\documentclass[12pt,a4paper]{article}


\usepackage{hyperref}
\usepackage[english]{babel} 
\usepackage{amsthm}
\usepackage{amsmath}
\usepackage{amsfonts}
\usepackage{amssymb}
\usepackage{enumerate}
\usepackage{amscd}
\usepackage{graphicx}
\usepackage{makeidx}
\usepackage[all]{xy}

\makeindex

\setcounter{section}{0}
\setcounter{subsection}{0}
\newcounter{Def}[section]
\renewcommand{\theDef}{\arabic{section}.\arabic{Def}}

\setlength{\parindent}{0 pt}

\theoremstyle{plain}
\newtheorem{Proposition}[Def]{Proposition}

\newtheorem{Example}[Def]{Example}

\newtheorem{Remark}[Def]{Remark}
\newtheorem{Warning}[Def]{Warning}
\newtheorem{Lemma}[Def]{Lemma}
\newtheorem{Theorem}[Def] {Theorem}

\newtheorem{corollary}[Def]{Corollary}
\newtheorem{Definition}[Def]{Definition}


\def\bigset#1#2{\left\lbrace\;\begin{minipage}[c]{#1}\begin{center}#2\end{center}\end{minipage}\;\right\rbrace}


\def\BrPic          {\underline{\text{Pic}}}

\def\Aut              {{\rm Aut}}
\def\AUT              {{\rm AUT}}
 
\def\R          {\mathbb R}

\def\Z          {\mathbb Z}

\def\cala          {\mathcal A}
\def\calb          {{\mathcal B}}
\def\calc          {{\mathcal C}}
\def\cald          {{\mathcal D}}

\def\call          {{\mathcal L}}

\def\calv          {{\mathcal V}}

\def\cyl           {{\Sigma(1,1)}}
\def\be                 {\begin{equation}}
\def\ee                 {\end{equation}}
\def\Desc          {\mathcal{D}\hspace{-.8pt}\mbox{\small\sl esc}}

\def\pop            {{\Sigma(2,1)}}

\def\KK           {{\mathbb K}}

\def\id               {{\rm id}}

\def\SS            {\mathbb{S}^1}

\def\id                 {\mathrm{id}}

\def\iso                {\stackrel{\sim}{\longrightarrow}}

  \def\Hom{\textnormal{Hom}}

\setlength{\textwidth}{17cm}
\setlength{\textheight}{24cm}
\hoffset -20mm \topmargin= -13mm
\def\Ue            {\mathrm U(1)}

\def\Bun           {\mathcal{B}{\!un}}

\def\vect          {\textnormal{Vect}}

 \def\H              {H}
\def\End            {\text{End}}
\def\cob           {\mathfrak{C}\mathrm{ob}}
\def\Span            {\mathfrak{S}\mathrm{pan}}
\def\mod            {\text{-}\mathrm{mod}}
\def\Inn            {\mathrm{Inn}}
\def\Out            {\mathrm{Out}}
\def\h                          {\check{\mathrm{H}}}
\def\cgj                        {\calc^J(G) = \bigoplus_{j \in J} \mathcal{C}(G)_j}
\def\cdw            {\calc^J(G)}

\begin{document}

\numberwithin{equation}{section}


\thispagestyle{empty}
\begin{flushright}
    {\sf ZMP-HH/11-3}\\
    {\sf Hamburger$\;$Beitr\"age$\;$zur$\;$Mathematik$\;$Nr.$\;$402}\\[2mm]
   March 2011
\end{flushright}
\vskip 2.0em
\begin{center}\Large
Equivariant Modular Categories via Dijkgraaf-Witten Theory
\end{center}\vskip 1.4em
\begin{center}
Jennifer Maier $^{1,2}$, Thomas Nikolaus $^2$ and Christoph Schweigert $^2$
\end{center}

\vskip 3mm

\begin{center}\it
   $^1$ School of Mathematics, Cardiff University, \\
   Senghennydd Road, Cardiff CF24 4AG, Wales, UK \\[.8em]
   $^2$ Fachbereich Mathematik, \ Universit\"at Hamburg\\
   Bereich Algebra und Zahlentheorie\\
   Bundesstra\ss e 55, \ D\,--\,20\,146\, Hamburg  
\end{center}
\vskip 2.5em
\begin{abstract} \noindent
Based on a weak action of a finite group $J$ on a finite group $G$,
we present a geometric construction of $J$-equivariant Dijkgraaf-Witten theory
as an extended topological field theory. The construction yields an
explicitly accessible class of equivariant modular tensor categories.
For the action of a group $J$ on a group $G$, the
category is described as the representation category of a
$J$-ribbon algebra that generalizes the Drinfel'd double
of the finite group $G$.

\end{abstract}

\noindent
{\sc Keywords}: Extended topological field theory, Dijkgraaf-Witten theory,
weak group action, equivariant modular tensor category, equivariant
ribbon algebras.

\setcounter{footnote}{0} \def\thefootnote{\arabic{footnote}}

\tableofcontents

\section{Introduction}\label{sec:introduction}

This paper has two seemingly different motivations and,
correspondingly, can be read from two different points of view,
a more algebraic and a more geometric one. Both in the introduction and
the main body of the paper, we try to separate these two points of view 
as much as possible, in the hope to keep the paper accessible
for readers with specific interests.

\subsection{Algebraic motivation: equivariant modular categories}
\label{intro:1.1}

Among tensor categories, modular tensor categories
are of particular interest for representation theory and
mathematical physics.
The representation categories of several algebraic
structures give examples of semisimple modular tensor categories:
\\[-1.66em]\begin{enumerate}\addtolength\itemsep{-2pt}
\item 
Left modules over connected factorizable ribbon weak Hopf algebras 
with Haar integral over an algebraically closed field {\rm \cite{nitv}}.
\item 
Local sectors of a finite $\mu$-index net of von Neumann algebras
on $\R$, if the net is strongly additive and split {\rm \cite{KLM}}.
\item 
Representations of selfdual $C_2$-cofinite vertex algebras 
with an additional finiteness condition on the homogeneous
components and which have semisimple representation categories {\rm \cite{Hu}}.
\end{enumerate}

Despite this list and the rather different fields in which modular
tensor categories arise, it is fair to say that modular
tensor categories are rare mathematical objects.
Arguably, the simplest incarnation of the first
algebraic structure in the list is the Drinfel'd double $\cald(G)$
of a finite group $G$.
Bantay \cite{bantay} has suggested a more general source for
modular tensor categories: a pair, consisting of a finite group $H$ and
a normal subgroup  $G\triangleleft H$. (In fact, Bantay has suggested general
finite crossed modules, but for this paper, only the case of a normal
subgroup is relevant.) In this situation, Bantay constructs a
ribbon category which is, in a natural
way, a representation category of a ribbon Hopf algebra
$\calb(G\triangleleft H)$. Unfortunately, it turns out that,
for a proper subgroup inclusion,
the category $\calb(G\triangleleft H)\mod$ is only premodular and 
not modular.

Still, the category $\calb(G\triangleleft H)\mod$ is modularizable in the
sense of Brugui\`eres \cite{brug}, and the next 
candidate for new modular tensor categories is the modularization
of $\calb(G\triangleleft H)\mod$.
However, it has been shown \cite{maier} that this modularization
is equivalent to the representation category of
the Drinfel'd double $\cald(G)$.

The modularization procedure of Brugui\`eres is based
on the observation that the violation of modularity of a
modularizable tensor category $\calc$ is captured in terms
of a canonical Tannakian subcategory of $\calc$. For
the category $\calb(G\triangleleft H)\mod$, this subcategory can
be realized as the representation category of the 
the quotient group $J:= H/G$ \cite{maier}. The modularization functor 
$$ \calb(G\triangleleft H)\mod \to \cald(G)\mod $$
is induction along the commutative Frobenius algebra given by the
regular representation of $J$. This has the important
consequence that the modularized category $\cald(G)$
is endowed with a $J$-action.

Experience with orbifold constructions, see \cite{kirI17,turaev2010}
for a categorical formulation, raises the question
of whether the category $\cald(G)\mod$ with this $J$-action
can be seen in a natural way as
the neutral sector of a $J$-modular tensor category. 

We thus want to complete the following square of tensor categories

\begin{equation}\label{int:square}
\xymatrix{
\cald(G)\mod \ar@<0.7ex>[d]^{\text{orbifold}} \ar@(lu,ur)[]^J\ar@{^{(}->}[r] 
& ??? \ar@(lu,ur)[]^J \ar@<0.7ex>[d]^{\text{orbifold}} \\
\calb(G\triangleleft H)\mod \ar[u]^{\text{modularization}}\ar@{^{(}->}[r] 
& ??? \ar[u]
}
\end{equation}

Here vertical arrows pointing upwards stand for
induction functors along the commutative
algebra given by the regular representation of $J$,
while downwards pointing arrows indicate orbifoldization.
In the upper right corner, we wish to place a $J$-modular category,
and in the lower right corner its $J$-orbifold which, on general
grounds \cite{kirI17}, has to be a modular tensor category.
Horizontal arrows indicate the inclusion of neutral sectors.

In general, such a completion need not exist. Even if it exists,
there might be inequivalent choices of $J$-modular tensor categories
of which a given modular tensor category with $J$-action is the neutral
sector \cite{ENO2009}.

\subsection{Geometric motivation: equivariant extended TFT}

Topological field theory is a mathematical structure that has been inspired
by physical theories \cite{witten} and which has developed
into an important tool in low-dimensional topology. Recently, these theories have received increased attention due
to the advent of {\em extended} topological field theories
\cite{Lurie,priesPhD}. The present paper focuses on three-dimensional 
topological field theory. 

Dijkgraaf-Witten theories provide a class of extended topological 
field theories. They can be seen as discrete variants of Chern-Simons 
theories, which provide invariants of three-manifolds and 
play an important role in knot theory \cite{witten}. 
Dijkgraaf-Witten theories have the advantage of being 
particularly tractable and admitting a very conceptual geometric construction. 

A Dijkgraaf-Witten theory is based on a finite group $G$; in this case the 
'field configurations' on a manifold $M$ are given by $G$-bundles over $M$, 
denoted by $\cala_G(M)$. Furthermore, one has to choose a suitable
action functional $S: \cala_G(M) \to \mathbb{C}$  
(which we choose here in fact to be trivial) on field configurations; 
this allows to make the structure suggested by formal path integration 
rigorous and to obtain a topological field theory. A  conceptually
very clear way 
to carry this construction out rigorously is described in \cite{FQ93} and 
\cite{Morton}, see section \ref{sec:DW} of this paper for a review.

Let us now assume that as a further input datum we have another finite group 
$J$ which acts on $G$. In this situation, we get an action of $J$ on the 
Dijkgraaf-Witten theory based on $G$. But it turns out that this
topological field 
theory  together with the $J$-action does not fully reflect 
the equivariance of the situation: it has been an important insight that 
the right notion is the one of equivariant topological field theories, 
which have been another point of recent interest \cite{kirI17,turaev2010}. 
Roughly speaking, equivariant topological field theories require that 
all geometric objects 
(i.e.\ manifolds of different dimensions) have to be decorated by a 
$J$-cover (see definitions \ref{defcobj} and \ref{defjtft} for details). 
Equivariant field theories also provide a conceptual setting for the 
orbifold construction, one of the standard tools for model building in 
conformal field theory and string theory. 

Given the action of a finite group $J$ on a finite group $G$, these
considerations lead to the question of whether Dijkgraaf-Witten theory 
based on $G$ can be enlarged to a $J$-equivariant topological field theory. 
Let us pose this question more in detail:
\begin{itemize}
\item What exactly is the right notion of an action of $J$ on $G$ 
that leads to interesting theories? To keep equivariant Dijkgraaf-Witten 
theory as explicit as the non-equivariant theory, one needs notions to keep 
control of this action as explicitly as possible.
\item 
Ordinary Dijkgraaf-Witten theory is mainly determined by the choice of 
field configurations $\cala_G(M)$ to be $G$-bundles. As mentioned before, 
for $J$-equivariant theories, we should replace manifolds
by manifolds with $J$-covers. We thus need a geometric notion of a 
$G$-bundle that is 'twisted' by this $J$-cover in order to develop the 
theory parallel to the non-equivariant one.
\end{itemize}
Based on an answer to these two points, we wish to construct
equivariant Dijkgraaf-Witten theory as explicitly as possible.

\subsection{Summary of the results}

This paper solves both the algebraic and the geometric problem we have 
just described. In fact, the two problems turn out to be closely related. 
We first solve the problem of explicitly constructing equivariant 
Dijkgraaf-Witten and then use our solution to construct the relevant 
modular categories that complete the square \eqref{int:square}. 

Despite this strong mathematical interrelation, we have taken some
effort to write the paper in such a way that it is accessible to
readers sharing only a geometric or algebraic interest. The geometrically
minded reader might wish to restrict his attention to section 2 and 3, 
and only take notice of the result about $J$-modularity
stated in theorem \ref{J-modular}. An algebraically oriented
reader, on the other hand, might simply accept the categories
described in proposition \ref{sectors} together with the structure
described in propositions \ref{prop:fusionproduct}, 
\ref{prop:action}
and \ref{prop:braiding} and then directly delve into section 4.

\medskip

For the benefit of all readers, we present here an outline of all our findings.
In section 2, we review the pertinent aspects of Dijkgraaf-Witten theory and 
in particular the specific construction given in \cite{Morton}. 
Section 3 is devoted to the equivariant case: we observe that the correct
notion of $J$-action on $G$ is what we call a weak
action of the group $J$ on the group $G$; this notion is 
introduced in definition \ref{Def:action}. Based on this notion,
we can very explicitly construct for every $J$-cover $P\to M$ a category 
$\cala_G(P \to M)$ of $P$-twisted $G$-bundles. For the definition and 
elementary properties of twisted bundles, we refer to section 
\ref{sec:twisted} and for a local description to appendix
\ref{cech}. We are then ready to construct equivariant Dijkgraaf
Witten theory along the lines of the construction described in
\cite{Morton}. This is carried out in section \ref{subsec:eqDW} and 
\ref{subsec:spans}. We obtain a construction of equivariant Dijkgraaf-Witten
theory that is so explicit that we can read off the category $\calc^J(G)$ it 
assigns to the circle $\SS$. The equivariant topological field 
theory induces additional structure on this category, which can also be 
computed by geometric methods due to the explicit control of the theory, 
and part of which we compute in section \ref{twisted_sectors_and_fusion}. 
This finishes the geometric part of our work. It remains to show
that the category $\calc^J(G)$ is indeed $J$-modular.

\medskip

To establish the $J$-modularity of the category
$\calc^J(G)$, we have to resort to algebraic tools. 
Our discussion is based on the appendix 6 of \cite{turaev2010}
by A.\ Vir\'elizier. At the same time,
we explain the solution of the algebraic problems described in
section \ref{intro:1.1}. The Hopf algebraic notions we encounter
in section 4, in particular Hopf algebras with a weak group 
action and their
orbifold Hopf algebras might be of independent algebraic interest.

In section 4, we introduce the notion of a $J$-equivariant ribbon
Hopf algebra. It turns out that it is natural to relax
some strictness requirements on the $J$-action on such a Hopf algebra.
Given a weak action of a finite group $J$ on a
finite group $G$, we describe in proposition \ref{double-J-ribbon} a specific
ribbon Hopf algebra which we call the equivariant Drinfel'd double $\cald^J(G)$.
This ribbon Hopf algebra is designed in such a way that its representation
category is equivalent to the geometric category $\calc^J(G)$
constructed in section 3, compare
proposition \ref{J-fusion}.

The $J$-modularity of $\calc^J(G)$ is established via the modularity of
its orbifold category. The corresponding notion of an orbifold algebra
is introduced in subsection 4.4.  In the case of the equivariant
Drinfel'd double $\cald^J(G)$, this orbifold algebra is shown to be
isomorphic, as a ribbon Hopf algebra, to a Drinfel'd double. This implies
modularity of the orbifold theory and, by a result of \cite{kirI17},
$J$-modularity of the category $\calc^J(G)$, cf.\ theorem \ref{J-modular}.

In the course of our construction, we develop 
several notions of independent interest.
In fact, our paper might be seen as a study of the
geometry of chiral backgrounds. It allows for various
generalizations, some of which are briefly sketched in the
conclusions. These generalizations include in
particular twists by 3-cocycles in group cohomology
and, possibly, even the case of non-semi simple chiral
backgrounds.

\bigskip

\noindent{\bf Acknowledgements.}
We thank Urs Schreiber for helpful discussions and Ingo
Runkel for a careful reading of the manuscript.
TN and CS are partially supported by the Collaborative 
Research Centre 676 
``Particles, Strings and the Early Universe - the Structure of Matter 
and Space-Time'' and the cluster of excellence
``Connecting particles with the cosmos''.
JM and CS are partially supported by the
Research priority program SPP 1388 
``Representation theory''.
JM is partially supported by the Marie Curie Research Training Network
MRTN-CT-2006-031962 in Noncommutative Geometry, EU-NCG.

\section{Dijkgraaf-Witten theory and Drinfel'd double}\label{sec:DW}

This section contains a short review of  Dijkgraaf-Witten 
theory as an extended three-dimensional topological
field theory, covering the contributions of many authors,
including in particular the work of Dijkgraaf-Witten 
\cite{DW90}, of Freed-Quinn \cite{FQ93}
and of Morton \cite{Morton}. We explain how these extended
3d TFTs give rise to modular tensor categories.
These specific modular tensor categories are the representation
categories of a well-known class of quantum groups, the
Drinfel'd doubles of finite groups.

While this section does not contain original material,
we present the ideas in such a way that equivariant generalizations
of the theories can be conveniently discussed. In this section, we also
introduce some categories and functors that we need for later sections.

\subsection{Motivation for  Dijkgraaf-Witten theory}
\label{sec:motivation}

We start with a brief motivation for Dijkgraaf-Witten 
theory from physical principles. A reader already familiar
with Dijkgraaf-Witten theory might wish to take
at least notice
of the definition \ref{def:2.2} and of proposition \ref{DW}.

It is an old, yet successful idea to extract invariants of 
manifolds from quantum field theories, in particular
from quantum field theories for which the fields are $G$-bundles 
with connection, where  $G$ is some group. In this paper we 
mostly consider the case of a finite group and only  occasionally 
make reference to the case of a compact Lie group.

Let $M$ be a compact oriented manifold of dimension 1,2 or 3, possibly 
with boundary. As the \textit{`space' of field configurations},
we choose $G$ bundles with connection,
$$ \mathcal A_G(M) := \Bun^{\nabla}_G(M). $$
In this way, we really assign to a manifold a groupoid, 
rather than an actual space.
The morphisms of the category take gauge transformations
into account.
We will nevertheless keep on calling it 'space' since 
the correct framework to handle $\mathcal A_G(M)$ is 
as a stack on  the category of smooth manifolds.

Moreover, another piece of data specifying the model 
is a function
defined on manifolds of a specific dimension,
$$ S: \mathcal A_G(M) \to \mathbb{C} $$
called the \textit{action}. In the simplest case, when $G$ is a finite group,
a field configuration is given by a $G$-bundle, since all bundles are
canonically flat and no connection data are involved.
Then, the simplest action is given by $S[P] := 0$ for all $P$. 
In the case of a compact, simple, simply connected Lie group $G$,
consider a 3-manifold $M$. In this situation, each $G$-bundle $P$ over $M$ 
is globally of the form $P \cong G \times M$, because 
$\pi_1(G) = \pi_2(G) = 0$. Hence a field configuration is given by a connection on the 
trivial bundle which is a 1-form $A \in \Omega^1(M,\mathfrak g)$ with values 
in the Lie algebra of $G$. An example of an action yielding
a topological field theory that can be defined in this 
situation is the Chern-Simons action
$$ S[A] := \int_M \langle A \wedge dA \rangle 
- \frac 1 6 \langle A \wedge A \wedge A \rangle $$ 
where $\langle \cdot , \cdot \rangle$ is the basic invariant inner product 
on the Lie algebra $\mathfrak g$. 

The heuristic idea is then to introduce an 
invariant $Z(M)$ for a 3-manifold $M$ by integration over all field
configurations: 
$$ Z(M) := " \int_{\mathcal A_G(M)} d\phi  ~ e^{i S[\phi]} ~".   $$

\begin{Warning} In general, this path integral has only a heuristic
meaning. In the case of a finite group, however, one can choose a 
counting measure $d\phi$ and thereby reduce the integral to a well-defined 
finite sum. The definition of Dijkgraaf-Witten theory \cite{DW90}
is based on this idea.
\end{Warning}

Instead of giving a well-defined meaning to the invariant $Z(M)$ as
a path-integral, we exhibit some formal properties these
invariants are expected to satisfy. To this end, it is crucial to allow
for manifolds that are not closed, as well. This allows to cut a 
three-manifold into several simpler three-manifolds with boundaries
so that the computation of the invariant can be reduced to the computation 
of the invariants of simpler pieces.

Hence, we consider a 3-manifold $M$ with a 2-dimensional boundary 
$\partial M$. We fix boundary values $\phi_1 \in \mathcal A_G(\partial M)$ 
and consider the space $\mathcal A_G(M,\phi_1)$ of all fields $\phi$ on $M$ 
that restrict to  the given boundary values $\phi_1$. We then introduce, again
at a heuristic level, the quantity
\begin{equation}\label{vektor}
  Z(M)_{\phi_1} := " \int_{\mathcal A_G(M,\phi_1)} d\phi  ~ e^{i S[\phi]} ~". \end{equation}
The assignment $\phi_1 \mapsto Z(M)_{\phi_1}$ could be called a
`wave function' on the space $\mathcal A_G(\partial M)$ 
of boundary values of fields.
These `wave functions' form a vector space 
$\mathcal H_{\partial M}$, the \textit{state space}
$$ \mathcal H_{\partial M} := "L^2 \big(\mathcal A_G(\partial M),\mathbb{C}\big)~" $$
that we assign to the boundary $\partial M$.  The transition to wave functions amounts to a
linearization. The notation $L^2$ should be taken with a grain
of salt and should indicate the choice of an appropriate
vector space for the category $\mathcal A_G(\partial M)$; 
it should not suggest the 
existence of any distinguished measure on the category.

In the case of Dijkgraaf-Witten theory based on a finite group $G$, 
the space of states has a basis consisting 
of $\delta$-functions on the set of isomorphism classes of 
field configurations on the boundary $\partial M$: 
$$ \mathcal H_{\partial M} = \mathbb{C} \big <\delta_{\phi_1} \mid \phi_1 \in Iso \mathcal A_G(\partial M) \big>.$$
In this way, we associate finite dimensional vector spaces 
$\mathcal H_{\Sigma}$ to compact oriented 2-manifolds 
$\Sigma$. The heuristic path integral in equation 
\eqref{vektor} suggests to associate to a
3-manifold $M$ with boundary $\partial M$
an element
$$ Z(M) \in \mathcal H_{\partial M}  \,\, , $$
or, equivalently, a linear map $\mathbb{C} \to  
\mathcal H_{\partial M}$. 

A natural generalization of this situation are
cobordisms $M:
\Sigma \to \Sigma'$, where $\Sigma$ and $\Sigma'$ are 
compact oriented 
2-manifolds. A cobordism is a compact oriented 3-manifold $M$ with 
boundary $\partial M \cong \bar{\Sigma} \sqcup \Sigma'$ 
where $\bar \Sigma$ 
denotes $\Sigma$, with the opposite orientation. 
To a cobordism, we wish to associate a linear map
$$ Z(M) : \mathcal H_{\Sigma} \to \mathcal H_{\Sigma'} $$
by giving its matrix elements in terms of the path 
integral
$$ Z(M)_{\phi_0,\phi_1} := " \int_{\mathcal A_G(M,\phi_0,\phi_1)} d\phi 
~ e^{i S[\phi]} ~" \,\,\, $$
with fixed boundary values
$\phi_0 \in \mathcal A_G(\Sigma)$ and 
$\phi_1 \in \mathcal A_G(\Sigma')$.
Here $\mathcal A_G(M,\phi_0,\phi_1)$ is the space of field 
configurations on $M$ that 
restrict to the field configuration $\phi_0$ on the ingoing 
boundary $\Sigma$ and to the field configuration $\phi_1$ 
on the outgoing boundary $\Sigma'$. One can now show  that the linear maps $Z(M)$ 
are compatible with gluing of cobordisms along boundaries. 
(If the group $G$ is not finite, additional subtleties arise; e.g.\
$Z(M)_{\phi_0,\phi_1}$ has to be interpreted as an integral kernel.)

Atiyah \cite{At88} has given a definition of 
a topological field theory that formalizes these
properties: it describes a topological field theory as
a symmetric monoidal functor from a 
geometric tensor category to an algebraic category. 
To make this definition explicit, let $\cob(2,3)$ be the category which has 2-dimensional compact 
oriented smooth manifolds as objects. Its morphisms
$M:\Sigma\to \Sigma'$ are given by (orientation preserving) 
diffeomorphism 
classes of 3-dimensional, compact oriented cobordism 
from $\Sigma$ to $\Sigma'$ which we write as
$$ \Sigma\hookrightarrow M \hookleftarrow \Sigma'.$$
Composition of morphisms is given by gluing cobordisms together along the 
boundary. The disjoint union of 2-dimensional manifolds and cobordisms equips this category 
with the structure of a symmetric monoidal category. For the
algebraic category, we choose the symmetric tensor category 
$\vect_{\KK}$ of finite dimensional vector spaces over an algebraically 
closed field $\KK$ of characteristic zero.

\begin{Definition}[Atiyah] \label{def:2.2}
A \textit{3d TFT} is a symmetric monoidal functor
$$ Z: \cob(2,3) \to \vect_{\KK}. $$
\end{Definition}

\medskip

Let us set up such a functor for Dijkgraaf-Witten theory,
i.e.\ fix a finite group $G$ and choose the 
trivial action $S: A_G(M) \to \mathbb{C}$,  i.e. $S[P] = 0$ 
for all $G$-bundles $P$ on $M$. 
Then the path integrals reduce to finite sums over $1$ 
hence simply count 
the number of elements in the category $\mathcal A_G$.
Since we are counting objects in a category, the
stabilizers have to be taken
appropriately into account, for details see e.g.\ 
\cite[Section 4]{Morton08}. This is achieved by
the  \textit{groupoid 
cardinality} (which is sometimes also called the 
Euler-characteristic of the groupoid $\Gamma$) 
$$ |\Gamma| := \sum_{[g] \in Iso(\Gamma)} \frac 1 {|Aut(g)|}. $$
A detailed discussion of groupoid cardinality can be found in \cite{BD00} 
and \cite{Leinster08}.

We summarize the discussion:

\begin{Proposition}[\cite{DW90},\cite{FQ93}] \label{DW}
Given a finite group $G$, the following assignment $Z_G$ defines
a 3d TFT: to a closed, oriented 2-manifold $\Sigma$, we assign 
the vector space freely generated by the isomorphism
classes of $G$-bundles on $\Sigma$, 
$$ \Sigma \quad \longmapsto \quad \mathcal H_\Sigma 
:= \KK \big< \delta_P \mid P \in 
Iso \mathcal A_G(\Sigma) \big> \,\, . $$
To a 3 dimensional cobordism $M$, we associate  the linear map
$$ Z_G\Big(\Sigma\hookrightarrow M \hookleftarrow \Sigma'\Big) : \quad \mathcal H_\Sigma \to \mathcal H_{\Sigma'}$$
with matrix elements given by the groupoid cardinality of
the categories $\cala_G(M,P_0,P_1)$:
$$  Z_G(M)_{P_0,P_1} := \big|\mathcal A_G(M,P_0,P_1)\big| \,\,\, . $$
\end{Proposition}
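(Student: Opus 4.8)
The plan is to verify the three defining properties of a symmetric monoidal functor $Z_G\colon\cob(2,3)\to\vect_\KK$: that it is well defined on objects and on morphisms, that it respects identities and composition, and that it is compatible with the monoidal and symmetric structures. Well-definedness is the easy part. Since $G$ is finite and every manifold in sight is compact, each groupoid $\cala_G(\Sigma)$ is equivalent to a finite groupoid (flat $G$-bundles are classified by the finite set $\mathrm{Hom}(\pi_1\Sigma,G)/G$, with finite automorphism groups), so $\mathcal{H}_\Sigma$ is finite dimensional; likewise each $\cala_G(M,P_0,P_1)$ is finite, so the numbers $|\cala_G(M,P_0,P_1)|$ are well-defined positive rationals. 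Invariance under orientation-preserving diffeomorphisms rel boundary is automatic, because isomorphism classes of flat $G$-bundles depend only on the homotopy type of $(M,\partial M)$. For monoidality I would use that a $G$-bundle on a disjoint union is precisely a pair of $G$-bundles, so $\cala_G(\Sigma\sqcup\Sigma')\simeq\cala_G(\Sigma)\times\cala_G(\Sigma')$, whence $\mathcal{H}_{\Sigma\sqcup\Sigma'}\cong\mathcal{H}_\Sigma\otimes\mathcal{H}_{\Sigma'}$, the empty surface gives $\KK$, and the flip of the two factors intertwines the symmetry of $\cob(2,3)$ with that of $\vect_\KK$.

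The heart of the proof, and the main obstacle, is functoriality under gluing. Write $N=M\cup_{\Sigma'}M'$ for the composite of cobordisms $M\colon\Sigma\to\Sigma'$ and $M'\colon\Sigma'\to\Sigma''$. The first thing I would establish is a van Kampen statement for bundle groupoids: restriction to the two pieces exhibits
$$\cala_G(N,P_0,P_2)\;\simeq\;\cala_G(M,P_0)\times_{\cala_G(\Sigma')}\cala_G(M',P_2)$$
as a homotopy (i.e.\ $2$-)fibre product, since a bundle on $N$ is the same datum as a bundle on $M$, a bundle on $M'$, and an isomorphism of their restrictions to $\Sigma'$. Under this equivalence the homotopy fibre of the restriction functor $r\colon\cala_G(M,P_0)\to\cala_G(\Sigma')$ over a bundle $P_1$ is precisely $\cala_G(M,P_0,P_1)$ (a short check shows the automorphisms match, as an automorphism in the fibre must restrict to the identity on both $\Sigma$ and $\Sigma'$), and symmetrically on the $M'$ side.

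The genuinely delicate ingredient is then a groupoid-cardinality formula for such a homotopy pullback. I would isolate the lemma that for functors $r\colon A\to B$, $r'\colon A'\to B$ of finite groupoids,
$$\bigl|A\times_B A'\bigr|\;=\;\sum_{[b]\in\mathrm{Iso}(B)}\frac{|r^{-1}(b)|\cdot|r'^{-1}(b)|}{|\Aut_B(b)|},$$
where $r^{-1}(b)$ is the homotopy fibre; the proof is a direct count of objects and automorphisms, the division by $|\Aut_B(b)|$ arising because an automorphism of a pullback object is a \emph{compatible} pair of automorphisms of the two legs. Combining this with the previous paragraph gives the gluing identity
$$|\cala_G(N,P_0,P_2)|=\sum_{[P_1]}\frac{|\cala_G(M,P_0,P_1)|\,|\cala_G(M',P_1,P_2)|}{|\Aut(P_1)|}.$$

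The one point that must not be glossed over is the rôle of the weight $1/|\Aut(P_1)|$: it is exactly the groupoid-cardinality measure on $\mathrm{Iso}\,\cala_G(\Sigma')$, and it is what makes matrix multiplication in $\vect_\KK$ reproduce gluing. Concretely, $Z_G(M)$ is the linear map $\delta_{P_0}\mapsto\sum_{[P_1]}|\cala_G(M,P_0,P_1)|\,|\Aut(P_1)|^{-1}\,\delta_{P_1}$, so that the stated numbers $|\cala_G(M,P_0,P_1)|$ are its matrix elements once the summation over intermediate isomorphism classes is taken with its groupoid measure $\sum_{[P_1]}|\Aut(P_1)|^{-1}$; with this convention the gluing identity above is precisely the statement $Z_G(N)=Z_G(M')\circ Z_G(M)$. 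A final consistency check, which simultaneously pins down this normalization and verifies preservation of identities, is the computation $|\cala_G(\Sigma\times[0,1],P_0,P_1)|=|\Aut(P_0)|\,\delta_{[P_0],[P_1]}$, giving $Z_G(\Sigma\times[0,1])=\mathrm{id}_{\mathcal{H}_\Sigma}$. With well-definedness, the monoidal and symmetric compatibilities, the pullback cardinality lemma, and the cylinder computation in hand, $Z_G$ is a symmetric monoidal functor, as claimed.
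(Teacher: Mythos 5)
Your proof is correct, but it takes a genuinely different route from the paper. The paper never proves proposition \ref{DW} directly: it cites \cite{DW90,FQ93} and instead constructs the \emph{extended} theory (proposition \ref{exDW}) as the composite $Z_G = \widetilde{\calv_\KK}\circ\widetilde{\cala_G}$, where composition of spans is respected because glued cobordisms give weak pullbacks of bundle groupoids (proposition \ref{conf}, and lemma \ref{kleben} in the equivariant setting) and where all the numerical content -- the groupoid cardinalities and their $1/|\Aut|$ weights -- is packaged inside Morton's pull-push 2-linearization (proposition \ref{lin2}); the non-extended statement is then recovered by restricting to $\End_{\cob(1,2,3)}(\emptyset)\cong\cob(2,3)$, with the identification of the matrix elements delegated to \cite[Section 5.2]{Morton}. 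Your argument decategorifies exactly this mechanism: your van Kampen equivalence $\cala_G(N,P_0,P_2)\simeq\cala_G(M,P_0)\times_{\cala_G(\Sigma')}\cala_G(M',P_2)$ is the counterpart of the weak-pullback property of spans, and your cardinality lemma $\bigl|A\times_B A'\bigr|=\sum_{[b]}|r^{-1}(b)|\,|r'^{-1}(b)|/|\Aut_B(b)|$ is the shadow of pull-push functoriality. What your route buys is a self-contained, elementary proof with no 2-categorical machinery, and in particular it makes explicit a point the paper's formulation leaves implicit: the numbers $|\cala_G(M,P_0,P_1)|$ can only serve as ``matrix elements'' if intermediate sums carry the groupoid measure $\sum_{[P_1]}|\Aut(P_1)|^{-1}$ (equivalently, the actual coefficients of $Z_G(M)$ in the basis $\{\delta_{P_1}\}$ are $|\cala_G(M,P_0,P_1)|/|\Aut(P_1)|$); without that weight, as your cylinder computation $|\cala_G(\Sigma\times[0,1],P_0,P_1)|=|\Aut(P_0)|\,\delta_{[P_0],[P_1]}$ shows, the cylinder would fail to act as the identity and gluing would fail. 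What the paper's route buys is the stronger extended statement -- the assignment of 2-vector spaces $[\cala_G(S),\vect_\KK]$ to 1-manifolds, which is what ultimately produces the braided categories and the equivariant generalization -- together with automatic handling of the coherence data that a direct construction would otherwise have to track by hand.
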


\begin{Remark}
\begin{enumerate}
\item
In the original paper \cite{DW90}, a generalization of the trivial action 
$S[P] = 0$, induced by an element $\eta$ in the group cohomology 
$\H^3_{Gp}\big(G,\Ue\big)$ with values in $\Ue$, has been studied. We postpone
the treatment of this generalization to a separate paper: in the present paper,
the term \textit{Dijkgraaf-Witten theory} refers to the 3d TFT 
of proposition \ref{DW} or its extended version. 
\item
In the case of a compact, simple, simply-connected Lie group $G$,
a definition of a 3d TFT by a path integral is not available. Instead,
the combinatorial definition of Reshetikin-Turaev \cite{RT} can be used
to set up a 3d TFT which has the properties expected
for Chern-Simons theory.
\item 
The vector spaces $\mathcal H_\Sigma$ can be described rather explicitly. 
Since every compact, closed, oriented 2-manifold is given by a disjoint union 
of surfaces $\Sigma_g$ of genus $g$,  it suffices to compute the dimension of 
$\mathcal H_{\Sigma_g}$. This can be done using the well-known description of
moduli spaces of flat $G$-bundles in terms of homomorphisms
from the fundamental group $\pi_1(\Sigma_g)$ to the
group $G$, modulo conjugation, 
$$ Iso \mathcal A_G(\Sigma_g) \cong \Hom( \pi_1(\Sigma_g) , G) /G $$
which can be combined with the usual description of the fundamental group
$\pi_1(\Sigma_g)$ in terms of generators and relations. In this way, one finds
that the space is one-dimensional
for surfaces of genus 0. In the case of  surfaces of genus $1$,
it is generated by pairs of commuting group elements, modulo simultaneous conjugation.
\item 
Following the same line of argument, one can show that for a closed 
3-manifold $M$, one has
$$ \big|\cala_G(M)\big| = \big|\Hom(\pi_1(M),G)\big| ~/~ |G|  \,\, . $$
This expresses the 3-manifold invariants in terms of the 
fundamental group of $M$.
\end{enumerate}
\end{Remark}

\subsection{Dijkgraaf-Witten theory as an extended TFT}

Up to this point, we have considered a version  
of Dijkgraaf-Witten theory which assigns invariants
to closed 3-manifolds $Z(M)$ and vector spaces to 
2-dimensional manifolds 
$\Sigma$. Iterating the argument that has lead us to consider
three-manifolds with boundaries, we might wish to cut the 
two-manifolds into smaller pieces as well, and thereby introduce
two-manifolds with boundaries into the picture.

Hence, we drop the requirement on the 2-manifold $\Sigma$ 
to be closed and allow $\Sigma$ to be 
a compact, oriented 2-manifold with 1-dimensional boundary 
$\partial \Sigma$. Given a field configuration 
$\phi_1 \in \mathcal A_G(\partial \Sigma)$ on the boundary 
of the surface $\Sigma$, we consider the space of all field
configurations 
$\mathcal A_G(\Sigma,\phi_1)$ on $\Sigma$ that restrict 
to the given field configuration $\phi_1$ on 
the boundary $\partial \Sigma$. Again, we linearize the
situation and consider for each field configuration
$\phi_1$ on the 1-dimensional boundary $\partial \Sigma$ 
the vector space freely generated by the isomorphism classes of field
configurations on $\Sigma$, 
$$ \mathcal H_{\Sigma, \phi_1} := " L^2\big(\mathcal A_G(\Sigma,\phi_1)\big)
" = \mathbb{C} \big<\delta_\phi \mid \phi \in Iso \mathcal A_G(\Sigma,\phi_1)\big>.$$

The object we associate to the 1-dimensional boundary
$\partial\Sigma$ of a 2-manifold $\Sigma$ is thus a map 
$\phi_1 \mapsto \mathcal H_{\Sigma, \phi_1}$ of
field configurations to vector spaces, i.e.\
a complex vector bundle over the space of all fields on the
boundary. In the case of a
finite group $G$, we prefer to see these vector bundles 
as objects of the functor category from the essentially
small category $\mathcal A_G(\partial \Sigma)$ to
the category $\vect_\mathbb{C}$ of finite-dimensional
complex vector spaces, i.e.\ as an element of
$$ \vect(\mathcal A_G(\partial \Sigma)) = \big[ \mathcal A_G(\partial \Sigma) , \vect_\mathbb{C} \big]. $$

Thus the extended version of the theory assigns the category 
$Z(S) =[\mathcal A_G(S),\vect_\mathbb{C}]$ to a
one dimensional, compact oriented manifold $S$.  These categories possess 
certain additional properties which can be summarized by saying that they 
are 2-vector spaces in the sense of {\cite{KV94}}:

\begin{Definition}\label{twovect}
\begin{enumerate}
\item A \textit{2-vector space} (over a field $\KK$) is a $\KK$-linear, abelian, 
finitely semi-simple category. Here finitely semi-simple means that
the category has finitely many isomorphism classes of simple objects and 
each object is a finite direct sum of simple objects.
\item
Morphisms between 2-vector spaces are $\KK$-linear functors and 2-morphisms 
are natural transformations. We denote the 2-category of 2-vector spaces 
by $2\vect_\KK$
\item
The \textit{Deligne tensor product} $~ \boxtimes ~$ endows $2\vect_\KK$ 
with the structure of a symmetric monoidal 2-category.
\end{enumerate}
\end{Definition}

For the Deligne tensor product, we refer to \cite[Sec. 5]{de90} or 
\cite[Def. 1.1.15]{BKLec}. The definition and the properties of symmetric 
monoidal bicategories (resp.\ 2-categories) can be found in
\cite[ch. 3]{priesPhD}.

In the spirit of definition \ref{def:2.2}, we formalize the properties of 
the extended theory $Z$ 
by describing it as a functor from a cobordism 2-category to the algebraic category 
$2\vect_\KK$. It remains to state the formal definition of the relevant geometric 
category.  Here, we ought to be a little bit more careful, since we expect 
a 2-category and hence can not identify diffeomorphic 2-manifolds. 
For precise statements on how to address the difficulties in gluing smooth 
manifolds with corners, we refer to \cite[4.3]{Morton06}; here, we
restrict ourselves to the following short definition:

\begin{Definition}\label{cob123}
$\cob(1,2,3)$ is the following symmetric monoidal bicategory:
\begin{itemize}\setlength{\itemsep}{-0.5ex}
\item { Objects} are compact, closed, oriented 1-manifolds $S$.
\item {1-Morphisms} are 
2-dimensional, compact, oriented collared cobordisms
$S \times I \hookrightarrow \Sigma \hookleftarrow  
S' \times I$.
\item {2-Morphisms} are generated by diffeomorphisms of cobordisms 
fixing the collar and 3-dimensional collared, oriented cobordisms with 
corners $M$, up to diffeomorphisms preserving the orientation and boundary.
\item Composition is by gluing along collars. 
\item The monoidal structure is given by disjoint union
with the empty set $\emptyset$ as the monoidal unit.
\end{itemize}
\end{Definition}

\begin{Remark}\label{smoothgluing}
The 1-morphisms are defined as collared surfaces, since
in the case of extended cobordism categories, we consider
surfaces rather than diffeomorphism classes of surfaces.
A choice of collar is always possible, but not unique.
The choice of collars ensures that the glued surface has 
a well-defined smooth structure. Different choices for the
collars yield equivalent 1-morphisms in $\cob(1,2,3)$.
\end{Remark}

Obviously, extended cobordism categories can be defined in 
dimensions different from three as well. We are now ready to give
the definition of an extended TFT which goes essentially 
back to Lawrence 
\cite{Lawrence}:

\begin{Definition}\label{ext}
An \textit{extended 3d TFT} is a weak symmetric monoidal 2-functor
$$ Z: \quad \cob(1,2,3) \to 2\vect_{\KK} \,\,\, . $$
\end{Definition}

We pause to explain 
in which sense extended TFTs extend the TFTs defined in
definition \ref{def:2.2}. To this end, we note that the monoidal
2-functor $Z$ has to send the monoidal unit in 
$\cob(1,2,3)$ to the monoidal 
unit in $2\vect_{\KK}$. The monoidal unit in $\cob(1,2,3)$ 
is the empty set 
$\emptyset$, and the unit in $2\vect_\KK$ is the category 
$\vect_\KK$. The functor $Z$ 
restricts to a functor $Z|_\emptyset$ from the 
endomorphisms of $\emptyset$ 
in $\cob(1,2,3)$ to the endomorphisms of $\vect_\KK$ in $2\vect_\KK$. 
It follows directly from the definition that 
$\End_{\cob(1,2,3)}\big(\emptyset\big) \cong \cob(2,3)$. 
Using the fact that 
the morphisms in $2\vect_\KK$ are additive (which follows from 
$\mathbb{C}$-linearity of functors in the definition 
of 2-vector spaces),
it is also easy to see that the equivalence of categories
$\End_{2\vect_\KK}\big(\vect_\KK\big) \cong \vect_\KK$
holds. Hence we have deduced:

\begin{Lemma}\label{restriction}
Let $Z$ be an extended 3d TFT. Then $Z|_{\emptyset}$ is a 
3d TFT in the sense of definition \ref{def:2.2}.
\end{Lemma}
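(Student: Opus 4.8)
The plan is to exhibit $Z|_\emptyset$ as the functor that a weak $2$-functor automatically induces on the endomorphism categories of a fixed object, here the monoidal unit $\emptyset$, and then to check that the symmetric monoidal structure of $Z$ descends to make this induced functor symmetric monoidal in the sense of Definition~\ref{def:2.2}. First I would recall the general mechanism: any weak $2$-functor $F\colon\mathcal{A}\to\mathcal{B}$ sends, for a fixed object $x$, the endo-$1$-morphisms $f\colon x\to x$ to $Ff\colon Fx\to Fx$ and the $2$-cells between them to their images under $F$, and this assignment is a functor $\End_\mathcal{A}(x)\to\End_\mathcal{B}(Fx)$ because $F$ preserves vertical composition of $2$-cells strictly (it preserves horizontal composition only up to the coherence $2$-cells of the weak $2$-functor). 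Taking $F=Z$ and $x=\emptyset$ produces a functor $Z_\emptyset\colon\End_{\cob(1,2,3)}(\emptyset)\to\End_{2\vect_\KK}(Z\emptyset)$.

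I would then splice this together with the two identifications already recorded above. Since $Z$ is monoidal it carries the monoidal unit $\emptyset$ to the monoidal unit, so $Z\emptyset\simeq\vect_\KK$; combined with $\End_{2\vect_\KK}(\vect_\KK)\cong\vect_\KK$ and $\End_{\cob(1,2,3)}(\emptyset)\cong\cob(2,3)$, the functor $Z_\emptyset$ becomes a functor $\cob(2,3)\to\vect_\KK$, which is exactly $Z|_\emptyset$. Under these identifications the categorical composition (gluing of $3$-cobordisms along surfaces on the source side, composition of linear maps on the target side) corresponds to vertical composition of $2$-cells, which $Z$ respects; so $Z|_\emptyset$ is indeed a functor.

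It remains, and this is the subtle part, to equip $Z|_\emptyset$ with the structure of a \emph{symmetric monoidal} functor. The key observation is an Eckmann--Hilton phenomenon: on $\End(\emptyset)$ the horizontal composition of $1$-morphisms is gluing of closed surfaces along the empty $1$-manifold, which is simply disjoint union, so it agrees with the ambient monoidal product, and this is precisely the disjoint-union monoidal structure that $\cob(2,3)$ carries; on the target side the analogous composition of endomorphisms of $\vect_\KK$ is the tensor product. A weak symmetric monoidal $2$-functor comes equipped with coherence $2$-cells comparing $Z(f\otimes g)$ with $Zf\otimes Zg$ and with constraints relating its structure to the symmetry; restricting these to $\End(\emptyset)$ supplies exactly the monoidal comparison isomorphisms and the symmetry constraint that promote $Z_\emptyset$ to a symmetric monoidal functor. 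The hard part is thus purely a matter of coherence bookkeeping: one must check that the higher-categorical coherence data of $Z$ collapse, under the two equivalences and the identification of the two a priori distinct monoidal structures on $\End(\emptyset)$, to the ordinary coherence data required of a symmetric monoidal functor into $\vect_\KK$.
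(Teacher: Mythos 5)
Your proposal is correct and follows essentially the same route as the paper: the paper likewise deduces the lemma from the facts that a monoidal $2$-functor sends $\emptyset$ to $\vect_\KK$, that $\End_{\cob(1,2,3)}(\emptyset)\cong\cob(2,3)$, and that $\End_{2\vect_\KK}(\vect_\KK)\cong\vect_\KK$ (using $\KK$-linearity of the functors in a $2$-vector space). In fact you are more careful than the paper, which states the conclusion without spelling out the Eckmann--Hilton identification of the two monoidal structures on $\End(\emptyset)$ or the restriction of the coherence data.
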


At this point, the question arises whether a given 
(non-extended) 3d TFT can be 
extended. In general, there is no reason for this to be true.
For Dijkgraaf-Witten theory, however, such an extension can be
constructed based on ideas which we described at the beginning of this section.
A very conceptual presentation of this
this construction based on important ideas of \cite{Freed92}
and \cite{FQ93} can be found in 
\cite{Morton}. 
Before we describe this construction in more detail in 
subsection \ref{lin}, we first state the result:

\begin{Proposition}\cite{Morton}\label{exDW}
Given a finite group $G$, there exists an extended 3d TFT  
$Z_G$ which assigns the categories
$$  \big[ \mathcal A_G(S) , \vect_\KK \big] $$
to 1-dimensional, closed oriented manifolds $S$ and whose restriction 
$Z_G|_\emptyset$ is (isomorphic to) the Dijkgraaf-Witten TFT 
described in proposition \ref{DW}.
\end{Proposition}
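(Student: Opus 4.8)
The plan is to construct the extended 3d TFT $Z_G$ explicitly as a symmetric monoidal 2-functor and verify that it restricts correctly to the non-extended theory of proposition \ref{DW}. The guiding principle is a \emph{linearization} procedure: to each geometric object in $\cob(1,2,3)$ one first assigns the groupoid $\cala_G(-) = \Bun_G(-)$ of $G$-bundles, functorially, and then post-composes with a linearization 2-functor that turns groupoids into 2-vector spaces and spans of groupoids into linear functors. Concretely, on objects I would set $Z_G(S) := [\cala_G(S), \vect_\KK]$, the functor category, which is a 2-vector space by definition \ref{twovect} since $\cala_G(S)$ is essentially finite for a finite group $G$ and a closed $1$-manifold $S$ (the isomorphism classes of $G$-bundles on a circle are conjugacy classes in $G$).

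The heart of the construction is what $Z_G$ does to $1$- and $2$-morphisms, and this is where the span technology enters. A cobordism $\Sigma: S \to S'$ induces, by restriction of bundles to the two boundary components, a span of groupoids
\[
\cala_G(S) \longleftarrow \cala_G(\Sigma) \longrightarrow \cala_G(S'),
\]
and the associated linear functor $Z_G(\Sigma): [\cala_G(S),\vect_\KK] \to [\cala_G(S'),\vect_\KK]$ is obtained by the standard pull-push (restriction along one leg, followed by a left Kan extension / groupoid-cardinality-weighted pushforward along the other leg). The $2$-morphisms, namely $3$-dimensional cobordisms with corners, are sent to natural transformations defined again by a pull-push along the corresponding span of groupoids of bundles, now with the groupoid cardinalities of the relevant fibers furnishing the matrix coefficients. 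First I would check that these assignments are well defined on the relevant diffeomorphism and collar-equivalence classes (using remark \ref{smoothgluing}), and that pull-push is functorial up to coherent isomorphism, so that composition of cobordisms by gluing along collars corresponds to composition of linear functors; this rests on a Fubini-type / Beck--Chevalley property for groupoid cardinality, i.e.\ that the groupoid of bundles on a glued manifold is the homotopy fiber product of the groupoids of bundles on the pieces over the groupoid of bundles on the gluing interface. I would then verify symmetric monoidality: disjoint union of manifolds goes to the Deligne tensor product of $2$-vector spaces, with the empty manifold mapping to $\vect_\KK$.

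Finally, to identify the restriction $Z_G|_\emptyset$ with the theory of proposition \ref{DW}, I would invoke lemma \ref{restriction}: under the identification $\End_{\cob(1,2,3)}(\emptyset) \cong \cob(2,3)$, a closed $2$-manifold $\Sigma$ is an endomorphism of $\emptyset$, and the functor category $[\cala_G(\Sigma),\vect_\KK]$ collapses—via $\End_{2\vect_\KK}(\vect_\KK)\cong\vect_\KK$—to the vector space $\KK\langle \delta_P \mid P \in \mathrm{Iso}\,\cala_G(\Sigma)\rangle$, reproducing $\mathcal H_\Sigma$. Likewise the pull-push assigned to a closed $3$-cobordism $M$ reduces to a number, and tracing through the normalization of the groupoid pushforward shows this number is exactly the groupoid cardinality $|\cala_G(M,P_0,P_1)|$, matching the matrix elements in proposition \ref{DW}.

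The main obstacle I anticipate is coherence rather than existence: establishing that pull-push along spans of groupoils is compatible with gluing \emph{up to specified, coherent $2$-isomorphisms}, and that all the associativity and monoidality constraints assemble into a genuine weak symmetric monoidal $2$-functor in the sense of definition \ref{ext}. This demands careful bookkeeping of the groupoid-cardinality normalization factors (so that stabilizers are counted consistently across gluings and the numbers do not pick up spurious $|\mathrm{Aut}|$ discrepancies), together with a disciplined treatment of smooth structures on manifolds with corners. Since the paper explicitly defers to \cite{Morton} for the detailed verification of these coherences, my proof would follow that construction and concentrate on checking that the groupoid of $G$-bundles has the local gluing (descent) and finiteness properties that the linearization machine requires.
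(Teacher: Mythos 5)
Your proposal follows essentially the same route as the paper: the paper also constructs $Z_G$ as the composite $\widetilde{\calv_\KK} \circ \widetilde{\cala_G}$, first sending cobordisms to spans of groupoids of $G$-bundles (proposition \ref{conf}) and then applying Morton's 2-linearization, which implements exactly your pull-push along spans via the ambidextrous adjoint $f_*$ of $f^*$ (your ``left Kan extension with groupoid-cardinality weights'' is that same two-sided adjoint), with the coherence and gluing verifications delegated to \cite{Morton} and \cite{Morton08} and the identification of $Z_G|_\emptyset$ with proposition \ref{DW} cited from Morton's calculation. The only cosmetic difference is that the paper isolates the span bicategory $\Span$ as an explicit intermediate symmetric monoidal 2-category, which modularizes the coherence bookkeeping you describe, but the mathematical content is the same.
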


\begin{Remark}
One can iterate the procedure of extension and introduce 
the notion of a fully extended TFT which also assigns
quantities to points rather than just 1-manifolds.
It can be shown that Dijkgraaf-Witten 
theory can be 
turned into a fully extended TFT, see \cite{FHLT}. The full
extension will not be needed in the present article.
\end{Remark}

\subsection{Construction via 2-linearization}\label{lin}

In this subsection, we describe in detail 
the construction of the extended 3d TFT of 
proposition \ref{exDW}. An impatient reader may skip 
this subsection and should still be able to understand 
most of the paper. 
He might, however, wish to take notice of the technique 
of 2-linearization  in proposition \ref{lin2} which is also an 
essential ingredient in our 
construction of equivariant Dijkgraaf-Witten theory in 
sequel of this paper. 

As emphasized in particular by Morton \cite{Morton}, the 
construction of the extended TFT is 
naturally split into two steps, which have already been 
implicitly present 
in preceding sections. The first step is to assign to manifolds and 
cobordisms the configuration spaces $\cala_G$ of $G$ 
bundles. We now restrict ourselves to the case when $G$ is a 
finite group. The following fact is standard:

\begin{itemize}
\item
The assignment $M \mapsto \cala_G(M) := \Bun_G$ is a contravariant 
2-functor from the category of manifolds to the 2-category of 
groupoids. 
Smooth maps between manifolds are mapped to the corresponding pullback 
functors on categories of bundles.
\end{itemize}

A few comments are in order: for a connected manifold $M$, the category 
$\cala_G(M)$ can be replaced by the equivalent category 
given by the action groupoid $\Hom\big(\pi_1(M),G\big)//G$ 
where $G$ acts by conjugation. In particular, the category 
$\cala_G(M)$ is essentially finite,
if $M$ is compact. It should be
appreciated that at this stage no restriction is imposed on
the dimension of the manifold $M$.

The functor $\cala_G(-)$ can be evaluated on a 2-dimensional cobordism 
$S \hookrightarrow \Sigma \hookleftarrow S'$ or a 3-dimensional cobordism 
$\Sigma \hookrightarrow M \hookleftarrow \Sigma'$. It then yields diagrams of the form
\begin{eqnarray*}
\cala_G(S)       \longleftarrow  ~\cala_G(\Sigma)  \longrightarrow  \cala_G(S') \\
\cala_G(\Sigma)  \longleftarrow  \cala_G(M)       \longrightarrow  \cala_G(\Sigma').
\end{eqnarray*}
Such diagrams are called spans. They are the morphisms of 
a symmetric monoidal bicategory $\Span$ of spans of groupoids as follows (see e.g. \cite{Daw04} or \cite{Morton06}):
\begin{itemize}\setlength{\itemsep}{-1ex}
\item Objects are (essentially finite) groupoids.
\item Morphisms are spans of essentially finite groupoids.
\item 2-Morphisms are isomorphism classes of spans of span-maps.
\item Composition is given by forming weak fiber products.
\item The monoidal structure is given by the cartesian product $\times$ of groupoids.
\end{itemize}

\begin{Proposition}[\cite{Morton}]\label{conf}
$\cala_G$ induces a symmetric monoidal 2-functor 
$$\widetilde{\cala_G}: \cob(1,2,3) \to \Span.$$
This functor assigns to a 1-dimensional manifold $S$
the groupoid $\cala_G(S)$, to a 2-dimensional
cobordism $S \hookrightarrow \Sigma \hookleftarrow S'$
the span $\cala_G(S)       \longleftarrow  ~\cala_G(\Sigma)  \longrightarrow  \cala_G(S')$ and to a 3-cobordism with corners a span of
 span-maps.
\end{Proposition}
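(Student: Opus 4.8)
The plan is to verify the four pieces of structure that make $\widetilde{\cala_G}$ a symmetric monoidal $2$-functor: it is well-defined on objects, $1$-morphisms and $2$-morphisms; it respects composition and identities; and it is compatible with the monoidal structures. The starting point is the standard fact already recorded in the excerpt, namely that $M \mapsto \cala_G(M) = \Bun_G(M)$ is a contravariant $2$-functor from manifolds to groupoids, sending smooth maps to pullback functors. Everything will be bootstrapped from this by applying it to the inclusions of boundaries into collared cobordisms. For the monoidal compatibility I will use that $\Bun_G$ sends disjoint unions to products, so that $\cala_G(S \sqcup S') \simeq \cala_G(S) \times \cala_G(S')$ and $\cala_G(\emptyset)$ is the terminal groupoid, matching the cartesian monoidal structure on $\Span$.

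First I would treat objects and $1$-morphisms. A closed $1$-manifold $S$ goes to the essentially finite groupoid $\cala_G(S)$; essential finiteness follows from the remark in the excerpt that for compact $M$ one may replace $\cala_G(M)$ by the action groupoid $\Hom(\pi_1(M),G)//G$. For a $2$-cobordism $S \hookrightarrow \Sigma \hookleftarrow S'$, applying the contravariant functor $\cala_G$ to the two collar inclusions produces the two legs of the span
\begin{equation*}
\cala_G(S) \longleftarrow \cala_G(\Sigma) \longrightarrow \cala_G(S'),
\end{equation*}
which is a morphism in $\Span$ since $\cala_G(\Sigma)$ is again essentially finite. The analogous step assigns to a $3$-cobordism with corners $M$, regarded as a $2$-morphism in $\cob(1,2,3)$, the induced span of span-maps obtained by pulling back along the inclusions of the boundary surfaces into $M$; here one must also check that diffeomorphisms of cobordisms fixing the collar induce the identity $2$-morphism in $\Span$, which holds because diffeomorphic bundles are isomorphic and $2$-morphisms in $\Span$ are taken up to isomorphism.

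The crux is functoriality under composition, and this is where I expect the main obstacle. Composition of $1$-morphisms in $\cob(1,2,3)$ is by gluing cobordisms along a common boundary collar, while composition in $\Span$ is by weak (homotopy) fiber product. So the essential point is a descent or van-Kampen-type statement: for a surface $\Sigma = \Sigma_1 \cup_{S'} \Sigma_2$ glued along $S'$, the groupoid of $G$-bundles on $\Sigma$ is equivalent to the weak fiber product
\begin{equation*}
\cala_G(\Sigma_1) \times_{\cala_G(S')} \cala_G(\Sigma_2),
\end{equation*}
naturally in the data. Concretely, a $G$-bundle on the glued manifold is precisely a pair of bundles on the pieces together with a chosen isomorphism of their restrictions to $S'$, which is exactly an object of the weak pullback; this is the gluing/stack property of $\Bun_G$. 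I would prove this equivalence and then check it is compatible with the leg maps, so that the composite span agrees with the image of the glued cobordism up to coherent $2$-isomorphism. The same gluing property applied one dimension up handles composition of the span-maps attached to $3$-cobordisms.

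Finally I would assemble the coherence data. The equivalences just produced are the compositor (and unitor) $2$-cells of the weak $2$-functor, and verifying that they satisfy the associativity and unit coherence axioms reduces, via the gluing equivalence, to the evident coherence of iterated fiber products of groupoids together with the naturality inherited from the $2$-functoriality of $\Bun_G$. The symmetric monoidal structure is verified separately but easily: the equivalence $\cala_G(S \sqcup S') \simeq \cala_G(S) \times \cala_G(S')$ furnishes the monoidal constraints, and the symmetry of disjoint union matches the symmetry of the cartesian product, so the braiding coherence is automatic. The only genuinely delicate points are the weak pullback gluing equivalence and the bookkeeping of its coherence; the rest is a routine, if lengthy, diagram check that I would organize around the $2$-functoriality already granted for $\Bun_G$.
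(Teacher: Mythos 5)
Your proposal is correct and takes essentially the same approach as the paper: the paper's proof consists of citing \cite[theorem 2]{Morton} for compatibility of composition and observing that the monoidal structure follows from $\cala_G(M \sqcup M') = \cala_G(M) \times \cala_G(M')$ --- exactly the two points you verify. The descent/weak-pullback gluing argument you sketch is precisely the content of that cited theorem, and it is also the argument the paper itself spells out when proving the equivariant generalization in lemma \ref{kleben} via descent for twisted bundles.
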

\begin{proof}
It only remains to be shown that composition of morphisms 
and the monoidal structure is respected. The first assertion 
is shown in \cite[theorem 2]{Morton} and the second 
assertion follows immediately from the fact that bundles 
over disjoint unions are given by pairs of bundles over the 
components, i.e.\ 
$\cala_G(M \sqcup M') = \cala_G(M) \times \cala_G(M')$.
\end{proof}

The second step in the construction of extended Dijkgraaf-Witten theory is the 2-linearization 
of \cite{Morton08}. As we have explained in section
\ref{sec:motivation}, the idea is to associate to a 
groupoid $\Gamma$ its category of vector bundles 
$\vect_\KK(\Gamma)$. If $\Gamma$ is essentially finite, the category of vector bundles is
conveniently defined as the functor category $\big[\Gamma,\vect_\KK\big]$. 
If $\KK$ is algebraically closed of characteristic zero, this category is a 2-vector space,
see \cite[Lemma 4.1.1]{Morton08}.

\begin{itemize}
\item
The assignment $\Gamma \mapsto \vect_\KK\big(\Gamma\big) := \big[\Gamma,\vect_\KK\big]$ is a 
contravariant 2-functor from the bicategory of (essentially finite) groupoids to the 2-category 
of 2-vector spaces. Functors between groupoids are sent to pullback functors.
\end{itemize}
We next need to explain what 2-linearization assigns to spans of groupoids.
To this end, we use the following lemma due to 
\cite[4.2.1]{Morton08}:
\begin{Lemma}
Let $f: \Gamma \to \Gamma'$ be a functor between essentially finite groupoids. Then the pullback functor
$ f^*: \vect\big(\Gamma'\big) \to \vect\big(\Gamma\big) $
admits a 2-sided adjoint $ f_*: \vect\big(\Gamma\big) \to \vect\big(\Gamma'\big)$, called the
\textit{pushforward}.

\end{Lemma}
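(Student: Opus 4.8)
The plan is to produce $f_*$ as the left Kan extension of functors along $f$ and then to prove that the very same functor is also the right Kan extension; since the left Kan extension along $f$ is left adjoint to $f^*$ and the right Kan extension is right adjoint to $f^*$, this exhibits $f_*$ as a two-sided adjoint.

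First I would invoke the general existence of Kan extensions: because $\vect_\KK$ is finitely complete and cocomplete and $\Gamma,\Gamma'$ are essentially finite, $f^*$ admits a left adjoint $f_!$ and a right adjoint $f_\bullet$, computed pointwise by the familiar (co)limit formulas over the comma categories $f\downarrow y'$ and $y'\downarrow f$, for $y'\in\Gamma'$. Since $\Gamma$ and $\Gamma'$ are groupoids these comma categories are again essentially finite groupoids, and inverting morphisms gives a canonical identification $f\downarrow y' \cong (y'\downarrow f)^{\mathrm{op}}$. Thus $f_!F$ and $f_\bullet F$ are, respectively, a colimit and a limit of one and the same finite diagram, and the entire content of the lemma is the assertion $f_!\cong f_\bullet$.

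To establish this I would reduce to group algebras. Splitting $\Gamma'$ into its connected components and $\Gamma$ into their preimages, all three functors $f^*, f_!, f_\bullet$ decompose as finite products of their restrictions to components, so it suffices to treat $\Gamma'$ connected; a finite disjoint union in the source contributes only a finite direct sum, which is harmless because finite products and coproducts agree in an additive category. Passing to skeleta — legitimate since equivalences of groupoids induce equivalences of functor categories and preserve adjoints — the claim becomes: for a homomorphism $\phi\colon G\to H$ of finite groups, under $\vect(BG)\cong\mathrm{Rep}_\KK(G)$ the restriction functor $\phi^*$ has coinciding left and right adjoints, i.e.\ induction $\KK[H]\otimes_{\KK[G]}(-)$ is naturally isomorphic to coinduction $\Hom_{\KK[G]}(\KK[H],-)$.

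The heart of the argument, and the step I expect to be the main obstacle, is this coincidence of induction and coinduction. I would factor $\phi$ as the surjection $G\twoheadrightarrow \phi(G)$ followed by the inclusion $\phi(G)\hookrightarrow H$. For the inclusion of a finite-index subgroup, induction and coinduction agree by the classical finite-index argument, with no hypothesis on $\KK$. For the surjection $q\colon G\twoheadrightarrow \bar G:=G/\ker\phi$, restriction along $q$ is inflation, whose left and right adjoints are the coinvariants $(-)_{\ker\phi}$ and the invariants $(-)^{\ker\phi}$; these are identified by the averaging isomorphism $v\mapsto \tfrac{1}{|\ker\phi|}\sum_{n\in\ker\phi} n\cdot v$, which is precisely where the characteristic-zero hypothesis (invertibility of $|\ker\phi|$ in $\KK$) enters. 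Composing the two cases yields the natural isomorphism $f_!\cong f_\bullet$; transporting it back through the skeleton equivalences and reassembling over the components of $\Gamma'$ produces the desired pushforward $f_* := f_!\cong f_\bullet$, a simultaneous left and right adjoint to $f^*$.
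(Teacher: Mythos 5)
Your proof is correct, but note first that the paper itself contains no proof of this lemma: it is quoted from Morton and justified solely by the citation to \cite[4.2.1]{Morton08}. So the real comparison is with Morton's argument, which runs through the structure theory of 2-vector spaces: over an algebraically closed field of characteristic zero, $[\Gamma,\vect_\KK]$ decomposes into finitely many copies of $\vect_\KK$ indexed by pairs (isomorphism class of object, irreducible representation of its automorphism group), a $\KK$-linear functor between such categories is a matrix of vector spaces, and both the left and the right adjoint are given by the transposed matrix of dual spaces, hence coincide. Your route is genuinely different and in one respect more general: you realize the pushforward as the left and right Kan extensions $f_!$ and $f_\bullet$, which exist pointwise because the comma categories are essentially finite groupoids, observe that for groupoids $f\downarrow y'\cong (y'\downarrow f)^{\mathrm{op}}$ so that the whole lemma becomes the single assertion $f_!\cong f_\bullet$, and then reduce along components and skeleta to a homomorphism $\phi\colon G\to H$ of finite groups, where the assertion splits into two classical facts: induction agrees with coinduction for a finite-index inclusion (true over any field), and coinvariants agree with invariants along a quotient via the averaging map, which is the only point where invertibility of $|\ker\phi|$ in $\KK$ --- in particular characteristic zero --- is used. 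All intermediate steps (decomposition of $f^*$, $f_!$, $f_\bullet$ over components, transport of adjoints across equivalences, composing adjoints along the factorization of $\phi$ into a surjection followed by an inclusion) are sound, so this is a complete proof, and one that needs neither algebraic closure nor semisimplicity. What Morton's approach buys instead is the explicit matrix description of $f_*$, which is what this paper actually exploits downstream: the concrete pull-push computations in propositions \ref{prop:2.17}, \ref{prop:fusionproduct} and \ref{prop:braiding} are carried out in exactly that form. One caveat, which is not a gap in your argument but worth keeping in mind: for 2-linearization one eventually needs not merely the existence of an ambidextrous adjoint but coherent choices of units and counits, so that the assignment on 2-morphisms of spans is well defined; that is extra structure beyond the statement of the lemma, and Morton's explicit construction is tailored to provide it.
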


Two-sided adjoints are also called `ambidextrous' adjoint, 
see \cite[ch. 5]{bartlett} for a discussion.
We use this pushforward to associate to a span $$ \xymatrix{
\Gamma & \Lambda \ar_-{~p_0}[l] \ar^-{p_1}[r] & \Gamma' } $$
of (essentially finite) groupoids the \textit{`pull-push'-functor} 
$$ (p_1)_* \circ (p_0)^* : \quad \vect_\KK\big(\Gamma\big) \longrightarrow \vect_\KK\big(\Gamma'\big).$$
A similar construction \cite{Morton08} associates to spans of span-morphisms a natural transformation.
Altogether we have:

\begin{Proposition}[\cite{Morton08}]\label{lin2}
The functor $\Gamma \mapsto \vect_\KK(\Gamma)$ can be 
extended to a symmetric monoidal 2-functor on the category of spans of groupoids
$$ \widetilde{\calv_\KK}: \Span \to 2\vect_{\KK}.$$
This 2-functor is called \emph{2-linearization}.
\end{Proposition}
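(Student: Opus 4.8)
The plan is to promote the contravariant pullback 2-functor $\Gamma \mapsto \vect_\KK(\Gamma) = [\Gamma,\vect_\KK]$ to a \emph{covariant} 2-functor on $\Span$ by using the pushforward $f_*$ supplied by the preceding Lemma to turn each span into a pull-push functor, and then to verify the three pieces of structure that a symmetric monoidal 2-functor requires: functoriality (compatibility with composition of spans), the monoidal comparison, and symmetry.

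First I would fix the assignment on cells. On objects set $\widetilde{\calv_\KK}(\Gamma) := \vect_\KK(\Gamma)$. To a span $\Gamma \xleftarrow{p_0} \Lambda \xrightarrow{p_1} \Gamma'$ assign the pull-push functor $(p_1)_* \circ (p_0)^*$ as indicated before the statement; this is $\KK$-linear because $(p_0)^*$ is a pullback of functor categories and $(p_1)_*$ is the two-sided adjoint from the Lemma. On 2-morphisms, a span of span-maps is sent to the natural transformation built from the units and counits of the ambidextrous adjunctions, following \cite{Morton08}; checking that this respects vertical and horizontal composition of 2-cells is routine bookkeeping once the 1-cell level is under control.

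The heart of the argument, and the step I expect to be the main obstacle, is functoriality under composition of spans. In $\Span$ the composite of $\Gamma \xleftarrow{p_0} \Lambda \xrightarrow{p_1} \Gamma'$ and $\Gamma' \xleftarrow{q_0} \Lambda' \xrightarrow{q_1} \Gamma''$ is formed by the weak fiber product $\Omega := \Lambda \times^h_{\Gamma'} \Lambda'$, giving a homotopy pullback square with projections $\pi\colon \Omega \to \Lambda$ and $\pi'\colon \Omega \to \Lambda'$ satisfying $p_1 \circ \pi \cong q_0 \circ \pi'$. I must produce a coherent natural isomorphism
$$ (q_1)_* \circ (q_0)^* \circ (p_1)_* \circ (p_0)^* \;\cong\; (q_1 \circ \pi')_* \circ (p_0 \circ \pi)^* , $$
whose right-hand side is precisely the pull-push of the composite span. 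This reduces to a base-change (Beck--Chevalley) isomorphism $(q_0)^* \circ (p_1)_* \cong (\pi')_* \circ \pi^*$ over the above homotopy pullback square, after which the claim follows from the (contravariant) pseudofunctoriality of $(-)^*$ together with the (covariant) pseudofunctoriality of $f \mapsto f_*$, the latter itself obtained from uniqueness of adjoints. To establish Beck--Chevalley for finite groupoids I would use the explicit description of $f_*$ as a finite limit over the homotopy fibres of $f$, which over a field of characteristic zero coincides with the corresponding colimit; this coincidence is exactly the ambidexterity asserted by the Lemma. The comparison map is then the canonical one induced by the fact that taking homotopy fibres commutes with the homotopy pullback, and since the groupoids are finite the relevant (co)limits are finite direct sums whose comparison involves only the orders of automorphism groups, which are invertible in $\KK$, so the map is an isomorphism. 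The delicate point is not existence but coherence: one must check that these base-change isomorphisms satisfy the associativity pentagons for triple spans and the unit constraints needed for $\widetilde{\calv_\KK}$ to be a genuine 2-functor rather than a mere cell-wise assignment.

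Finally I would verify the monoidal and symmetric structure. Because a finitely semisimple $\KK$-linear category is determined by its simple objects and $[\Gamma \times \Gamma', \vect_\KK] \simeq [\Gamma,\vect_\KK] \boxtimes [\Gamma',\vect_\KK]$, the assignment sends Cartesian products of groupoids to Deligne products of 2-vector spaces and the terminal groupoid to $\vect_\KK$, the monoidal unit of $2\vect_\KK$. Since pullback and pushforward along the product legs $p_0 \times p_0'$ and $p_1 \times p_1'$ factor through the corresponding operations on the two tensor factors, the pull-push functor of a product span agrees with the Deligne product of the two pull-push functors, yielding the monoidal comparison 2-cells; their compatibility with the braidings holds because both the weak fiber products and the symmetry of $\boxtimes$ are computed factorwise. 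Checking the remaining symmetric monoidal coherence diagrams is then a finite verification, and the composite of $\widetilde{\cala_G}$ from Proposition \ref{conf} with this functor will supply the extended theory of Proposition \ref{exDW}.
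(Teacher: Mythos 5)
Your proposal is correct, and it is considerably more self-contained than what the paper itself does. The paper's proof of this proposition is two sentences long: the 2-functoriality of $\widetilde{\calv_\KK}$ is not proved at all but delegated wholesale to the reference \cite{Morton08}, and the only thing verified on the spot is monoidality, via the equivalence $\vect_\KK(\Gamma\times\Gamma')\cong\vect_\KK(\Gamma)\boxtimes\vect_\KK(\Gamma')$ --- which is exactly the content of your final paragraph. What you add is an actual account of the delegated part, and your account matches the strategy of the cited work: the pull-push assignment on 1-cells, 2-cells built from the units and counits of the ambidextrous adjunctions, and, as the technical core, a Beck--Chevalley base-change isomorphism $(q_0)^*\circ(p_1)_*\cong(\pi')_*\circ\pi^*$ over the weak (iso-comma) pullback used to compose spans. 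Your justification of that isomorphism is also the right one: ambidexterity identifies the limit and colimit descriptions of $f_*$ over homotopy fibres, and the comparison map is an averaging over automorphism groups whose orders must be invertible in $\KK$ --- this is where essential finiteness of the groupoids and the characteristic-zero hypothesis enter. You are likewise right that the delicate issue is not the existence of these base-change isomorphisms but their coherence (the associativity and unit constraints making a genuine 2-functor rather than a cell-wise assignment); that coherence check is precisely what occupies the bulk of the proof in \cite{Morton08}, and you have reasonably only sketched it. The trade-off between the two routes: the paper's citation-style proof is short and appropriate since the result is attributed to Morton, while your version makes the proposition verifiable without leaving the paper, at the cost of reproducing a nontrivial coherence argument in outline.
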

\begin{proof}
The proof that $\widetilde{\calv_\KK} $ is a 2-functor is in 
\cite{Morton08}. The fact that $\widetilde{\calv_\KK} $ is 
monoidal follows from the fact that  
$\vect_\KK\big(\Gamma \times \Gamma'\big) \cong 
\vect_\KK\big(\Gamma\big) \boxtimes 
\vect_\KK\big(\Gamma'\big)$ for a product 
$\Gamma \times \Gamma'$ of essentially finite groupoids.
\end{proof}

Arguments similar to the ones in \cite[prop 1.10]{Daw04} 
which are based on the universal property of the span 
category can be used to show that such an extension is
essentially unique.

We are now in a position to give the functor $Z_G$ 
described in proposition \ref{exDW} which is 
Dijkgraaf-Witten theory as an extended 3d TFT 
as the composition of functors
$$ Z_G := \widetilde{\calv_\KK} \circ \widetilde{\cala_G} : 
\quad \cob(1,2,3) \longrightarrow 2\vect_\KK.\qquad$$
It follows from propositions \ref{conf} and \ref{lin2} that $Z_G$ 
is an extended 3d TFT in the sense of definition \ref{ext}. 
For the proof of proposition \ref{exDW}, it remains to be shown 
that $Z_G|_\emptyset$ is the Dijkgraaf-Witten 3d TFT from 
proposition \ref{DW}; this follows from a 
calculation which can be found in \cite[Section 5.2]{Morton}.

\subsection{Evaluation on the circle}\label{evaluation}

The goal of this subsection is a more detailed discussion 
of extended Dijkgraaf-Witten theory $Z_G$ as
described in proposition \ref{exDW}. Our focus is 
on the object assigned to the  1-manifold $\SS$ given 
by the circle with its standard orientation. 
We start our discussion by evaluating an arbitrary 
extended 3d TFT $Z$ as in definition \ref{ext}
on certain manifolds of different dimensions: 

\begin{enumerate}
\item
To the circle $\SS$, the extended TFT assigns a $\KK$-linear, abelian finitely semisimple category $\calc_Z:=Z(\SS)$.
\item
To the two-dimensional sphere with three boundary components,
two incoming and one outgoing, also known as the \emph{pair of pants},
\begin{center}
\includegraphics{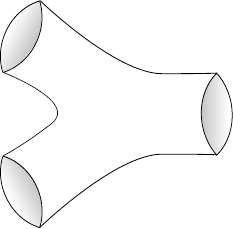}
\end{center}
the TFT associates a functor
$$\ \otimes:\,\, \calc_Z\boxtimes\calc_Z \to\calc_Z\ \,\, , $$
which turns out to provide a tensor product on the
category $\calc_Z$.
\item The figure 
\begin{center} 
\includegraphics{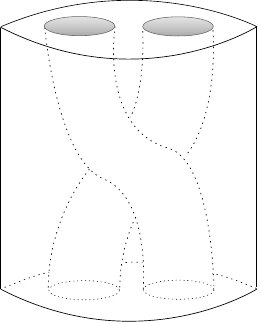}
\end{center}
shows a 2-morphism between two
three-punctured spheres, drawn as the upper and lower lid.
The outgoing circle is drawn as the boundary of the big disk.
To this cobordism, the TFT associates a natural
transformation
$$\otimes \Rightarrow \otimes^{{\rm opp}} $$
which turns out to be a braiding.
\end{enumerate}

Moreover, the TFT provides coherence cells, in particular associators and relations 
between the given structures. This endows the category $\calc_Z$ with much additional
structure. This structure can be summarized as follows:
\begin{Proposition}
For $Z$ an extended 3d TFT, the category $\calc_Z := Z(\SS)$ is naturally endowed with
the structure of a braided tensor category.
\end{Proposition}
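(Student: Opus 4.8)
The plan is to read off every piece of the braided-tensor structure on $\calc_Z=Z(\SS)$ from the images under $Z$ of a small list of standard cobordisms, and then to deduce the axioms from the coherence of $Z$ as a weak symmetric monoidal $2$-functor together with the geometric relations that already hold among these cobordisms in $\cob(1,2,3)$. The basic data come first. The pair of pants, regarded as a $1$-morphism $P\colon\SS\sqcup\SS\to\SS$, is sent by $Z$ to a functor $Z(P)\colon Z(\SS\sqcup\SS)\to\calc_Z$; composing with the monoidal constraint $Z(\SS\sqcup\SS)\cong\calc_Z\boxtimes\calc_Z$ supplied by the (weak) monoidal structure of $Z$ defines the tensor product $\otimes\colon\calc_Z\boxtimes\calc_Z\to\calc_Z$. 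Likewise the disk, viewed as a $1$-morphism $\emptyset\to\SS$, is sent to a functor $Z(\emptyset)=\vect_\KK\to\calc_Z$ whose value on the ground field selects the tensor unit. Since $\SS$ is already an object and $\calc_Z$ is a $2$-vector space by definition \ref{twovect}, the target category needs no further preparation.

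Next I would produce the coherence cells. The two ways of assembling a four-holed sphere from two copies of $P$ are diffeomorphic cobordisms $\SS\sqcup\SS\sqcup\SS\to\SS$; a collared diffeomorphism between them is a $2$-morphism in $\cob(1,2,3)$, and its image under $Z$, transported along the monoidal constraints, is the associator. Capping one leg of $P$ with the disk gives a cobordism $\SS\to\SS$ diffeomorphic to the identity cylinder, and the corresponding diffeomorphism supplies the left and right unit constraints. For the braiding I would use exactly the "Dose" $2$-morphism: it is the $3$-cobordism relating $P$ to its precomposition with the symmetry that swaps the two incoming circles, and $Z$ sends it to the natural transformation $\otimes\Rightarrow\otimes^{\mathrm{opp}}$ serving as $c_{X,Y}\colon X\otimes Y\to Y\otimes X$. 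Naturality of the associator, the unitors, and the braiding is automatic, since each is the image of a $2$-morphism and $Z$ is a $2$-functor.

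Finally I would verify the axioms. The pentagon identity for the associator, the triangle identity for the units, and the two hexagon identities for the braiding each correspond to an equality of $2$-morphisms in $\cob(1,2,3)$ arising from mutually diffeomorphic $3$-manifolds with corners, namely the standard moves relating parenthesizations and crossings. Because $Z$ respects composition and the interchange of $2$-morphisms, and because its monoidal constraints satisfy the symmetric-monoidal coherence, each geometric relation is carried to the matching algebraic axiom. The main obstacle is precisely this bookkeeping: one must check that the chosen generating cobordisms and the relations they satisfy in $\cob(1,2,3)$ genuinely encode the braided-monoidal axioms, i.e.\ that the geometric pentagon and hexagon moves hold up to the allowed diffeomorphisms and that $Z$'s associativity and interchange constraints align with them. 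This is a presentation-style argument about the cobordism bicategory rather than a computation, and it is where all the genuine care lies; once it is in place, the construction of the individual functors and natural transformations is essentially formal.
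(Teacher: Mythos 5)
Your proposal follows essentially the same route as the paper: the paper likewise extracts the tensor product from the pair of pants, the braiding from the ``Dose'' $2$-morphism $\otimes\Rightarrow\otimes^{\mathrm{opp}}$, and the associators and unit constraints from coherence cells supplied by $Z$, and then defers the verification of the pentagon/hexagon axioms to the literature (\cite{Freed92}, \cite{freed94}, \cite{freed99}, \cite{CY99}). Your honest identification of the remaining work --- checking that the diffeomorphism relations in $\cob(1,2,3)$ really encode the braided-monoidal axioms --- is precisely the content the paper outsources to those references, so your sketch is correct and at the same level of detail as the paper's own treatment.
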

For details, we refer to \cite{Freed92} \cite{freed94} \cite{freed99} and \cite{CY99}.
This is not yet the complete structure that can be extracted:
from the braiding-picture above it is intuitively clear that the braiding is not symmetric;
in fact, the braiding is `maximally non-symmetric' in a precise sense that is explained in 
definition \ref{modularity}. We discuss this in the next section for the category obtained 
from the Dijkgraaf-Witten extended TFT. 

\medskip

We now specialize to the case of extended Dijkgraaf-Witten 
TFT $Z_G$. We first determine
the category $\calc(G) := \calc_Z$; it is by definition 
$$ \calc(G) = \big[ \mathcal A_G(\SS) , \vect_\KK \big]. $$
It is a standard result in the theory of coverings that
$G$-covers on $\SS$ are described by group homomorphisms $\pi_1(\SS)
\to G$ and their morphisms by group elements acting by
conjugation. Thus the category $\cala_G(\SS)$
is equivalent to the action groupoid  
$G//G$ for the conjugation action. As a consequence,
we obtain the abelian category 
$\calc(G) \cong [G // G, \vect_\KK ]$. 
We spell out this functor category explicitly:

\begin{Proposition}\label{DWcat}
For the extended Dijkgraaf-Witten 3d TFT, the category 
$\calc(G)$ associated to the circle $\SS$ is given by 
the category of $G$-graded vector spaces $V = \bigoplus_{g \in G} V_g$ 
together with a $G$-action on $V$ such that for all
$x,y\in G$
$$x . V_g \subset V_{xgx^{-1}}\,\,\, .$$
\end{Proposition}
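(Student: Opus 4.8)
The plan is to build on the equivalence $\calc(G)\cong[G//G,\vect_\KK]$ that was just established, and to unwind the definition of this functor category directly into the data of a $G$-graded vector space carrying a compatible $G$-action. Since the objects and morphisms will match up on the nose, I expect to obtain not merely an equivalence but an isomorphism of categories.

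First I would make the action groupoid $G//G$ for the conjugation action completely explicit: its objects are the elements $g\in G$; a morphism $g\to h$ is an element $x\in G$ with $xgx^{-1}=h$; the identity of $g$ is the neutral element $e\in G$; and composition is multiplication in $G$, so that $y\colon g\to ygy^{-1}$ followed by $x\colon ygy^{-1}\to(xy)g(xy)^{-1}$ composes to $xy\colon g\to(xy)g(xy)^{-1}$. Given a functor $F\colon G//G\to\vect_\KK$, I set $V_g:=F(g)$ and $V:=\bigoplus_{g\in G}V_g$, which is by construction a $G$-graded vector space. Each morphism $x\colon g\to xgx^{-1}$ is sent by $F$ to a linear map $F(x)\colon V_g\to V_{xgx^{-1}}$, and I define a $G$-action on $V$ by $x.v:=F(x)(v)$ for $v\in V_g$. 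Functoriality yields exactly the two axioms of a left action: $F(e)=\id$ gives $e.v=v$, while $F(xy)=F(x)\circ F(y)$ gives $x.(y.v)=(xy).v$. Because $F(x)$ takes values in $V_{xgx^{-1}}$, the action automatically satisfies the compatibility condition $x.V_g\subset V_{xgx^{-1}}$.

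Conversely, starting from a $G$-graded vector space $V=\bigoplus_g V_g$ equipped with a $G$-action obeying $x.V_g\subset V_{xgx^{-1}}$, I reconstruct a functor by $F(g):=V_g$ and $F(x\colon g\to xgx^{-1}):=(x.-)\colon V_g\to V_{xgx^{-1}}$; the compatibility condition is precisely what makes this map well-defined, and the group-action axioms are precisely functoriality. These two assignments are mutually inverse. To upgrade this to an isomorphism of categories, I would finally observe that a natural transformation $\eta\colon F\Rarr F'$, i.e.\ a family $\eta_g\colon V_g\to V'_g$ whose naturality square reads $\eta_{xgx^{-1}}\circ F(x)=F'(x)\circ\eta_g$, is exactly a grading-preserving, $G$-equivariant linear map $\bigoplus_g\eta_g\colon V\to V'$, so morphisms correspond on both sides as well.

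There is no genuine obstacle here: the argument is a direct translation of the definitions of functor and natural transformation for the specific groupoid $G//G$. The only point demanding care is the bookkeeping of the composition convention in $G//G$, so that one verifies that the induced action is a \emph{left} action with the stated variance $x.V_g\subset V_{xgx^{-1}}$ rather than its inverse; everything else follows tautologically.
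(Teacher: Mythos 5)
Your proposal is correct and takes essentially the same route as the paper: the paper establishes $\calc(G)\cong[G//G,\vect_\KK]$ via covering theory in the text immediately preceding the proposition and then simply ``spells out this functor category explicitly,'' which is exactly the tautological unwinding of functors and natural transformations that you carry out. Your explicit check of the composition convention (to get a left action with $x.V_g\subset V_{xgx^{-1}}$) and the identification of natural transformations with grading-preserving $G$-equivariant maps just supplies the details the paper leaves implicit.
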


As a next step we determine the tensor product on $\calc(G)$. 
Since the fundamental group of the pair 
of pants is the free group on two generators, the relevant 
category of $G$-bundles is 
equivalent to the action groupoid $(G \times G) // G$ where $G$ 
acts by simultaneous conjugation on the two copies of $G$. 
The 2-linearization $\widetilde{\calv_\KK}$ on the span 
$$(G//G) \times (G//G) \leftarrow (G\times G) //G \rightarrow G//G.$$
is treated in detail in \cite[rem.\ 5]{Morton}; the result 
of this calculation yields the following tensor product:
\begin{Proposition}\label{prop:2.17}
The tensor product of $V$ and $W$ is given by the $G$-graded vector space
$$ (V \otimes W)_g = \bigoplus_{st =g} V_s \otimes W_t $$
together with the $G$-action $g. (v,w) = (gv, gw)$. The associators are the 
obvious ones induced by the tensor product in $\vect_\KK$.
\end{Proposition}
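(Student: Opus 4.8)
The plan is to feed the span of groupoids that $\widetilde{\cala_G}$ assigns to the pair of pants into the pull--push $2$-functor $\widetilde{\calv_\KK}$ of proposition \ref{lin2} and to evaluate the resulting functor fibrewise. Viewing the pair of pants as a cobordism $\SS\sqcup\SS\to\SS$, proposition \ref{conf} produces the span
$$ (G//G)\times(G//G)\ \xleftarrow{\ p_0\ }\ (G\times G)//G\ \xrightarrow{\ p_1\ }\ G//G, $$
where, as recalled just before the proposition, $\cala_G$ of the pair of pants is the action groupoid $(G\times G)//G$ with $G$ acting by simultaneous conjugation. The map $p_0$ is restriction to the two incoming circles, so $p_0(s,t)=(s,t)$ on objects; the map $p_1$ is restriction to the outgoing circle, whose boundary loop is the product of the two free generators of $\pi_1$, so that $p_1(s,t)=st$. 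Both maps are equivariant for conjugation.

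First I would compute the pullback. Using the monoidal equivalence $\vect_\KK\big((G//G)\times(G//G)\big)\cong\calc(G)\boxtimes\calc(G)$ of proposition \ref{lin2} together with the description of $\calc(G)$ in proposition \ref{DWcat}, a pair $(V,W)$ corresponds to the functor on $(G\times G)//G$ sending $(s,t)\mapsto V_s\otimes W_t$, with a conjugation morphism $h$ acting by $(h\cdot_V)\otimes(h\cdot_W)$. Since $p_0$ is the identity on objects, $p_0^*(V\boxtimes W)$ is exactly this functor.

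The heart of the proof is the pushforward $(p_1)_*$. In characteristic zero and for essentially finite groupoids the ambidextrous pushforward computes as the left Kan extension along $p_1$, so its value over $g\in G//G$ is the colimit of $p_0^*(V\boxtimes W)$ over the comma groupoid $p_1/g$. Its objects are pairs $\big((s,t),\phi\big)$ with $\phi\in G$ and $\phi(st)\phi^{-1}=g$, and a morphism from $\big((s,t),\phi\big)$ to $\big((s',t'),\phi'\big)$ is a simultaneous conjugation $h$ with $\phi' h=\phi$. Conjugating by $\phi$ shows each object is isomorphic to one of the form $\big((s,t),e\big)$ with $st=g$; a direct check then gives that these have trivial automorphism group and lie in pairwise distinct components. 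Hence $p_1/g$ is equivalent to the discrete set $\{(s,t)\mid st=g\}$ and the colimit is $\bigoplus_{st=g}V_s\otimes W_t$, which is the claimed grading. Tracking functoriality in $g$, a morphism $k\colon g\to kgk^{-1}$ sends the summand over $(s,t)$ to the one over $(ksk^{-1},ktk^{-1})$ by $(k\cdot_V)\otimes(k\cdot_W)$, i.e.\ the diagonal action $k.(v\otimes w)=(kv)\otimes(kw)$.

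I expect the main obstacle to be controlling this pushforward: one must keep the conjugation morphisms and the stabilizer/groupoid-cardinality bookkeeping under control and verify that the comma groupoid really collapses to a set with no surviving automorphisms, so that no spurious multiplicities appear; one must also pin down the orientation conventions that fix $p_1(s,t)=st$ rather than $ts$ or $(st)^{-1}$. For the associator, since the pull--push assigns the plain tensor product of vector spaces to each fibre, the associativity isomorphism of $\otimes$ in $\vect_\KK$ furnishes natural isomorphisms $(U\otimes V)\otimes W\cong U\otimes(V\otimes W)$ respecting the gradings and the diagonal action; identifying these with the coherence cells that $\widetilde{\calv_\KK}$ assigns to the diffeomorphism relating the two gluings of three pairs of pants shows that the associators are the obvious ones, as asserted.
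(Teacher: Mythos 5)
Your proposal is correct and follows exactly the route the paper takes: identify $\widetilde{\cala_G}$ of the pair of pants as the span $(G//G)\times(G//G) \leftarrow (G\times G)//G \rightarrow G//G$ and apply the pull--push functor $\widetilde{\calv_\KK}$. The only difference is that the paper outsources the actual pull--push computation to \cite[rem.~5]{Morton}, whereas you carry it out yourself via the comma-groupoid/Kan-extension argument (correctly, including the collapse of $p_1/g$ to the discrete set $\{(s,t)\mid st=g\}$ and the resulting diagonal $G$-action).
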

In the same vein, the braiding can be calculated: 
\begin{Proposition} \label{prop:2.18}
The braiding $V \otimes W \to W \otimes V$ is for $v \in V_g$ and $w \in W$ given by
$$ v \otimes w \mapsto gw \otimes v.$$
\end{Proposition}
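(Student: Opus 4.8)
The plan is to obtain the braiding as the image under the $2$-linearization functor $\widetilde{\calv_\KK}$ of the span of span-maps that $\widetilde{\cala_G}$ assigns to the braiding cobordism (the ``Dose'' $2$-morphism), exactly as the tensor product of Proposition~\ref{prop:2.17} was extracted from the pair-of-pants span. Since $Z_G=\widetilde{\calv_\KK}\circ\widetilde{\cala_G}$, the natural transformation $\otimes \Rarr \otimes^{\rm opp}$ is computed in two stages: first I would evaluate $\cala_G$ on the relevant $3$-manifold with corners, and then push the resulting span of span-maps through the pull-push machinery of Proposition~\ref{lin2}.

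First I would fix the geometry. The braiding cell is a cobordism $B$ whose ingoing and outgoing $2$-dimensional faces are both copies of the three-punctured sphere, and which realises the half-twist exchanging the two incoming legs. Since $\cala_G$ depends only on the homotopy type, $B$ may be replaced by the mapping cylinder of this half-twist, so that everything is governed by fundamental groupoids: with the pair of pants carrying $\pi_1=F_2=\langle a,b\rangle$ (the two incoming boundary circles) and outgoing holonomy $ab$, the groupoid $\cala_G$ on each face is $(G\times G)//G$ as in Proposition~\ref{prop:2.17}. The effect of the half-twist on $\pi_1$ is the elementary braid automorphism $a\mapsto aba^{-1},\ b\mapsto a$ (or its inverse, depending on orientation); this is the single geometric input that distinguishes the braiding from a mere symmetry.

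Next I would feed the span of span-maps induced by this automorphism into $\widetilde{\calv_\KK}$. Applying the pullback $(p_0)^*$ and the ambidextrous pushforward $(p_1)_*$ of Proposition~\ref{lin2} to a basis object, i.e.\ to $v\otimes w$ with $v\in V_g$ and $w\in W_t$, the conjugation $b\mapsto aba^{-1}$ reorders the two holonomies and simultaneously conjugates the second by the first. Reading off the resulting morphism on the graded pieces $(V\otimes W)_{gt}\to (W\otimes V)_{gt}$, the conjugating element $g$ acts on $w$ through the $G$-action of Proposition~\ref{DWcat}, yielding $v\otimes w\mapsto (g.w)\otimes v$, which is the asserted formula. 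That the target degree is again $gt$, since $g.w\in W_{gtg^{-1}}$, serves as a useful consistency check.

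The main obstacle is the bookkeeping in this last stage: one must track the pull-push through the weak fibre products of $\Span$ and verify that the conjugating group element is precisely the degree $g$ of the homogeneous vector $v$, acting on $w$ on the correct side and without an inadvertent inverse. The orientation and ordering conventions built into the ``Dose'' cobordism determine which of the two generators is conjugated, and hence whether one obtains the stated braiding or its inverse; pinning this down is where the argument must be careful. As an independent check, the formula can be matched against the universal $R$-matrix of the Drinfel'd double $\cald(G)$ under the standard equivalence $\calc(G)\cong\cald(G)\mod$, where the braiding acts on homogeneous vectors in exactly this way.
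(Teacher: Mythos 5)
Your proposal is correct and follows essentially the same route as the paper: the paper obtains Proposition \ref{prop:2.18} ``in the same vein'' as the tensor product of Proposition \ref{prop:2.17}, i.e.\ by applying the 2-linearization $\widetilde{\calv_\KK}$ to the span of span-maps that $\widetilde{\cala_G}$ assigns to the braiding cobordism, with the computation deferred to \cite{Morton}. Indeed, the paper's detailed proof of the equivariant analogue (lemma \ref{braiding-morphism} and proposition \ref{prop:braiding}) uses exactly your ingredients --- the half-twist diffeomorphism $F$ with $\pi_1(F)(a)=aba^{-1}$, $\pi_1(F)(b)=a$, and the induced map $(h,h')\mapsto(hh'h^{-1},h)$ fed through the pull-push machinery --- and specializes to your argument when $J$ is trivial.
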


\subsection{Drinfel'd double and modularity}\label{dd}

The braided tensor category $\calc(G)$ we just computed
from the last section has a well-known description as the 
category of modules over
a braided Hopf-algebra $\cald(G)$, 
the Drinfel'd double $\cald(G):=\cald(\KK[G])$ of the group algebra
$\KK[G]$ of $G$, see e.g.\ \cite[Chapter 9.4]{kassel1995quantum}
The Hopf-algebra $\cald(G)$ is defined as follows:

As a vector space, $\cald(G)$ is the tensor product $\KK(G) \otimes \KK[G]$ of the algebra of functions on $G$
and the group algebra of $G$, i.e.\ we have the canonical basis $(\delta_g \otimes h)_{g,h\in G}$. The algebra structure can be described as a 
smash product (\cite{montgomery1993hopf}), 
an analogue of the semi-direct product for groups: in the canonical basis, we have
\[(\delta_{g} \otimes h)(\delta_{g'}\otimes h') 
= \left\{\begin{array}{l} \delta_{g}\otimes hh'
\text{ for } g=hg'h^{-1} \\[.2em]
0 \text{ else.}
\end{array}\right.
\]
where the unit is given by the tensor product of the two units:
$\sum_{g \in G} \delta_g \otimes 1$.
The coalgebra structure of $\cald(G)$ is given by the 
tensor product of the coalgebras $\KK(G)$ 
and $\KK[G]$, i.e.\ the coproduct reads
\[\Delta(\delta_g \otimes h) = \sum_{g'g'' = g} (\delta_{g'}\otimes h) \otimes (\delta_{g''}\otimes h)\]
and the counit is given by
$\epsilon(\delta_1 \otimes h) = 1$ and
$\epsilon(\delta_g \otimes h) = 0$ for $g\neq 1$ for all
$h\in G$.
It can easily be checked that this defines a bialgebra structure on $\KK(G) \otimes \KK[G]$ and that furthermore the linear map
\[S:(\delta_g \otimes h) \mapsto (\delta_{h^{-1}g^{-1}h} \otimes h^{-1})\]
is an antipode for this bialgebra so that $\cald(G)$ is
a Hopf algebra. Furthermore, the element
\[R := \sum_{g,h\in G} (\delta_g \otimes 1)\otimes (\delta_h \otimes g) \in \cald(G)\otimes \cald(G)\]
is a universal R-matrix, which fulfills the defining identities of a braided bialgebra and corresponds to the braiding in
proposition \ref{prop:2.18}. At last, the element
\[\theta := \sum_{g \in G} (\delta_g \otimes  g^{-1}) \in \cald(G)\]
is a ribbon-element in $\cald(G)$, which gives $\cald(G)$ 
the structure of a 
ribbon Hopf-algebra (as defined in \cite[Definition 14.6.1]{kassel1995quantum}).
Comparison with propositions \ref{prop:2.17} and
\ref{prop:2.18} shows

\begin{Proposition}
The category $\calc(G)$ is isomorphic, as a braided tensor category, to the category $\cald(G)\mod$.
\end{Proposition}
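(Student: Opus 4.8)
The plan is to produce a strict isomorphism of categories $F\colon \cald(G)\mod \to \calc(G)$ and then promote it to an isomorphism of braided tensor categories by matching, in turn, the coalgebra structure of $\cald(G)$ against Proposition \ref{prop:2.17} and the universal $R$-matrix against Proposition \ref{prop:2.18}. Because the underlying data of a $\cald(G)$-module and of an object of $\calc(G)$ coincide literally, I expect $F$ to be bijective on objects and on morphisms, i.e.\ an isomorphism and not merely an equivalence.

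First I would fix the dictionary on objects. In any $\cald(G)$-module $V$ the elements $\delta_g\otimes 1$ form a complete system of orthogonal idempotents with $\sum_g \delta_g \otimes 1$ the unit, so they induce a $G$-grading $V=\bigoplus_g V_g$ with $V_g:=(\delta_g\otimes 1)\cdot V$, while the elements $1\otimes h=\sum_g \delta_g\otimes h$ define a left $G$-action. The smash-product relation gives $(1\otimes h)(\delta_g\otimes 1)=(\delta_{hgh^{-1}}\otimes 1)(1\otimes h)$, whence $h\cdot V_g\subseteq V_{hgh^{-1}}$; this is exactly the compatibility of Proposition \ref{DWcat}. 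Conversely I would turn an object of $\calc(G)$ into a module by $(\delta_g\otimes h)\cdot v:=(\delta_g\otimes 1)(h\cdot v)$, the projection of $h\cdot v$ onto degree $g$, and check that the defining relations of the smash product are respected. Since a morphism of modules is the same datum as a grading- and action-preserving linear map, $F$ is the identity on morphisms, and the two assignments are mutually inverse on objects; this yields the isomorphism of $\KK$-linear abelian categories.

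Monoidality is read off the coproduct. From $\Delta(\delta_g\otimes 1)=\sum_{g'g''=g}(\delta_{g'}\otimes 1)\otimes(\delta_{g''}\otimes 1)$ one gets $(V\otimes W)_g=\bigoplus_{st=g}V_s\otimes W_t$, and from $\Delta(1\otimes h)=(1\otimes h)\otimes(1\otimes h)$ one gets the diagonal action $h\cdot(v\otimes w)=hv\otimes hw$, reproducing Proposition \ref{prop:2.17}; the counit identifies the monoidal unit with $\KK$ placed in degree $e$, and both categories inherit the associator of $\vect_\KK$, so $F$ is strictly monoidal. For the braiding I would use $c_{V,W}(v\otimes w)=\tau\big(R\cdot(v\otimes w)\big)$ with $\tau$ the flip. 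For homogeneous $v\in V_a$ the idempotent in the first leg of $R=\sum_{g,h}(\delta_g\otimes 1)\otimes(\delta_h\otimes g)$ annihilates every summand with $g\neq a$, so $R\cdot(v\otimes w)=v\otimes\big(\sum_h \delta_h\otimes a\big)w=v\otimes(1\otimes a)w=v\otimes(aw)$, and the flip gives $c_{V,W}(v\otimes w)=aw\otimes v$, which is precisely Proposition \ref{prop:2.18}.

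The idempotent decomposition, the coproduct computation and the $R$-matrix evaluation are all routine. The one place that demands genuine care is the passage between the smash-product multiplication and the single grading condition $x.V_g\subset V_{xgx^{-1}}$: one must confirm that the associativity axiom for a $\cald(G)$-module translates into exactly this compatibility and nothing further, so that the two assignments really are inverse functors. This bookkeeping is where an oversight would most plausibly hide, although no substantive difficulty arises.
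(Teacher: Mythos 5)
Your proposal is correct and follows essentially the same route as the paper, which simply asserts the result by "comparison with propositions \ref{prop:2.17} and \ref{prop:2.18}": the standard identification of $\cald(G)$-modules with $G$-graded vector spaces carrying a compatible $G$-action (the description of $\calc(G)$ in proposition \ref{DWcat}), followed by matching the coproduct against the tensor product of proposition \ref{prop:2.17} and the $R$-matrix against the braiding of proposition \ref{prop:2.18}. You have merely filled in the routine idempotent/smash-product bookkeeping that the paper leaves implicit, and your computations (grading via $\delta_g\otimes 1$, action via $1\otimes h$, evaluation of $R$ on homogeneous elements) are all accurate.
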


The category $\cald(G)\mod$ is actually endowed with more structure than the one of a braided monoidal 
category. Since $\cald(G)$ is a ribbon Hopf-algebra, the category of representations $\cald(G)\mod$ has 
also dualities and a compatible twist, i.e. has the structure of a ribbon 
category (see \cite[Proposition 16.6.2]{kassel1995quantum} 
or \cite[Def. 2.2.1]{BKLec} for the notion of a ribbon category). Moreover, the category $\cald(G)\mod$ is a 
2-vector space over $\KK$ and thus, in particular, finitely semi-simple. We finally make explicit the 
non-degeneracy condition on the braiding that was mentioned in the last subsection.

\begin{Definition}\label{modularity}
\begin{enumerate}
\item
Let $\KK$ be an algebraically closed field of characteristic zero.
A \emph{premodular tensor category} over $\KK$ is a
$\KK$-linear, abelian, finitely semisimple category
$\calc$ which has the structure of a ribbon category 
such that the tensor product is linear in each variable and the tensor unit is absolutely
simple, i.e.\ $\mathrm{End}(\mathbf{1}) = \KK$.
\item
Denote by $\Lambda_{\calc}$ a set of representatives for the
isomorphism classes of simple objects. The braiding on $\calc$ 
allows to define the 
\emph{S-matrix} with entries in the field $\KK$
\[s_{XY} := tr(R_{YX} \circ R_{XY}) \,\, ,\]
where $X,Y \in \Lambda_{\calc}$.
A premodular category is called \emph{modular}, if the S-matrix is invertible.
\end{enumerate}
\end{Definition}

In the case of the Drinfel'd double, the S-matrix can be
expressed explicitly in terms of characters of finite
groups \cite[Section 3.2]{BKLec}. Using orthogonality relations, one shows:

\begin{Proposition}\label{modular-Drinfeld}
The category $\calc(G) \cong \cald(G)\mod$ is modular.
\end{Proposition}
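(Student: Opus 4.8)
The plan is to make the non-degeneracy condition of Definition \ref{modularity} concrete: I would first classify the simple objects of $\calc(G)\cong\cald(G)\mod$, then compute the $S$-matrix in closed form directly from the braiding of Proposition \ref{prop:2.18}, and finally read off its invertibility from the orthogonality relations for irreducible characters. Using the description of $\calc(G)$ in Proposition \ref{DWcat} as the functor category $[G//G,\vect_\KK]$ of $G$-graded $G$-modules with $x.V_g\subset V_{xgx^{-1}}$, a simple object must be supported on a single conjugation orbit, i.e.\ on one conjugacy class $[a]$, and is determined there by an irreducible representation $\chi$ of the stabilizer of $a$, which is the centralizer $C_G(a)$. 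Hence the simple objects are labelled by pairs $([a],\chi)$ with $[a]$ a conjugacy class and $\chi\in\mathrm{Irr}(C_G(a))$; in particular there are finitely many of them, as premodularity requires.

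Next I would compute the $S$-matrix. For simple objects $X=X_{[a],\chi}$ and $Y=X_{[b],\psi}$, expanding the monodromy $R_{YX}\circ R_{XY}$ with the explicit braiding $v\otimes w\mapsto g.w\otimes v$ (for $v\in V_g$) of Proposition \ref{prop:2.18} gives, for homogeneous $v\in V_g$ and $w\in W_h$,
\[
 (R_{YX}\circ R_{XY})(v\otimes w)=(ghg^{-1}).v\,\otimes\,g.w .
\]
Only the summands with $g\in[a]$, $h\in[b]$ and $gh=hg$ preserve the bidegree and thus contribute to the trace; on each such summand the operator acts by $h$ on $V_g$ and by $g$ on $W_h$, so that
\[
 s_{XY}=\sum_{\substack{g\in[a],\,h\in[b]\\ gh=hg}}\mathrm{tr}_{V_g}(h)\,\mathrm{tr}_{W_h}(g).
\]
Transporting the centralizer representations along the group elements conjugating $a$ to $g$ and $b$ to $h$ turns the two traces into values of the characters $\chi$ and $\psi$ on commuting elements, which is exactly the expression of the $S$-matrix through characters of finite groups recorded in \cite[Section 3.2]{BKLec}.

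Finally I would establish invertibility. The idea is to compute the product of $S$ with its conjugate transpose (equivalently $S^2$, using that charge conjugation sends $([a],\chi)$ to $([a^{-1}],\overline{\chi})$) and to show it is a nonzero scalar multiple of the charge-conjugation permutation matrix; since a permutation matrix is invertible, so is $S$, and $\calc(G)$ is modular by Definition \ref{modularity}. The main obstacle lies precisely in this bookkeeping: the summation ranges over elements of the conjugacy classes and over their centralizers, and distinct simple objects involve the irreducible characters of \emph{different} subgroups $C_G(a)$. One must therefore first sum over the class index, using $|[a]|=|G|/|C_G(a)|$ to restore a sum over all of $G$, before the first and second orthogonality relations for characters become applicable; arranging the normalization factors $|C_G(a)|$ to cancel correctly against these class sizes is where the genuine work of the argument is concentrated.
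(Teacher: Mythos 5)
Your proposal is correct and follows essentially the same route as the paper, which disposes of this proposition in one line by citing the expression of the $S$-matrix of $\cald(G)\mod$ through characters of finite groups \cite[Section 3.2]{BKLec} and invoking orthogonality relations. Your write-up simply fills in the details behind that citation --- the classification of simples by pairs $([a],\chi)$ with $\chi\in\mathrm{Irr}(C_G(a))$, the trace computation of the monodromy from Proposition \ref{prop:2.18}, and the orthogonality bookkeeping --- all of which are carried out correctly.
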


The notion of a modular tensor category first arose as a formalization of
the Moore-Seiberg data of a two-dimensional rational conformal field theory.
They are the input for the Turaev-Reshetikhin construction
of three-dimensional topological field theories.

\section{Equivariant Dijkgraaf-Witten theory}

We are now ready to turn to the construction of equivariant
generalization of the results of section \ref{sec:DW}. 
We denote again by $G$ a finite group.
Equivariance will be 
with respect to another finite group $J$ that acts on $G$ in a way
we will have to explain. As usual, `twisted sectors'
\cite{VW95} have to be taken into account for a consistent equivariant
theory. A description of these twisted sectors in terms
of bundles twisted by $J$-covers is one important
result of this section.

\subsection{Weak actions and extensions}

Our first task is to identify the appropriate definition of a $J$-action. 
The first idea that comes to mind -- a genuine action of 
the group $J$ acting on $G$ by group automorphisms --
turns out to need a modification. For reasons that will become apparent 
in a moment, we only require an action up to inner automorphism.

\begin{Definition}\label{Def:action}
\begin{enumerate}
\item
A \emph{weak action} of a group $J$ on a group $G$ consists 
of a collection of group automorphisms $\rho_j: G \to G$, 
one automorphism for each $j \in J$, and a collection of group
elements $c_{i,j} \in G$, one group element for each pair 
of elements $i,j \in J$. These data are required to obey
the relations:
$$
  \rho_i \circ \rho_j = \Inn_{c_{i,j}} \circ \rho_{ij}  \qquad
  \rho_i(c_{j,k}) \cdot c_{i,jk}  = c_{i,j} \cdot c_{ij,k} \quad \text{ and } \quad c_{1,1} = 1
$$
for all $i,j,k \in J$. Here $\Inn_g$ denotes the inner 
automorphism $G \to G$ associated to an element $g \in G$. 
We will also use the short hand notation $ ~^j\!g := \rho_j(g)$.
\item Two weak actions $\big(\rho_j,c_{i,j})$ 
and $\big(\rho'_j,c'_{i,j})$ of a group $J$ on a group $G$ 
are called isomorphic, if there is a collection of group elements 
$h_j \in G$, one group element for each $j \in J$, such that
$$ \rho_j' = \Inn_{h_j} \circ \rho_j \quad \text{ and } c'_{ij} \cdot h_{ij} = h_i \cdot \rho_i(h_j) \cdot c_{ij}$$
\end{enumerate}
\end{Definition}

\begin{Remark}\label{rem:3.2}
\begin{enumerate}
\item 
If all group elements $c_{i,j}$ equal the neutral element,
$c_{i,j} = 1$, the weak action reduces to a strict action 
of $J$ on $G$ by group automorphisms. 

\item A weak action induces a strict action of $J$ on the 
group  $\Out(G)=\Aut(G)/\Inn(G)$ of outer automorphisms.

\item
In more abstract terms, 
a weak action amounts to a (weak) 2-group homomorphism 
$J \to \AUT(G)$. Here
$\AUT(G)$ denotes the automorphism 
2-group of $G$. This automorphism 2-group can be described 
as the monoidal category of endofunctors of the 
one-object-category with morphisms $G$. The group $J$
is considered as a discrete 2-group with only identities 
as morphisms. For more details 
on 2-groups, we refer to \cite{baez04}.
\end{enumerate}
\end{Remark}

Weak actions are also known under the name \textit{Dedecker cocycles},
due to the work \cite{Ded60}. 
The correspondence between weak actions and extensions 
of groups is also termed \textit{Schreier theory},
with reference to \cite{Schreier}. Let us briefly sketch 
this correspondence:
\begin{itemize}
\item
Let $\big(\rho_j, c_{i,j}\big)$ be a weak action of $J$ on $G$. 
On the set $H := G \times J$, we define a multiplication by
\begin{equation} \label{mult}
(g,i) \cdot (g',j) := \big(g \cdot ~^i (g') \cdot c_{i,j}~, ~ij\big). 
\end{equation}
One can check that this turns $H$ into a group in such 
a way that the sequence $G \to H \to J$ consisting of the 
inclusion $g \mapsto (g,1)$ and the projection 
$(g,j) \mapsto j$ is exact.

\item 
Conversely, let $G \longrightarrow H \stackrel{\pi}{\longrightarrow} J$ 
be an extension of groups. Choose a set theoretic section 
$s: J \to H$ of $\pi$ with $s(1) = 1$.  Conjugation with
the group element $s(j)\in H$ leaves the normal
subgroup $G$ invariant. We thus obtain for  
$j\in J$\ the automorphism
$\rho_j(g) := s(j)~ g ~s(j)^{-1}$ of $G$. 
Furthermore, the element $c_{i,j} := s(i) s(j) s(ij)^{-1}$ 
is in the kernel of $\pi$ and thus actually contained in 
the normal subgroup $G$. It is then straightforward to 
check that $\big(\rho_j,c_{i,j}\big)$ defines a weak 
action of $J$ on $G$.

\item Two different set-theoretic sections $s$ and $s'$ of the extension 
$G \to H \to J$ differ by a map $J \to G$. This map 
defines an isomorphism of the induced weak actions
in the sense of definition \ref{Def:action}.2.
\end{itemize}

We have thus arrived at the

\begin{Proposition}[Dedecker, Schreier]
\label{dedeschreier}
There is a 1-1 correspondence between isomorphism classes of weak actions 
of $J$ on $G$ and isomorphism classes of group extensions $G \to H \to J$.
\end{Proposition}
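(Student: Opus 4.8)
The plan is to prove the stated 1-1 correspondence (Proposition \ref{dedeschreier}) by exhibiting the two constructions sketched in the bullet points immediately preceding the statement as mutually inverse bijections on isomorphism classes. Concretely, I would build two maps:
$$ \Phi: \{\text{weak actions}\}/\!\sim \;\longrightarrow\; \{\text{extensions}\}/\!\sim, \qquad \Psi: \{\text{extensions}\}/\!\sim \;\longrightarrow\; \{\text{weak actions}\}/\!\sim, $$
where $\Phi$ sends a weak action $(\rho_j, c_{i,j})$ to the extension $G \to H \to J$ with $H = G \times J$ and multiplication \eqref{mult}, and $\Psi$ sends an extension, together with a choice of normalized set-theoretic section $s$ with $s(1)=1$, to the weak action $\rho_j(g) = s(j)\,g\,s(j)^{-1}$ and $c_{i,j} = s(i)s(j)s(ij)^{-1}$. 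The work is then to verify that both maps are well defined on isomorphism classes and that $\Phi$ and $\Psi$ are inverse to each other.

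First I would check that $\Phi$ lands in the right place: that \eqref{mult} is associative with unit $(1,1)$ and inverses, that the maps $g \mapsto (g,1)$ and $(g,j)\mapsto j$ form an exact sequence, and that $G \cong G\times\{1\}$ is normal. This is a direct computation in which the two defining relations of Definition \ref{Def:action}.1 are exactly the conditions needed: associativity of \eqref{mult} forces the cocycle identity $\rho_i(c_{j,k})\cdot c_{i,jk} = c_{i,j}\cdot c_{ij,k}$, while the compatibility $\rho_i\circ\rho_j = \Inn_{c_{i,j}}\circ\rho_{ij}$ is what makes conjugation in $H$ reproduce $\rho$. Next I would check that $\Psi$ is well defined: a different normalized section $s'$ differs from $s$ by a map $j\mapsto h_j := s'(j)s(j)^{-1}\in G$, and a short computation shows the resulting weak actions $(\rho_j,c_{i,j})$ and $(\rho'_j,c'_{i,j})$ satisfy precisely the isomorphism relations $\rho'_j = \Inn_{h_j}\circ\rho_j$ and $c'_{ij}\cdot h_{ij} = h_i\cdot\rho_i(h_j)\cdot c_{ij}$ of Definition \ref{Def:action}.2, so that $\Psi$ descends to isomorphism classes. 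I would likewise verify that $\Phi$ respects isomorphism of weak actions, sending isomorphic weak actions to equivalent extensions via the evident map $(g,j)\mapsto (g\,h_j^{-1}, j)$ or its inverse.

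Finally I would show the two maps compose to the identity in both directions. Starting from a weak action, $\Phi$ builds $H = G\times J$, and the canonical section $s(j) = (1,j)$ is normalized; feeding this into $\Psi$ recovers $\rho_j$ and $c_{i,j}$ on the nose by direct evaluation of \eqref{mult}, so $\Psi\circ\Phi = \id$ even before passing to isomorphism classes. Conversely, starting from an extension $G\to H\to J$ with chosen section $s$, the map $\Psi$ produces a weak action, and $\Phi$ reassembles a group on $G\times J$; the assignment $(g,j)\mapsto g\cdot s(j)$ is then a group isomorphism $G\times J \to H$ compatible with the inclusion and projection, giving an equivalence of extensions, so $\Phi\circ\Psi = \id$ on isomorphism classes. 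I expect the main obstacle to be purely bookkeeping rather than conceptual: namely keeping the placement of the factors $c_{i,j}$ and the twists $\rho_i(-)$ consistent throughout, since the cocycle relations are non-symmetric and a single misplaced $\rho_i$ or transposed product will break associativity or the inverse-map check. The cleanest way to contain this is to phrase $\Psi\circ\Phi=\id$ strictly and only invoke isomorphism classes for $\Phi\circ\Psi$, where the section ambiguity genuinely enters.
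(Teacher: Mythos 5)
Your proposal is correct and follows essentially the same route as the paper: the paper establishes the proposition via exactly your two constructions, namely building $H = G\times J$ with the multiplication \eqref{mult} from a weak action, and conversely extracting $\rho_j(g)=s(j)gs(j)^{-1}$, $c_{i,j}=s(i)s(j)s(ij)^{-1}$ from a choice of normalized section, with the section ambiguity accounting for isomorphism of weak actions. If anything, your write-up is more complete than the paper's sketch, since you additionally verify that $\Phi$ respects isomorphisms of weak actions and that the two assignments are mutually inverse (strictly in one direction via the canonical section $s(j)=(1,j)$, and up to equivalence of extensions via $(g,j)\mapsto g\cdot s(j)$ in the other), steps the paper leaves implicit.
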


\begin{Remark}
\begin{enumerate}
\item 
One can easily turn this statement into an 
equivalence of categories. Since we do not
need such a statement in this paper,
we leave a precise formulation to the reader.

\item
Under this correspondence, 
\emph{strict actions} of $J$ 
on $G$ correspond to \emph{split extensions}. This can be easily seen as follows: given a \emph{split} extension $G \to H \to J$, one can choose
the section $J\to H$ as a group homomorphism and thus
obtains a \emph{strict} action of $J$ on $G$. Conversely for a strict action of $J$ on $G$ it is easy to see that the group
constructed in equation (\ref{mult}) is a semidirect
product  and thus the sequence of groups splits. 
To cover all extensions, we thus really
need to consider weak actions.
\end{enumerate}
\end{Remark}

\subsection{Twisted bundles}\label{sec:twisted}

It is a common lesson from field theory that in an equivariant
situation, one has to include ``twisted sectors'' to
obtain a complete theory.
Our next task is to construct the parameters labeling 
twisted sectors for a given weak action of a 
finite group $J$ on $G$, with corresponding extension 
$G \to H \to J$ of groups and chosen set-theoretic section
$J\to H$. We will adhere to a two-step procedure as
outlined after proposition \ref{lin2}. To this end,
we will first construct for any smooth manifold a
category of twisted bundles. Then, the linearization functor
can be applied to spans of such categories. 

We start our discussion of twisted $G$-bundles with 
the most familiar case of the circle, $M=\SS$.

The isomorphism classes of $G$-bundles on $\SS$ are in bijection to connected 
components of the free loop space $\call BG$ of the classifying space $BG$:
$$ Iso\big(\cala_G(\SS)\big) = \Hom_{\mathrm{Ho(Top)}}(\SS,BG) = \pi_0(\call BG). $$
Given a (weak) action of $J$ on $G$, one can introduce
twisted loop spaces. For any element $j\in J$, we have a group automorphism $j: G \to G$ 
and thus a homeomorphism $j: BG \to BG$. The $j$-twisted loop space is then defined to be
$$ \call^j BG :=\big\{ f:[0,1]\to BG \mid f(0)=j \cdot f(1) \big\}. $$
Our goal is to introduce for every group element $j\in J$ 
a category $\cala_G(\SS,j)$ of $j$-twisted $G$-bundles on $\SS$ such that 
$$ Iso\big(\cala_G(\SS,j)\big) = \pi_0(\call^j BG) \,\,\, . $$

In the case of the circle $\SS$, the twist parameter was a group element
$j\in J$. A more geometric description uses a family of $J$-covers $P_j$ 
over $\SS$, with $j \in J$. The cover $P_j$ is uniquely determined by 
its monodromy $j$ 
for the base point $1 \in \SS$ and a fixed point in the fiber over $1$.
A concrete construction of the cover $P_j$ is given by the quotient
$P_j := [0,1] \times J / \sim$ where $(0,i) \sim (1,ji)$ for all $i \in J$. 
In terms of these $J$-covers, we can write
$$ \call^j BG = \big\{ f: P_j \to BG \mid f \text{ is $J$-equivariant} \big\}.$$

This description generalizes to an arbitrary smooth manifold $M$.
The natural twist parameter in the general case is 
a $J$-cover $P\stackrel J\to M$. 

Suppose, we have a weak $J$-action on $G$ and construct
the corresponding extension $G\to H \stackrel\pi\to J$.
The category of bundles we 
need are $H$-lifts of the given $J$-cover:

\begin{Definition}\label{def:twisted}
Let $J$ act weakly on $G$.
Let $P\stackrel J\to M$ be a $J$-cover over $M$.
\begin{itemize}
\item
A $P$-twisted $G$-bundle over $M$ is a pair $(Q,\varphi)$, consisting of an 
$H$-bundle $Q$ over $M$ 
and a smooth map $\varphi: Q \to P$ over $M$ that is required to obey
$$ \varphi(q\cdot h) = \varphi(q) \cdot \pi(h) \,\,\,  $$
for all $q\in Q$ and $h\in H$. Put differently, a
$P\stackrel J\to M$-twisted $G$-bundle is
a lift of the $J$-cover $P$ 
reduction along the group homomorphism
$\pi:\ H\to J$.

\item 
A morphism of $P$-twisted bundles $(Q,\varphi)$ and $(Q',\varphi')$ is a morphism 
$f: Q \to Q'$ of $H$-bundles such that $\varphi' \circ f = \varphi$.

\item We denote the category of $P$-twisted $G$-bundles by $\cala_G\big(P \to M\big)$.
For $M= \SS$, we introduce the abbreviation $\cala_G\big(\SS,j) := \cala_G\big(P_j \to \SS\big)$ for the standard covers of the circle.
\end{itemize}
\end{Definition}

\begin{Remark}
There is an alternative point of view on a $P$-twisted bundle $(Q,\varphi)$:
the subgroup $G\subset H$ acts on the total space $Q$ in such a way that the map $\varphi: Q \to P$ 
endows $Q$ with the structure of a $G$-bundle on $P$. Both the structure group
$H$ of the bundle $Q$ and the bundle $P$ itself carry an action of $G$; for twisted
bundles, an equivariance condition on this action has to be imposed.
Unfortunately this equivariance property is relatively involved; therefore, we have opted
for the definition in the form given above.
\end{Remark}

A morphism $f: P \to P'$ of $J$-covers over the same manifold induces a functor 
$f_*: \cala_G\big(P \to M\big) \to \cala_G\big(P' \to M\big)$ by $f_*(Q,\varphi) := (Q,f \circ \varphi)$. 
Furthermore, for a smooth map $f: M \to N$, we can pull back the twist data $P\to M$ and
get a \textit{pullback functor} of twisted $G$-bundles:
$$f^*: \cala_G\big(P \to N\big) \to \cala_G\big(f^*P \to M\big)$$
by $f^*(Q,\varphi)= (f^*Q,f^*\varphi)$. 
Before we discuss more sophisticated properties of twisted 
bundles, we have to
make sure that our definition is consistent with `untwisted' bundles:

\begin{Lemma} \label{trivbundle}
Let the group $J$ act weakly on the group $G$. For $G$-bundles 
twisted by the trivial $J$-cover $M\!\times\!J \to M$, we 
have a canonical equivalence of categories
$$
\cala_G\big(M\!\times\!J \to M\big)\cong \cala_G(M) .
$$
\end{Lemma}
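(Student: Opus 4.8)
The plan is to unwind Definition~\ref{def:twisted} in the special case $P = M\times J$ and to recognize the resulting objects as $H$-bundles equipped with a canonical reduction of structure group to the subgroup $G\subset H$. Writing a point of the trivial cover as $\varphi(q) = (p(q),\psi(q))$ with $p\colon Q\to M$ the bundle projection and $\psi\colon Q\to J$, being a map over $M$ forces the first component to be $p$, and the equivariance $\varphi(q\cdot h)=\varphi(q)\cdot\pi(h)$ becomes $\psi(q\cdot h)=\psi(q)\,\pi(h)$. Since $J$ is discrete, $\psi$ is locally constant. Thus a $(M\times J)$-twisted $G$-bundle is precisely an $H$-bundle $Q$ together with a smooth $H$-equivariant map $\psi\colon Q\to J$, i.e.\ a trivialization of the associated $J$-bundle $Q/G$; a morphism of twisted bundles is then just a morphism of $H$-bundles intertwining the maps $\psi$.

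First I would construct the two functors realizing the equivalence. In one direction, send a $G$-bundle $R$ to the induced $H$-bundle $R\times_G H$ along the inclusion $\iota\colon G\hookrightarrow H$, $g\mapsto(g,1)$, together with $\varphi_R[r,h]:=(p(r),\pi(h))$; this is well defined because $\pi\circ\iota$ is trivial, so the associated $J$-bundle of $R\times_G H$ is canonically trivialized. In the other direction, send $(Q,\varphi)$ to the subspace $R:=\psi^{-1}(1)\subset Q$, where $\psi:=\mathrm{pr}_J\circ\varphi$. The relation $\psi(q\cdot h)=\psi(q)\,\pi(h)$ shows that $R$ is stable under $G=\pi^{-1}(1)$ and is a principal $G$-bundle over $M$: over a neighbourhood $U$ trivializing $Q$ on which $\psi$ equals the constant $j_0$, one gets $R|_U\cong U\times\pi^{-1}(j_0^{-1})$, a $G$-torsor. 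Both assignments are manifestly functorial.

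The two functors are mutually inverse up to canonical natural isomorphism. For $R\mapsto R\times_G H\mapsto\psi^{-1}(1)$, the identity $[r,\iota(g)]=[r g,1]$ for $g\in G$ identifies $\psi^{-1}(1)$ with $\{[r,1]:r\in R\}\cong R$. For $(Q,\varphi)\mapsto R=\psi^{-1}(1)\mapsto R\times_G H$, the $H$-equivariant map $[r,h]\mapsto r\cdot h$ is an isomorphism $R\times_G H\iso Q$, and it intertwines $\varphi_R$ with $\varphi$ since $\varphi(r\cdot h)=\varphi(r)\cdot\pi(h)=(p(r),\pi(h))$ for $r\in R$. Naturality is immediate from the explicit formulas, and everything is canonical because the trivial cover $M\times J$ carries the distinguished section $m\mapsto(m,1)$.

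The only genuinely technical point is the verification in the second functor that $R=\psi^{-1}(1)$ is a smooth principal $G$-bundle, which rests on the local constancy of $\psi$ together with local triviality of $Q$; this is routine. I would stress that the argument uses only the homomorphisms $\iota\colon G\hookrightarrow H$ and $\pi\colon H\to J$ with $G=\ker\pi$, and never a set-theoretic section $J\to H$. Hence the possible non-splitness of the extension arising from a genuinely weak action (nontrivial $c_{i,j}$) is no obstacle, which is exactly what one needs for the lemma to hold in full generality.
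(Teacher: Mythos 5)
Your proposal is correct and takes essentially the same approach as the paper: in both arguments, a bundle twisted by the trivial cover is recognized as an $H$-bundle $Q$ whose associated $J$-cover $Q\times_H J$ is trivialized by $\varphi$, which amounts to a reduction of the structure group of $Q$ to $G$, and the inverse functor is extension (induction) of $G$-bundles to $H$-bundles along $G\hookrightarrow H$. The only difference is one of detail: you realize the reduction concretely as $R=\psi^{-1}(1)$ and check the mutually inverse natural isomorphisms by hand, whereas the paper invokes the standard correspondence between reductions of structure group and sections of the associated fiber bundle from the literature.
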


\begin{proof}
We have to show that for an element $(Q,\varphi) \in 
\cala_G\big(M\!\times\!J \to M\big)$ the $H$-bundle $Q$ 
can be reduced to a $G$-bundle. Such a reduction is the 
same as a section of the associated fiber bundle 
$\pi_*(Q) \in \Bun_J(M)$
see e.g. \cite[Satz 2.14]{baum}). 
Now $\varphi: Q\to M\times J$ induces an isomorphism
of $J$-covers $Q\times_H\ J\cong (M\times J)\times_H\ J
\cong M\times J$ so that the bundle $Q\times_H\ J$ 
is trivial as a $J$-cover and in particular admits
global sections.

Since morphisms of twisted bundles have to commute with these 
sections, we obtain in that way a functor 
$\cala_G\big(M\!\times\!J \to M\big) \to \cala_G(M)$. Its inverse 
is given by extension of $G$-bundles on $M$ to $H$-bundles on $M$. 
\end{proof}

We also give a description of twisted bundles using 
standard covering theory; for an alternative description 
using  \v{C}ech-cohomology, we refer to appendix \ref{cech}. 
We start by recalling the following standard fact
from covering theory, see e.g.\ \cite[1.3]{hatcher}
that has already been used to prove proposition \ref{DWcat}:
for a finite group $J$, the category of $J$-covers
is equivalent to the action groupoid
$\Hom(\pi_1(M),J) //J$. (Note that this equivalence
involves choices and is not canonical.)

To give a similar description of twisted bundles, fix
a $J$-cover $P$. Next, we choose
a basepoint $m \in M$ and a point $p$ in the fiber $P_m$
over $m$. These data determine a unique group
morphism $\omega: \pi_1(M,m) \to J$ representing $P$.

\begin{Proposition}\label{covering}
Let $J$ act weakly on $G$.
Let $M$ be a connected manifold and $P$ be a $J$-cover
over $M$ represented after the choices just indicated
by the group homomorphism $\omega: \pi_1(M) \to J$. 
Then there is a  (non-canonical) equivalence of categories
$$ \cala_G\big(P \to M \big) \cong \Hom^\omega\big(\pi_1(M),H\big) // G $$
where we consider group homomorphisms
$$ \Hom^\omega\big(\pi_1(M),H\big) := \big\{\mu: \pi_1(M) \to H \mid \pi \circ \mu = \omega \big\} $$
whose composition restricts to the group
homomorphism $\omega$ describing the $J$-cover $P$. The group
$G$ acts on $\Hom^\omega\big(\pi_1(M),H\big)$
via pointwise conjugation using the inclusion 
$G \to H$.
\end{Proposition}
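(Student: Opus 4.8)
The plan is to first reinterpret the twisting datum and then build an explicit quasi-inverse to the holonomy functor. A $\pi$-equivariant map $\varphi\colon Q\to P$ over $M$ is the same as an isomorphism of $J$-covers $\pi_*(Q)=Q\times_H J\iso P$, since both are $J$-torsors over $M$; so an object of $\cala_G(P\to M)$ is an $H$-bundle $Q$ equipped with a trivialization of $\pi_*Q$ against $P$. I will use the standard holonomy equivalence $\Bun_H(M)\cong\Hom(\pi_1(M),H)//H$ (the same fact invoked for proposition \ref{DWcat}), together with the model $P\cong\widetilde M\times_{\pi_1(M)}J$ determined by $\omega$ and the chosen $p\in P_m$, where $\widetilde M\to M$ is the universal cover based at a lift $\widetilde m$ of $m$ and $\pi_1(M)$ acts through $\omega$. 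Rather than fight the non-canonical basepoint choices in the holonomy functor, I will construct a functor $\Phi$ in the opposite direction and prove it is fully faithful and essentially surjective.

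\textbf{Construction of $\Phi$.} For $\mu\in\Hom^\omega(\pi_1(M),H)$ set $Q_\mu:=\widetilde M\times_{\pi_1(M)}H$ (the associated $H$-bundle, with $\pi_1(M)$ acting via $\mu$), and define $\varphi_\mu[x,h]:=[x,\pi(h)]\in\widetilde M\times_{\pi_1(M)}J=P$. Well-definedness of $\varphi_\mu$ is exactly the condition $\pi\circ\mu=\omega$, and $\pi$-equivariance is immediate from $\pi(hh')=\pi(h)\pi(h')$; thus $\Phi(\mu):=(Q_\mu,\varphi_\mu)$ lies in $\cala_G(P\to M)$. On morphisms, an element $g\in G$ with $\Inn_g\circ\mu=\mu'$ is sent to $f_g[x,h]:=[x,gh]$; this is a well-defined $H$-bundle map $Q_\mu\to Q_{\mu'}$, and because $\pi(g)=1$ one computes $\varphi_{\mu'}\circ f_g=\varphi_\mu$, so $f_g$ is a morphism of twisted bundles. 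This is precisely the point where the kernel $G=\ker\pi$ enters: the trivialization against $P$ is preserved exactly by conjugations coming from $G$.

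\textbf{Essential surjectivity and full faithfulness.} Given $(Q,\varphi)$, the restriction $\varphi\colon Q_m\to P_m$ is $\pi$-equivariant between an $H$-torsor and a $J$-torsor; as $\pi$ is surjective it is onto, so there is $q_0\in Q_m$ with $\varphi(q_0)=p$. Let $\mu$ be the holonomy of $Q$ at $q_0$. Lifting a loop $\gamma$ and applying $\varphi$, the path $\varphi\circ\widetilde\gamma$ is (by unique path lifting in the cover $P\to M$) the lift of $\gamma$ starting at $p$, so $p\cdot\pi(\mu(\gamma))=p\cdot\omega(\gamma)$; hence $\pi\circ\mu=\omega$ and $\mu\in\Hom^\omega$. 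The standard iso $Q_\mu\cong Q$ sending $[\widetilde m,1]\mapsto q_0$ intertwines $\varphi_\mu$ and $\varphi$, since two $\pi$-equivariant maps over $M$ agreeing at one point of the fiber over $m$ coincide (transitivity of $H$ on fibers plus connectedness of $M$); thus $\Phi$ is essentially surjective. For full faithfulness, $H$-bundle isos $Q_\mu\to Q_{\mu'}$ are given by $\{h\in H:\Inn_h\circ\mu=\mu'\}$ via $[\widetilde m,1]\mapsto[\widetilde m,h]$, and the compatibility $\varphi_{\mu'}\circ f=\varphi_\mu$ forces $[\widetilde m,\pi(h)]=[\widetilde m,1]$ in $P$, i.e.\ $\pi(h)=1$, i.e.\ $h\in G$; this recovers exactly $\Hom_{\Hom^\omega//G}(\mu,\mu')$.

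\textbf{Main obstacle.} The routine parts are the associated-bundle bookkeeping and the standard holonomy dictionary; the genuine content is the reduction of the automorphism group from $H$ to $G$. The crux is therefore the pair of facts that $\varphi$ is fiberwise surjective (so a lift $q_0$ of $p$ exists, pinning down the basepoint up to $\ker\pi$) and that $\ker\pi=G$ (so the remaining gauge freedom compatible with $\varphi$ is precisely pointwise $G$-conjugation). I also expect some care to be needed in verifying the holonomy identity $\pi\circ\mu=\omega$ through unique path lifting, and in phrasing the non-canonical basepoint choices so that $\Phi$ is a bona fide equivalence rather than a strict isomorphism.
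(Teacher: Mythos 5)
Your proof is correct, and its engine coincides with the paper's: choose compatible basepoints $p\in P_m$ and $q_0\in Q_m$ with $\varphi(q_0)=p$ (available since $\varphi$ is fiberwise onto), take the holonomy $\mu$ of $Q$ at $q_0$, and observe that $\varphi(q_0h)=p\,\pi(h)$ together with unique path lifting forces $\pi\circ\mu=\omega$. That computation is essentially the \emph{entire} proof given in the paper, which stops right there, treating the rest as a refinement of the standard equivalence $\Bun_H(M)\cong\Hom\big(\pi_1(M),H\big)//H$ recalled just before the proposition. Where you genuinely differ is in architecture and completeness: you build an explicit quasi-inverse $\mu\mapsto\big(\widetilde M\times_{\pi_1(M)}H,\,[x,h]\mapsto[x,\pi(h)]\big)$ out of the universal cover, check it is a well-defined functor on the action groupoid, and then prove essential surjectivity (which contains the paper's argument as a sub-step) and full faithfulness. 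In particular, your full-faithfulness step --- an $H$-bundle map $Q_\mu\to Q_{\mu'}$ determined by $h\in H$ with $\Inn_h\circ\mu=\mu'$ is compatible with the twistings iff $[\widetilde m,\pi(h)]=[\widetilde m,1]$ in $P$, i.e.\ iff $h\in\ker\pi=G$ --- is precisely the point that justifies quotienting by $G$ rather than by $H$, and the paper's proof is silent on it. What the paper's sketch buys is brevity and a direct appeal to covering theory; what your version buys is a self-contained verification of the statement as an equivalence of groupoids, including the morphism level where the actual content of the twisting sits.
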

\begin{proof}
Let $m\in M$ and $p\in P$ over $m$ be the choices of
base point in the $J$-cover $P\to M$ that lead to
the homomorphism $\omega$. Consider a $(P\to M)$ twisted
bundle $Q\to M$. Since $\varphi:\ Q\to P$ is
surjective, we can choose a base point $q$ in the fiber
of $Q$ over $m$ such that $\varphi(q)=p$. The
group homomorphism $\pi_1(M)\to H$ describing the
$H$-bundle $Q$
is obtained by lifting closed paths in $M$ starting in
$m$ to paths in $Q$ starting
in $q$. They are mapped under $\varphi$ to
lifts of the same path to $P$ starting at $p$, and
these lifts are just described by the group homomorphism
$\omega: \pi_1(M)\to J$ describing the cover $P$.
If the end point of the path in $Q$ is
$qh$ for some $h\in H$, then by the defining property
of $\varphi$, the lifted path in $P$ has endpoint
$\varphi(qh)=\varphi(q)\pi(h)= p\pi(h)$. Thus
$\pi\circ\mu=\omega$.

\end{proof}

\begin{Remark}
For non-connected manifolds, a description as in proposition
\ref{covering} can be obtained for every component.
Again the equivalence involves choices of base points on
$M$ and in the fibers over the base points. This could
be fixed by working with pointed manifolds, but pointed
manifolds cause problems when we consider cobordisms.
Alternatively, we could use the fundamental groupoid
instead of the fundamental group, see e.g. \cite{may}. 
\end{Remark}

\begin{Example}\label{ex_twisted}
We now calculate the categories of twisted bundles over 
certain manifolds using proposition \ref{covering}. 
\begin{enumerate}\setlength{\itemsep}{-0.5ex}
\item For the circle $\SS$,
$\omega\in\Hom(\pi_1(\SS),J)=\Hom(\Z,J)$ is determined
by an element $j\in J$ and the condition $\pi\circ\mu=
\omega$ requires $\mu(1)\in H$ to be in the preimage
$H_j:=\pi^{-1}(j)$ of $j$. Thus,  we have $\cala_G(\SS,j)
\cong H_j // G$.
\item For the 3-Sphere $\mathbb{S}^3$, all twists $P$ and 
all $G$-bundles are trivial. Thus, we have 
$\cala_G(P \to \mathbb{S}^3) \cong \cala_G(\mathbb{S}^3) 
\cong pt // G$. 
\end{enumerate}
\end{Example}

\subsection{Equivariant Dijkgraaf-Witten theory}\label{subsec:eqDW}

The key idea in the construction of equivariant
Dijkgraaf-Witten theory is to take twisted bundles 
$\cala_G(P \to M)$ as the field configurations, taking
the place of $G$-bundles  in section \ref{sec:DW}. 
We cannot expect to get then invariants of closed
3-manifolds $M$, but rather invariants of 3-manifolds $M$ 
together with a twist datum, i.e.\ a $J$-cover
$P$ over $M$. Analogous statements apply to
manifolds with boundary and cobordisms. Therefore we 
need to introduce extended cobordism-categories as 
$\cob(1,2,3)$ in definition \ref{cob123}, but endowed
with the extra datum of a $J$-cover over each manifold.

\begin{Definition}\label{defcobj}
$\cob^J(1,2,3)$ is the following symmetric monoidal bicategory:
\begin{itemize}\setlength{\itemsep}{-0.5ex}
\item { Objects} are compact, closed, oriented 1-manifolds $S$, together with a $J$-cover $P_S \stackrel{J}{\to} S$.
\item {1-Morphisms} are collared cobordisms 
$$S \times I \hookrightarrow \Sigma \hookleftarrow  S' \times I$$
where $\Sigma$ is a 2-dimensional, compact, oriented cobordism, together 
with a $J$-cover $P_\Sigma \to \Sigma$ and isomorphisms 
$$ P_\Sigma|_{(S \times I)} \iso P_S \times I \qquad \text{ and } \qquad P_\Sigma|_{(S' \times I)} \iso P_{S'} \times I.$$
over the collars.
\item {2-Morphisms} are generated by 
\begin{enumerate}[-]\setlength{\itemsep}{-0.5ex}
\item orientation preserving diffeomorphisms $\varphi: \Sigma \to \Sigma'$ of cobordisms fixing the collar together with an 
isomorphism $\widetilde\varphi: P_\Sigma \to P_{\Sigma'}$ covering $\varphi$.
\item 3-dimensional collared, oriented cobordisms with corners $M$ with 
cover $P_M \to M$ together with covering isomorphisms over the collars 
(as before) up 
to diffeomorphisms preserving the orientation and boundary. 
\end{enumerate}
\item Composition is by gluing cobordisms and covers along collars. 
\item The monoidal structure is given by disjoint union.
\end{itemize}
\end{Definition}

\begin{Remark}
In analogy to remark \ref{smoothgluing}, we point out
that the isomorphisms of covers are defined 
over the collars, rather than only over the the boundaries. 
This endows the glued cover with a well-defined smooth structure. 
\end{Remark}

\begin{Definition}\label{defjtft}
An extended 3d $J$-TFT is a symmetric monoidal 2-functor
$$ Z: \cob^J(1,2,3) \to 2\vect_\KK.$$
\end{Definition}

Just for the sake of completeness, we will also give a 
definition of non-extended $J$-TFT. Therefore define the 
symmetric monoidal category 
$\cob^J(2,3)$ to be the endomorphism category of the 
monoidal unit $\emptyset$ in $\cob(1,2,3)$. More concretely, this 
category has as 
objects closed, oriented 2-manifolds with $J$-cover and as 
morphisms $J$-cobordisms between them.
\begin{Definition}
A (non-extended) 3d $J$-TFT is a symmetric monoidal 2-functor
$$ \cob^J(2,3) \to \vect_\KK.$$
\end{Definition}
Similarly as in the non-equivariant case (lemma \ref{restriction}), 
we get 
\begin{Lemma}
Let $Z$ be an extended 3d $J$-TFT. Then $Z|_{\emptyset}$ is a 
(non-extended) 3d $J$-TFT.
\end{Lemma}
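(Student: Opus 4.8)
The plan is to mimic, almost verbatim, the argument already given for the non-equivariant case in Lemma \ref{restriction}, now with $\cob(1,2,3)$ replaced by $\cob^J(1,2,3)$ throughout. The decisive structural observation is that, by definition, $\cob^J(2,3)$ is precisely the endomorphism category $\End_{\cob^J(1,2,3)}(\emptyset)$ of the monoidal unit $\emptyset$, so on the geometric side there is nothing to prove beyond unwinding this definition.

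First I would recall that a symmetric monoidal $2$-functor $Z$ must send the monoidal unit of its source to the monoidal unit of its target. Since $\emptyset$ is the unit of $\cob^J(1,2,3)$ and $\vect_\KK$ is the unit of $2\vect_\KK$, this yields an equivalence $Z(\emptyset)\cong \vect_\KK$. Consequently $Z$ restricts to a functor
$$ Z|_\emptyset : \End_{\cob^J(1,2,3)}(\emptyset) \longrightarrow \End_{2\vect_\KK}(\vect_\KK). $$

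Next I would invoke the two identifications that render these endomorphism categories concrete. On the source side, $\End_{\cob^J(1,2,3)}(\emptyset)=\cob^J(2,3)$ holds by the very definition of $\cob^J(2,3)$. On the target side, the equivalence $\End_{2\vect_\KK}(\vect_\KK)\cong\vect_\KK$ is exactly the one used in Lemma \ref{restriction}; its justification rests only on the $\KK$-linearity and additivity of morphisms of $2$-vector spaces and is therefore completely insensitive to the $J$-cover decorations, so it carries over unchanged. Composing these two identifications exhibits $Z|_\emptyset$ as a functor $\cob^J(2,3)\to\vect_\KK$.

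Finally I would verify that $Z|_\emptyset$ is symmetric monoidal, which is automatic: the restriction of a symmetric monoidal $2$-functor to the endomorphisms of the monoidal unit inherits the monoidal and braiding constraints of $Z$. I do not expect a genuine obstacle anywhere; the only point deserving a sentence of care — and hence the closest thing to a ``hard step'' — is confirming that the disjoint-union monoidal structure carried by $\cob^J(2,3)$ together with its $J$-covers is indeed the one inherited from $\End(\emptyset)$, which is immediate from Definition \ref{defcobj}.
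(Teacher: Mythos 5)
Your proposal is correct and matches the paper exactly: the paper proves this lemma simply by noting it follows ``similarly as in the non-equivariant case,'' i.e.\ by the argument of lemma \ref{restriction} with $\cob(1,2,3)$ replaced by $\cob^J(1,2,3)$, which is precisely what you carry out. Your observation that $\cob^J(2,3)=\End_{\cob^J(1,2,3)}(\emptyset)$ holds by definition, and that the equivalence $\End_{2\vect_\KK}(\vect_\KK)\cong\vect_\KK$ is insensitive to the $J$-cover decorations, is exactly the content the paper leaves implicit.
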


Now we can state the main result of this section:
\begin{Theorem} \label{Thm1}
For a finite group $G$ and a weak $J$-action on $G$, there is 
an extended 3d $J$-TFT called $Z_G^J$ 
which assigns the categories
$$ \vect_\KK\big(\mathcal A_G(P \to S)\big) = 
\big[ \mathcal A_G(P \to S) , \vect_\KK \big] $$
to 1-dimensional, closed oriented manifolds $S$ with $J$-cover $P \to S$.
\end{Theorem}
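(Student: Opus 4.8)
The plan is to mimic the two-step construction of the non-equivariant theory $Z_G = \widetilde{\calv_\KK} \circ \widetilde{\cala_G}$ from subsection \ref{lin}, replacing the configuration functor of untwisted $G$-bundles by the configuration functor of $P$-twisted $G$-bundles. Concretely, I would set
\[ Z_G^J := \widetilde{\calv_\KK} \circ \widetilde{\cala_G^J}, \]
where $\widetilde{\calv_\KK}: \Span \to 2\vect_\KK$ is the $2$-linearization of proposition \ref{lin2} (which requires no modification, as it only sees the underlying groupoids), and $\widetilde{\cala_G^J}: \cob^J(1,2,3) \to \Span$ is a symmetric monoidal $2$-functor sending a manifold-with-cover to its groupoid of twisted bundles. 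Granting that $\widetilde{\cala_G^J}$ exists and is symmetric monoidal, the composite is an extended $3$d $J$-TFT by definition \ref{defjtft}, and on an object $(S,P_S)$ it evaluates to $\widetilde{\calv_\KK}\big(\cala_G(P_S \to S)\big) = [\cala_G(P_S \to S), \vect_\KK]$, which is exactly the asserted category.

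The bulk of the work is constructing $\widetilde{\cala_G^J}$. On objects it sends $(S,P_S)$ to the groupoid $\cala_G(P_S \to S)$; by proposition \ref{covering} this is equivalent to $\Hom^\omega(\pi_1(S),H)//G$, hence essentially finite since $G,H$ are finite and $\pi_1(S)$ is finitely generated, so it is a legitimate object of $\Span$. To a $1$-morphism, the collared cobordism $(\Sigma,P_\Sigma)$ with its fixed identifications $P_\Sigma|_{S\times I} \iso P_S\times I$ and $P_\Sigma|_{S'\times I}\iso P_{S'}\times I$, I would assign the span
\[ \cala_G(P_S \to S) \longleftarrow \cala_G(P_\Sigma \to \Sigma) \longrightarrow \cala_G(P_{S'} \to S'), \]
whose legs are the restriction (pullback) functors along the boundary inclusions, composed with the functors induced by the chosen collar identifications as in the text after definition \ref{def:twisted}. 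A $2$-morphism is sent to the corresponding span of span-maps, exactly as in proposition \ref{conf}.

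It then remains to verify the $2$-functor axioms, and here the monoidal part is easy while the compositional part is the main obstacle. Monoidality is immediate: a twisted bundle over a disjoint union is a pair of twisted bundles over the components, so $\cala_G(P \sqcup P' \to M \sqcup M') \cong \cala_G(P \to M) \times \cala_G(P' \to M')$, matching the cartesian monoidal structure of $\Span$. The hard step is that gluing of cobordisms along a common boundary (together with its cover) is sent to the weak fiber product of spans. This is the twisted analogue of the gluing statement \cite[Theorem 2]{Morton}: one must show that for $\Sigma = \Sigma_1 \cup_{S} \Sigma_2$ with compatibly glued covers, restriction induces an equivalence
\[ \cala_G(P_\Sigma \to \Sigma) \;\iso\; \cala_G(P_{\Sigma_1}\to\Sigma_1) \times^h_{\cala_G(P_S\to S)} \cala_G(P_{\Sigma_2}\to\Sigma_2), \]
i.e.\ a descent/van-Kampen property for twisted bundles.

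I would prove this by exhibiting $\cala_G(P \to M)$ as the groupoid of lifts of the classifying map $M \to BJ$ of $P$ through the fibration $BH \to BJ$ induced by $\pi: H \to J$, whose fibre is $BG$. In this homotopy-pullback description, functoriality in $M$, compatibility of restriction with collars, and the gluing equivalence all follow from the corresponding homotopy-theoretic statements (the cover supplies the fixed base map to $BJ$, over which everything is taken), thereby reducing the twisted gluing lemma to Morton's untwisted one, applied fibrewise over $BJ$. The collared, rather than merely boundary-wise, nature of the cover identifications (cf.\ the remark after definition \ref{defcobj}) is what guarantees the glued bundle a well-defined smooth structure and makes the equivalence canonical up to the usual coherent isomorphism. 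Finally, essential uniqueness of the extension along $\widetilde{\calv_\KK}$, by the universal-property argument of \cite[Prop.\ 1.10]{Daw04} cited after proposition \ref{lin2}, shows that $Z_G^J$ is well defined up to equivalence.
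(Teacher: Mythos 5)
Your construction is the same as the paper's in all structural respects: the identical two-step factorization $Z_G^J = \widetilde{\calv_\KK}\circ\widetilde{\cala_G}$, the identical spans assigned to 1- and 2-morphisms, and the identical disjoint-union argument for monoidality. The one genuine divergence is how you prove the crucial gluing statement (the paper's lemma \ref{kleben}, asserting that a glued cobordism is sent to a weak pullback of groupoids). The paper stays entirely inside bundle theory: it covers the glued surface $N=\Sigma\sqcup_{S\times I}\Sigma'$ by $U_0=\Sigma\setminus S$ and $U_1=\Sigma'\setminus S$ and invokes descent for twisted bundles (corollary \ref{desc_pullback}), which is a one-line consequence of descent for $H$-bundles and for $J$-covers, because by definition \ref{def:twisted} a twisted bundle is nothing but an $H$-bundle together with a map to the fixed $J$-cover. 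You instead pass to homotopy theory, describing twisted bundles as lifts of a classifying map through the fibration $BH\to BJ$ with fibre $BG$, obtaining the gluing equivalence from the homotopy-pullback decomposition of relative mapping spaces, and then applying $\Pi_1$. This route is sound: since $G$, $H$, $J$ are finite, all the relevant mapping and lift spaces are 1-types, homotopy pullbacks of 1-types are again 1-types, and $\Pi_1$ then carries them to the weak (iso-comma) pullbacks of groupoids that $\Span$ requires. But note what it costs: you need a \emph{functorial} equivalence between the groupoid of smooth twisted bundles and the groupoid of lifts, i.e.\ a naturalized, homotopy-theoretic strengthening of proposition \ref{covering} --- which in the paper is explicitly non-canonical, depending on choices of base points --- and you must keep that equivalence compatible with restriction to collars. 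The paper's \v{C}ech/descent argument was designed precisely to avoid this bookkeeping. What your argument buys in exchange is conceptual transparency (the van Kampen property becomes a formal statement about mapping spaces over $BJ$) and better prospects for generalization, e.g.\ to non-discrete structure groups or to the cocycle-twisted theories sketched in the paper's outlook; what the paper's buys is that every step is elementary and self-contained. Your closing appeal to essential uniqueness of the extension via the universal property of the span category is consistent with the paper's remark following proposition \ref{lin2}, though it is not needed for the existence statement of the theorem.
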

We will give a proof of this theorem in the next sections.
Having twisted bundles at our disposal, 
the main ingredient will again be the 2-linearization 
described in section \ref{lin}.
\subsection{Construction via spans}\label{subsec:spans}

As in the case of ordinary Dijkgraaf-Witten theory, cf.\ section 
\ref{lin}, equivariant Dijkgraaf-Witten $Z_G^J$
theory is constructed as the composition of the symmetric monoidal 2-functors
$$\widetilde{\cala_G}: \cob^J(1,2,3) \to \Span \quad \text{ and } 
\quad \widetilde{\calv_\KK} : \Span \to 2\vect_\KK.$$
The second functor will be exactly the 2-linearization 
functor of proposition \ref{lin2}. 
Hence we can limit our discussion to the construction of the first 
functor $\widetilde{\cala_G}$. 
As it will turn out, our definition of twisted bundles is 
set up precisely in such a way that the
construction of the corresponding functor in proposition \ref{conf} can be generalized. \\

Our starting point is the following observation:
\begin{itemize}
\item
The assignment $(P_M \stackrel{J}{\to} M) \longmapsto 
\cala_G(P_M \stackrel J\to M)$ of 
twisted bundles to a twist datum $P_M\to M$ constitutes 
a contravariant 2-functor from the  category of manifolds with 
$J$-cover to the 2-category of groupoids. Maps between manifolds with cover are mapped to 
the corresponding pullback functors of bundles.
\end{itemize}

From this functor which is defined on manifolds of any dimension, we  
construct a functor $\widetilde{\cala_G}$ on $J$-cobordisms
with values in the 2-category $\Span$ of spans of groupoids,
where the category $\Span$ is defined in section \ref{lin}.  
To an object in $\cob^J(1,2,3)$, i.e.\
to a $J$-cover $P_S\to M$, we assign 
the category $\cala_G(P_S \to S)$ of $J$-covers. To a 1-morphism
$ P_S \hookrightarrow P_\Sigma \hookleftarrow  P_S'$ in 
$\cob^J(1,2,3)$, we associate the span
\begin{equation}\label{span1}
  \cala_G(P_S \to S) \leftarrow \cala_G(P_\Sigma \to \Sigma) \rightarrow \cala_G(P_{S'} \to S')
\end{equation}
and to a 2-morphism of the type $P_\Sigma \hookrightarrow P_M \hookleftarrow P_{\Sigma'}$ the span
\begin{equation}\label{span2}
  \cala_G(P_\Sigma \to \Sigma) \leftarrow \cala_G(P_M \to M) \rightarrow \cala_G(P_{\Sigma'} \to \Sigma')\text{.}
\end{equation}
We have to show that this defines a symmetric monoidal functor 
$\widetilde{\cala_G}: \cob^J(1,2,3) \to \Span$.

In particular, we have to show that the composition of morphisms is respected.

\begin{Lemma}\label{kleben}
Let $P_\Sigma \to \Sigma$ and $P_{\Sigma'} \to \Sigma'$ be two 1-morphisms in $\cob^J(1,2,3)$ 
which can be composed at the object  $P_S \to S$ to get
the 1-morphism 
$$P_\Sigma \circ P_{\Sigma'}:=
\big(P_\Sigma \sqcup_{P_S\times I} P_{\Sigma'} \to \Sigma \sqcup_{S \times I} \Sigma'\big)
\,\, ,$$
where $I = [0,1]$ is the standard interval. (Recall that
we are gluing over collars.)
Then the category $ \cala_G\big(P_\Sigma \circ P_{\Sigma'}\big)$ is the weak pullback of $\cala_G(P_\Sigma \to \Sigma)$ and $\cala_G(P_{\Sigma'} \to \Sigma')$ over $\cala_G(P_S \to S)$.
\end{Lemma}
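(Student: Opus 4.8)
The plan is to prove that $\cala_G(P_\Sigma \circ P_{\Sigma'})$ satisfies the universal property of the weak pullback (homotopy fiber product) of the two restriction functors down to $\cala_G(P_S \to S)$. Recall that gluing in $\cob^J(1,2,3)$ happens over collars $P_S \times I$, and the defining data of a $P$-twisted $G$-bundle is a pair $(Q,\varphi)$ with $Q$ an $H$-bundle and $\varphi\colon Q \to P$ a map covering $\pi\colon H \to J$. The heart of the argument is therefore a descent/gluing statement: a twisted bundle on the glued manifold should be the same thing as a pair of twisted bundles on the two pieces together with a coherent identification of their restrictions to the shared collar.

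First I would set up the weak pullback explicitly. Its objects are triples $\big((Q_\Sigma,\varphi_\Sigma),\, (Q_{\Sigma'},\varphi_{\Sigma'}),\, \alpha\big)$ where $(Q_\Sigma,\varphi_\Sigma)\in\cala_G(P_\Sigma\to\Sigma)$, $(Q_{\Sigma'},\varphi_{\Sigma'})\in\cala_G(P_{\Sigma'}\to\Sigma')$, and $\alpha$ is an isomorphism of twisted bundles between their restrictions to $\cala_G(P_S\times I\to S\times I)$; morphisms are pairs of morphisms commuting with the $\alpha$'s. I would then construct a comparison functor from $\cala_G(P_\Sigma\circ P_{\Sigma'})$ into this weak pullback by restricting a twisted bundle on the glued object to each of the two pieces, the gluing isomorphism $\alpha$ being supplied tautologically by the fact that both restrictions agree on the overlap collar. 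The substance of the lemma is that this comparison functor is an equivalence of groupoids.

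To prove essential surjectivity and full faithfulness I would invoke the standard descent property of bundles (and of equivariant maps into covers) along an open cover, here the cover of the glued manifold by the two collared pieces with the collar $S\times I$ as overlap. Concretely, an $H$-bundle on $\Sigma\sqcup_{S\times I}\Sigma'$ is equivalent to a pair of $H$-bundles on $\Sigma$ and $\Sigma'$ glued by an isomorphism over the collar; this is exactly why collars were built into the definition (cf.\ remark \ref{smoothgluing}), since they guarantee a well-defined smooth structure on the glued total space. The maps $\varphi_\Sigma$ and $\varphi_{\Sigma'}$ to $P_\Sigma$ and $P_{\Sigma'}$ glue to a map $\varphi$ into the glued cover $P_\Sigma\sqcup_{P_S\times I}P_{\Sigma'}$ precisely because the gluing isomorphism $\alpha$ is compatible with the covering data over the collar, and the defining relation $\varphi(qh)=\varphi(q)\pi(h)$ is a local condition preserved under gluing. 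Thus the data of a twisted bundle on the glued object and the data of an object in the weak pullback correspond bijectively up to canonical isomorphism, which gives the equivalence.

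The main obstacle I anticipate is not the bundle-theoretic descent itself, which is classical, but rather handling the weakness carefully: the fiber product of groupoids is only a \emph{weak} (2-categorical) pullback, so I must track the gluing isomorphism $\alpha$ as genuine data rather than a mere condition, and verify that morphisms in the weak pullback (which involve a commuting square of isomorphisms over the collar) correspond exactly to morphisms of glued twisted bundles. A secondary subtlety is that the collar overlap is $S\times I$ rather than just $S$; I would use that the inclusion $S\hookrightarrow S\times I$ is a homotopy equivalence, so $\cala_G(P_S\times I\to S\times I)\simeq\cala_G(P_S\to S)$ via pullback, allowing me to identify the overlap category with the object $\cala_G(P_S\to S)$ over which the weak pullback is formed. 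Once these identifications are in place, full faithfulness and essential surjectivity of the comparison functor follow from the descent equivalence together with this homotopy invariance.
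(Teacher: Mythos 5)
Your proposal is correct and follows essentially the same route as the paper: identify $\cala_G(P_\Sigma\circ P_{\Sigma'})$ with a descent category for a two-piece cover of the glued surface (descent for twisted bundles reducing to descent for $H$-bundles together with the glued maps to the $J$-cover), then use homotopy invariance to identify the overlap category with $\cala_G(P_S\to S)$ and invariance of weak pullbacks under equivalence of groupoids. The only minor difference is that the paper shrinks to the honest open cover $U_0=\Sigma\setminus S$, $U_1=\Sigma'\setminus S$ with intersection the open cylinder $S\times(0,1)$, so that its descent proposition (stated for open coverings) applies verbatim, whereas your closed collared pieces with overlap $S\times I$ require either this shrinking or an extra remark that gluing along a collar is unproblematic.
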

\begin{proof}
By definition the category $$ \cala_G\big(P_\Sigma \circ P_{\Sigma'}\big)$$ has as objects twisted $G$-bundles over the 2-manifold 
$\Sigma \sqcup_{S \times I} \Sigma' =: N$. The manifold $N$ admits an open covering $N = U_0 \cup U_1$ with $U_0 = \Sigma \setminus S$ and $U_1 = \Sigma'\setminus S$ where the intersection is the cylinder 
$U_0 \cap U_1 = S \times (0,1)$. 
By construction, the restrictions of the glued bundle $P_N \to N$ to $U_0$ and $U_1$ are given by $P_\Sigma \setminus P_S$ and 
$P_{\Sigma'} \setminus P_S$. 

The natural inclusions $U_0 \to \Sigma$ and $U_1 \to \Sigma'$ induce equivalences
\begin{eqnarray*}
\cala_G(P_\Sigma \to \Sigma) \iso \cala_G( P_N|_{U_0} \to U_0) \\
\cala_G(P_{\Sigma'} \to \Sigma') \iso \cala_G( P_N|_{U_1} \to U_1)
\end{eqnarray*}
Analogously, we have an equivalence
$$
\cala_G\big(P_N|_{U_0 \cap U_1} \to U_0 \cap U_1\big) \iso 
\cala_G(P_S \to S) \,\,\, .
$$
At this point, we have reduced the claim to an
assertion about descent of twisted bundles which we will
prove in corollary \ref{desc_pullback}. 
This corollary implies that $\cala_G(P_N \to N)$ is 
the weak pullback of 
$\cala_G(P_N|_{U_0} \to U_0)$ and $\cala_G(P_N|_{U_1} \to U_1)$ over $\cala_G\big(P_N|_{U_0 \cap U_1}\big)$. Since weak pullbacks are invariant under 
equivalence of groupoids, this shows the claim.
\end{proof}

We now turn to the promised results about descent of
twisted bundles.
Let $P \to M$ be a $J$-cover over a manifold $M$ and 
$\{U_\alpha\}$ be an open covering of $M$, where for the 
sake of generality we allow for arbitrary open coverings. 
We want to show that twisted bundles can be glued together 
like ordinary bundles; while the precise meaning of this statement
is straightforward, we briefly summarize the relevant
definitions for the sake of completeness:
\begin{Definition}
Let $P \to M$ be a $J$-cover over a manifold $M$ and $\{U_\alpha\}$ be an 
open covering of $M$. The descent category $\Desc(U_\alpha,P)$ has 
\begin{itemize}
\item \underline{Objects:} families of $P|_{U_\alpha}$-twisted bundles 
$Q_\alpha$ over $U_\alpha$,  together with isomorphisms of twisted bundles
  $\varphi_{\alpha\beta} : Q_\alpha|_{U_\alpha \cap U_\beta} \iso Q_\beta|_{U_\alpha \cap U_\beta}$ satisfying the cocycle condition $\varphi_{\alpha\beta} \circ \varphi_{\beta\gamma} = \varphi_{\alpha\gamma}$.
\item \underline{Morphisms:}
families of morphisms $f_\alpha: Q_\alpha \to Q'_\alpha$ of twisted 
bundles such that over $U_{\alpha\beta}$ we have 
$\varphi'_{\alpha\beta} \circ (f_\alpha)|_{U_{\alpha\beta}} = (f_\beta)|_{U_{\alpha\beta}} \circ \varphi_{\alpha\beta}$.
\end{itemize}
\end{Definition}

\begin{Proposition}[Descent for twisted bundles]
Let $P \to M$ be a $J$-cover over a manifold $M$ and 
$\{U_\alpha\}$ be an open covering of $M$. Then the groupoid $\cala_G(P \to M)$ 
is equivalent to the descent category $\Desc(U_\alpha,P)$. 
\end{Proposition}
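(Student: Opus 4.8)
The plan is to show that the assignment $U \mapsto \cala_G(P|_U \to U)$ is a stack on $M$, which immediately yields the claimed equivalence with the descent category. Concretely, I would first write down the obvious restriction functor $R\colon \cala_G(P \to M) \to \Desc(U_\alpha,P)$ sending a twisted bundle $(Q,\varphi)$ to the family $\big(Q|_{U_\alpha},\, \varphi|_{U_\alpha}\big)$ equipped with the tautological identity isomorphisms $\varphi_{\alpha\beta} = \id$ on the overlaps (these trivially satisfy the cocycle condition), and sending a morphism $f$ to its family of restrictions $f|_{U_\alpha}$. It then remains to prove that $R$ is an equivalence, that is, essentially surjective and fully faithful.

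For essential surjectivity I would start from descent data $\big((Q_\alpha),(\varphi_{\alpha\beta})\big)$. The underlying $H$-bundles $Q_\alpha$ together with the gluing isomorphisms $\varphi_{\alpha\beta}$ constitute ordinary descent data for principal $H$-bundles; since principal bundles form a stack, these glue to a global $H$-bundle $Q \to M$ equipped with canonical isomorphisms $Q|_{U_\alpha} \iso Q_\alpha$. It remains to glue the structure maps. Each $\varphi_\alpha\colon Q_\alpha \to P|_{U_\alpha}$ is, after transport along these isomorphisms, a smooth map into the fixed cover $P$, and the defining compatibility of the $\varphi_{\alpha\beta}$ with the twisted-bundle structure says precisely that these local maps agree on the overlaps $U_\alpha \cap U_\beta$. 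Because smooth maps into a fixed target form a sheaf, they glue to a single smooth map $\varphi\colon Q \to P$ over $M$. The equivariance relation $\varphi(q\cdot h) = \varphi(q)\cdot\pi(h)$ is a pointwise, hence local, condition, and therefore holds globally as soon as it holds on each $U_\alpha$; thus $(Q,\varphi)$ is a $P$-twisted $G$-bundle restricting to the given descent data.

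Full faithfulness is argued along the same lines. A morphism in $\cala_G(P \to M)$ is an $H$-bundle morphism $f\colon Q \to Q'$ with $\varphi'\circ f = \varphi$. Morphisms of principal $H$-bundles form a sheaf on $M$, so a compatible family $(f_\alpha)$ satisfying the morphism condition of $\Desc(U_\alpha,P)$ glues uniquely to a global $H$-bundle morphism $f$; and the condition $\varphi'\circ f = \varphi$ can be checked on the members of the cover, so the glued $f$ is automatically a morphism of twisted bundles. This shows that $R$ induces a bijection on Hom-sets, completing the equivalence.

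The substantive input is the descent (stack) property for ordinary principal $H$-bundles, which I would simply cite as standard; everything specific to twisted bundles --- the gluing of the maps $\varphi_\alpha$ and the persistence of the equivariance relation --- reduces to the sheaf property of smooth maps and to the locality of a pointwise condition, and so presents no real difficulty. The only point demanding genuine care is the bookkeeping: I must check that the canonical isomorphisms $Q|_{U_\alpha} \iso Q_\alpha$ produced by bundle descent are compatible with the $\varphi_\alpha$, so that the transported maps really do agree on overlaps and the cocycle condition for the $\varphi_{\alpha\beta}$ is exactly what is needed to glue $\varphi$. I expect this compatibility check to be the main, though ultimately routine, obstacle.
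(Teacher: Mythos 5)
Your proposal is correct and follows essentially the same route as the paper: the paper's (much terser) proof simply invokes descent for $H$-bundles and for $J$-covers together with the definition of a twisted bundle as an $H$-bundle equipped with a map to $P$, which is exactly what you spell out via bundle descent for the $Q_\alpha$ and the sheaf property for the structure maps $\varphi_\alpha$. Your elaboration, including the locality of the equivariance condition and the compatibility bookkeeping, is a faithful expansion of the argument the paper leaves implicit.
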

\begin{proof}
Note that the corresponding statements are true for 
$H$-bundles and for $J$-covers. Then the description in definition \ref{def:twisted}
of a twisted bundle as an $H$-bundle together with a 
morphism of the associated $J$-cover
immediately implies the claim.
\end{proof}

\begin{corollary}\label{desc_pullback}
For an open covering of $M$ by two open sets $U_0$ and $U_1$ the category $\cala_G(P \to M)$ is the weak pullback of $\cala_G(P|_{U_0}\to U_0)$ and $\cala_G(P|_{U_1}\to U_1)$ over $\cala_G(P|_{U_0 \cap U_1}\to U_0 \cap U_1)$.
\end{corollary}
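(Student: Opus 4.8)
The plan is to deduce the corollary directly from the Descent Proposition just established, by specializing the general descent category to a covering consisting of exactly two open sets and observing that in this case the descent category coincides, essentially on the nose, with the weak fibre product of groupoids.

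First I would apply the Descent Proposition to the open covering $\{U_0,U_1\}$ of $M$, obtaining an equivalence of groupoids
$$ \cala_G(P \to M) \;\simeq\; \Desc(\{U_0,U_1\},P). $$
Since weak pullbacks are invariant under equivalence of groupoids (this was already invoked at the end of the proof of Lemma \ref{kleben}), it then suffices to identify $\Desc(\{U_0,U_1\},P)$ with the asserted weak pullback.

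Next I would unwind the definition of the descent category in the special case of a two-set covering. The only double overlaps are $U_0\cap U_0 = U_0$, $U_1\cap U_1=U_1$ and $U_0\cap U_1$, so an object of $\Desc(\{U_0,U_1\},P)$ reduces to a pair $(Q_0,Q_1)$ of twisted bundles over $U_0$ and $U_1$ together with a single gluing isomorphism $\varphi_{01}\colon Q_0|_{U_0\cap U_1}\iso Q_1|_{U_0\cap U_1}$; the components $\varphi_{00},\varphi_{11}$ are forced to be identities, one has $\varphi_{10}=\varphi_{01}^{-1}$, and the cocycle condition is then automatic. I would compare this with the weak pullback of the two restriction (pullback) functors
$$ \cala_G(P|_{U_0}\to U_0) \xrightarrow{\;r_0\;} \cala_G(P|_{U_0\cap U_1}\to U_0\cap U_1) \xleftarrow{\;r_1\;} \cala_G(P|_{U_1}\to U_1), $$
whose objects are triples $(Q_0,Q_1,\psi)$ with $\psi\colon r_0(Q_0)\iso r_1(Q_1)$. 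Setting $\psi=\varphi_{01}$ gives a bijection on objects, and an entirely analogous comparison of the morphism compatibility conditions (the descent condition $\varphi'_{01}\circ(f_0)|_{U_0\cap U_1}=(f_1)|_{U_0\cap U_1}\circ\varphi_{01}$ against the weak-pullback condition $\psi'\circ r_0(f_0)=r_1(f_1)\circ\psi$) shows that the two categories are isomorphic.

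The argument is essentially bookkeeping, so I do not expect a genuine obstacle; the single point that needs care is verifying that the restriction functors built into the descent category are precisely the pullback functors $r_0,r_1$ along the inclusions $U_0\cap U_1\hookrightarrow U_i$ that define the weak fibre product, so that the base of the pullback is indeed $\cala_G(P|_{U_0\cap U_1}\to U_0\cap U_1)$ as claimed. Once this identification is in place, composing it with the equivalence furnished by the Descent Proposition yields the corollary.
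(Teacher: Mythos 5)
Your proposal is correct and matches the paper's intent exactly: the paper states this corollary without a separate proof, treating it as an immediate specialization of the Descent Proposition to a two-set covering, which is precisely what you carry out. Your explicit identification of $\Desc(\{U_0,U_1\},P)$ with the weak pullback is just the bookkeeping the paper leaves implicit.
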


In order to prove that the assignment \eqref{span1} and \eqref{span2} really promotes $\cala_G$ to a symmetric monoidal functor $\widetilde{\cala_G}: \cob^J(1,2,3) \to \Span$, it remains to show that $\cala_G$ preserves the monoidal structure. 

Now a bundle over a disjoint union is given by a pair of 
bundles over each component. Thus, for a disjoint union 
of $J$-manifolds $P \to M = (P_1 \sqcup P_2) 
\to (M_1 \sqcup M_2)$, we have 
$\cala_G(P \to M) \cong \cala_G(P_1 \to M_1) \times 
\cala_G(P_2 \to M_2)$. Note that the manifolds
$M,M_1$ and $M_2$ can also be cobordisms.
The isomorphism of categories is clearly associative and 
preserves the symmetric structure. Together 
with lemma \ref{kleben}, this proves the next proposition.

\begin{Proposition}
$\cala_G$ induces a symmetric monoidal functor 
$$\widetilde{\cala_G}: \cob^J(1,2,3) \to \Span$$
which assigns the spans \eqref{span1} and \eqref{span2} to 2 and 3-dimensional cobordisms with $J$-cover.
\end{Proposition}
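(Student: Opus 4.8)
The plan is to verify the axioms of a symmetric monoidal $2$-functor piecewise, reducing each to a result already established above. First I would record that the assignment $(P\to M)\mapsto\cala_G(P\to M)$ is a contravariant $2$-functor from manifolds-with-$J$-cover to groupoids (the observation stated just before Lemma \ref{kleben}); applying it to the two collar inclusions of a $1$-morphism $P_S\hookrightarrow P_\Sigma\hookleftarrow P_{S'}$ yields precisely the span \eqref{span1}, and applying it to the boundary inclusions of a $3$-cobordism gives the span of span-maps encoded by \eqref{span2}. This shows that the data of $\widetilde{\cala_G}$ on objects, $1$-morphisms and $2$-morphisms are well defined; what remains is functoriality with respect to composition and compatibility with the monoidal and symmetric structures.

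The essential point is that composition of $1$-morphisms is respected. Composition in $\cob^J(1,2,3)$ is gluing of cobordisms and their covers along collars, while composition in $\Span$ is the weak fibre product. Lemma \ref{kleben} says exactly that $\cala_G(P_\Sigma\circ P_{\Sigma'})$ is the weak pullback of $\cala_G(P_\Sigma\to\Sigma)$ and $\cala_G(P_{\Sigma'}\to\Sigma')$ over $\cala_G(P_S\to S)$, i.e. that $\widetilde{\cala_G}$ carries the glued span to the composite span up to canonical equivalence. The analogous statement one dimension up -- that gluing $3$-cobordisms along a $2$-manifold is sent to composition of spans of span-maps -- is obtained by the identical descent argument, invoking Corollary \ref{desc_pullback} for the two-set open cover $U_0\cup U_1$ of the glued $3$-manifold. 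Since weak pullbacks are invariant under equivalence of groupoids, the canonical equivalences assembled by descent serve as the compositor structure cells of the functor.

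For the monoidal structure I would use that a bundle over a disjoint union is a pair of bundles over the components, giving $\cala_G(P_1\sqcup P_2\to M_1\sqcup M_2)\cong\cala_G(P_1\to M_1)\times\cala_G(P_2\to M_2)$; since $\times$ is the monoidal product of $\Span$, this furnishes the monoidality constraint, and the evident associativity and symmetry of these product isomorphisms supply the symmetric monoidal coherence data. Structurally this is a verbatim generalization of Proposition \ref{conf}, with Lemma \ref{kleben} playing the role that \cite[theorem~2]{Morton} plays in the untwisted case.

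I expect the main obstacle to be bookkeeping rather than genuine mathematical difficulty: one must check that the coherence cells of a symmetric monoidal bifunctor -- associators and unitors for composition, together with the naturality and hexagon data for $\times$ -- are satisfied, which amounts to verifying that the canonical equivalences produced by descent are natural and cohere with one another. Because all of these arise from manipulations of weak pullbacks, which are functorial and associative up to canonical equivalence, no new input beyond Lemma \ref{kleben}, Corollary \ref{desc_pullback} and the disjoint-union identity is required; the verification is routine but notation-heavy, and I would either carry it out diagrammatically or, as the paper implicitly does, regard it as the direct analogue of the untwisted case treated in Proposition \ref{conf}.
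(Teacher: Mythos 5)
Your proposal is correct and follows essentially the same route as the paper: the paper's proof consists precisely of the preceding observation that $\cala_G$ is a contravariant 2-functor on manifolds with $J$-cover, lemma \ref{kleben} (via descent, corollary \ref{desc_pullback}) for compatibility of gluing with weak pullbacks, and the disjoint-union identity $\cala_G(P_1 \sqcup P_2 \to M_1 \sqcup M_2) \cong \cala_G(P_1 \to M_1) \times \cala_G(P_2 \to M_2)$ for the symmetric monoidal structure, in direct analogy with proposition \ref{conf}. Your additional remarks --- extending the descent argument to vertical composition of 2-morphisms and flagging the coherence-cell bookkeeping --- are points the paper leaves implicit, but they do not constitute a different approach.
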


\subsection{Twisted sectors and fusion}\label{twisted_sectors_and_fusion}

We next proceed to evaluate the
$J$-equivariant TFT $Z_G^J$ constructed in the last section
on the circle, as we did in section \ref{evaluation}
for the non-equivariant TFT.
We recall from section \ref{sec:twisted} the fact
that over the circle $\SS$ we have for each $j \in J$ 
a standard cover $P_j$. The associated  category
$$ \calc(G)_j := Z_G^J\big(P_j \to \SS\big) $$
is called the $j$-twisted sector of the theory; the sector 
$\calc(G)_1$ is called the neutral sector. By lemma
\ref{trivbundle}, we have an equivalence
$\cala_G(P_1\to \SS)\cong \cala_G(\SS)$; hence we
get an equivalence of categories  $\calc(G)_1\cong\calc(G)$,
where $\calc(G)$ is the category arising in  the
non-equivariant Dijkgraaf-Witten model, we discussed
in section \ref{evaluation}.  We
have already computed the twisted sectors as
abelian categories in example 
\ref{ex_twisted} and note the result for future reference:

\begin{Proposition}\label{sectors}
For the $j$-twisted sector of equivariant Dijkgraaf-Witten theory,
we have an equivalence of abelian categories
$$\calc(G)_j \cong [H_j //G,\vect_\KK] \,\, , $$
where $H_j //G$ is the action groupoid given by the conjugation action of $G$ on $H_j := \pi^{-1}(j)$. More concretely, the
category $\calc(G)_j$ is equivalent to the category of $H_j$-graded 
vector spaces $V = \bigoplus_{h \in H_j} V_h$ together with a 
$G$-action on $V$ such that
$$ g.V_h \subset V_{ghg^{-1}}\text{.}$$
\end{Proposition}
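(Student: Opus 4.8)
The plan is to derive Proposition \ref{sectors} as a direct specialization of the general equivalence established in Proposition \ref{covering}, applied to the circle $M = \SS$ with the standard cover $P_j$. First I would recall that $\pi_1(\SS) \cong \Z$, so that a group homomorphism $\omega: \pi_1(\SS) \to J$ representing the cover $P_j$ is entirely determined by the image $\omega(1) = j \in J$; this is exactly the identification of $J$-covers on the circle with elements of $J$ that was used in Example \ref{ex_twisted}.1. Then, Proposition \ref{covering} gives a (non-canonical) equivalence of groupoids
$$ \cala_G\big(P_j \to \SS\big) \cong \Hom^\omega\big(\pi_1(\SS),H\big) // G. $$
The main step is to unwind the set $\Hom^\omega(\pi_1(\SS),H)$. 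Since a homomorphism $\mu: \Z \to H$ is determined by $\mu(1) \in H$, the constraint $\pi \circ \mu = \omega$ translates into the single condition $\pi(\mu(1)) = \omega(1) = j$, i.e.\ $\mu(1) \in \pi^{-1}(j) = H_j$. Thus $\Hom^\omega(\pi_1(\SS),H)$ is in bijection with $H_j$, and since $G$ acts by pointwise conjugation via $G \hookrightarrow H$, the action groupoid becomes $H_j // G$ with $G$ acting by conjugation $h \mapsto ghg^{-1}$. Applying the 2-linearization $\vect_\KK(-) = [-,\vect_\KK]$ then yields $\calc(G)_j = Z_G^J(P_j \to \SS) \cong [H_j // G, \vect_\KK]$, establishing the first claim.

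For the second, more concrete description, I would invoke the standard dictionary between functors out of an action groupoid and equivariant objects, exactly as in the proof of Proposition \ref{DWcat}. A functor $V: H_j // G \to \vect_\KK$ assigns a vector space $V_h$ to each object $h \in H_j$, giving an $H_j$-graded vector space $V = \bigoplus_{h \in H_j} V_h$; the morphisms in $H_j // G$, which send $h$ to $ghg^{-1}$ for $g \in G$, are sent to linear isomorphisms $V_h \to V_{ghg^{-1}}$, and functoriality makes these into a genuine $G$-action on $V$ satisfying the grading-compatibility $g.V_h \subset V_{ghg^{-1}}$. Conversely such an equivariant graded vector space defines a functor, and these assignments are mutually inverse equivalences of categories.

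The only genuine subtlety, rather than an obstacle, is bookkeeping about base points: Proposition \ref{covering} is stated for a connected manifold after a choice of base point $m \in \SS$ and a point $p$ in the fiber $P_m$, and the resulting equivalence is explicitly non-canonical. Since $\SS$ is connected this applies directly, and the non-canonicity does not affect the statement, which asserts only an equivalence of categories. One should note that the identification of $H_j$ as $\pi^{-1}(j)$ depends on the chosen set-theoretic section $J \to H$ fixed at the start of Subsection \ref{sec:twisted}, but different choices yield isomorphic weak actions and hence equivalent categories, so nothing is lost. Thus the proof is essentially a matter of specializing and transcribing the already-proven general statement, and I would expect it to be short.
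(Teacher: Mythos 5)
Your proof is correct and takes essentially the same route as the paper: there, Proposition \ref{sectors} is obtained by citing Example \ref{ex_twisted}.1 (which specializes Proposition \ref{covering} to the circle, identifying $\Hom^\omega\big(\pi_1(\SS),H\big)$ with $H_j$ via $\mu \mapsto \mu(1)$) and then applying the definition of the equivariant TFT from Theorem \ref{Thm1}, namely $Z_G^J(P_j \to \SS) = \big[\cala_G(P_j \to \SS),\vect_\KK\big]$. Your explicit unwinding of $[H_j//G,\vect_\KK]$ as $H_j$-graded vector spaces with compatible $G$-action is the same dictionary the paper uses implicitly (as in Proposition \ref{DWcat}), so the argument matches the paper's in substance.
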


As a next step, we want to make explicit additional 
structure on the categories $\calc(G)_j$ coming from certain 
cobordisms. Therefore, consider the pair of pants $\pop$:
\begin{center}
\includegraphics{Hose.pdf}
\end{center}
The fundamental group of $\pop$ is the free group on two 
generators. Thus, given a pair of group elements
$j,k\in J$, there is a $J$-cover $P^\pop_{j,k} \to \pop$ 
which restricts to the standard covers $P_j$ and $P_k$ 
on the two ingoing boundaries and to the standard cover
$P_{jk}$ on the outgoing boundary circle. (To find a
concrete construction, one should fix a parametrization of 
the pair of pants $\pop$.)
The cobordism $P^\pop_{j,k}$ is a morphism
\begin{equation} \label{fusionproduct}
P^\pop_{j,k}: \big(P_j \to \SS\big) \sqcup \big(P_k \to 
\SS\big) \longrightarrow \big(P_{jk} \to \SS\big)
\end{equation}
in the category $\cob^J(1,2,3)$. Applying the equivariant
TFT-functor $Z_G^J$ yields a functor
$$ \otimes_{jk} :\quad  \calc(G)_j \boxtimes \calc(G)_k \longrightarrow \calc(G)_{jk}.$$
We describe this functor in terms of the equivalent categories
of graded vector spaces as a functor
$$H_j//G\mod \times H_k//G\mod \to H_{jk}// G\mod \,\, .$$

\begin{Proposition}\label{prop:fusionproduct}
For $V= \bigoplus_{h\in H_j} V_h \in H_j//G\mod $ and 
$W = \bigoplus W_h \in  H_k//G\mod$ the product 
$V \otimes_{jk} W \in  H_{jk}//G\mod$ is given by
$$ (V \otimes_{jk} W)_{h} = \bigoplus_{st = h} V_s \otimes W_t $$
together with the action $g .(v \otimes w) = g.v \otimes g.w$.
\end{Proposition}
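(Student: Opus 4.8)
The plan is to unwind the definition $Z_G^J = \widetilde{\calv_\KK} \circ \widetilde{\cala_G}$ and treat the two steps in turn. Applying $\widetilde{\cala_G}$ to the pair-of-pants cobordism $P^\pop_{j,k}$ of \eqref{fusionproduct} produces, by \eqref{span1}, the span of groupoids
$$\cala_G(P_j \to \SS) \times \cala_G(P_k \to \SS) \xleftarrow{p_0} \cala_G(P^\pop_{j,k} \to \pop) \xrightarrow{p_1} \cala_G(P_{jk} \to \SS),$$
and $\otimes_{jk}$ is by construction the pull-push functor $(p_1)_* \circ (p_0)^*$ furnished by proposition \ref{lin2}. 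Thus the real work is to identify this span explicitly and then to evaluate the pull-push.

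For the identification I would invoke proposition \ref{covering}. The fundamental group $\pi_1(\pop)$ is free on two generators $x,y$, which I choose so that their monodromies are the ingoing boundaries (with values $j$ and $k$) and so that $xy$ represents the outgoing boundary (with value $jk$). A homomorphism $\mu\colon \pi_1(\pop) \to H$ with $\pi \circ \mu = \omega$ is then freely determined by $\mu(x) = s \in H_j$ and $\mu(y) = t \in H_k$, with $G$ acting by simultaneous conjugation; hence $\cala_G(P^\pop_{j,k} \to \pop) \cong (H_j \times H_k)//G$. Restriction to the two ingoing circles gives $p_0$ as the functor $(s,t) \mapsto (s,t)$ together with the diagonal inclusion $G \hookrightarrow G \times G$ on automorphisms, where I identify the target using $(H_j//G)\times(H_k//G) \cong (H_j \times H_k)//(G \times G)$. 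Restriction to the outgoing circle gives $p_1\colon (s,t) \mapsto st$, which lands in $H_{jk}$ since $\pi(st) = jk$.

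Now I carry out the pull-push. By the monoidality in proposition \ref{lin2}, the Deligne product $V \boxtimes W$ is the $(H_j \times H_k)$-graded space with $(s,t)$-component $V_s \otimes W_t$ and $G \times G$-action. Pulling back along $p_0$ restricts this action to the diagonal, yielding $g.(v \otimes w) = gv \otimes gw$. Pushing forward along $p_1$, the ambidextrous pushforward is computed fibrewise: the fibre over $h \in H_{jk}$ consists of all pairs $(s,t)$ with $st = h$, so
$$\big((p_1)_* p_0^*(V \boxtimes W)\big)_h = \bigoplus_{st = h} V_s \otimes W_t,$$
and since $g$ sends $(s,t)$ to $(gsg^{-1}, gtg^{-1})$, whose product is $ghg^{-1}$, the diagonal action survives intact. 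This is exactly the asserted formula.

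The step that requires genuine care is the last one: I must check that the pushforward reduces to this naive direct sum, i.e.\ that the map $p_1$ induces on automorphism groups the inclusion of stabilizers $\mathrm{Stab}_G(s,t) \hookrightarrow \mathrm{Stab}_G(h)$ in such a way that no induction multiplicities are introduced. Here the ambidexterity of the adjoint and the explicit action-groupoid structure must be used. Fortunately, both the span and the pull-push are formally identical to the non-equivariant pair-of-pants computation of Morton \cite[rem.\ 5]{Morton} recorded in proposition \ref{prop:2.17} --- the only change being the replacement of the $G$-gradings by the $H_\bullet$-gradings while the acting group remains $G$ --- so I would complete the argument by transcribing that computation verbatim into the twisted setting.
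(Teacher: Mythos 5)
Your proposal is correct and takes essentially the same route as the paper's proof: both use proposition \ref{covering} and the fact that $\pi_1(\pop)$ is free on two generators to identify the span as $H_j//G \times H_k//G \leftarrow (H_j\times H_k)//G \rightarrow H_{jk}//G$ (projection on the left, multiplication on the right), and then evaluate the pull-push functor of proposition \ref{lin2}. The final step you flag as delicate --- that the pushforward along $p_1$ yields the plain direct sum $\bigoplus_{st=h}V_s\otimes W_t$ with no induction multiplicities --- is precisely the point the paper also leaves to the computation of \cite[rem.\ 5]{Morton}, so deferring to that calculation matches the paper's level of detail.
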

\begin{proof}
As a first step we have to compute the span 
$\widetilde{\cala_G}(P^\pop_{j,k})$
associated to the cobordism $P^H_{j,k}$. From 
the description of twisted bundles in proposition 
\ref{covering} and the fact that the fundamental group of $\pop$ is the 
free group on two generators, we derive the following
equivalence of categories: 
$$ \cala_G\big(P^\pop_{jk} \to \pop\big) \cong (H_j \times H_k) // G
\,\,\, . $$
Here we have
$H_j \times H_k = \{(h,h') \in H \times H \mid \pi(h) 
= j,~ \pi(h') = k\}$, on which $G$ acts by simultaneous
conjugation.
This leads to the span of action groupoids
$$ H_j //G \times H_k //G \longleftarrow (H_j \times H_k) // G 
\longrightarrow H_{jk} //G $$
where the left map is given be projection to the factors 
and the right hand map by multiplication. Applying the 
2-linearization functor
$\widetilde{ \calv_\KK }$ from proposition \ref{lin2}
amounts to computing the corresponding pull-push 
functor. This yields the result.
\end{proof}

Next, we consider the 2-manifold $\cyl$ given by the cylinder 
over $\SS$, i.e. $\cyl = \SS \times I$:
\begin{center}
\includegraphics{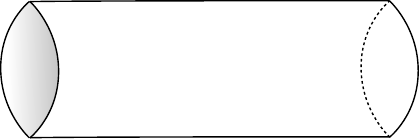}
\end{center}
There exists a cover $P^\cyl_{j,x} \to \cyl$ for $j,x \in J$ 
that restricts to $P_{j}$ on the ingoing circle and to 
$P_{xjx^{-1}}$ on the outgoing circle. The simplest
way to construct such a cover is to consider the
cylinder $P_{xjx^{-1}} \times I \to \SS \times I$ and 
to use the identification of
$P^\cyl_{j,x}$ over (a collaring neighborhood of) the outgoing 
circle by the identity and over the ingoing circle 
the identification by the morphism
$P_\cyl|_{\SS \times {1}} = P_j \to P_{x^{-1}jx}$ given by conjugation with $x$. In this way, we obtain a cobordism that
is a 1-morphism
\begin{equation} \label{jaction}
  P^\cyl_{j,x}: (P_{j} \to \SS) \longrightarrow (P_{xjx^{-1} } \to \SS)
\end{equation}
in the category $\cob^J(1,2,3)$ and hence induces a functor
$$ \phi_x: \calc(G)_{j} \to \calc(G)_{xjx^{-1}}.$$
We compute the functor on the equivalent action groupoids
explicitly:

\begin{Proposition} \label{prop:action}
The image under $\phi_x$ of an object 
$V = \bigoplus V_h \in H_{j}//G\mod$ is
the graded vector space with homogeneous component
$$ \phi_x(V)_h = V_{s(x^{-1}) h s\left(x^{-1}\right)^{-1}} $$
for $h\in H_{xjx^{-1}}$ and with $G$-action on $v \in V_h$ 
given by $s(x^{-1}) g s\left(x^{-1}\right)^{-1} \cdot v$.
\end{Proposition}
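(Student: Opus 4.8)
The plan is to proceed exactly as in the proof of Proposition~\ref{prop:fusionproduct}: first compute the span $\widetilde{\cala_G}(P^\cyl_{j,x})$ assigned by $\widetilde{\cala_G}$ to the cobordism~\eqref{jaction}, using Proposition~\ref{covering}, and then apply the 2-linearization functor $\widetilde{\calv_\KK}$ of Proposition~\ref{lin2}. Since the cylinder $\cyl = \SS\times I$ retracts onto either boundary circle, $\pi_1(\cyl)\cong\pi_1(\SS)\cong\Z$, and trivializing $P^\cyl_{j,x}$ along the ingoing circle represents it by the homomorphism $\omega\colon\Z\to J$ sending the generator to $j$. Proposition~\ref{covering} then yields an equivalence
\[ \cala_G\big(P^\cyl_{j,x}\to\cyl\big)\cong H_j//G, \]
just as for the circle itself.

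Next I would identify the two legs of the resulting span
\[ H_j//G \;\xleftarrow{\,p_0\,}\; H_j//G \;\xrightarrow{\,p_1\,}\; H_{xjx^{-1}}//G. \]
The ingoing circle was used to trivialize the cover, so the left leg $p_0$, given by restriction to the ingoing boundary, is (equivalent to) the identity. The right leg $p_1$ encodes the mismatch between the two boundary trivializations, which by the construction of $P^\cyl_{j,x}$ is conjugation by a representative $s(x)$: on objects $h\mapsto s(x)\,h\,s(x)^{-1}$, and since $s(x)\,g\,s(x)^{-1}=\rho_x(g)\in G$ for $g\in G$, it transports the conjugation $G$-action on $H_j$ to the conjugation action on $H_{xjx^{-1}}$ relabelled through $\rho_x$. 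As $\pi\big(s(x)\,h\,s(x)^{-1}\big)=xjx^{-1}$, the image indeed lies in $H_{xjx^{-1}}$, and $p_1$ is an isomorphism of action groupoids.

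Finally, because both legs are equivalences, the 2-linearization collapses: $(p_0)^*$ is essentially the identity, while the pushforward $(p_1)_*$ along an equivalence of groupoids is naturally isomorphic to pullback along a quasi-inverse of $p_1$. Hence $\phi_x=(p_1)_*\circ(p_0)^*$ is the pullback along the groupoid morphism $H_{xjx^{-1}}//G\to H_j//G$ sending $h\mapsto s(x^{-1})\,h\,s(x^{-1})^{-1}$ on objects and $g\mapsto s(x^{-1})\,g\,s(x^{-1})^{-1}$ on the $G$-action, which is exactly the asserted description of $\phi_x(V)$. The only delicate point --- and the one genuine role of the section $s$ --- is that the honest inverse of $p_1$ sends $h$ to $s(x)^{-1}\,h\,s(x)$, whereas the formula uses the representative $s(x^{-1})$; the two differ by the cocycle value $c_{x,x^{-1}}=s(x)\,s(x^{-1})\in G$, and since conjugation by an element of $G$ is realized by an isomorphism in the action groupoid, the two pullback functors are naturally isomorphic. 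This cocycle bookkeeping, not the pull-push, is the main obstacle: unlike in Proposition~\ref{prop:fusionproduct}, no nontrivial sum over fibres occurs here, since both legs are equivalences.
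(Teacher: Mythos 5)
Your proof is correct and follows the same two-step strategy as the paper (compute the span $\widetilde{\cala_G}(P^\cyl_{j,x})$ via proposition \ref{covering}, then apply the 2-linearization of proposition \ref{lin2}), but you make the opposite choice of trivialization, and this changes the shape of the computation. The paper identifies the middle groupoid of the span with $H_{xjx^{-1}}//G$, i.e.\ trivializes the cover compatibly with the \emph{outgoing} boundary; the span then reads $H_j//G \leftarrow H_{xjx^{-1}}//G \rightarrow H_{xjx^{-1}}//G$ with the \emph{right} leg the identity and the \emph{left} leg the conjugation $(h,g)\mapsto\bigl(s(x^{-1})hs(x^{-1})^{-1},\,s(x^{-1})gs(x^{-1})^{-1}\bigr)$, so the pull-push functor consists only of a pullback and the stated formula drops out on the nose, with no cocycle bookkeeping at all. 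You instead trivialize over the \emph{ingoing} boundary, so the identity sits on the left leg and the conjugation $h\mapsto s(x)hs(x)^{-1}$ on the right; the pull-push is then a pushforward along an equivalence, and your argument needs two extra observations, both of which you supply correctly: that the ambidextrous adjoint $(p_1)_*$ along an equivalence is naturally isomorphic to pullback along a quasi-inverse, and that conjugation by $s(x)^{-1}$ and by $s(x^{-1})$ differ by conjugation with $c_{x,x^{-1}}=s(x)s(x^{-1})\in G$, which is realized by an isomorphism in the action groupoid and hence induces a natural isomorphism of the two pullback functors. Both routes are valid, since $\phi_x$ (like the equivalences of proposition \ref{covering}) is only pinned down up to natural isomorphism; the paper's choice of trivialization is precisely what makes its proof a one-liner, while your route makes explicit the otherwise hidden roles of the ambidextrous adjunction and of the Dedecker cocycle.
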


\begin{proof}
As before we compute the span $\widetilde{\cala_G}(P^\cyl_{j,x})$. 
Using explicitly the equivalence given in the proof of proposition 
\ref{covering}, we obtain the span of action groupoids
$$ H_{j}//G \leftarrow H_{xjx^{-1}}//G \rightarrow H_{xjx^{-1}}//G $$
where the right-hand map is the identity and the left-hand map is given by 
$$ (h,g) \mapsto \big(s(x^{-1}) h s(x^{-1})^{-1}, s(x^{-1}) g s(x^{-1})^{-1}\big)
\,\, . $$
Computing the corresponding pull-push functor, which here in fact only consists of a pullback, 
shows the claim.
\end{proof}

Finally we come to the structure corresponding to the 
braiding of section \ref{evaluation}. Note that the 
cobordism that interchanges the two ingoing circles of 
the pair of pants $\pop$, as in the following picture,
\begin{center}
\includegraphics{Dose.pdf}
\end{center}
can also be realized as the diffeomorphism $F: \pop \to \pop$ 
of the pair of pants
that rotates the ingoing circles counterclockwise around 
each other and leaves the outgoing circle fixed. 
In this picture, we think of the cobordism as the cylinder 
$\pop \times I$ where the identification with $\pop$ on the top 
is the identity and on the bottom is given by the
diffeomorphism $F$. More explicitly, denote by 
$\tau:\ \SS\times\SS\to\SS\times\SS$ the map that interchanges
the two copies. We then consider the following diagram
in the 2-category $\cob(1,2,3)$:
$$ \xymatrix{ 
&&\pop\ar@{=>}^F[dd]&& \\
\SS\times \SS\ar_\tau[dr]\ar^\iota[urr]&&&&\ar[ull]\ar[dl]\SS \\
&\SS\times\SS\ar^\iota[rr] &\ &\pop 
}$$
where $\iota:\SS\times\SS\to \pop$ is the standard inclusion of
the two ingoing boundary circles into the trinion $\pop$.

Our next task is to lift this situation to manifolds
with $J$-covers. On the ingoing trinion, we take the $J$ cover
$P^\pop_{jk}$. We denote the symmetry isomorphism in $\cob^J(1,2,3)$
by $\tau$ as well. Applying the diffeomorphism of the
trinion explicitly, one sees that the outgoing trinion will 
have monodromies $jkj^{-1}$ and $j$ on the ingoing circles.
Hence we have to apply a $J$-cover $P_{j,k}^\cyl$
of the cylinder $\cyl$ first to one insertion. 
The next lemma asserts that then the 2-morphism
in $\cob^J(1,2,3)$ is fixed:

\begin{Lemma}\label{braiding-morphism}
In the 2-category $\cob^J(1,2,3)$, there 
is a unique 2-morphism
$$\hat F:\quad P^\pop_{j,k} \Longrightarrow \big(P^\pop_{jkj^{-1},j}\big) 
\circ \tau \circ \big(\id \sqcup P^\cyl_{j,k}\big) $$
that covers the 2-morphism $F$ in $\cob(1,2,3)$.
\end{Lemma}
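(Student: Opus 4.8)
The plan is to reduce the claim about the existence and uniqueness of the 2-morphism $\hat F$ covering $F$ to a statement about lifting the diffeomorphism $F$ to the total spaces of the relevant $J$-covers. By definition of $\cob^J(1,2,3)$, a 2-morphism covering $F$ consists of the diffeomorphism $F:\pop\to\pop$ together with an isomorphism $\widetilde F$ of the source $J$-cover onto the target $J$-cover that covers $F$ and is compatible with the chosen collar identifications on the boundary circles. So the task is twofold: first, to verify that the source and target covers in the statement are indeed diffeomorphic as $J$-covers via a lift of $F$, and second, to verify that such a lift, once its behaviour on the boundary collars is fixed, is unique.

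First I would make the two $J$-covers explicit. The source is $P^\pop_{j,k}\to\pop$, determined (via proposition \ref{covering}) by the homomorphism $\omega:\pi_1(\pop)\to J$ sending the two free generators to $j$ and $k$. The target is the composite cover $\big(P^\pop_{jkj^{-1},j}\big)\circ\tau\circ\big(\id\sqcup P^\cyl_{j,k}\big)$; I would trace through the gluing prescriptions of definition \ref{defcobj} to identify it as a $J$-cover over $\pop$ (after composing the cylinder and trinion cobordisms) whose classifying homomorphism sends the generators to $jkj^{-1}$ and $j$ on the appropriate boundary circles. The key geometric input, already noted in the paragraph preceding the lemma, is that applying $F$ to the trinion conjugates the monodromies: rotating the two ingoing circles around each other sends the pair $(j,k)$ to $(jkj^{-1},j)$. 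The insertion of the cylinder cover $P^\cyl_{j,k}$ is precisely the bookkeeping device that absorbs the conjugation by $j$, so that source and target classifying data match up.

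Next I would establish existence of the lift. Since $F:\pop\to\pop$ is a diffeomorphism and $J$-covers are classified (up to the choices in proposition \ref{covering}) by their monodromy homomorphism, the computation above shows that $F$ pulls back the target cover to a cover isomorphic to the source cover. This furnishes a cover isomorphism $\widetilde F$ over $F$; I would then check that it can be chosen to respect the prescribed collar identifications, which is exactly what the cylinder insertion was designed to guarantee. For uniqueness, I would use that a $J$-cover over a connected base together with a choice of point in one fibre admits no nontrivial deck automorphisms beyond those determined by the monodromy action; concretely, two lifts of $F$ differ by a deck transformation of the target cover, and the compatibility with the fixed collar identification over, say, the outgoing circle pins down the lift on a point in one fibre. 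Since $\pop$ is connected and the cover is determined by its monodromy, fixing the lift on one fibre fixes it everywhere, giving uniqueness.

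The main obstacle I expect is the careful bookkeeping in the second step: correctly identifying the composite cover on the right-hand side as a single $J$-cover over $\pop$ and confirming that its monodromy data really does coincide with that of $P^\pop_{j,k}$ after the twist by $F$. This requires keeping precise track of the collar gluings in definition \ref{defcobj}, the effect of the symmetry isomorphism $\tau$, and the conjugation encoded in $P^\cyl_{j,k}$ from proposition \ref{prop:action}; a sign or ordering error here (e.g.\ conjugating by $j$ versus $j^{-1}$, or interchanging the roles of the two boundary circles) would break the matching of monodromies. Everything else --- existence and uniqueness of the lift --- then follows formally from standard covering theory once the monodromy data are seen to agree.
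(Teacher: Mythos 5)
Your proposal takes essentially the same route as the paper's proof: compute the action of $F$ on $\pi_1(\pop)$ (equivalently, on the monodromy data of the covers) to conclude $F^*P^\pop_{j,k} \cong P^\pop_{jkj^{-1},j}$, hence a lift $\tilde F$ of $F$ exists, and then do the careful bookkeeping with $\tau$ and the cylinder cover $P^\cyl_{j,k}$ to exhibit this lift as the stated 2-morphism in $\cob^J(1,2,3)$. The only difference is that you explicitly supply the uniqueness argument (two lifts differ by a cover automorphism, which is killed by the fixed collar identification and connectedness of $\pop$), a point the paper's proof leaves implicit.
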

\begin{proof}
First we show that a morphism 
$\tilde F: P^\pop_{jkj^{-1},j} \to P^\pop_{j,k}$
can be found that covers the diffeomorphism $F:  \pop \to \pop$.
This morphism is most easily described using the 
action of $F$ on the fundamental group $\pi_1(\pop)$ of
the pair of pants. The latter is a free group with two
generators which can be chosen as the paths $a,b$ around the
two ingoing circles,  
$\pi_1(\pop) = \mathbb{Z}* \mathbb{Z} = \langle a,b \rangle$. 
Then the induced action of $F$\ on the generators is
$\pi_1(F)(a) = aba^{-1}$ and $\pi_1(F)(b) = a$. 
Hence, we find on the covers $F^*P_{j,k} \cong P_{jkj^{-1},j}$.
This implies that we have a diffeomorphism
$\tilde F: P_{jkj^{-1},j} \to P_{j,k}$ covering $F$.

To extend $\tilde F$ to a 2-morphism in $\cob^J(1,2,3)$, we have 
to be a bit careful about how we consider 
the cover $P^\pop_{jkj^{-1},j} \to \pop$ of the trinion as a 
1-morphism. In fact, it has to be considered as a morphism
$(P_j \to \SS) \sqcup (P_k \to \SS) \longrightarrow  (P_{jk} \to \SS)$
where the ingoing components are first exchanged and then the 
identification of $P_k \to \SS$ and $P_{jkj^{-1}} \to \SS$ 
via the conjugation isomorphisms $P^\cyl_{j,k}$ induced by
covers of the cylinders is
used first, compare the lower arrows in the preceding
commuting diagram. This yields the composition
$ \big(P^\pop_{jkj^{-1},j}\big) \circ \tau \circ 
\big(\id \sqcup P^\cyl_{j,k}\big) $
on the right hand side of the diagram.
\end{proof}

The next step is to apply the TFT functor $Z_G^J$ to 
the 2-morphism $\hat F$. The target 1-morphism of
$\hat F$ can be computed using the fact that  $Z_G^J$ 
is a symmetric monoidal 2-functor; we find the
following functor $\calc(G)_j\otimes \calc(G)_k
\to \calc(G)_{jk}$:
$$Z_G^J\Big(\big(P^\pop_{jkj^{-1},j}\big) \circ \tau 
\circ \big(\id \sqcup P^\cyl_{j,k}\big)\Big) 
= (-)^j \otimes_{jkj^{-1},j}^{op} (-)$$
We thus have the functor which acts on objects as
$(V,W) \mapsto \phi_j(W) \otimes V$
for $V \in \calc(G)_j$ and $W \in \calc(G)_k$.

Then $c := Z_G^J(\hat F)$ is a natural transformation
$ (-) \otimes_{j,k}(-) \Longrightarrow  (-)^j
\otimes_{jkj^{-1},j}^{op}(-) $
i.e. a family of isomorphisms
\begin{equation}
  c_{V,W}:V \otimes_{j,k} W \iso \quad \phi_j(W) \otimes_{jkj^{-1},j} V
\end{equation}
in $\calc(G)_{jk}$ for $V \in \calc(G)_j$ and $W \in \calc(G)_k$.

We next show how this natural transformation is expressed
when we use the equivalent description of the categories
$\calc(G)_j$ as vector bundles on action groupoids:

\begin{Proposition}\label{prop:braiding}
For $V= \bigoplus V_h \in H_j//G\mod$ and 
$W= \bigoplus W_h \in H_k//G\mod$ the natural isomorphism 
$c_{V,W}: V \otimes W \to \phi_j(W) \otimes V$ is given by
$$ v \otimes w \mapsto (s(j^{-1})h).w \otimes v$$
for $v  \in V_h$ with $h \in H_{j}$ and $w \in W$.
\end{Proposition}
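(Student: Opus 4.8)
The plan is to compute $c = Z_G^J(\hat F)$ directly from the $2$-morphism $\hat F$ constructed in Lemma \ref{braiding-morphism}, exploiting the factorization $Z_G^J = \widetilde{\calv_\KK} \circ \widetilde{\cala_G}$. First I would apply $\widetilde{\cala_G}$ to $\hat F$ to obtain a $2$-morphism in $\Span$, i.e.\ a span of span-maps, and then feed this into the $2$-linearization functor $\widetilde{\calv_\KK}$ of Proposition \ref{lin2} to read off the natural transformation on homogeneous components. Since the source and target $1$-morphisms of $\hat F$ have already been identified with the functors $\otimes_{j,k}$ and $(-)^j \otimes_{jkj^{-1},j}^{\mathrm{op}}(-)$ in the discussion preceding the statement, the only substantial work is to trace what $\widetilde{\cala_G}(\hat F)$ does on the relevant action groupoids and to push it through $\widetilde{\calv_\KK}$.

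For the groupoid computation I would use, exactly as in the proof of Proposition \ref{prop:fusionproduct}, the equivalence $\cala_G(P^\pop_{j,k} \to \pop) \cong (H_j \times H_k)//G$ coming from Proposition \ref{covering}, an object being a pair $(h_1,h_2)$ with $\pi(h_1)=j$ and $\pi(h_2)=k$. The diffeomorphism $F$ acts on $\pi_1(\pop)=\langle a,b\rangle$ by $a \mapsto aba^{-1}$ and $b \mapsto a$, so the covering isomorphism $\tilde F$ of Lemma \ref{braiding-morphism} corresponds on these groupoids to the relabeling $(h_1,h_2) \mapsto (h_1 h_2 h_1^{-1},\, h_1)$, which indeed sends $H_j \times H_k$ to $H_{jkj^{-1}} \times H_j$. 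The remaining identification $\tau \circ (\id \sqcup P^\cyl_{j,k})$ on the target side contributes precisely the conjugation by the section value $s(j^{-1})$ coming from the cylinder cover $P^\cyl_{j,k}$, exactly as in Proposition \ref{prop:action} with $x=j$; this is the geometric origin of the element $s(j^{-1})$ in the final formula.

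Finally I would apply $\widetilde{\calv_\KK}$. Because $\widetilde{\cala_G}(\hat F)$ is assembled from equivalences of action groupoids (a relabeling twisted by the $G$-action) rather than from a genuine pull-push with nontrivial fibres, the induced natural transformation acts on homogeneous components simply by swapping the two tensor factors and acting on one of them by the appropriate $G$-element; combining the relabeling $(h_1,h_2)\mapsto(h_1h_2h_1^{-1},h_1)$ with the cylinder twist then yields $v \otimes w \mapsto (s(j^{-1})h).w \otimes v$ for $v \in V_h$ with $h \in H_j$. I expect the main obstacle to be the careful bookkeeping of the set-theoretic section $s$: one must check that $s(j^{-1})h \in G$ so that it indeed acts on $W$, that $(s(j^{-1})h).w \otimes v$ lands in the correct homogeneous component of $\phi_j(W)\otimes_{jkj^{-1},j}V$ (a short degree count using $\phi_j(W)_{h'} = W_{s(j^{-1})h's(j^{-1})^{-1}}$ confirms both sides carry degree $ht$), and that the map is $G$-equivariant, all while matching the ordering conventions fixed in Propositions \ref{prop:fusionproduct} and \ref{prop:action}. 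As a consistency check, specializing to $j=k=1$, where $s(1)=1$ and $\phi_1=\id$, should reproduce the non-equivariant braiding $v \otimes w \mapsto h.w \otimes v$ of Proposition \ref{prop:2.18}.
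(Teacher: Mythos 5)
Your proposal is correct and follows essentially the same route as the paper: factor $Z_G^J$ through $\Span$, identify the target span as the zig-zag of action groupoids built from Proposition \ref{prop:action} (with $x=j$) and the exchange of factors, recognize that $\hat F$ induces the equivariant relabeling $(h_1,h_2)\mapsto(h_1h_2h_1^{-1},h_1)$ coming from the action of $F$ on $\pi_1(\pop)$, and then push this through the 2-linearization $\widetilde{\calv_\KK}$. The paper defers this last computation to Morton's method, while you correctly observe it is immediate here because the span maps are equivalences; your added degree and equivariance checks and the $j=k=1$ consistency check are sound.
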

\begin{proof}
We first compute the 1-morphism in the category $\Span$ 
of spans of finite groupoids that corresponds to
the target 1-morphism $\big(P^\pop_{jkj^{-1},j}\big) \circ \tau 
\circ \big(id \sqcup P^\cyl_{j,k}\big)$. From the previous
proposition,  we obtain the following zig-zag diagram:
$$ H_j//G \times H_k //G \leftarrow H_{jkj^{-1}}//G \times H_j//G
\leftarrow (H_{jkj^{-1}} \times H_k) //G \to H_{jk} //G
\,\, . $$
The first morphism is given by the morphisms implementing
the $J$-action that has been computed in the 
proof of proposition \ref{prop:action}, composed 
with the exchange of factors. The second 1-morphism
is obtained from the two projections and the last 1-morphism 
is the product in the group $H$. 

Thus, the 2-morphism $\hat F$ from Lemma \ref{braiding-morphism} yields a 2-morphism $\hat F_G$ 
in the diagram
$$\xymatrix{
& & H_j \times H_k //G \ar[lld]\ar[rrd]\ar@{=>}[dd]^{\hat F_G} & \\
  H_j//G \times H_k //G & & & & H_{jk} //G \\
& H_{jkj^{-1}}//G \times H_j//G\ar[lu] & (H_{jkj^{-1}} \times H_j) //G \ar[l]\ar[rru] &
}$$
where $\hat F_G$ is induced by the equivariant map 
$(h,h') \mapsto (h h' h^{-1},h)$. Once the situation
is presented in this way, one can carry out explicitly
the calculation along the lines described in
\cite[Section 4.3]{Morton} and obtain the result.
\end{proof}

A similar discussion can in principle be carried out to compute the
associators. More generally, structural morphisms 
on $H//G\mod$ can be derived from suitable 3-cobordisms. The relevant
computations become rather involved. On the other hand,
the category $H//G\mod$ also inherits structural morphisms
from the underlying category of vector spaces. We 
will use in the sequel the latter type of structural
morphism.

\section{Equivariant Drinfel'd double}

The goal of this section is to show that the category
$\cdw := \bigoplus_{j \in J} \calc(G)_j$
comprising the categories 
we have constructed in proposition \ref{sectors}
has a natural structure of a $J$-modular category.

Very much like ordinary modularity, $J$-modularity
is a completeness requirement for the relevant
tensor category that is suggested by
principles of field theory. Indeed,
it ensures that one can construct a $J$-equivariant
topological field theory, see \cite{turaev2010}.
For the definition of 
$J$-modularity we refer to  \cite[Definition 10.1]{kirI17}.

To establish the structure of a modular tensor category
on the category found in the previous sections, we 
realize this category
as the representation category of a finite-dimensional
algebra, more precisely of a $J$-Hopf algebra.
This section is organized as follows: we first
recall the notions of equivariant ribbon categories and
of equivariant ribbon algebras, taking into account
a suitable form of weak actions. In section 4.3,
we then present the appropriate generalization of the
Drinfel'd double that describes the category $\cdw$.
We then describe its orbifold category as the
category of representations of a braided Hopf algebra,
which allows us to establish the modularity of the orbifold
category. We then apply a result of
\cite{kirI17} to deduce that the structure with which
we have endowed $\cdw$ is the one of a $J$-modular
tensor category.

The Hopf algebraic structures endowed with weak actions
we introduce in this section might be of independent interest.

\subsection{Equivariant braided categories.}

Let $1\to G\to H\stackrel\pi\to J\to 1$ be an exact sequence of finite
groups. The normal subgroup $G$ acts on $H$ by
conjugation; denote by $H//G$ the corresponding action 
groupoid. We consider the
functor category $H//G\mod:=[H//G,\vect_\KK]$, where
$\KK$ is an algebraically closed field of characteristic
zero. The category $H//G\mod$ is
the category of $H$-graded vector spaces,
endowed with an action of the subgroup $G$ such that 
$g.V_h \subset V_{ghg^{-1}}$ for all $g\in G, h\in H$.

An immediate corollary of proposition \ref{sectors}
is the following description of the category
$\cdw := \bigoplus_{j \in J} \calc(G)_j$ 
as an abelian category:

\begin{Proposition}\label{HG-Mod}
The category $\cdw$ is equivalent, as an
abelian category, to the category $H//G\mod$.
In particular, the category $\calc^J(G)$ is a 2-vector space
in the sense of definition \ref{twovect}.
\end{Proposition}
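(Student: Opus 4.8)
The plan is to assemble the claimed equivalence directly from results already established in the excerpt, since Proposition \ref{HG-Mod} is essentially a bookkeeping statement that packages the per-sector descriptions into a single statement about the direct sum. First I would recall that by definition $\cdw = \bigoplus_{j \in J} \calc(G)_j$, and that Proposition \ref{sectors} already furnishes, for each fixed $j \in J$, an equivalence of abelian categories
$$ \calc(G)_j \cong [H_j //G, \vect_\KK], $$
where $H_j = \pi^{-1}(j)$ and $G$ acts by conjugation. The core observation is then purely set-theoretic at the level of the underlying groupoids: since $\pi: H \to J$ is surjective, its fibers partition $H$ as the disjoint union $H = \coprod_{j \in J} H_j$, and because the conjugation action of the normal subgroup $G$ preserves each fiber $H_j$ (as $\pi(ghg^{-1}) = \pi(h)$ for $g \in G$), the action groupoid decomposes as a coproduct of groupoids, $H //G \cong \coprod_{j \in J} (H_j //G)$.

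Next I would exploit the fact that the functor category construction $[-,\vect_\KK]$ sends coproducts of groupoids to direct sums (products) of functor categories. Concretely, a functor out of a disjoint union $\coprod_{j} (H_j //G)$ is the same datum as a family of functors, one out of each $H_j //G$, so there is a natural equivalence
$$ [H //G, \vect_\KK] \cong \prod_{j \in J} [H_j //G, \vect_\KK]. $$
Since $J$ is finite, this product agrees with the direct sum $\bigoplus_{j \in J} [H_j //G, \vect_\KK]$ of abelian categories. Chaining this with the per-sector equivalences of Proposition \ref{sectors} gives
$$ \cdw = \bigoplus_{j \in J} \calc(G)_j \cong \bigoplus_{j \in J} [H_j //G, \vect_\KK] \cong [H //G, \vect_\KK] = H //G\mod, $$
which is exactly the asserted equivalence of abelian categories. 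I would also note that the explicit graded-vector-space description follows immediately: an object of $H //G\mod$ is an $H$-graded vector space $V = \bigoplus_{h \in H} V_h$ with a $G$-action satisfying $g.V_h \subset V_{ghg^{-1}}$, and the fiber decomposition of $H$ just regroups the homogeneous components according to $\pi(h) = j$.

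For the final sentence of the proposition, I would observe that $H //G$ is an essentially finite groupoid (both $H$ and $G$ are finite), and $\KK$ is algebraically closed of characteristic zero; hence by the result already cited in Section \ref{lin} (namely \cite[Lemma 4.1.1]{Morton08}), the functor category $[H //G, \vect_\KK]$ is a $2$-vector space in the sense of Definition \ref{twovect}. Since $\calc^J(G) \cong H //G\mod$ as abelian categories, it inherits this property. I do not anticipate a genuine obstacle here: every ingredient is already in place, and the only mild care required is to check that the decomposition $H //G \cong \coprod_{j} (H_j //G)$ is genuinely a coproduct of groupoids --- that is, that there are no morphisms in $H //G$ connecting objects in distinct fibers --- which holds precisely because $G$-conjugation preserves $\pi$. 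The proof is therefore essentially a matter of stringing together the fiber decomposition with the two functorialities (coproduct-to-product for functor categories, finiteness of $J$ to pass from product to direct sum) and invoking Proposition \ref{sectors}.
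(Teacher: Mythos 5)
Your proposal is correct and follows essentially the same route as the paper: invoke Proposition \ref{sectors} sector by sector, use the decomposition $H = \bigsqcup_{j \in J} H_j$ (equivalently $H//G \cong \coprod_j H_j//G$, since $G$-conjugation preserves the fibers of $\pi$) to reassemble the direct sum as $H//G\mod$, and cite \cite[Lemma 4.1.1]{Morton08} for the 2-vector space claim. The paper's proof is just a terser version of exactly these steps; your additional care in verifying that the fiber decomposition is a genuine coproduct of groupoids is the detail the paper leaves implicit.
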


\begin{proof}
With $H_j:=\pi^{-1}(j)$, proposition \ref{sectors}
gives the equivalence $\calc(G)_j\cong\H_j//G\mod$ 
of abelian categories. The equivalence of categories
$\cdw\cong H//G\mod$ now follows
from the decomposition $H = \bigsqcup_{j \in J} H_j$. 
By \cite[Lemma 4.1.1]{Morton08}, the representation 
category of a finite groupoid is a  2-vector space.
\end{proof}

Representation categories of finite groupoids are very
close in structure to representation categories of finite
groups. In particular, there is a complete character
theory that describes the simple objects, 
see appendix  \ref{groupoidrep}.

We next introduce equivariant categories.

\begin{Definition}
Let $J$ be a finite group and $\calc$ a category. 
\begin{enumerate}
\item
A \emph{categorical action} of the group $J$ on 
the category $\calc$ consists of the
following data:
\begin{itemize}
\item[--] A functor $\phi_j:\calc \to \calc$ for 
every group element $j \in J$.
\item[--] A functorial isomorphism 
$\alpha_{i,j}:\phi_i \circ \phi_j \stackrel\sim\to \phi_{ij}$, called \emph{compositors}, for every 
pair of group elements $i,j \in J$
\end{itemize} such that the coherence conditions
\begin{equation}\label{coherence-compositor}
\alpha_{ij,k} \circ \alpha_{i,j}  = \alpha_{i,jk} 
\circ \phi_i(\alpha_{j,k}) \quad \text{ and } \quad \phi_1 = \id
\end{equation}
hold.

\item
If $\calc$ is a monoidal category, we only consider
actions by monoidal functors $\phi_j$ and 
require the natural
transformations to be monoidal natural transformations.
In particular, for  each group element $j\in J$,
we have the additional
datum of a natural isomorphism
$$\gamma_j(U,V): \phi_j(U)\otimes\phi_j(V)\stackrel\sim\to\phi_j(U\otimes V)$$
for each pair of objects $U,V$ of $\calc$ such that
the following diagrams commute: 
$$ \xymatrix{
^{jk}X\otimes {} ^{jk}Y \ar^{\gamma_{jk}(X,Y)}[rrrr]
\ar_{\alpha_{jk}(X)\otimes\alpha_{jk}(Y)}[d]&&&&
^{jk}(X\otimes Y)\ar^{\alpha_{jk}(X\otimes Y)}[d]\\
^j(^k{} (X))\otimes {}^j(^k{} ( Y))
\ar_{^j\gamma_k(X,Y)\circ\gamma_j(^kX,^k Y)}[rrrr]
&&&&
^j(^k{} (X\otimes Y))
}$$

(The data of a monoidal functor includes an isomorphism $\phi_j(1)\to 1$ in principal, but in this paper, the isomorphism will be the identity and therefore we will suppress it in our discussion.)
We use the notation $^jU := \phi_j(U)$ for the image of
an object $U \in \calc$ under the functor $\phi_j$.

\item 
A \emph{$J$-equivariant} category $\calc$ is a category
with a decomposition $\calc = \bigoplus_{j \in J} \calc_j$ 
and a categorical action of $J$, subject to the
compatibility requirement
\[\phi_{i}\calc_j \subset \calc_{iji^{-1}}\]
with the grading.

\item
A \emph{$J$-equivariant tensor category} is a $J$-equivariant 
monoidal category $\calc$, subject 
to the compatibility requirement that
the tensor product of two homogeneous elements 
$U \in \calc_i, V \in \calc_j$ is again homogeneous, 
$U \otimes V \in \calc_{ij}$.

\end{enumerate}
\end{Definition}

\begin{Remark}
We remark that the condition $\phi_1 = \id$ \ref{coherence-compositor} should in general be replaced by an extra datum, an isomorphism $\eta:\id \stackrel\sim\to \phi_1$ and two coherence conditions which involve the compositors $\alpha_{i,j}$. The diagrams can be found as follows:
For any category $\calc$,
consider the category $\AUT(\calc)$ whose objects
are automorphisms of $\calc$ and whose morphisms are 
natural isomorphisms. 
The composition of functors and natural transformations 
endow $\AUT(\calc)$ with the natural structure of a strict 
tensor category.
A categorical action of a finite group $J$ on a category
$\calc$ then amounts to a tensor functor 
$\phi:J \to \AUT(\calc)$, where $J$ is seen as a tensor category
with only identity morphisms, compare also remark
\ref{rem:3.2}.3.
The condition $\phi_1 = \id$ holds in the categories we are interested in, hence we impose it.

Similarly, we consider
for a monoidal category $\calc$ the category
$\AUT mon(\calc)$ whose objects are monoidal 
automorphisms of $\calc$ and whose morphisms are monoidal
natural automorphisms. The categorical actions we consider
for monoidal categories are then tensor
functors $\phi:J \to \AUT mon(\calc)$
 For more details, we
refer to \cite{turaev2010} Appendix 5.
\end{Remark}

The category $H//G\mod$ has a natural structure of a
monoidal category: the tensor product of two objects
$V=\oplus_{h\in H} V_h$ and $W=\oplus_{h\in H} W_h$
is the vector space $V\otimes W$ with $H$ grading
given by $(V\otimes W)_h := \oplus_{h_1h_2=h}
V_{h_1}\otimes W_{h_2}$ and $G$ action given by
$g.(v\otimes w)= g.v\otimes g.w$. The associators
are inherited from the underlying category of
vector spaces.

\begin{Proposition}
Consider an exact sequence of groups
$1\to G\to H \to J\to 1$. Any choice of a 
a set-theoretic section $s: J\to H$ allows us to endow the
abelian category $H//G\mod$ with the structure of a
$J$-equivariant tensor category as follows: the functor
$\phi_j$ is given by shifting the grading from
$h$ to $s(j)hs(j)^{-1}$ and replacing the action by
$g$ by the action of $s(j)gs(j)^{-1}$.
The isomorphism 
$\alpha_{i,j}:\phi_i\circ\phi_k\to\phi_{ij}$ 
is given by the left action action of the element
$$ \alpha_{i,j} = s(i)s(j)s(ij)^{-1} \,\,.$$
\end{Proposition}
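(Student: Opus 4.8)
The plan is to verify, in turn, each clause in the definition of a $J$-equivariant tensor category: that every $\phi_j$ is a well-defined monoidal endofunctor of $H//G\mod$, that the $\alpha_{i,j}$ are monoidal natural isomorphisms obeying the compositor coherence and the normalisation $\phi_1=\id$, and that the two grading conditions $\phi_i\calc_j\subset\calc_{iji^{-1}}$ and $\calc_i\otimes\calc_j\subset\calc_{ij}$ hold. The single observation that organises all of this is that conjugation by the element $s(j)\in H$ is an automorphism $\Phi_j$ of the action groupoid $H//G$ — well defined precisely because $G\triangleleft H$, so that $s(j)^{-1}Gs(j)=G$ — and that $\phi_j$ is nothing but the pullback functor $\Phi_j^\ast$ on $[H//G,\vect_\KK]=H//G\mod$. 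Since pullback along groupoid morphisms is a (contravariant) 2-functor, as exploited already in section \ref{lin}, most of the axioms reduce to bookkeeping with the section $s$.

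First I would record that $\phi_j$ is well defined. Concretely $\phi_j(V)_h=V_{s(j)^{-1}hs(j)}$, so an element of original degree $k$ acquires degree $s(j)ks(j)^{-1}$, matching the statement, and the $G$-action is transported along the automorphism; the compatibility $g.\phi_j(V)_h\subset\phi_j(V)_{ghg^{-1}}$ is then automatic because $s(j)^{-1}(\cdot)s(j)$ is a group automorphism of $H$ preserving $G$. The grading condition follows from $\pi(s(i)^{-1}hs(i))=i^{-1}\pi(h)i$: an object supported on $H_j=\pi^{-1}(j)$ is sent by $\phi_i$ to one supported on $\pi^{-1}(iji^{-1})$, whence $\phi_i\calc_j\subset\calc_{iji^{-1}}$. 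For the monoidal structure I would take $\gamma_j$ to be the identity on underlying spaces; this is legitimate because conjugation by $s(j)$ is multiplicative, so it matches the convolution grading $(V\otimes W)_h=\bigoplus_{h_1h_2=h}V_{h_1}\otimes W_{h_2}$ term by term and intertwines the diagonal $G$-actions. Finally $\calc_i\otimes\calc_j\subset\calc_{ij}$ is the elementary fact $H_iH_j\subset H_{ij}$, already built into the monoidal structure on $H//G\mod$ described above.

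The substance of the proof is the compositor. Since $\pi(s(i)s(j)s(ij)^{-1})=ij(ij)^{-1}=1$, the element $c_{i,j}=s(i)s(j)s(ij)^{-1}$ lies in $G$, so it genuinely acts on every module, and I would first check that this action gives a natural isomorphism $\phi_i\circ\phi_j\Rightarrow\phi_{ij}$. The point is that the two iterated grading shifts differ exactly by an inner automorphism: applying $\phi_i\circ\phi_j$ conjugates the original degree by $s(i)s(j)$, whereas $\phi_{ij}$ conjugates it by $s(ij)$, and $s(i)s(j)=c_{i,j}s(ij)$, so the discrepancy is $\Inn_{c_{i,j}}$ — precisely the shift implemented on a module by the $G$-action of $c_{i,j}$. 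The main obstacle, and the only step that is not formal, is the coherence identity $\alpha_{ij,k}\circ\alpha_{i,j}=\alpha_{i,jk}\circ\phi_i(\alpha_{j,k})$. Here I expect the whiskering $\phi_i(\alpha_{j,k})$ to turn the action of $c_{j,k}$ into the action of $\rho_i(c_{j,k})=s(i)c_{j,k}s(i)^{-1}$, so that the coherence becomes the relation $\rho_i(c_{j,k})\cdot c_{i,jk}=c_{i,j}\cdot c_{ij,k}$ in $G$. This is exactly the Dedecker cocycle condition of Definition \ref{Def:action}, which holds because $(\rho_j,c_{i,j})$ is the weak action associated to the section $s$ of $G\to H\to J$ by Schreier theory, cf.\ Proposition \ref{dedeschreier}. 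The normalisation $\phi_1=\id$ follows from the choice $s(1)=1$, which also gives $c_{1,1}=1$.

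It then remains to confirm that the $\alpha_{i,j}$ are monoidal natural transformations, i.e.\ compatible with the chosen $\gamma_j=\id$; this is automatic, since the action of $c_{i,j}\in G$ is diagonal on tensor products, $c_{i,j}.(v\otimes w)=c_{i,j}.v\otimes c_{i,j}.w$, so the square relating $\alpha$ and $\gamma$ commutes on the nose. Assembling these verifications yields the asserted $J$-equivariant tensor category structure, with the entire content concentrated in the identification of the compositor coherence with the weak-action cocycle identity.
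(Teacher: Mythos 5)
Your proof is correct and follows essentially the same route as the paper: the paper likewise dismisses everything except the compositor coherence as routine, and derives that coherence from the Dedecker cocycle identity $\rho_i(c_{j,k})\cdot c_{i,jk}=c_{i,j}\cdot c_{ij,k}$ attached to the section $s$ via Schreier theory (proposition \ref{dedeschreier}), exactly as you do. The one nuance the paper flags that you pass over silently is that composition of functors and group multiplication are written in opposite orders (so one must be careful in which module structure the element $c_{i,j}$ is acting when checking that $\alpha_{i,j}$ preserves degrees); this is a matter of conventions and inverses, not a gap in the argument.
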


The fact that the action is only a weak action
thus accounts for the failure of $s$ to be a section in the
category of groups.

\begin{proof}
Only the coherence conditions 
$\alpha_{ij,k} \circ \alpha_{i,j}  = \alpha_{i,jk} 
\circ \phi_i(\alpha_{j,k})$ remain to be checked. 
By the results of Dedecker and Schreier, cf.\ proposition
\ref{dedeschreier}, the group elements
$s(i)s(j)s(ij)^{-1}\in G$ are the coherence cells of 
a weak group action of $J$ on $H$. By
definition \ref{Def:action}, this implies the coherence
identities, once one takes into account that that 
composition of functors is written in different order 
than group multiplication.
\end{proof}

We have derived in section 
\ref{twisted_sectors_and_fusion} from the geometry
of extended cobordism categories
more structure on the geometric category $\cgj$.
In particular, we collect the functors $\otimes_{jk}: \calc(G)_j 
\boxtimes \calc(G)_k \to \calc(G)_{jk}$ from  
proposition \ref{prop:fusionproduct}  into a functor 
\begin{equation}
 \otimes : \calc \boxtimes \calc \to \calc\text{.}
\label{eq:grouptogether}
\end{equation}
Another structure are the isomorphisms  
$V \otimes W \to \phi_j(W) \otimes V$ 
for $V \in \calc(G)_j$, described in proposition 
\ref{prop:braiding}. Together with the associators,
this suggests to endow the category $\cdw$ 
with a structure of a braided $J$-equivariant tensor 
category: 

\begin{Definition}\label{J-equ.tc}

A \emph{braiding} on a $J$-equivariant tensor category is 
a family
\[c_{U,V}:U\otimes V \to {}^jV \otimes U\]
of isomorphisms, one for every pair of objects $U \in \calc_i, V \in \calc_j$,
which are natural in $U$ and $V$. 
 Moreover, a braiding is required to satisfy
an analogue of the hexagon axioms (see 
\cite[appendix A5]{turaev2010})
and to be preserved under the action of $J$,
i.e.\ the following diagram commutes for all objects
$U,V$ with $U\in\calc_j$ and $i\in J$
\begin{equation}\label{action-braiding-compatibility}
\xymatrix{
^i(U\otimes V)\ar_{\gamma_i}[d]\ar^{^i(c_{U,V})}[rr]
&& ^i(^jV \otimes U) \ar^{\gamma_i}[rr]&& ^i(^jV)\otimes \, ^iU 
\ar^{\alpha_{ij}(V)\otimes\id}[d]\\
^iU\otimes ^iV\ar_{c_{^iU,^iV}}[rr] && 
^{ij^{-1}}(^iV)\otimes\, ^iU 
\ar_{\alpha_{iji^{-1},i(V)}\otimes\id}[rr]
&& ^{ij}V\otimes \, ^i U
}
\end{equation}
\end{Definition}

\begin{Remark}
\begin{enumerate}
\item
It should be appreciated that a braided $J$-equivariant
category is not, in general, a braided 
category. Its {\em neutral component} $\calc_1$ with
$1\in J$ the neutral element, is a braided tensor category.

\item
By replacing the underlying category by an equivalent
category, one can replace a weak action by a strict
action, compare \cite[Appendix A5]{turaev2010}.
In our case, weak actions actually lead to simpler
algebraic structures.

\item
The $J$-equivariant monoidal category $H//G\mod$ has a
natural braiding isomorphism that has been
described in proposition \ref{prop:braiding}

\end{enumerate}
\end{Remark}

We use the equivalence of abelian categories between
$\cgj$ and $H//G\mod$ to endow the category
$\cgj$ with associators. The category has now enough
structure that we can state our next result:

\begin{Proposition}\label{prop:Jtensor}
The category $\cgj$, with the tensor product functor
from (\ref{eq:grouptogether}), can be endowed with
the structure of a braided $J$-equivariant tensor 
category such that the isomorphism $\cgj
\cong H//G\mod$ becomes an isomorphism of
braided $J$-equivariant tensor categories.
\end{Proposition}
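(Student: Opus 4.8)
The plan is to transport the full braided $J$-equivariant tensor structure from the algebraic model $H//G\mod$ onto the geometric category $\cdw$ along the equivalence of abelian categories of Proposition \ref{HG-Mod}, and to check that the three pieces of structure already extracted geometrically in Section \ref{twisted_sectors_and_fusion} agree, under this equivalence, with the algebraic ones. The equivalence of Proposition \ref{HG-Mod} is built from the identifications $\calc(G)_j\cong H_j//G\mod$ together with $H=\bigsqcup_{j\in J}H_j$, so it respects the $J$-grading, and it therefore suffices to exhibit a braided $J$-equivariant tensor structure on $H//G\mod$ whose tensor product, categorical action and braiding are exactly those computed in Propositions \ref{prop:fusionproduct}, \ref{prop:action} and \ref{prop:braiding}. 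The associators and compositors, which we did not compute geometrically, will simply be those of $H//G\mod$ inherited from $\vect_\KK$ and from the chosen section $s\colon J\to H$.

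First I would assemble the structure on $H//G\mod$. The monoidal structure and the functors $\phi_j$ together with the compositors $\alpha_{i,j}=s(i)s(j)s(ij)^{-1}$ have already been shown to define a $J$-equivariant tensor category in the proposition above that endows $H//G\mod$ with a $J$-equivariant tensor structure; there the coherence of the compositors is exactly the cocycle identity of the weak action, via Proposition \ref{dedeschreier}. It then remains to equip this with the braiding $c_{V,W}\colon v\otimes w\mapsto (s(j^{-1})h).w\otimes v$ of Proposition \ref{prop:braiding} (well defined since $\pi(s(j^{-1})h)=1$, so $s(j^{-1})h\in G$) and to verify the two braiding axioms of Definition \ref{J-equ.tc}: the equivariant hexagons and the compatibility diagram \eqref{action-braiding-compatibility}. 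Both are explicit diagram chases on homogeneous elements of graded vector spaces; the pentagon is automatic since the associators are those of $\vect_\KK$, and the only inputs beyond the definitions of $\otimes$, $\phi_j$ and $c$ are the defining relations of the weak action, $\rho_i\circ\rho_j=\Inn_{c_{i,j}}\circ\rho_{ij}$ and $\rho_i(c_{j,k})\,c_{i,jk}=c_{i,j}\,c_{ij,k}$ from Definition \ref{Def:action}, rewritten in terms of $s$.

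Having verified these axioms, I would carry the associators and compositors of $H//G\mod$ over to $\cdw$ along the equivalence of Proposition \ref{HG-Mod}, while retaining on $\cdw$ the geometric tensor product \eqref{eq:grouptogether}, the geometric action functors, and the geometric braiding of Section \ref{twisted_sectors_and_fusion}. Propositions \ref{prop:fusionproduct}, \ref{prop:action} and \ref{prop:braiding} assert precisely that these three geometric structures correspond, under the equivalence, to the algebraic tensor product, the functors $\phi_j$, and the braiding $c$ on $H//G\mod$; hence the equivalence respects all of the structure and upgrades to an isomorphism of braided $J$-equivariant tensor categories. No further computation is needed in this step.

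The main obstacle is the verification of the equivariant hexagon axioms and of the compatibility square \eqref{action-braiding-compatibility} for the braiding $c$ on $H//G\mod$. Unlike the non-equivariant case, the braiding is not symmetric, and the bookkeeping of the grading shifts $h\mapsto s(j)hs(j)^{-1}$ interacts nontrivially with the failure of $s$ to be a homomorphism; it is precisely here that the weakness of the action (the cocycle $c_{i,j}$) enters and must cancel against the compositors $\alpha_{i,j}$. One could instead try to deduce these axioms abstractly from the fact that $Z_G^J$ is a symmetric monoidal $2$-functor, since they then correspond to relations among $3$-cobordisms with $J$-cover in $\cob^J(1,2,3)$; but this route would require the geometric computation of the associators, which we have deliberately avoided, so the direct algebraic check is the more economical one.
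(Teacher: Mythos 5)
Your proposal is correct and follows essentially the same route as the paper: the paper likewise transports associators/compositors to $\cgj$ along the equivalence with $H//G\mod$ of Proposition \ref{HG-Mod}, notes that grading compatibility holds by construction of the graded components $\otimes_{jk}$ and $c_{V,W}$, and reduces the remaining work to checking that the $J$-action is by tensor functors (an index-shift argument) and that the braiding satisfies the hexagon axiom by direct computation. If anything, you are slightly more explicit than the paper's proof, which does not separately mention the compatibility diagram \eqref{action-braiding-compatibility}, so no gap arises.
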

\begin{proof}

The compatibility with the grading is implemented by definition via the graded components $\otimes_{jk}$ of $\otimes$ and the graded components of $c_{V,W}$. It remains to check that the action is by tensor functors and that the braiding satisfies the hexagon axiom. The second boils down to a simple calculation and the first is seen by noting that the action is essentially an index shift which is preserved by tensoring together the respective components. 
\end{proof}

\subsection{Equivariant ribbon algebras}

In the following, let $J$ again be a finite group.
To identify the structure of a $J$-modular tensor
category on the geometric category $\cgj$, we need
dualities. This will lead us
to the discussion of (equivariant) ribbon algebras.
Apart from strictness issues, our discussion closely follows
\cite{turaev2010}. 
We start our discussion with the relevant category-theoretic
structures, which generalize premodular categories (cf. definition \ref{modularity}) to the equivariant setting.

\begin{Definition}\label{ribbon_def}
\begin{enumerate}
\item
A \emph{$J$-equivariant ribbon category} is a $J$-braided 
category with dualities and a family of isomorphisms 
$\theta_V:V \to{} ^jV$ for all $j \in J, V \in \calc_j$, 
such that $\theta$ is compatible with duality and the action of $J$ (see \cite[VI.2.3]{turaev2010} for the identities).
In contrast to \cite{turaev2010}, we allow weak $J$-actions
and thus require the diagram
\begin{align}\label{com.diagram-twist}
\begin{xy}
  \xymatrix{
      U\otimes V \ar[rrrr]^{\theta_{U\otimes V}} \ar[d]_{\theta_U \otimes \theta_V}    &&&&   ^{ij}(U\otimes V) \\
   ^iU\otimes ^jV \ar[rd]_{c_{{}^iU, ^jV}}             & &&&  ^{iji^{-1}}(^i U) \otimes ^{ij}V\ar[u]_{\gamma_{ij} \circ (\alpha_{iji^{-1},i} \otimes \id)} \\
    &^i(^jV) \otimes {}^i U\ar[rr]_{\alpha_{i,j}\otimes \id} & &^{ij}V \otimes {}^i U\ar[ur]_{c_{{}^{ij}V,^{i}U}} &
  }
\end{xy}
\end{align}
involving compositors,
to commute for $U \in \calc_i$ and $V \in \calc_j$.

\item A \emph{$J$-premodular category} is a $\KK$-linear, abelian, finitely semi-simple $J$-equivariant ribbon category such that the tensor product is a $\KK$-bilinear functor and the tensor unit is absolutely simple. 
\end{enumerate}
\end{Definition}

\begin{Remark}
The following facts directly follow from the
definition
of $J$-equivariant ribbon category:
the neutral component $\calc_1$ is itself a braided
tensor category. In particular, it contains the
tensor unit of the $J$-equivariant tensor category.
The dual object of an object $V\in \calc_j$ is in 
the category $\calc_{j^{-1}}$.
\end{Remark}

We will not be able to directly endow the geometric
category $\cgj$ with the structure of a $J$-equivariant ribbon category. 
Rather, we will realize an equivalent category as the
category of modules over a suitable algebra.
To this end, we introduce in several steps the notions of a
$J$-ribbon algebra and analyze the extra structure 
induced on its representation category.

\begin{Definition}
Let $A$ be an (associative, unital) algebra over a
field $\KK$. A \emph{weak $J$-action} on $A$ 
consists of algebra automorphisms $\varphi_j \in \Aut(A)$,
one for every element $j\in J$, and invertible 
elements $c_{ij} \in A$, one for every pair of elements 
$i,j \in J$, such that for all $i,j,k \in J$
the following conditions hold:
\begin{equation}\label{coherence-compositors2}
  \varphi_i \circ \varphi_j = \Inn_{c_{i,j}} \circ \varphi_{ij}\qquad
  \varphi_i(c_{j,k}) \cdot c_{i,jk}  = c_{i,j} \cdot 
  c_{ij,k} \quad \text{ and } \quad c_{1,1} = 1
\end{equation}
Here $\Inn_x$ with $x$ an invertible element of $A$ denotes 
the algebra automorphism $a \mapsto xax^{-1}$.
A weak action of a group $J$ is called
strict, if  $c_{i,j} = 1$ for all pairs $i,j\in J$. 
\end{Definition}

\begin{Remark}
As discussed for weak actions on groups in remark
\ref{rem:3.2}, a weak action 
on a $\KK$-algebra $A$ can be seen as a categorical action 
on the category
which has one object and the elements 
of $A$ as endomorphisms.
\end{Remark}

We now want to relate a weak action $(\varphi_j,c_{i,j})$
of a group $J$ on an algebra $A$ to a categorical action 
on the representation category $A\mod$. To this end, 
we define for each element $j\in J$ a functor
on objects by
$$^j(M,\rho) := (M,\rho \circ (\varphi_{j^{-1}}\otimes \id_M))$$
and on morphisms by $^jf = f$. For the functorial
isomorphisms, we take
\begin{equation*}
\alpha_{i,j}(M,\rho):= \rho\big((c_{j^{-1},i^{-1}})^{-1}\otimes \id_M\big).
\end{equation*}
The inversions in the above formulas make sure that the action on the level of categories really becomes a left action.

\begin{Lemma}\label{action}
Given a weak action of $J$ on a $\KK$-algebra $A$,
these data define a categorical action on 
the category $A\mod$.
\end{Lemma}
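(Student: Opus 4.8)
The plan is to check, one by one, the three ingredients required by the definition of a categorical action. First I would verify that each $\phi_j$ is a well-defined $\KK$-linear endofunctor of $A\mod$: precomposing an action $\rho$ with the algebra automorphism $\varphi_{j^{-1}}\otimes\id_M$ again gives an algebra representation, and any $A$-linear map $f$ stays $A$-linear for the twisted actions because $^jf=f$ and $\varphi_{j^{-1}}$ is an algebra map. Next I would show that $\alpha_{i,j}$ is a natural isomorphism $\phi_i\circ\phi_j\Rightarrow\phi_{ij}$. The point is that $\phi_i\circ\phi_j$ sends $(M,\rho)$ to the module twisted by $\varphi_{j^{-1}}\circ\varphi_{i^{-1}}$, while $\phi_{ij}$ twists by $\varphi_{(ij)^{-1}}=\varphi_{j^{-1}i^{-1}}$; the first relation in \eqref{coherence-compositors2}, read with the indices $j^{-1},i^{-1}$, gives exactly $\varphi_{j^{-1}}\circ\varphi_{i^{-1}}=\Inn_{c_{j^{-1},i^{-1}}}\circ\varphi_{(ij)^{-1}}$, so left multiplication (through $\rho$) by $c_{j^{-1},i^{-1}}^{-1}$ intertwines the two twisted actions. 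Invertibility of $c_{j^{-1},i^{-1}}$ makes this an isomorphism, and naturality is immediate because every functor acts as the identity on morphisms and the components are $A$-linear.

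It then remains to check the two coherence conditions \eqref{coherence-compositor}. The condition $\phi_1=\id$ follows once I observe that $\varphi_1=\id_A$: setting $i=j=1$ in the first relation and using $c_{1,1}=1$ gives $\varphi_1\circ\varphi_1=\varphi_1$, and $\varphi_1$ is invertible. For the associativity constraint the cleanest approach is to record every compositor as left multiplication (through $\rho$) by an element of $A$, keeping track of two bookkeeping facts: whiskering $\alpha_{i,j}$ by $\phi_k$ on the right replaces its element $c_{j^{-1},i^{-1}}^{-1}$ by $\varphi_{k^{-1}}(c_{j^{-1},i^{-1}}^{-1})$ (since $\phi_k$ twists the action by $\varphi_{k^{-1}}$), whereas whiskering $\alpha_{j,k}$ by $\phi_i$ on the left leaves the underlying linear map unchanged, because $\phi_i$ is the identity on morphisms. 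Collecting the factors, both composites in $\alpha_{ij,k}\circ(\alpha_{i,j}*\phi_k)=\alpha_{i,jk}\circ(\phi_i*\alpha_{j,k})$ reduce to the single algebra identity $\varphi_{k^{-1}}(c_{j^{-1},i^{-1}})\,c_{k^{-1},j^{-1}i^{-1}}=c_{k^{-1},j^{-1}}\,c_{k^{-1}j^{-1},i^{-1}}$, which is precisely the second relation in \eqref{coherence-compositors2} under the substitution $(i,j,k)\mapsto(k^{-1},j^{-1},i^{-1})$.

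I expect the main obstacle to be purely bookkeeping rather than conceptual: one has to track the inversions built into the definitions of $\phi_j$ and $\alpha_{i,j}$ and the asymmetric effect of left- versus right-whiskering on the elements $c_{i,j}$, so that the associativity constraint lands on the $2$-cocycle relation with the correct index substitution. Once this substitution is identified the proof is complete with no further input; indeed the inversions in the definitions are arranged exactly so that the categorical action is a genuine \emph{left} action and the cocycle condition of the weak action matches the compositor coherence on the nose.
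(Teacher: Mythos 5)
Your proof is correct and follows essentially the same route as the paper's: realize each compositor $\alpha_{i,j}$ as left multiplication (through $\rho$) by $(c_{j^{-1},i^{-1}})^{-1}$, check it intertwines the twisted actions via the first relation in \eqref{coherence-compositors2}, and reduce the pentagon-type coherence to the second relation under the substitution $(i,j,k)\mapsto(k^{-1},j^{-1},i^{-1})$, with the whiskering asymmetry (right whiskering applies $\varphi_{k^{-1}}$, left whiskering does nothing) handled exactly as in the paper. Your write-up is in fact slightly more complete, as it also records the routine points the paper leaves implicit (functoriality of $\phi_j$, naturality of $\alpha_{i,j}$, and the derivation $\varphi_1=\id_A$ giving $\phi_1=\id$), and it uses the index order $c_{j^{-1},i^{-1}}$ consistently with the paper's definition, where the paper's own proof body contains a harmless index transposition.
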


\begin{proof}
Let $V$ be an $A$-Modul. We first show that $\alpha_{i,j}$ is a morphism $^{i}(^{j}V)\to ^{ij}V$ in $A\mod$:
Let $V\in A\mod$, then for every $v \in V, a \in A$, we have:
\begin{align*}
\alpha_{i,j}(V) \circ \rho_{^{i}(^jV)}(a\otimes v)& = (c_{i^{-1},j^{-1}})^{-1}\varphi_{j^{-1}} \circ \varphi_{i^{-1}}(a).v\\ 
& = \varphi_{(ij)^{-1}}(a)(c_{i^{-1},j^{-1}})^{-1}.v\\
& = \rho_{(^{ij}V)} \circ (\id_A \otimes \alpha_{i,j}(V))(a \otimes v),
\end{align*}
where we used the abbreviation $a.v := \rho(a\otimes v)$.
The validity of the coherence condition \eqref{coherence-compositor} for the $\alpha_{i,j}$ follows from the second equation in \eqref{coherence-compositors2}, since the right side of \eqref{coherence-compositor} evaluated on an element $v \in V$ reads
\[ \alpha_{i,jk} \circ \phi_i(\alpha_{j,k}) (v)  =   (c_{(jk)^{-1},i^{-1}})^{-1}(c_{k^{-1},j^{-1}})^{-1}.v \]
and the left one is:
\[\alpha_{ij,k} \circ \alpha_{i,j}(v) =  (c_{k^{-1},(ij)^{-1}})^{-1} \varphi_{k^{-1}} (c_{j^{-1},i^{-1}})^{-1} .v\]
\end{proof}

We next turn to an algebraic structure that yields
$J$-equivariant tensor categories.

\begin{Definition}\label{JHopf}
A \emph{$J$-Hopf algebra} over $\KK$ is a Hopf algebra 
$A$ with a $J$-grading $A = \bigoplus_{j\in J} A_{j}$ 
and a weak $J$-action such that:
\begin{itemize}
\item 
The algebra structure of $A$ restricts to
the structure of an associative algebra on
each homogeneous component so that $A$ is the direct 
sum of the components $A_j$ as an algebra.

\item
$J$ acts by homomorphisms of Hopf algebras.

\item The action of $J$ is compatible with the grading,
i.e.\ $\varphi_i(A_j) \subset A_{iji^{-1}}$

\item 
The coproduct $\Delta:\ A\to A\otimes A$ respects
the grading, i.e.\
$$\ \Delta (A_j) \subset \bigoplus_{p,q\in J,pq=j} A_p\otimes
A_q \,\, . $$

\item The elements $(c_{i,j})_{i,j\in J}$ are group-like, i.e $\Delta(c_{i,j}) = c_{i,j}\otimes c_{i,j}$.
\end{itemize}
\end{Definition}

\begin{Remark}
\begin{enumerate}
\item
For the counit $\epsilon$ and the antipode $S$ of
a $J$-Hopf algebra, the compatibility relations
with the grading $\epsilon(A_j) = 0$ for $j \ne 1$ and 
$S(A_j) \subset A_{j^{-1}}$ are immediate consequences of
the definitions.

\item The restrictions of the structure maps
endow the homogeneous component $A_1$ of $A$
with the structure of a Hopf algebra with a weak
$J$-action.

\item $J$-Hopf algebras with strict $J$-action 
have been considered under the name ``$J$-crossed Hopf coalgebra''
in \cite[Chapter VII.1.2]{turaev2010}. 


\end{enumerate}
\end{Remark}

The category $A\mod$ of finite-dimensional modules
over a $J$-Hopf algebra inherits a natural duality
from the duality of the underlying category of 
$\KK$-vector spaces. The weak action described in
Lemma \ref{action} is even a monoidal action,
since $J$ acts by Hopf algebra morphisms.
A grading on $A\mod$ can be given by taking 
$(A\mod)_j = A_j\mod$ as the $j$-homogeneous component. 
From the properties of a $J$-Hopf algebra one can finally deduce 
that the tensor product, duality and grading are compatible with 
the $J$-action. We have thus arrived at the following statement:

\begin{Lemma}\label{J-equivariant}
The category of representations of a $J$-Hopf algebra has a
natural structure of a $\KK$-linear, abelian 
$J$-equivariant tensor category with compatible duality 
as introduced in definition \ref{J-equ.tc}.
\end{Lemma}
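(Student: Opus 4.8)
The plan is to equip $A\mod$ with each piece of structure demanded by the definition of a $J$-equivariant tensor category and to trace every compatibility back to the defining axioms of a $J$-Hopf algebra in Definition \ref{JHopf}. First I would note that, $A$ being a Hopf algebra, the category $A\mod$ of finite-dimensional modules is $\KK$-linear and abelian (inherited from $\vect_\KK$), carries a monoidal structure whose tensor product is transported through the coproduct $\Delta$, and has unit object $\KK$ with $A$ acting through the counit $\epsilon$. The grading comes from the algebra decomposition $A = \bigoplus_{j\in J} A_j$: since this is a direct sum of algebras, each $A$-module splits canonically as $\bigoplus_j M_j$ with $M_j$ an $A_j$-module, yielding $A\mod = \bigoplus_{j\in J} (A_j\mod)$, so I set $\calc_j := A_j\mod$.

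Next I would verify that this grading is compatible with the tensor product. For $M\in\calc_i$ and $N\in\calc_j$, an element $a\in A_k$ acts on $M\otimes N$ via $\Delta(a)$, and the grading axiom $\Delta(A_k)\subset\bigoplus_{pq=k} A_p\otimes A_q$ shows that the only summand which can act nontrivially (as $A_p$ kills $M$ unless $p=i$ and $A_q$ kills $N$ unless $q=j$) is the $(i,j)$-component, which requires $k=ij$. Hence $M\otimes N\in\calc_{ij}$. The same grading condition together with $\epsilon(A_k)=0$ for $k\neq 1$ places the unit object in $\calc_1$. For the categorical action itself I would simply invoke Lemma \ref{action}, which produces from the weak action $(\varphi_j,c_{i,j})$ the functors $\phi_j$ and compositors $\alpha_{i,j}$ obeying the coherence \eqref{coherence-compositor}; compatibility with the grading, $\phi_i\calc_j\subset\calc_{iji^{-1}}$, is immediate from $\varphi_i(A_j)\subset A_{iji^{-1}}$, since $\phi_i$ twists the action by $\varphi_{i^{-1}}$.

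The crucial new point is that each $\phi_j$ is a \emph{monoidal} functor and that the compositors are \emph{monoidal} natural transformations. For the former, because $\varphi_j$ is a coalgebra morphism one has $\Delta\circ\varphi_{j^{-1}} = (\varphi_{j^{-1}}\otimes\varphi_{j^{-1}})\circ\Delta$, so the two $A$-actions on the underlying space of $\phi_j(M)\otimes\phi_j(N)$ and of $\phi_j(M\otimes N)$ literally coincide; I may therefore take $\gamma_j=\id$, whose naturality is then trivial. The main obstacle — the step I would treat most carefully — is showing that, with $\gamma_j=\id$, the coherence square relating $\gamma$ and $\alpha$ commutes, which reduces to the identity $\alpha_{i,j}(M\otimes N) = \alpha_{i,j}(M)\otimes\alpha_{i,j}(N)$. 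Here $\alpha_{i,j}$ acts by the element $(c_{j^{-1},i^{-1}})^{-1}$, and on $M\otimes N$ this element acts through $\Delta\big((c_{j^{-1},i^{-1}})^{-1}\big)$. This is precisely where the hypothesis that the $c_{i,j}$ are group-like is indispensable: since inverses of group-likes are group-like, $\Delta\big((c_{j^{-1},i^{-1}})^{-1}\big) = (c_{j^{-1},i^{-1}})^{-1}\otimes(c_{j^{-1},i^{-1}})^{-1}$, giving exactly the required factorization. This is the one input beyond the axioms of an ordinary graded Hopf algebra.

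Finally, the duality is the one inherited from $\vect_\KK$ via the antipode $S$: the axiom $S(A_j)\subset A_{j^{-1}}$ shows that the dual of an object of $\calc_j$ lies in $\calc_{j^{-1}}$, and compatibility of the duality with the $J$-action follows at once from $J$ acting by Hopf algebra morphisms, which in particular commute with $S$. Assembling these verifications — $\KK$-linear abelian structure, graded monoidal structure with correctly graded tensor product and unit, a grading-compatible categorical action by monoidal functors with group-like compositors, and a compatible duality — yields the asserted $J$-equivariant tensor category structure on $A\mod$.
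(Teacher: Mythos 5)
Your proof is correct and follows essentially the same route as the paper: grading by $\calc_j := A_j\mod$, the categorical action imported from Lemma \ref{action}, duality inherited from $\vect_\KK$ via the antipode, and the central-idempotent argument ($\varphi_{i^{-1}}(1_{iji^{-1}})=1_j$) for the compatibility $\phi_i\calc_j\subset\calc_{iji^{-1}}$. In fact you verify more than the paper's own proof does -- the paper only spells out that last compatibility and leaves the monoidal structure of the $\phi_j$, the group-like argument for the compositors $\alpha_{i,j}$, and the tensor-grading check to the surrounding prose, all of which you carry out explicitly and correctly.
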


\begin{proof}
We show, that the grading and the action on $A\mod$ are compatible. Let $
V \in A_j\mod$, then the $j$-component $1_j$ of the unit in A acts as the identity on $V$. We have to check, that $^{i}V \in A_{iji^{-1}}\mod$, i.e that $1_{iji^{-1}}$ acts as the identity on $^iV$. For $v\in ^iV$, we have:
\[\rho_i(1_{iji^{-1}}\otimes v) = \varphi_{i^{-1}}(1_{iji^{-1}}).v = 1_j.v = v\] 
This shows $^i V \in A_{iji^{-1}}\mod$.
\end{proof}

The representation category of a braided Hopf algebra
is a braided tensor category. If the Hopf algebra has, moreover, a
twist element, its representation category is even
a ribbon category. We now present $J$-equivariant
generalizations of these structures. 

\begin{Definition}\label{def:jribbon}
 Let $A$ be a $J$-Hopf algebra. 
\begin{enumerate}
\item
A $J$-equivariant $R$-matrix is an element $R = R_{(1)}\otimes R_{(2)} \in A\otimes A$ such that for $V\in (A\mod)_i$, $W\in A\mod$, the map
\begin{align*}
c_{VW}:& V\otimes W \to ^{i}W \otimes V\\
&v\otimes w \mapsto R_{(2)}.w\otimes R_{(1)}.v
\end{align*}
is a $J$-braiding on the category $A\mod$ according to definition \ref{J-equ.tc}. 
\item A $J$-twist is an invertible element $\theta \in A$ such that for every object $V\in (A\mod)_i$ the induced map
\begin{align*}
\theta_V: &V\to ^{i}V\\
& v \mapsto \theta^{-1}.v
\end{align*}
is a $J$-twist on $A\mod$ as defined in \ref{ribbon_def}.
\end{enumerate}
If $A$ has an $R$-matrix and a twist, we call it a $J$-\emph{ribbon-algebra}.
\end{Definition}



\begin{Remark}
\begin{itemize}
\item
A $J$-ribbon algebra is not, in general,
a ribbon Hopf algebra.
\item 
The component $A_1$ with the obvious restricitions of $R$ and $\theta$ is a ribbon algebra.
\item The conditions that the category $A\mod$ is braided resp. ribbon can be translated into algebraic coniditions on the elements $R$ and $\theta$. Since we are mainly interested in the categorical structure we refrain from doing that here.
\end{itemize}
\end{Remark}

Taking all the introduced algebraic strucutre into account we get the following proposition.

\begin{Proposition}\label{category-J-ribbon}
The representation category of a $J$-ribbon algebra is a $J$-ribbon category.
\end{Proposition}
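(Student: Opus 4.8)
The plan is to assemble the $J$-equivariant ribbon structure on $A\mod$ from the three layers of data carried by a $J$-ribbon algebra, each of which has already been analysed separately: the underlying $J$-Hopf algebra, the $J$-equivariant $R$-matrix, and the $J$-twist. The point is that the categorical definitions \ref{def:jribbon} of $R$-matrix and twist have been set up so that the corresponding categorical structures are produced directly, so the proof is largely a matter of verifying that these pieces fit together.

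First I would invoke lemma \ref{J-equivariant}: the underlying $J$-Hopf algebra structure of $A$ already equips $A\mod$ with the structure of a $\KK$-linear, abelian $J$-equivariant tensor category with compatible duality, in the sense of definition \ref{J-equ.tc}. This settles the entire tensor-categorical and grading layer, including the compositors $\alpha_{i,j}$ (induced by the group-like elements $c_{i,j}$) and the monoidal constraints $\gamma_j$, so that from this point on only the braiding and the twist remain to be supplied.

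Next, by definition \ref{def:jribbon}.1 the $J$-equivariant $R$-matrix $R = R_{(1)}\otimes R_{(2)}$ is, by hypothesis, such that the maps $c_{V,W}: V\otimes W \to {}^iW\otimes V$, $v\otimes w \mapsto R_{(2)}.w\otimes R_{(1)}.v$, constitute a $J$-braiding on $A\mod$; in particular they are natural isomorphisms of $A$-modules satisfying the equivariant hexagon axioms and the compatibility diagram \eqref{action-braiding-compatibility}. Likewise, by definition \ref{def:jribbon}.2 the $J$-twist $\theta$ is such that the maps $\theta_V: V \to {}^iV$, $v \mapsto \theta^{-1}.v$, form a $J$-twist in the sense of definition \ref{ribbon_def}, hence are compatible with the duality and the $J$-action and render the diagram \eqref{com.diagram-twist} commutative.

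Combining these observations, $A\mod$ is a $J$-braided category with compatible dualities carrying a twist that obeys all the coherence conditions of definition \ref{ribbon_def}; this is exactly the data of a $J$-equivariant ribbon category, which proves the claim. The only point demanding genuine care --- and where I would slow down --- is the interplay of the \emph{weak} action with the twist and the braiding: since we allow weak rather than strict $J$-actions, the compositors $\alpha_{i,j}$ appear explicitly in \eqref{com.diagram-twist} and in \eqref{action-braiding-compatibility}, and one must confirm that the specific compositors furnished by lemma \ref{J-equivariant} are precisely those for which the braiding from $R$ and the twist from $\theta$ satisfy these diagrams. This matching is exactly what the definitions of a $J$-equivariant $R$-matrix and of a $J$-twist have been designed to encode, so beyond unwinding the definitions no further computation is required.
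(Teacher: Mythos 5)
Your proposal is correct and matches the paper's reasoning: the paper gives no explicit proof at all, precisely because definition \ref{def:jribbon} was formulated categorically (the $R$-matrix and twist element are \emph{required} to induce a $J$-braiding resp.\ $J$-twist on $A\mod$), so that together with lemma \ref{J-equivariant} the proposition follows by unwinding definitions, exactly as you argue. Your closing caveat about the weak action and the compositors is a fair observation, but as you yourself note, it is absorbed into the hypotheses of definition \ref{def:jribbon}, so no further verification is needed.
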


\begin{Remark}
In \cite{turaev2010}, Hopf algebras and ribbon Hopf algebras
with strict $J$-action have been considered.
The next subsection will give an illustrative
example where the natural action is not strict.
\end{Remark}

\subsection{Equivariant Drinfel'd Double}

The goal of this subsection is to  construct a $J$-ribbon algebra, given a finite group $G$
with a weak $J$-action. As explained in subsection 
\ref{Def:action}, such a weak $J$-action amounts to
a group extension
\begin{equation}\label{groupext}
1\to G \longrightarrow H \stackrel{\pi}{\longrightarrow} J
\to 1
\end{equation}
with a set-theoretical splitting $s:J \to H$.

We start from the well-known fact reviewed
in subsection \ref{dd} that  the Drinfel'd double 
$\cald(H)$ of the finite group $H$ is a ribbon Hopf 
algebra. The double $\cald(H)$ has a canonical basis
$\delta_{h_1}\otimes h_2$ indexed by pairs
$h_1,h_2$ of elements of $H$. Let $G\subset H$ be
a subgroup. We are interested in
the vector subspace $\cald^J(G)$ spanned by the
basis vectors $\delta_h\otimes g$
with $h\in H$ and $g\in G$.

\begin{Lemma}
The structure maps of the Hopf algebra
$\cald(H)$ restrict to the vector subspace
$\cald^J(G)$ in such a way that the
latter is endowed with the structure of a  Hopf subalgebra.
\end{Lemma}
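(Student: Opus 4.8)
The plan is to exploit the fact that $\cald(H)$ is \emph{already} a Hopf algebra: to prove the lemma it suffices to verify that the subspace $\cald^J(G)$ is stable under each of the five structure maps — multiplication, unit, comultiplication, counit, and antipode. Once stability is established, every Hopf algebra axiom automatically holds in $\cald^J(G)$, since it holds in the ambient algebra $\cald(H)$ and all the relevant elements land in the subspace; the restrictions of the structure maps then equip $\cald^J(G)$ with the inherited Hopf structure.

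The single observation that organizes all five checks is that the defining constraint on a basis vector $\delta_a\otimes g$ of $\cald^J(G)$ concerns only the group-algebra tensor slot, namely $g\in G$, whereas the function slot $\delta_a$ is permitted to range over all of $H$. Each structure map of $\cald(H)$ acts on the group-algebra slot purely through operations internal to a group — multiplication, inversion, or the unit — and hence never moves an element out of $G$, precisely because $G$ is a subgroup of $H$.

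Concretely, for multiplication one has $(\delta_a\otimes g)(\delta_b\otimes g')$ equal either to $0$ or to $\delta_a\otimes gg'$, and $gg'\in G$ since $G$ is closed under multiplication; the conditional $a=g b g^{-1}$ only decides whether the product vanishes and never affects membership in the subspace. The unit $\sum_{h\in H}\delta_h\otimes 1$ lies in $\cald^J(G)$ because $1\in G$. The coproduct $\Delta(\delta_a\otimes g)=\sum_{bc=a}(\delta_b\otimes g)\otimes(\delta_c\otimes g)$ leaves the group-algebra slot untouched, so every summand sits in $\cald^J(G)\otimes\cald^J(G)$. The counit restricts with no condition on $g$ at all. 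Finally the antipode $S(\delta_a\otimes g)=\delta_{g^{-1}a^{-1}g}\otimes g^{-1}$ sends $g$ to $g^{-1}\in G$, while the function index $g^{-1}a^{-1}g$ merely lands somewhere in $H$, which is exactly what the definition of $\cald^J(G)$ allows.

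I do not expect a genuine obstacle here; the verification is routine. The only two points deserving a moment's care are that the unit of $\cald(H)$ is a sum over all of $H$, so one must confirm the whole sum lies in the subspace rather than reason from a single basis vector, and that closure under multiplication should be recorded on the nonzero branch of the conditional product. Both are immediate consequences of $G$ being closed under multiplication, inversion, and containing $1$, which is all that the construction of $\cald^J(G)$ ever uses.
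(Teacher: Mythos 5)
Your proof is correct and is precisely the routine verification that the paper leaves implicit (the lemma is stated there without proof): one checks that $\cald^J(G)$ is stable under multiplication, unit, coproduct, counit and antipode of $\cald(H)$, each of which acts on the group-algebra slot only through the subgroup operations of $G$. Your two points of care — treating the unit as a full sum over $H$ and recording closure on the nonzero branch of the product — are exactly the right details to note, and nothing further is needed.
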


\begin{Remark}
The induced algebra structure on $\cald^J(G)$ is the
one of the groupoid algebra of the action groupoid
$H//G$.
\end{Remark}

The Drinfel'd double $\cald(H)$ of a group $H$ 
has also the structure of a ribbon algebra. However, 
neither the R-matrix nor the the ribbon element yield
an R-matrix or a ribbon element of $\cald^J(G)\subset\cald(H)$.
Rather, this Hopf subalgebra can be endowed with the 
structure of a $J$-ribbon Hopf algebra as in definition
\ref{def:jribbon}.

To this end, consider the partition of the group $H$
into the subsets  $H_j:=\pi^{-1}(j) \subset H$. It gives
a $J$-grading of the algebra $A$ as a direct sum
of subalgebras:
\[A_j := \langle \delta_h\otimes g\rangle_{h\in H_j,g\in G}
\,\,\, . \]
The set-theoretical section $s$ gives a weak action of $J$
on $A$ that can be described by its action
on the canonical basis of $A_j$:
\begin{eqnarray*}
\varphi_j(\delta_h \otimes g) :=& (\delta_{s(j)hs(j)^{-1}} \otimes s(j)gs(j)^{-1})
\,\, ;
\end{eqnarray*}
the coherence elements are
\[c_{ij} := \sum_{h\in H} \delta_h \otimes 
s(i)s(j)s(ij)^{-1} \,\,\,\, .\]


\begin{Proposition}
The Hopf algebra $\cald^J(G)$, together with the grading
and weak $J$-action derived from the weak $J$-action 
on the group $G$,  has the structure of a $J$-Hopf algebra.
\end{Proposition}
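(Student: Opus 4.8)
The plan is to verify, one by one, the five defining conditions of a $J$-Hopf algebra listed in Definition \ref{JHopf}, together with the prerequisite that $(\varphi_j,c_{ij})$ really is a weak $J$-action on the algebra. Since by the lemma above $\cald^J(G)$ is already a Hopf subalgebra of the Drinfel'd double $\cald(H)$, the Hopf-algebra axioms themselves need not be re-checked; only the interplay of grading, weak action and coproduct remains.

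The key structural observation on which everything rests is the following. The group algebra $\KK[H]$ embeds into $\cald(H)$ as a Hopf subalgebra via $x \mapsto \widehat{x} := \sum_{h\in H} \delta_h \otimes x$, and a short computation with the smash-product multiplication shows that conjugation by $\widehat{x}$ inside $\cald(H)$ reproduces group conjugation, $\widehat{x}\,(\delta_h \otimes h')\,\widehat{x}^{-1} = \delta_{xhx^{-1}} \otimes xh'x^{-1}$. Consequently $\varphi_j$ is exactly the restriction to $\cald^J(G)$ of the Hopf automorphism of $\cald(H)$ induced by conjugation with $s(j)\in H$, and the coherence element is $c_{ij} = \widehat{s(i)s(j)s(ij)^{-1}}$, where $s(i)s(j)s(ij)^{-1}\in G$ because it lies in $\ker\pi$. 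As $x\mapsto\widehat{x}$ is an algebra homomorphism, $\Inn_{c_{ij}}$ on $\cald^J(G)$ coincides with conjugation by the group element $s(i)s(j)s(ij)^{-1}$.

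Granting this, I would first establish the weak-action relations. Writing $\varphi_i\circ\varphi_j = \Inn_{\widehat{s(i)}\widehat{s(j)}} = \Inn_{\widehat{s(i)s(j)}}$ and splitting $s(i)s(j) = (s(i)s(j)s(ij)^{-1})\,s(ij)$ yields $\varphi_i\circ\varphi_j = \Inn_{c_{ij}}\circ\varphi_{ij}$; the cocycle relation $\varphi_i(c_{j,k})\,c_{i,jk} = c_{i,j}\,c_{ij,k}$ and the normalization $c_{1,1}=1$ (using $s(1)=1$) reduce, through $x\mapsto\widehat{x}$, precisely to the Schreier identity $\rho_i\big(s(j)s(k)s(jk)^{-1}\big)\,s(i)s(jk)s(ijk)^{-1} = s(i)s(j)s(ij)^{-1}\,s(ij)s(k)s(ijk)^{-1}$ satisfied by the cocycle of the extension $1\to G\to H\to J\to 1$, i.e.\ to the defining relations of the weak $J$-action on $G$ from Definition \ref{Def:action} supplied by Proposition \ref{dedeschreier}. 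Since each $\varphi_j$ is the restriction of a Hopf automorphism, $J$ acts by Hopf-algebra homomorphisms. The remaining conditions are then read off the canonical basis. For the grading $A_j = \langle \delta_h\otimes g\rangle_{h\in H_j,\, g\in G}$, the product $(\delta_h\otimes g)(\delta_{h'}\otimes g')$ is nonzero only when $h=gh'g^{-1}$, and applying $\pi$ with $\pi(g)=1$ forces $\pi(h)=\pi(h')$, so $A_iA_j=0$ for $i\ne j$ and $A=\bigoplus_j A_j$ as an algebra. Compatibility with the grading, $\varphi_i(A_j)\subset A_{iji^{-1}}$, follows from $\pi(s(i)hs(i)^{-1}) = i\,\pi(h)\,i^{-1}$, while $\Delta(\delta_h\otimes g)=\sum_{h'h''=h}(\delta_{h'}\otimes g)\otimes(\delta_{h''}\otimes g)$ lands in $\bigoplus_{pq=\pi(h)}A_p\otimes A_q$ since $\pi(h')\pi(h'')=\pi(h)$. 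Finally $c_{ij}=\widehat{s(i)s(j)s(ij)^{-1}}$ is the image of a group element under the Hopf embedding $\KK[H]\hookrightarrow\cald(H)$ and is therefore group-like.

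I expect the genuine obstacle to be the weak-action step: one must keep careful track of the three distinct notions of conjugation in play — conjugation in the group $H$, the inner automorphisms $\Inn$ of the algebra $\cald^J(G)$, and the induced automorphisms on $\cald(H)$ — and verify that the algebraic cocycle identity matches the group-theoretic Schreier cocycle relation on the nose, including any reordering coming from how composition of automorphisms translates into group multiplication. The grading and coalgebra compatibilities are routine bookkeeping on the canonical basis and should present no difficulty.
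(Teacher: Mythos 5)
Your proof is correct and takes essentially the same route as the paper, which also reduces everything to the fact that the weak action is implemented by conjugation inside $\cald(H)$ and that conjugation commutes with the product and coproduct of the Drinfel'd double. Your write-up is in fact more detailed than the paper's (which disposes of the axioms in three lines), making the mechanism precise via the group-like elements $\widehat{x}=\sum_{h\in H}\delta_h\otimes x$ of the Hopf embedding $\KK[H]\hookrightarrow\cald(H)$ and verifying the Schreier-type cocycle identities explicitly.
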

\begin{proof}
It only remains to check the compatibility relations of grading and weak $J$-action
with the Hopf algebra structure that have been formulated in
definition \ref{JHopf}. 
The fact that $\varphi_i(A_j) \subset A_{iji^{-1}}$ is immediate, since $s(i)H_js(i^{-1}) \subset H_{iji^{-1}}$. The axioms follow are essentially equivalent to the property that conjugation commutes with the product and coproduct of the Drinfel'd double.
\end{proof}

We now turn to the last piece of structure, 
an $R$-matrix and twist element in $\cald^J(G)$.
Consider the element 
$R= \sum_{ij}R_{i,j} \in \cald^J(G) \otimes \cald^J(G)$ 
with homogeneous elements $R_{i,j}$ defined as
\begin{eqnarray}\label{JR-matrix}
R_{i,j} := \sum_{h_1 \in H_i, h_2 \in H_{j}} 
(\delta_{h_1} \otimes 1)\otimes(\delta_{h_2} \otimes s(i^{-1}) h_1).
\end{eqnarray}
(Note that $\pi(s(i^{-1})h_1) = 1$ for $h_1 \in H_i$ and thus $s(i^{-1})h_1 \in G$.)\\
The element $R_{i,j}$ is invertible with inverse
\begin{eqnarray*}
R_{i,j}^{-1} = \sum_{h_1 \in H_i, h_2 \in H_{j}} 
(\delta_{h_1} \otimes 1)\otimes(\delta_{h_2} 
\otimes h_1^{-1} s(i^{-1})^{-1}).
\end{eqnarray*}
We also introduce a twist element
$\theta = \sum_{j \in J} \theta_j \in \cald^J(G)$
with 
\begin{align}\label{J-twist}
\theta_j^{-1} := \sum_{h\in H_j} 
\delta_h \otimes s\big(j^{-1}\big)h \in A_j
\end{align}
for every 
element $j \in J$.

\begin{Proposition}\label{double-J-ribbon}
The elements $R$ and $\theta$ endow the $J$-Hopf algebra
$\cald^J(G)$  with the structure of a $J$-ribbon algebra
that we call the $J$-Drinfel'd double of $G$.
\end{Proposition}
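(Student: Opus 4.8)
The plan is to unwind Definition \ref{def:jribbon}: since the preceding proposition already shows that $\cald^J(G)$ is a $J$-Hopf algebra, Lemma \ref{J-equivariant} equips $\cald^J(G)\mod$ with the structure of a $\KK$-linear abelian $J$-equivariant tensor category with compatible duality, whose underlying abelian category is $H//G\mod$. What remains is exactly to check that the element $R$ of \eqref{JR-matrix} is a $J$-equivariant $R$-matrix and that $\theta$ of \eqref{J-twist} is a $J$-twist. For both, the first move is to read off the induced categorical map on objects and compare it with the structure already under control.

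For the $R$-matrix I would compute the induced $c_{VW}$ directly. For $V\in(\cald^J(G)\mod)_i$ and a homogeneous $v\in V_h$ with $h\in H_i$, the factor $\delta_{h_1}\otimes 1$ in \eqref{JR-matrix} forces $h_1=h$ and acts as the identity, so $R_{(1)}.v=v$, while $R_{(2)}=\delta_{h_2}\otimes s(i^{-1})h$ acts on $w$ through the group element $s(i^{-1})h\in G$ (the sum over $\delta_{h_2}$ collapsing to the identity). Hence $c_{VW}(v\otimes w)=(s(i^{-1})h).w\otimes v$, which is precisely the isomorphism computed geometrically in Proposition \ref{prop:braiding}. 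Since Proposition \ref{prop:Jtensor} already establishes that this family is a braiding on $H//G\mod$, i.e.\ satisfies the hexagon axioms and the action-braiding compatibility \eqref{action-braiding-compatibility}, the element $R$ is a $J$-equivariant $R$-matrix; invertibility is the explicit inverse exhibited above. This step is therefore largely a matter of matching formulas and invoking the earlier geometric results.

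For the twist I would likewise read off $\theta_V$: for $v\in V_h$ with $V\in\calc_i$ and $h\in H_i$ one finds $\theta_V(v)=\theta^{-1}.v=(s(i^{-1})h).v$. Because $s(i^{-1})h\in G$, the $G$-action places this in degree $s(i^{-1})hs(i^{-1})^{-1}$, which is exactly the degree-$h$ part of ${}^iV=\phi_i(V)$ as described in Proposition \ref{prop:action}; so $\theta_V$ is a well-defined isomorphism $V\to{}^iV$. It then remains to verify the two conditions of Definition \ref{ribbon_def}.1: compatibility with duality, which I would deduce from the antipode formula $S(\delta_g\otimes h)=\delta_{h^{-1}g^{-1}h}\otimes h^{-1}$ of $\cald(H)$ together with $S(A_j)\subset A_{j^{-1}}$, and the compositor diagram \eqref{com.diagram-twist}.

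To check \eqref{com.diagram-twist} I would evaluate both composites on a homogeneous $u\otimes v$ with $u\in U_{h_1}$, $v\in V_{h_2}$, $h_1\in H_i$, $h_2\in H_j$, so $u\otimes v$ lies in degree $h_1h_2\in H_{ij}$. The left path yields $\theta_{U\otimes V}(u\otimes v)=(s((ij)^{-1})h_1h_2).(u\otimes v)$, while the right path is the composite of $\theta_U\otimes\theta_V$, two braidings $c$, the compositor $\alpha_{i,j}=s(i)s(j)s(ij)^{-1}$ and $\gamma_{ij}$; each factor contributes an explicit element of $G$ acting diagonally, and equality reduces to a single identity among products of the $s(\cdot)$ and the $h_\ell$ in $H$. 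The main obstacle is exactly this bookkeeping: because the action is only a \emph{weak} action, the conjugating elements $s(i^{-1})$, $s(j^{-1})$, $s((ij)^{-1})$ do not multiply strictly, and one must use the section's cocycle relation $s(i)s(j)=c_{i,j}\,s(ij)$ to see the factors $c_{i,j}$ cancel correctly against the compositors $\alpha_{i,j}$. Once this identity and the duality compatibility are in place, $\theta$ is a $J$-twist, and together with $R$ this endows $\cald^J(G)$ with the structure of a $J$-ribbon algebra.
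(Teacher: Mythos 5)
Your two central computations are exactly the paper's: the $R$-matrix \eqref{JR-matrix} induces $c_{V,W}(v\otimes w)=s(j^{-1})h.w\otimes v$ and the twist \eqref{J-twist} induces $\theta_V(v)=s(j^{-1})h.v$ for $v\in V_h$, $V$ in the $j$-component. The genuine difference lies in how you discharge the coherence axioms for the braiding. The paper stays entirely on the algebraic side: it verifies that these maps are morphisms in $\cald^J(G)\mod$ (grading and $G$-equivariance) and then checks — or declares checkable by ``straightforward calculations'' — the hexagons, the compatibility \eqref{action-braiding-compatibility}, and the three twist diagrams directly; only afterwards, in proposition \ref{J-fusion}, does it match this algebraic structure with the geometric category. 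You run part of that matching in the opposite direction: having observed that the induced braiding coincides with the formula of proposition \ref{prop:braiding}, you import the hexagon axioms and action-compatibility from proposition \ref{prop:Jtensor}. Since \ref{prop:Jtensor} precedes \ref{double-J-ribbon} and its proof does not depend on it, this is not circular, and it has the advantage of not proving the same coherence twice. But there is a caveat you should make explicit: proposition \ref{prop:Jtensor} concerns $H//G\mod$ with the $J$-equivariant structure induced by the set-theoretic section (functors $\phi_j$, compositors given by left action of $s(i)s(j)s(ij)^{-1}$), whereas definition \ref{def:jribbon} requires a braiding on $\cald^J(G)\mod$ with the structure induced by the weak action on the algebra via Lemma \ref{action} (compositors $\rho\big((c_{j^{-1},i^{-1}})^{-1}\otimes \id\big)$). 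To invoke \ref{prop:Jtensor} you must check that these two structures — gradings, action functors, tensor products, and in particular the compositors — agree under the canonical identification $\cald^J(G)\mod\cong H//G\mod$; the paper itself only asserts this coincidence \emph{after} proposition \ref{double-J-ribbon} and never spells it out for the compositors. So your route shifts this verification rather than eliminating it, and it deserves a line of proof.

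One smaller omission: for the twist you list only the duality diagram and \eqref{com.diagram-twist}, but definition \ref{ribbon_def} also demands compatibility of $\theta$ with the $J$-action itself, i.e.\ the commutativity of the diagram relating ${}^i\theta_V$, $\theta_{{}^iV}$ and the compositors $\alpha_{iji^{-1},i}$, $\alpha_{i,j}$ — the third diagram in the paper's proof. Your cocycle-bookkeeping strategy (using $s(i)s(j)=c_{i,j}\,s(ij)$ to cancel compositors against the nonmultiplicativity of the section) handles it by the same mechanism as \eqref{com.diagram-twist}, but it must be stated and checked as a separate condition.
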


\begin{proof}
By Definition \ref{def:jribbon} we have to check that the induced tranformations on the level of categories satisify the axioms for a braiding and a twist. We first compute the induced braiding by using the R-matrix given in \eqref{JR-matrix}:
For $V\in (\cald^J(G)\mod)_j = \cald^J(G)_j\mod$ and $W\in \cald^J(G)\mod$, we get the linear map
\begin{align*}
c_{V,W}:V\otimes W &\to ^jW \otimes V\\
v\otimes w & \mapsto s(j^{-1})h.w\otimes v \quad\text{for} \quad v \in V_h
\end{align*}
First of all, we show that this is a morphism in $\cald^J(G)\mod$.
Let $v\in V_h$ and $w \in W_{h'}$. We have $v\otimes w \in (V\otimes W)_{hh'}$ and
$c_{V,W}(v\otimes w) = s(j^{-1})h.w\otimes v \in (^jW\otimes V)_{hh'}$, since the element $s(j^{-1})h.w$ is in the component $\left(s(j^{-1})hh'h^{-1}s(j^{-1})^{-1}\right)$ of $W$ which is the component $hh'h^{-1}$ of $^jW$. So $c_{V,W}$ respects the grading.
As for the action of $G$, we observe that for $g \in G$, the element $g.v$ lies in the component $V_{ghg^{-1}}$, and so
\begin{align*}
c_{V,W}(g.(v\otimes w)) = \left(s(j^{-1})ghg^{-1}\right)g.w\otimes g.v =  s(j^{-1})gh.w\otimes g.v\\
g.c_{V,W}(v\otimes w) = \left(s(j^{-1})gs(j^{-1})^{-1}\right)s(j^{-1})h.w \otimes g.v =  s(j^{-1})gh.w\otimes g.v
\end{align*}
which shows that $c_{V,W}$ commutes with the action of $G$.\\
Furthermore, it needs to be checked, that $c_{V,W}$ satisfies the hexagon axioms, which in our case reduce to the two diagrams
\begin{align*}
\xymatrix{
U\otimes V \otimes W\ar[rr]^{c_{U, V\otimes W}}\ar[d]_{c_{U,V}\otimes \id_W}  && ^{i}V\otimes ^{i}W\otimes U\\
^{i}V \otimes U \otimes W\ar[rru]_{\id_{^{i}V} \otimes c_{U,W}}&&
}
\end{align*}
and
\begin{align*}
\xymatrix{
U\otimes V \otimes W\ar[rr]^{c_{U\otimes V,W}}\ar[d]_{\id_U \otimes c_{V,W}}  &&^{ij}W \otimes U \otimes V\\
U \otimes ^{j}W  \otimes V\ar[rr]_{c_{U,^{j}W} \otimes \id_{V}} && ^{i}(^{j}W) \otimes U \otimes V
\ar[u]_{\alpha_{i,j}\otimes \id_U}} 
\end{align*}
where  $U \in \calc_i, V \in \calc_j$ and we suppressed the associators in $A\mod$.\\
And at last we need to check the compatibility of the braiding and the $J$-action, i.e. diagram \eqref{action-braiding-compatibility}. 
All of that can be proven by straightforward calculations, which show, that the morphism induced by the element $R$ is really a braiding in the module category.\\
We now compute the morphism given by the action with the inverse twist element \eqref{J-twist}. For $V\in (\cald^J(G)\mod)_j = \cald^J(G)_j\mod$ we get the linear map:
\begin{align*}
\theta_V:V &\to ^{j}V\\
v&\mapsto s(j^{-1})h.v
\end{align*}
for $v \in V_h$. This map is compatible with the $H$-grading since for $v \in V_h$, we have $s(j^{-1})h.v \in V_{s(j^{-1})hs(j^{-1})^{-1}} = (^j V)_h$ and it is also compatible with the $G$-action, since
\begin{align*}
\theta_V(g.v) &= s(j^{-1})(ghg^{-1})g.v = s(j^{-1})gh.v\\
g.\theta_V(v) &= \left(s(j^{-1})gs(j^{-1})^{-1}\right)s(j^{-1})h.v = s(j^{-1})gh.v
\end{align*}
So the induced map is a morphism in the category $\cald^J(G)$. In order to proof that it is a twist, we have to check the commutativity of diagram \eqref{com.diagram-twist} as well as for every $i,j \in J$ and object $V$ in the component $j$, the commutativity of the diagrams
\begin{align*}
\xymatrix{
^{j}V^{\vee}\ar[r]^{(\theta_V)^{\vee}}\ar[d]_{\theta_{^{j}V^{\vee}}}  &V^{\vee}\\
^{j^{-1}}(^{j}V^{\vee})\ar[ru]_{\alpha_{j^{-1},j}}&} 
\end{align*}
(which displays the compatibility of the twist with the duality),
and
\begin{align*}
\xymatrix{
^{i}V\ar[rr]^{ ^{i}\theta_V}\ar[d]_{\theta_{^{i}V}}  && ^{i}(^{j}V)\ar[d]^{\alpha_{i,j}}\\
^{iji^{-1}}(^{i}V) \ar[rr]_{\alpha_{iji^{-1},i}}&& ^{ij}V} 
\end{align*}
(which is the compatibility of the twist with the $J$-action). This again can be checked by straightforward calculations.
\end{proof}

We are now ready to come back to the $J$-equivariant
tensor category $\cgj$ described in proposition 
\ref{prop:Jtensor}.
From this proposition, we know that the category
$\cdw$ is equivalent to $H//G\mod\cong\cald^J(G)\mod$ as a $J$-equivariant
tensor category. 
Also $J$-action and tensor product coincide with
the ones on $\cald^J(G)\mod$. 
Moreover, the equivariant braiding of $\cdw$ computed in proposition 
\ref{prop:braiding}  coincides with the braiding on $\cald^J(G)$ computed in the proof of the last proposition 
\ref{double-J-ribbon}.

This allows us to
transfer also the other structure on the representation category
of the $J$-Drinfel'd double $\cald^J(G)$ described
in proposition \ref{double-J-ribbon} to the category
$\cdw$:

\begin{Proposition}\label{J-fusion}
The $J$-equivariant tensor category $\cgj$ described
in proposition \ref{prop:Jtensor} can be endowed with
the structure of a $J$-premodular category
such that it is equivalent, as a $J$-premodular
category, to the category $\cald^J(G)\mod$.
\end{Proposition}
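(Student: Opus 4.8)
The plan is to obtain the entire $J$-premodular structure on $\cdw$ by transport of structure from $\cald^J(G)\mod$, for which the $J$-ribbon structure has already been established in propositions \ref{double-J-ribbon} and \ref{category-J-ribbon}. The backbone is the identification of $\cald^J(G)$ with the groupoid algebra $\KK[H//G]$ of the action groupoid $H//G$, noted immediately after its construction. This gives an equivalence of abelian categories $\cald^J(G)\mod \cong [H//G,\vect_\KK] = H//G\mod$, which, composed with the isomorphism of proposition \ref{prop:Jtensor}, produces an equivalence $\cdw \cong \cald^J(G)\mod$.

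First I would upgrade this to an equivalence of braided $J$-equivariant tensor categories. By proposition \ref{prop:Jtensor} the isomorphism $\cdw \cong H//G\mod$ already respects the tensor product, the functors $\phi_j$ together with their compositors $\alpha_{i,j}$, and the braiding. It therefore suffices to observe that these data on $H//G\mod$, computed geometrically in propositions \ref{prop:fusionproduct}, \ref{prop:action} and \ref{prop:braiding}, coincide with the data coming from the Hopf-algebraic structure of $\cald^J(G)$. This is immediate for the grading-shifting action and the graded tensor product; for the braiding it is exactly the coincidence of the formula $v\otimes w\mapsto (s(j^{-1})h).w\otimes v$ of proposition \ref{prop:braiding} with the braiding induced by the $R$-matrix \eqref{JR-matrix} that is read off in the proof of proposition \ref{double-J-ribbon}. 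Since both descriptions are built from the \emph{same} set-theoretic section $s$, the compositors $\alpha_{i,j}=s(i)s(j)s(ij)^{-1}$ match as well.

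Next I would transport the remaining ribbon data. By propositions \ref{double-J-ribbon} and \ref{category-J-ribbon} the category $\cald^J(G)\mod$ is a $J$-ribbon category, with dualities inherited from $\vect_\KK$ and twist induced by the element $\theta$ of \eqref{J-twist}. Pulling these back along the equivalence endows $\cdw$ with dualities and a twist; because the equivalence is braided and $J$-equivariant, the compatibility diagrams of definition \ref{ribbon_def}, including the compositor diagram \eqref{com.diagram-twist}, hold automatically on $\cdw$. It then remains to check the two extra conditions of definition \ref{ribbon_def}.2: that $\cdw$ is $\KK$-linear, abelian and finitely semisimple, which is precisely the 2-vector-space statement of proposition \ref{HG-Mod}, and that the tensor product is $\KK$-bilinear with absolutely simple tensor unit. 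The unit is the one-dimensional object supported at $1\in H$, for which $\End(\mathbf 1)=\KK$.

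I expect the main obstacle to be bookkeeping rather than conceptual, since the genuine content has been front-loaded into propositions \ref{prop:Jtensor} and \ref{double-J-ribbon}. The real point of care is verifying that the explicit geometric formulas and the explicit Hopf-algebraic formulas encode the same coherence data — in particular that the weak-action compositors, which measure the failure of $s$ to be a group homomorphism, are matched consistently on both sides. Once this is in place, the premodular axioms are inherited along the equivalence and the result is, by construction, an equivalence of $J$-premodular categories.
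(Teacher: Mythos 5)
Your proposal is correct and takes essentially the same route as the paper: the paper likewise combines the equivalence $\cgj \cong H//G\mod \cong \cald^J(G)\mod$ of $J$-equivariant tensor categories from proposition \ref{prop:Jtensor} with the observed coincidence of the geometric braiding of proposition \ref{prop:braiding} and the $R$-matrix braiding computed in the proof of proposition \ref{double-J-ribbon}, and then transports the remaining duality and twist structure from $\cald^J(G)\mod$ to $\cgj$. Your additional bookkeeping (matching of compositors built from the same section $s$, and the verification of semisimplicity and absolute simplicity of the unit via proposition \ref{HG-Mod}) only makes explicit what the paper leaves implicit.
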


\begin{Remark}
At this point, we have constructed in paticular a $J$-equivariant 
braided fusion category $\cgj$ (see \cite{ENO2009}) with neutral component
$\calc(G)_1\cong\cald(G)\mod$
from a weak action of the group $J$ on the group $G$,
or in different words, from a 2-group homomorphisms 
$J \to \AUT(G)$ with $\AUT(G)$ the automorphism 2-group 
of $G$. 

In this remark, we very briefly sketch the relation
to the description of $J$-equivariant braided fusion categories
with given neutral sector $\calb$ in terms of 3-group 
homomorphisms $J \to \BrPic(\calb)$ given in \cite{ENO2009}.
Here $\BrPic(\calb)$ denotes the so called Picard 3-group
whose objects are invertible module-categories of the 
category $\calb$. The group structure comes from 
the tensor product of module categories which can be 
defined since the braiding on $\calb$ allows to turn module categories 
into bimodule categories.

Using this setting, we give a description of our 
$J$-equivariant braided fusion category $\cald^J(G)\mod$
in terms of a functor $\Xi: J \to \BrPic(\cald(G))$.
To this end, we construct a 3-group homomorphism 
$\AUT(G) \to \BrPic(\cald(G))$ and write $\Xi$ as the 
composition of this functor and the functor 
$J \to \AUT(G)$ defining the weak $J$-action. 

The 3-group homomorphism $\AUT(G) \to \BrPic(\cald(G))$ 
is given as follows: to an object $\varphi \in \AUT(G)$ 
we associate the twisted conjugation groupoid 
$G//^{\varphi}G$, where $G$ acts on itself by 
twisted conjugation, $g.x := g x \varphi(g)^{-1}$. 
This yields the category 
$G//^{\varphi}G\mod := [ G//^{\varphi}G , \vect_\KK ]$
which is naturally a module category over $\cald(G) \mod$. 
Morphisms $\varphi \to \psi$ in $\AUT(G)$ 
are given by group elements $g \in G$ with 
$g \varphi g^{-1} = \psi$; to such a morphism we associate 
the functor $L_g: G//^{\varphi}G \to G//^{\psi}G$ given by 
conjugating with $g\in\ G$ on objects and morphisms. 
This induces functors of module categories
$G//^{\varphi}G\mod \to G//^{\psi}G\mod$. Natural
coherence data exist; one then shows 
that this really establishes the desired 3-group homomorphism.
\end{Remark}

\subsection{Orbifold category and orbifold algebra}

It remains to show that the $J$-equivariant ribbon
category $\cgj$ described in \ref{double-J-ribbon}
is $J$-modular. To this end, we will use
the orbifold category of the $J$-equivariant category:

\begin{Definition}
Let $\calc$ be a $J$-equivariant category. The 
\emph{orbifold category} $\calc^J$ of $\calc$ has:
\begin{itemize}
\item as objects pairs $(V,(\psi_j)_{j \in J})$
consisting of an object $V\in\calc$ and a family
of isomorphisms $\psi_j:{}^jV \to V$ with $j\in J$
such that $\psi_i\,\circ\, {}^i\psi_j = \psi_{ij}\circ \alpha_{i,j}$.
\item as morphisms $f:(V,\psi^V_j) \to (W,\psi^W_j)$
those morphisms $f:V \to W$ in $\calc$ for which
$\psi_j \circ {}^j(f) = f \circ \psi_j$ holds for 
all $j \in J$.
\end{itemize}
\end{Definition}

In \cite{kirI17}, it has been shown that the orbifold 
category of a $J$-ribbon category is an ordinary,
non-equivariant ribbon category:

\begin{Proposition}\label{kirillov1}
\begin{enumerate}
\item
Let $\calc$ be a $J$-ribbon category. Then the orbifold 
category $\calc^J$ is naturally endowed with the
structure of a ribbon category by the following data:
\begin{itemize}
\item The tensor product of the objects $(V,(\psi^V_j))$
and $(W,(\psi^W_j))$ is defined as the object 
$(V\otimes W, (\psi^V_j\otimes \psi^W_j))$.

\item The tensor unit for this tensor product
is $\mathbf{1} = (\mathbf{1},(\id))$

\item The dual object of $(V,(\psi_j))$ is 
the object $(V^{\ast},(\psi_j^{\ast})^{-1})$, 
where $V^{\ast}$ denotes the dual object in $\calc$. 

\item The braiding of the two objects $(V,(\psi^V_j))$ and
$(W,(\psi^W_j))$ with $V \in \calc_j$ is given by the
isomorphism
$(\psi_j \otimes \id_V)\circ c_{V,W}$, where 
$c_{V,W}:V \otimes W \to ^jW \otimes V$ is the $J$-braiding 
in $\calc$.

\item The twist on an object $(V,(\psi_j))$ is 
$\psi_j \circ \theta$, where $\theta:V \to {}^jV$ is the twist in $\calc$.
\end{itemize}
\item
If $\calc$ is a $J$-premodular category, then the 
orbifold category $\calc^J$ is even a premodular category.
\end{enumerate}
\end{Proposition}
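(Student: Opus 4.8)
The plan is to treat $\calc^J$ as the category of $J$-equivariant objects of $\calc$ and to check that each component of the ribbon structure on $\calc$ descends along the forgetful functor $U\colon \calc^J \to \calc$, $(V,(\psi_j))\mapsto V$. All the listed data are built from structure already present in $\calc$, so the content of part~1 lies in two verifications: that each piece is \emph{well defined} on equivariant objects (i.e.\ respects the coherence condition $\psi_i\circ{}^i\psi_j=\psi_{ij}\circ\alpha_{i,j}$ and sends morphisms of $\calc^J$ to morphisms of $\calc^J$), and that the ribbon axioms hold. Both are diagram chases, organized around the compositors $\gamma_j$ of the monoidal functors $\phi_j$, the cells $\alpha_{i,j}$ of the weak action, and the two compatibility diagrams \eqref{action-braiding-compatibility} and \eqref{com.diagram-twist}.

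Concretely, I would first confirm that $\calc^J$ is a category (the equivariance condition on morphisms is closed under composition and contains identities) and that the tensor product, unit and associator descend. For the tensor product one equips $(V,(\psi^V_j))\otimes(W,(\psi^W_j))$ with the isomorphisms $(\psi^V_j\otimes\psi^W_j)\circ\gamma_j(V,W)^{-1}\colon{}^j(V\otimes W)\to V\otimes W$; the coherence condition for this family is precisely the hexagon relating $\gamma$ and $\alpha$ in the definition of a monoidal categorical action, combined with \eqref{coherence-compositor}. The unit $(\mathbf 1,(\id))$ and the associator of $\calc$ descend because $\phi_j$ is monoidal. For the dualities I would use that the $\phi_j$ preserve duals, so that $(\psi_j^{\ast})^{-1}$ is an isomorphism ${}^j(V^{\ast})\to V^{\ast}$, and check the coherence condition by dualizing that of $(\psi_j)$. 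The only genuinely equivariant pieces are the braiding and twist: the braiding $(\psi^W_j\otimes\id_V)\circ c_{V,W}$ on $(V,-)\otimes(W,-)$ with $V\in\calc_j$ is a morphism in $\calc^J$ exactly by diagram \eqref{action-braiding-compatibility}, and it inherits the hexagon axioms from the $J$-hexagons for $c$; the twist, acting on the $j$-homogeneous component by $\psi_j\circ\theta$, is equivariant by the twist--action compatibility and satisfies the two ribbon identities (with the braiding, and with the duality) by the corresponding $J$-equivariant identities, in particular \eqref{com.diagram-twist}.

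For part~2 I would inherit the $\KK$-linear abelian structure from $\calc$: kernels, cokernels and biproducts are computed on underlying objects in $\calc$ and carry an induced equivariant structure, and $U$ is exact and faithful. Absolute simplicity of the unit is immediate, since an endomorphism of $(\mathbf 1,(\id))$ in $\calc^J$ is just an endomorphism of $\mathbf 1$ in $\calc$, so $\End_{\calc^J}(\mathbf 1)=\End_\calc(\mathbf 1)=\KK$. The substantial point is finite semisimplicity. Here I would introduce the induction functor $\mathrm{Ind}\colon\calc\to\calc^J$ two-sided adjoint to $U$ (sending $X$ to $\bigoplus_{j\in J}{}^jX$ with its tautological equivariant structure), show that every equivariant object is a direct summand of an induced object $\mathrm{Ind}(X)$ with $X$ semisimple in $\calc$, and conclude that $\calc^J$ is semisimple. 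Finiteness of the set of isomorphism classes of simple objects then follows by classifying the equivariant structures on a fixed simple object of $\calc$: they form (when nonempty) a torsor under the character group of the stabilizer of its isomorphism class in $J$, and there are finitely many orbits since $\calc$ has finitely many simples.

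The main obstacle is twofold. In part~1 it is bookkeeping: because we allow a \emph{weak} action, the compositors $\alpha_{i,j}$ and $\gamma_j$ appear throughout, and the braiding-- and twist--equivariance checks are genuinely the diagrams \eqref{action-braiding-compatibility} and \eqref{com.diagram-twist} rather than the much simpler strict-action versions, so care is needed to insert these cells in the right places. In part~2 the real work is finite semisimplicity, which cannot be checked object-by-object and instead requires the induction/restriction adjunction together with the character theory of the stabilizers; this is where I expect the argument of \cite{kirI17} to do the heavy lifting.
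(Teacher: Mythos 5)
The first thing to say is that the paper contains no proof of this proposition to compare against: it is quoted from \cite{kirI17}, and the text moves on immediately. So your attempt can only be judged on its own merits. For part 1 your outline is sound and is the standard argument: all data are transported along the forgetful functor, the only genuinely equivariant verifications are that the braiding and twist are morphisms of $\calc^J$, and these are precisely the diagrams \eqref{action-braiding-compatibility} and \eqref{com.diagram-twist}. You also correctly noticed that the equivariant structure on a tensor product has to be read as $(\psi^V_j\otimes\psi^W_j)\circ\gamma_j(V,W)^{-1}$, with the compositor inserted so that the source really is ${}^j(V\otimes W)$ -- a detail the statement of the proposition suppresses, and whose coherence indeed reduces to the $\gamma$--$\alpha$ compatibility square together with \eqref{coherence-compositor}.

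Part 2, however, has a genuine gap at its central claim. From ``every object of $\calc^J$ is a direct summand of an induced object $\mathrm{Ind}(X)$ with $X$ semisimple in $\calc$'' you cannot ``conclude that $\calc^J$ is semisimple'': that inference requires the induced objects $\mathrm{Ind}(X)$ to be semisimple \emph{as objects of $\calc^J$}, which is exactly what is in question and does not follow from semisimplicity of $X$ in $\calc$. The missing engine is a Maschke-type averaging argument, and this is also the only place where the standing hypothesis that $\KK$ has characteristic zero (so that $|J|$ is invertible) enters -- a hypothesis your sketch never invokes, although without it both the splitting of your unit/counit composite and semisimplicity itself fail: for $\calc$ concentrated in the neutral component with $\calc_1=\vect_\KK$ and trivial $J$-action one has $\calc^J\cong\KK[J]\mod$, which is not semisimple when $\mathrm{char}\,\KK$ divides $|J|$. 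The concrete fix: given a monomorphism $(W,\psi^W)\hookrightarrow(V,\psi^V)$ in $\calc^J$, choose any retraction $r\colon V\to W$ in $\calc$ and replace it by $\frac{1}{|J|}\sum_{j\in J}\psi^W_j\circ{}^jr\circ(\psi^V_j)^{-1}$, which one checks is a morphism of $\calc^J$ and still a retraction; hence every subobject splits. A second, smaller inaccuracy: your classification of simples is not correct as stated, since a simple object of $\calc^J$ need not have simple underlying object (for instance objects induced from a simple whose isomorphism class has trivial stabilizer), so ``equivariant structures on a fixed simple of $\calc$, a torsor under the character group of the stabilizer'' does not exhaust the simples of $\calc^J$; the correct statement involves $J$-orbits of simples together with (in general projective) representations of stabilizers. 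Fortunately this claim is not needed: finiteness already follows from your induction argument, because every simple of $\calc^J$ is a summand of $\mathrm{Ind}(X)$ for one of the finitely many simples $X$ of $\calc$, and each $\mathrm{Ind}(X)$ has only finitely many simple summands since Hom spaces are finite dimensional.
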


It has been shown in \cite{kirI17} that the $J$-modularity 
of a $J$-premodular category is equivalent to 
the modularity as in definition \ref{modularity} of its 
orbifold category. Our problem is thus reduced to showing
modularity of the orbifold category of $\cald^J(G)\mod$.

To this end, we describe orbifoldization on
the level of (Hopf-)algebras: given a $J$-equivariant
algebra $A$, we introduce an orbifold algebra $\widehat{A}^J$
such that its representation category $\widehat{A}^J\mod$
is isomorphic to the orbifold category of $A\mod$.

\begin
{Definition}\label{Def:o.algebra}
Let $A$ be an algebra with a weak $J$-action
$(\varphi_j,c_{ij})$.
We endow the  vector space 
$\widehat{A}^J:=A \otimes \KK[J]$
with a unital associative multiplication which is defined on an element
of the form $(a\otimes j)$ with $a \in A$ and $j \in J$
by\[(a\otimes i)(b\otimes j) := a \varphi_i(b)c_{ij}\otimes ij
\,\, . \]

This algebra  is called the 
\emph{orbifold algebra} $\widehat{A}^J$ of the
$J$-equivariant
algebra $A$ with respect to the weak $J$-action.
\end{Definition}

If $A$ is even a $J$-Hopf algebra, it is possible
to endow the orbifold algebra with even more structure.
To define the coalgebra structure on the orbifold algebra,
we use the standard coalgebra structure on the
group algebra $\KK[J]$ with coproduct 
$\Delta_J(j) = j\otimes j$ and counit $\epsilon_J(j) = 1$ 
on the canonical basis $(j)_{j\in J}$. The
tensor product coalgebra on $A\otimes \KK[J]$ has
the coproduct and counit
\begin{equation} 
\Delta(a\otimes j) = (\id_A \otimes \tau \otimes 
\id_{\KK[J]})(\Delta_A(a) \otimes j \otimes j ),\quad\text{ and }\quad
\epsilon(a\otimes j) = \epsilon_A(a) 
\end{equation}
which is clearly coassociative and counital.

To show that this endows the orbifold algebra with the
structure of a bialgebra, we have first to show that 
the coproduct $\Delta$ is a unital algebra morphism.
This follows from the fact, that $\Delta_A$ is already 
an algebra morphism and that the action of $J$ is by 
coalgebra morphisms. Next, we have to show that
the counit $\epsilon$ is a unital algebra morphism as well.
This follows from the fact that the action of $J$ 
commutes with the counit and from the fact that the elements $c_{i,j}$ are group-like.
The compatibility of $\epsilon$ with the unit is obvious.

In a final step, one verifies that the endomorphism
\[S(a\otimes j) = (c_{j^{-1}, j})^{-1}\varphi_{j^{-1}}(S_A(a))\otimes j^{-1}\]
is an antipode. Altogether, one arrives at

\begin{Proposition}
If $A$ is a $J$-Hopf-algebra, then the 
orbifold algebra $\widehat{A}^J$ has a natural
structure of a Hopf algebra. 
\end{Proposition}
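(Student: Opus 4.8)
The plan is to verify in turn the three layers of structure — algebra, bialgebra, and antipode — on $\widehat{A}^J = A\otimes\KK[J]$, drawing each time on the corresponding structure of $A$ together with the weak-action axioms and the extra compatibilities recorded in Definition \ref{JHopf}. Since $\widehat{A}^J$ is already declared an associative unital algebra in Definition \ref{Def:o.algebra}, I would first make explicit why this is legitimate: associativity of $(a\otimes i)(b\otimes j) = a\,\varphi_i(b)\,c_{ij}\otimes ij$ reduces, after expanding both bracketings and inserting $\varphi_i\circ\varphi_j = \Inn_{c_{i,j}}\circ\varphi_{ij}$, to precisely the $2$-cocycle identity $\varphi_i(c_{j,k})\,c_{i,jk} = c_{i,j}\,c_{ij,k}$; and $1_A\otimes 1$ is a two-sided unit once one derives from the same two relations the normalizations $\varphi_1 = \id$ and $c_{1,j} = c_{j,1} = 1$ (obtained by specializing the axioms at $i=j=1$, then at $k=1$).

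Next I would install the coalgebra and bialgebra structure. The comultiplication $\Delta(a\otimes j) = (a_{(1)}\otimes j)\otimes(a_{(2)}\otimes j)$ and counit $\epsilon(a\otimes j) = \epsilon_A(a)$ are exactly the tensor-product coalgebra of $(A,\Delta_A,\epsilon_A)$ with the group-like coalgebra on $\KK[J]$, so coassociativity and counitality are inherited with no separate argument. The bialgebra axioms are the substantive check here: $\Delta$ is multiplicative because $\Delta_A$ is an algebra map, each $\varphi_i$ is a coalgebra morphism, and — decisively — each $c_{ij}$ is group-like by Definition \ref{JHopf}, so that $\Delta_A(a\,\varphi_i(b)\,c_{ij}) = a_{(1)}\varphi_i(b_{(1)})c_{ij}\otimes a_{(2)}\varphi_i(b_{(2)})c_{ij}$, which matches $\Delta(a\otimes i)\,\Delta(b\otimes j)$ computed componentwise in $\widehat{A}^J\otimes\widehat{A}^J$. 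Multiplicativity of $\epsilon$ is then immediate from $\epsilon_A\circ\varphi_i = \epsilon_A$ and $\epsilon_A(c_{ij}) = 1$, and compatibility with the unit is trivial.

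The main obstacle is the antipode. With $S(a\otimes j) = c_{j^{-1},j}^{-1}\,\varphi_{j^{-1}}(S_A(a))\otimes j^{-1}$, the left axiom works cleanly: $\sum S(a_{(1)}\otimes j)(a_{(2)}\otimes j)$ collapses, via the product formula and $\varphi_{j^{-1}}\big(S_A(a_{(1)})a_{(2)}\big) = \epsilon_A(a)1_A$, to $\epsilon_A(a)\,c_{j^{-1},j}^{-1}c_{j^{-1},j}\otimes 1 = \epsilon(a\otimes j)(1_A\otimes 1)$. The right axiom is the delicate one: expanding $\sum(a_{(1)}\otimes j)\,S(a_{(2)}\otimes j)$ produces the correction factor $\varphi_j(c_{j^{-1},j}^{-1})\,c_{j,j^{-1}}$, while the rewriting $\varphi_j\big(\varphi_{j^{-1}}(S_A(a_{(2)}))\big) = c_{j,j^{-1}}\,S_A(a_{(2)})\,c_{j,j^{-1}}^{-1}$ comes from $\varphi_j\circ\varphi_{j^{-1}} = \Inn_{c_{j,j^{-1}}}\circ\varphi_1 = \Inn_{c_{j,j^{-1}}}$. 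To finish I would establish the identity $\varphi_j(c_{j^{-1},j}) = c_{j,j^{-1}}$ by specializing the cocycle relation at $(i,j,k) = (j,j^{-1},j)$ together with the normalizations above; this makes the stray factor $\varphi_j(c_{j^{-1},j}^{-1})\,c_{j,j^{-1}}$ equal to $1$, the conjugations cancel, and there remains $\sum a_{(1)}S_A(a_{(2)})\otimes 1 = \epsilon_A(a)1_A\otimes 1$. This identity is exactly the point where the \emph{weakness} of the action — the presence of the cocycle $c_{i,j}$ — must be handled carefully, and it is the step I expect to demand the most attention.
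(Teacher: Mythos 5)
Your proposal is correct and follows essentially the same route as the paper: the same tensor-product coalgebra structure on $A\otimes\KK[J]$, the same antipode formula $S(a\otimes j) = (c_{j^{-1},j})^{-1}\varphi_{j^{-1}}(S_A(a))\otimes j^{-1}$, and the same appeals to $\Delta_A$ being an algebra map, to $J$ acting by Hopf algebra morphisms, and to the group-likeness of the elements $c_{i,j}$. The only difference is one of detail: the paper leaves the antipode check to the reader (``one verifies\dots''), whereas you carry it out, correctly isolating the identity $\varphi_j(c_{j^{-1},j}) = c_{j,j^{-1}}$ and the normalizations $\varphi_1=\id$, $c_{1,j}=c_{j,1}=1$ as the precise points where the weak-action cocycle axioms enter.
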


\begin{Remark}
\begin{enumerate}
\item 
The algebra $\widehat A^J$ is not the fixed point 
subalgebra $A^J$ of $A$; in general, the categories
$A^J\mod$ and $\widehat A^J\mod$ are inequivalent.

\item
Given any Hopf algebra $A$ with weak $J$-action,
we have an exact sequence of Hopf algebras
\begin{align}\label{Hopf-algebra-sequ.}
A \to \widehat{A}^J \to \KK[J] \,\,\, .
\end{align}
In particular, $A$ is a sub-Hopf algebra of $\widehat{A}^J$. In general, there is no inclusion of $\KK[J]$ into $\widehat{A}^J$
as a Hopf algebra.
 
\item 
If the action of $J$ on the algebra $A$ is strict, then 
the algebra $A$ is a module algebra over the Hopf algebra 
$\KK[J]$ (i.e.\ an algebra in the tensor category 
$\KK[J]\mod$). Then the orbifold algebra is the smash 
product $A\# \KK[J]$ 
(see \cite[Section 4]{montgomery1993hopf} 
for the definitions). The situation described occurs,
if and only if the exact sequence (\ref{Hopf-algebra-sequ.}) 
splits.
\end{enumerate}
\end{Remark}

The next proposition justifies the name ``orbifold algebra''
for $\widehat{A}^J$:

\begin{Proposition}\label{orbifoldization}
Let $A$ be a $J$-Hopf algebra. Then there is an equivalence
of tensor categories
\[\widehat{A}^J\mod \cong (A\mod)^J \,\, . \]
\end{Proposition}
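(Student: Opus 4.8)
The plan is to exhibit a strict monoidal isomorphism of categories by identifying, on both sides, the underlying $A$-module together with a supplementary family of operators indexed by $J$, and then matching the coherence data. First I would unravel what an $\widehat{A}^J$-module is. Since $\widehat{A}^J = A\otimes\KK[J]$ is generated by the sub-Hopf-algebra $A\cong A\otimes 1$ and the invertible group-like elements $1\otimes j$, an $\widehat{A}^J$-module is the same as an $A$-module $(V,\rho)$ together with the invertible operators $\Psi_j := (1\otimes j)\cdot(-)$. The two defining relations of $\widehat{A}^J$, namely $(1\otimes j)(a\otimes 1) = (\varphi_j(a)\otimes 1)(1\otimes j)$ and $(1\otimes i)(1\otimes j) = (c_{i,j}\otimes 1)(1\otimes ij)$ (both read off directly from Definition \ref{Def:o.algebra}, using $c_{1,j}=c_{j,1}=1$ and $\varphi_1=\id$), translate into
$$ \Psi_j\,\rho(a) = \rho(\varphi_j(a))\,\Psi_j \qquad\text{and}\qquad \Psi_i\Psi_j = \rho(c_{i,j})\,\Psi_{ij}. $$
Conversely any $A$-module equipped with invertible operators satisfying these two identities assembles into an $\widehat{A}^J$-module via $(a\otimes j)\mapsto\rho(a)\Psi_j$, and a morphism of $\widehat{A}^J$-modules is precisely an $A$-linear map commuting with all the $\Psi_j$.

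Next I would set up the comparison functor $F\colon \widehat{A}^J\mod\to(A\mod)^J$. Keeping the underlying $A$-module $V$, I define $\psi_k:=\Psi_{k^{-1}}^{-1}$ for $k\in J$. The first identity above shows that $\psi_k$ is a morphism ${}^kV\to V$ in $A\mod$: indeed $\Psi_{k^{-1}}\rho(a)=\rho(\varphi_{k^{-1}}(a))\Psi_{k^{-1}}$ gives $\psi_k\,\rho(\varphi_{k^{-1}}(a))=\rho(a)\,\psi_k$, which is exactly the intertwining condition for a map out of ${}^kV=(V,\rho\circ(\varphi_{k^{-1}}\otimes\id))$. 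The cocycle identity, together with the explicit compositor $\alpha_{i,j}=\rho\big((c_{j^{-1},i^{-1}})^{-1}\big)$ from Lemma \ref{action}, yields the orbifold relation: writing ${}^i\psi_j=\psi_j$ on underlying spaces,
$$ \psi_i\circ{}^i\psi_j = \Psi_{i^{-1}}^{-1}\Psi_{j^{-1}}^{-1} = \big(\Psi_{j^{-1}}\Psi_{i^{-1}}\big)^{-1} = \big(\rho(c_{j^{-1},i^{-1}})\Psi_{(ij)^{-1}}\big)^{-1} = \psi_{ij}\circ\alpha_{i,j}. $$
Thus $F$ lands in $(A\mod)^J$. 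Since the correspondence $(\Psi_j)\leftrightarrow(\psi_k)$ is a bijection with $\Psi_1=\psi_1=\id$, and morphisms on both sides are exactly the $A$-linear maps commuting with these operators, $F$ is an isomorphism of categories, with inverse sending $(V,(\psi_j))$ to the $\widehat{A}^J$-module determined by $\Psi_j=\psi_{j^{-1}}^{-1}$.

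Finally I would promote $F$ to a monoidal isomorphism. Because $A\hookrightarrow\widehat{A}^J$ is a sub-Hopf-algebra, the $A$-module structure on a tensor product is computed through $\Delta_A$ on both sides and is therefore preserved. Since each $1\otimes j$ is group-like, $\Delta(1\otimes j)=(1\otimes j)\otimes(1\otimes j)$, the operators multiply diagonally, $\Psi_j^{V\otimes W}=\Psi_j^V\otimes\Psi_j^W$; as $J$ acts by Hopf morphisms one checks that ${}^j(V\otimes W)={}^jV\otimes{}^jW$ with $\gamma_j=\id$, so translating through $\psi_k=\Psi_{k^{-1}}^{-1}$ gives $\psi_k^{V\otimes W}=\psi_k^V\otimes\psi_k^W$, which is precisely the orbifold tensor product of Proposition \ref{kirillov1}; the counit $\epsilon(a\otimes j)=\epsilon_A(a)$ identifies the monoidal unit $(\mathbf{1},(\id))$ on both sides. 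The main obstacle is purely bookkeeping forced by the weakness of the action: one must place the cocycles $c_{i,j}$ on the correct side and choose the index inversion $\psi_k=\Psi_{k^{-1}}^{-1}$ so that $\Psi_i\Psi_j=\rho(c_{i,j})\Psi_{ij}$ reproduces exactly the compositors $\alpha_{i,j}$. In the strict case $c_{i,j}=1$ this reduces to the familiar equivalence between smash-product modules and equivariantizations.
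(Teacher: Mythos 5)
Your proof is correct and follows essentially the same route as the paper: the same correspondence $\tilde\rho(a\otimes j)=\rho(a)\circ\psi_{j^{-1}}^{-1}$ (equivalently $\psi_j=\Psi_{j^{-1}}^{-1}$), the same identification of morphisms on the two sides, and the same strict-monoidality argument via the group-like elements $1\otimes j$ and the tensor-product form of the coproduct. The only differences are cosmetic --- you orient the functor from $\widehat{A}^J\mod$ to $(A\mod)^J$ rather than the reverse, and you spell out the verifications (that $\psi_k$ is a morphism ${}^kV\to V$ and that $\psi_i\circ{}^i\psi_j=\psi_{ij}\circ\alpha_{i,j}$) which the paper leaves implicit.
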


\begin{proof}
\begin{itemize}
\item
An object of $(A\mod)^J$ consists of a $\KK$-vector
space $M$, an $A$-action $\rho: A\to \End(M)$
and a family of $A$-module morphisms $(\psi_j)_{j\in J}$.
We define on the same $\KK$-vector space $M$ the
structure of an $\widehat{A}^J$ module
by $\tilde\rho: \widehat{A}^J \to\End(M)$ with
$\tilde\rho(a\otimes j) := \rho(a)\circ(\psi_{j^{-1}})^{-1}$.


One next checks that, given two objects
$(M,\rho,\psi)$ and $(M',\rho',\psi')$
in $(A\mod)^J$, a $\KK$-linear map 
$f\in \Hom_\KK(M,M')$ is in the subspace
$\Hom_{(A\mod)^J}(M,M')$ if and only if it is
in the subspace 
$\Hom_{\widehat{A}^J\mod}(M,\tilde\rho),(M',\tilde\rho'))$.

We can thus consider a $\KK$-linear functor
\begin{equation}\label{orbifold-equivalence}
F: (A\mod)^J \to \widehat{A}^J\mod
\end{equation}
which maps on objects by $(M,\rho,\psi)\mapsto (M,\tilde
\rho)$ and on morphisms as the identity. This functor
is clearly fully faithful. 

To show that the functor is also essentially surjective,
we note that for any object $(M,\tilde\rho)$ in 
$\widehat{A}^J\mod$, an object in $(A\mod)^J$ can
be obtained as follows: on the underlying vector space,
we have the structure of an $A$-module by restriction,
$\rho(a):=\tilde\rho(a\otimes 1_J)$. A family of
equivariant morphisms is given by
$\psi_j :=(\tilde\rho(\mathbf{1}\otimes j^{-1}))^{-1}$. Clearly
its image under $F$ is isomorphic to $(M,\tilde\rho)$. 
This shows that the functor $F$ is an equivalence of
categories, indeed even an isomorphism of categories.

\item The functor $F$ is also a strict tensor functor:
consider two objects  $F(M,\rho,\psi)$ and $F(M',\rho',\psi')$
in $(A\mod)^J$. The functor
$F$ yields the
following action of the orbifold Hopf algebra
$\widehat{A}^J$ on the $\KK$-vector space $M\otimes_\KK M'$:
$$ \tilde\rho_{M\otimes M'}(a\otimes j)
= \rho\otimes \rho'(\Delta(a)) \circ ((\psi_{j^{-1}})^{-1}
\otimes (\psi'_{j^{-1}})^{-1})\,\, .$$
Since the coproduct on $\widehat{A}^J$ was just given
by the tensor product of coproducts on
$A$ and $\KK[J]$, this coincides with the tensor product 
of $F(M,\rho,\psi)$ and $F(M',\rho',\psi')$
in $\widehat{A}^J\mod$.
\end{itemize}
\end{proof}

In a final step, we assume that the $J$-equivariant algebra $A$
has the additional structure of a $J$-ribbon algebra.
Then, by proposition \ref{category-J-ribbon},
the category $A\mod$ is a $J$-ribbon category
and by proposition \ref{kirillov1} the orbifold
category $(A\mod)^J$ is a ribbon category.
The strict isomorphism (\ref{orbifold-equivalence})
of tensor categories allows us to transport both the
braiding and the ribbon structure to the representation category
of the orbifold Hopf algebra $\widehat{A}^J$.
General results \cite[Proposition 16.6.2]{kassel1995quantum}
assert that this amounts to a natural structure of
a ribbon algebra on $\widehat{A}^J$. In fact, we directly
read off the R-matrix and the ribbon element. For example,
the R-matrix $\hat R$ of $\hat A^J$ equals
$$\ \hat R= \hat\tau \,\,
c_{\widehat{A}^J,\widehat{A}^J}
(1_{\widehat{A}^J}\otimes 1_{\widehat{A}^J})
\in \widehat{A}^J\otimes \widehat{A}^J \,\,\, , $$
where the linear map $\hat\tau$
flips the two components of the tensor product
$\widehat{A}^J\otimes \widehat{A}^J$. This expression can be
explicitly evaluated, using the fact that 
$A \otimes \KK[J]$ is an object in $(A\mod)^J$ with 
$A$-module structure given by left action
on the first component and that the morphisms $\psi_j$ 
are given by left multiplication on the second component. 
We find for the  R-matrix of $\widehat{A}^J$
\begin{align*}
\hat R & = \sum_{i,j \in J}(\id \otimes \psi_j)(\rho\otimes \rho)(R_{ij})
((1_A \otimes 1_J) \otimes 1_A \otimes 1_J)\\
& = \sum_{i,j \in J}((R_{i,j})_{1}\otimes 1_J) 
\otimes (1_A \otimes j^{-1})^{-1}((R_{i,j})_{2} \otimes 1_J))
\end{align*}
where $R$ is the R-matrix of $A$.
The twist element of $\widehat{A}^J$ can be computed 
similarly; one finds
\[\theta^{-1} = \sum_{j\in J}\psi_j\circ 
\rho(\theta_j)(1_{A_j} \otimes 1_J)
=\sum_{j\in J}(1_A \otimes j^{-1})^{-1} (\theta_j \otimes 1_J)\]
We summarize our findings:

\begin{corollary}\label{ribbon-ribbon}
If $A$ is a $J$-ribbon algebra, then the orbifold algebra 
$\widehat{A}^J$ inherits a natural structure of a ribbon 
algebra such that the equivalence of tensor categories in
proposition \ref{orbifoldization} is an equivalence of 
ribbon categories.
\end{corollary}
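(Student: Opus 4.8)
The plan is to obtain the result purely by composing the structural statements already established, and then to make the resulting braiding and twist explicit at the level of the Hopf algebra $\widehat{A}^J$. First, since $A$ is assumed to be a $J$-ribbon algebra, proposition \ref{category-J-ribbon} shows that $A\mod$ is a $J$-ribbon category. Proposition \ref{kirillov1} then endows the orbifold category $(A\mod)^J$ with the structure of an ordinary ribbon category, with braiding $(\psi_j\otimes\id)\circ c_{V,W}$ and twist $\psi_j\circ\theta$. Finally, proposition \ref{orbifoldization} provides an isomorphism of tensor categories $\widehat{A}^J\mod\cong(A\mod)^J$. Since this is a strict isomorphism --- not merely an equivalence --- it transports the ribbon structure verbatim to $\widehat{A}^J\mod$, so that the equivalence of proposition \ref{orbifoldization} automatically becomes an equivalence of ribbon categories once the transported structure is shown to be ribbon in the Hopf-algebraic sense.

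The key step is to verify that the transported braiding and twist on $\widehat{A}^J\mod$ arise from a universal $R$-matrix $\hat R\in\widehat{A}^J\otimes\widehat{A}^J$ and a ribbon element $\theta\in\widehat{A}^J$. This is exactly where the reconstruction result \cite[Proposition 16.6.2]{kassel1995quantum} enters: a braiding on the representation category of a Hopf algebra that is natural and compatible with the tensor product and the forgetful functor is equivalent to the datum of a universal $R$-matrix, and similarly a compatible twist corresponds to a ribbon element. Since our braiding and twist are transported across a \emph{strict tensor} isomorphism from a genuine ribbon structure, they satisfy these compatibilities automatically, so the reconstruction theorem applies and produces the desired elements.

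It then remains to read off $\hat R$ and $\theta$ explicitly. I would recover the $R$-matrix from the standard formula $\hat R = \hat\tau\,c_{\widehat{A}^J,\widehat{A}^J}(1\otimes 1)$, where $\widehat{A}^J$ is regarded as the left regular module over itself and $\hat\tau$ flips the tensor factors; this works because the action on any module is determined by the action on cyclic generators, so evaluating the braiding on the regular module at $1\otimes 1$ reproduces the universal element. To evaluate this expression, I would use the explicit description of the orbifold object $A\otimes\KK[J]$ furnished in the proof of proposition \ref{orbifoldization}: the $A$-action is by left multiplication on the first tensor factor, while the isomorphisms $\psi_j$ are realized as left multiplication by $1_A\otimes j^{-1}$ on the $\KK[J]$-factor. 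Substituting the homogeneous components $R_{i,j}$ of the $J$-equivariant $R$-matrix of $A$ and combining the $c_{V,W}$-part with the $\psi_j$-part then yields the stated formula for $\hat R$; the ribbon element is obtained by the same method from $\theta^{-1}=\sum_{j}\psi_j\circ\rho(\theta_j)$.

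The main obstacle I anticipate is bookkeeping rather than conceptual: one must track carefully how the orbifold isomorphisms $\psi_j$ correspond to the $\KK[J]$-action inside $\widehat{A}^J$ --- including the inversions $j\mapsto j^{-1}$ built into the functor $F$ of proposition \ref{orbifoldization} --- and how the weak-action coherence elements $c_{i,j}$ enter through the multiplication of $\widehat{A}^J$. Once these identifications are fixed, the verification that the reconstructed $\hat R$ and $\theta$ satisfy the ribbon axioms reduces to the already-established fact that $(A\mod)^J$ is ribbon, so no further axiom-checking is needed beyond the explicit evaluation.
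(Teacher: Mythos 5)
Your proposal matches the paper's own argument step for step: it composes propositions \ref{category-J-ribbon}, \ref{kirillov1} and \ref{orbifoldization} in the same order, invokes the strictness of the isomorphism $F$ together with \cite[Proposition 16.6.2]{kassel1995quantum} to reconstruct the universal $R$-matrix and ribbon element, and recovers them explicitly via $\hat R=\hat\tau\,c_{\widehat{A}^J,\widehat{A}^J}(1\otimes 1)$ evaluated on the object $A\otimes\KK[J]$ with $\psi_j$ acting as left multiplication by $1_A\otimes j^{-1}$ on the second factor. This is precisely the paper's proof, including the careful handling of the inversions coming from the functor $F$, so there is nothing to add.
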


\subsection{Equivariant modular categories}

In this subsection, we show that the orbifold category
of the $J$-equivariant ribbon
category $\cdw\mod$ is $J$-modular. A theorem of
Kirillov \cite[Theorem 10.5]{kirI17} then immediately implies
that the category $\cdw\mod$ is $J$-modular.

Since we have already seen in corollary \ref{ribbon-ribbon}
that the orbifold category is equivalent, as a
ribbon category, to the representation category of the
orbifold Hopf algebra, it suffices the compute
this Hopf algebra explicitly. Our final result asserts
that this Hopf algebra is an ordinary 
Drinfel'd double:

\begin{Proposition}\label{J-Drinfeld-orbifold}
The $\KK$-linear map
\begin{equation}
\begin{array}{rll}
\Psi:\qquad
\widehat{\cald^J(G)}^J &\to & \cald(H)\\[.3em]
(\delta_h \otimes g \otimes j) & \mapsto &(\delta_h \otimes gs(j))
\end{array}
\end{equation}
is an isomorphism of ribbon algebras, where the Drinfel'd
double $\cald(H)$ is taken with the standard ribbon 
structure introduced in subsection 2.5.
\end{Proposition}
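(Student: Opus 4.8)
The plan is to verify, in turn, that $\Psi$ respects the algebra structure, the coalgebra structure, the $R$-matrix and the twist. Bijectivity together with the first two points already upgrades $\Psi$ to an isomorphism of Hopf algebras (the antipode is then transported automatically, being uniquely determined), and the last two points promote it to an isomorphism of ribbon algebras. First I would record that $\Psi$ is a linear isomorphism: a dimension count gives $\dim\widehat{\cald^J(G)}^J=|H|\,|G|\,|J|=|H|^2=\dim\cald(H)$ using $|H|=|G|\,|J|$, and $\Psi$ sends the canonical basis $(\delta_h\otimes g\otimes j)$ bijectively onto the canonical basis $(\delta_h\otimes h_2)$ of $\cald(H)$, because $(g,j)\mapsto g\,s(j)$ is a bijection $G\times J\to H$ (apply $\pi$ to recover $j$, then $g$).

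Next I would check the algebra homomorphism property. Unwinding the orbifold product $(a\otimes i)(b\otimes j)=a\,\varphi_i(b)\,c_{ij}\otimes ij$ on basis vectors and using the multiplication of $\cald(H)$, one finds the product of $\delta_{h_1}\otimes g_1\otimes i$ and $\delta_{h_2}\otimes g_2\otimes j$ vanishes unless $h_1=g_1 s(i)h_2 s(i)^{-1}g_1^{-1}$, and otherwise equals $\delta_{h_1}\otimes g_1 s(i)g_2 s(i)^{-1}\gamma_{ij}\otimes ij$ with $\gamma_{ij}:=s(i)s(j)s(ij)^{-1}$. Applying $\Psi$ replaces the trailing $\gamma_{ij}\otimes ij$ by the group element $\gamma_{ij}s(ij)=s(i)s(j)$, so the result becomes $\delta_{h_1}\otimes g_1 s(i)g_2 s(j)$, which is exactly $\Psi(\delta_{h_1}\otimes g_1\otimes i)\,\Psi(\delta_{h_2}\otimes g_2\otimes j)=(\delta_{h_1}\otimes g_1 s(i))(\delta_{h_2}\otimes g_2 s(j))$ in $\cald(H)$, the defining conditions matching as well. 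The single identity $\gamma_{ij}\,s(ij)=s(i)s(j)$ --- that the coherence elements $c_{ij}$ measure precisely the failure of $s$ to be a homomorphism --- is the heart of this step. The coalgebra property is immediate, since the orbifold coproduct is diagonal in the $\KK[J]$-slot: $(\Psi\otimes\Psi)\Delta(\delta_h\otimes g\otimes j)=\sum_{h'h''=h}(\delta_{h'}\otimes g s(j))\otimes(\delta_{h''}\otimes g s(j))=\Delta_{\cald(H)}\Psi(\delta_h\otimes g\otimes j)$, and both counits read off the $h=1$ coefficient.

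For the ribbon structure I would avoid pushing the explicit orbifold $R$-matrix and twist element through $\Psi^{\otimes2}$ and $\Psi$, and instead compare the \emph{induced} braidings and twists on modules, where the computation telescopes. Transporting a $\cald(H)$-module along $\Psi$ realizes it as an object of $(\cald^J(G)\mod)^J$ whose orbifold datum $\psi_j$ acts, via $\Psi$, as the group element $s(j^{-1})^{-1}$. The orbifold braiding of proposition \ref{kirillov1} is $(\psi_j\otimes\id)\circ c_{V,W}$, with $c$ the $J$-braiding of $\cald^J(G)$ computed in proposition \ref{double-J-ribbon}; on $v\in V_h$ this gives $\psi_j(s(j^{-1})h.w)\otimes v=h.w\otimes v$, the section factor $s(j^{-1})$ being cancelled by $\psi_j$. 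This is precisely the braiding $v\otimes w\mapsto h.w\otimes v$ induced on $\cald(H)$-modules by $R_{\cald(H)}=\sum_{a,b}(\delta_a\otimes1)\otimes(\delta_b\otimes a)$. The same cancellation applied to the orbifold twist $\psi_j\circ\theta_V$ against the $J$-twist of proposition \ref{double-J-ribbon} yields $v\mapsto h.v$ on $V_h$, which matches the twist induced by the standard ribbon element $\theta_{\cald(H)}=\sum_h\delta_h\otimes h^{-1}$. Since $\Psi$ is an algebra isomorphism and, in the ordinary (post-orbifold) setting, an $R$-matrix (resp.\ twist element) is recovered from the braiding (resp.\ twist) it induces on the faithful regular representation evaluated at the unit, agreement of the induced braidings and twists forces $\Psi^{\otimes2}(\hat R)=R_{\cald(H)}$ and $\Psi(\hat\theta)=\theta_{\cald(H)}$.

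The main obstacle is keeping track of the weak, i.e.\ non-strict, action: the section $s$ enters every structure map through the factors $s(j)$ and the coherence elements $c_{ij}=\gamma_{ij}$, and one must verify that the orbifold construction cancels all of them, so that the target is the honest untwisted double $\cald(H)$ rather than some cocycle-twisted variant. Concretely this reduces to the two cancellations $\gamma_{ij}\,s(ij)=s(i)s(j)$ (for the algebra structure) and $s(j^{-1})^{-1}s(j^{-1})=1$, the $\psi_j$ undoing the section factor in the $J$-braiding and $J$-twist (for the ribbon structure); once these are isolated, the remaining verifications are routine bookkeeping.
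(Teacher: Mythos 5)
Your proposal is correct, and its first half coincides with the paper's own argument: the product computation hinging on the identity $c_{ij}\,s(ij)=s(i)s(j)$ and the coproduct computation are exactly the paper's (the paper, like you, leaves bijectivity, the counit and the antipode as routine; your appeal to uniqueness of the antipode is a standard and valid way to dispose of the latter). Where you genuinely diverge is the ribbon half. The paper computes the orbifold $R$-matrix $\hat R$ and twist $\hat\theta$ as \emph{explicit elements} of $\widehat{\cald^J(G)}^J$, using the regular-representation formulas preceding corollary \ref{ribbon-ribbon}, then pushes them through $\Psi^{\otimes 2}$ resp.\ $\Psi$ and simplifies; the factor $\Psi(1\otimes 1_G\otimes j^{-1})^{-1}=(1\otimes s(j^{-1})^{-1})$ cancels the section factor $s(j^{-1})$ sitting inside $R_{i,j}$ and $\theta_j$, yielding the standard $R$-matrix $\sum_{h,h'}(\delta_h\otimes 1)\otimes(\delta_{h'}\otimes h)$ and ribbon element of $\cald(H)$. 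You instead transport $\cald(H)$-modules backwards along $\Psi$, note that the orbifold datum becomes $\psi_j=s(j^{-1})^{-1}$ so that in the orbifold braiding $(\psi_j\otimes\id)\circ c_{V,W}$ and orbifold twist $\psi_j\circ\theta_V$ of proposition \ref{kirillov1} the section factors cancel on the nose, and then recover equality of the elements from equality of the induced braidings and twists via evaluation on the regular representation at the unit. The two arguments rest on the same reconstruction principle (namely $\hat R=\hat\tau\,c(1\otimes 1)$ and the analogous formula for the twist) and on the same cancellation, so they are equivalent in substance; yours packages the cancellation categorically, which makes it conceptually transparent and spares you writing out $\hat R$ as a sum, while the paper's element-level route produces the explicit formulas as a byproduct. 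One point your route makes essential rather than optional: comparing induced braidings presupposes that transport of modules along $\Psi$ is monoidal, i.e.\ that $\Psi$ is already known to be a bialgebra map, so the ordering of your steps (bialgebra first, ribbon second) is forced, and you correctly respect it.
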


This result immediately implies the equivalence
\[(\widehat{\cald^J(G)}\mod)^J \cong \cald(H)\mod\]
of ribbon categories and thus, by proposition \ref{modular-Drinfeld},
the modularity of the orbifold category, so that we have
finally proven:

\begin{Theorem}\label{J-modular}
The category $\cgj$ has a natural structure of a 
$J$-modular tensor category.
\end{Theorem}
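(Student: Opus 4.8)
The plan is to reduce the claim about $J$-modularity, a genuinely equivariant notion, to ordinary modularity of a single non-equivariant category, namely an orbifold, for which all the needed tools are by now in place. The organizing principle is Kirillov's theorem \cite[Theorem 10.5]{kirI17}, which asserts that a $J$-premodular category is $J$-modular precisely when its orbifold category (in the sense of the definition preceding proposition \ref{kirillov1}) is modular as an ordinary ribbon category in the sense of definition \ref{modularity}. Thus I first invoke proposition \ref{J-fusion}: the geometric category $\cdw$ carries the structure of a $J$-premodular category and is equivalent, as such, to the representation category $\cald^J(G)\mod$ of the $J$-Drinfel'd double. It therefore suffices to prove that the orbifold of $\cald^J(G)\mod$ is a modular tensor category.

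Next I identify this orbifold category algebraically. By proposition \ref{double-J-ribbon} the equivariant Drinfel'd double $\cald^J(G)$ is a $J$-ribbon algebra, so corollary \ref{ribbon-ribbon} applies: the orbifold algebra $\widehat{\cald^J(G)}^J$ inherits a ribbon structure, and the tensor equivalence $\widehat{\cald^J(G)}^J\mod \cong (\cald^J(G)\mod)^J$ of proposition \ref{orbifoldization} is in fact an equivalence of ribbon categories. Hence the orbifold category in question is realized as the representation category of a concrete ordinary ribbon Hopf algebra.

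The final step is the computation recorded in proposition \ref{J-Drinfeld-orbifold}: the explicit map $\Psi$ is an isomorphism of ribbon algebras $\widehat{\cald^J(G)}^J \iso \cald(H)$, where $\cald(H)$ carries its standard ribbon structure. Composing the equivalences yields an equivalence of ribbon categories $(\cald^J(G)\mod)^J \cong \cald(H)\mod$. Since $H$ is a finite group, proposition \ref{modular-Drinfeld}, applied to $H$ in place of $G$, shows that $\cald(H)\mod$ is modular. Therefore the orbifold category is modular, and Kirillov's criterion then yields $J$-modularity of $\cdw$.

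As for where the real work lies: all the substantial content has been discharged in the preceding propositions, so the present argument is essentially a chaining of equivalences. The one point demanding care is to ensure that every equivalence in the chain transports the full ribbon (indeed premodular) structure and not merely the underlying abelian or tensor structure --- this is exactly what proposition \ref{J-fusion}, corollary \ref{ribbon-ribbon}, and proposition \ref{J-Drinfeld-orbifold} were phrased to guarantee --- and to confirm that $\cald^J(G)\mod$ is genuinely $J$-premodular (finite semisimplicity together with absolute simplicity of the tensor unit) so that Kirillov's theorem is applicable. Granting these, no further computation is needed.
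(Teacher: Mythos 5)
Your proposal is correct and follows exactly the paper's own argument: reduce $J$-modularity to modularity of the orbifold via Kirillov's criterion, identify the orbifold category as $\widehat{\cald^J(G)}^J\mod$ through corollary \ref{ribbon-ribbon}, apply the ribbon-algebra isomorphism $\Psi$ of proposition \ref{J-Drinfeld-orbifold} to obtain $\cald(H)\mod$, and conclude with proposition \ref{modular-Drinfeld}. The care you flag about transporting the full premodular structure (via proposition \ref{J-fusion}) is precisely what the paper relies on as well, so there is nothing to add.
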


\begin{proof}[Proof of proposition \ref{J-Drinfeld-orbifold}]
We show by direct computations that the linear map 
$\Psi$ preserves product, coproduct, R-matrix and 
twist element:
\begin{itemize}
\item
Compatibility with the product:
\begin{align*}
\Psi((\delta_h \otimes g \otimes j)&(\delta_h' \otimes g' \otimes j')) 
= \Psi((\delta_h \otimes g)\cdot {}^j(\delta_h' \otimes g')c_{jj'} \otimes jj') \\[.2em] 
& = \Psi\left((\delta_h \otimes g)\cdot (\delta_{s(j)h's(j)^{-1}} \otimes s(j)g's(j)^{-1})\cdot\sum_{h''\in H} (\delta_{h''} \otimes s(j)s(j')s(jj')^{-1})\otimes jj'\right) \\[.2em] 
& = \Psi\left(\delta(h,gs(j)hs(j)^{-1}g^{-1})(\delta_h \otimes gs(j)g's(j')s(jj')^{-1})\otimes jj'\right) \\[.2em] 
& = \delta(h,gs(j)hs(j)^{-1}g^{-1})(\delta_h \otimes gs(j)g's(j')) \\[.2em] 
& = (\delta_{h}\otimes gs(j))\cdot (\delta_{h'}\otimes g's(j'))\\[.2em] 
& = \Psi(\delta_h \otimes g \otimes j)\Psi(\delta_{h'} \otimes g' \otimes j') 
\end{align*}
\item 
Compatibility with the coproduct:
\begin{align*}
(\Psi \otimes \Psi)\Delta(\delta_h \otimes g \otimes j) 
&= \sum_{h'h'' = h} \Psi(\delta_{h'} \otimes g \otimes j) \otimes \Psi(\delta_{h''} \otimes g \otimes j)\\[.2em]
&= \sum_{h'h'' = h} (\delta_{h'} \otimes gs(j)) \otimes (\delta_{h''} \otimes gs(j))\\[.2em]
&= \Delta\left(\Psi(\delta_h \otimes g \otimes j)\right) 
\end{align*}
\item 
The R-matrix of the orbifold algebra $\widehat{\cald^J(G)}^J $ can
be determined using the lines preceding 
corollary \ref{ribbon-ribbon} and the definition of the R-Matrix of $\cald^J(G)$ given in \eqref{JR-matrix}:
$$R = \sum_{j,j'\in J}\sum_{h \in H_j, h' \in H_{j'}} (\delta_{h} 
\otimes 1_G \otimes 1_J) \otimes (1 \otimes 1_G \otimes j^{-1})^{-1}(\delta_{h'} \otimes s(j^{-1})h
\otimes 1_J) $$
This implies
\begin{align*}
(\Psi \otimes \Psi)(R) 
&= \sum_{j,j'\in J}\sum_{h \in H_j, h' \in H_{j'}} \Psi(\delta_{h} \otimes 1_G \otimes 1_J) \otimes \Psi(1 \otimes 1_G \otimes j^{-1})^{-1} \cdot \Psi(\delta_{h'} \otimes s(j^{-1})h \otimes 1_J)\\[.2em]
&=\sum_{j,j'\in J}\sum_{h \in H_j, h' \in H_{j'}} (\delta_{h} \otimes 1_H) \otimes (1 \otimes s(j^{-1})^{-1}) \cdot (\delta_{h'} \otimes s(j^{-1})h)\\[.2em]
&=\sum_{h \ h' \in H}(\delta_{h} \otimes 1) \otimes (\delta_{h'} \otimes h),\\[.2em]
\end{align*}
which is the standard R-matrix of the Drinfel'd double $\cald(H)$.
\item The twist in $\widehat{\cald^J(G)}^J $ is by corollary \ref{ribbon-ribbon} equal to
\begin{equation*}
\theta^{-1} = \sum_{j\in J}\sum_{h\in H_j} (\delta_h \otimes s(j^{-1})h\otimes 1_J)
\end{equation*}
and thus it gets mapped to the element
\begin{align*}
\Psi(\theta^{-1})
&= \sum_{j\in J}\sum_{h\in H_j} \Psi(1 \otimes 1_G \otimes j^{-1})^{-1} \Psi(\delta_h \otimes s(j^{-1})h\otimes 1_J)\\[.2em]
&= \sum_{j\in J}\sum_{h\in H_j} (1 \otimes s(j^{-1})^{-1}) \cdot (\delta_h \otimes s(j^{-1})h)\\[.2em]
&=\sum_{h\in H} (\delta_h \otimes h)
\end{align*}
which is the inverse of the twist element in $\cald(H)$.
\end{itemize}
\end{proof}

\subsection{Summary of all tensor categories involved}

We summarize our findings by discussing again the
four tensor categories mentioned in the introduction,
in the square of equation (\ref{int:square}),
thereby presenting the
explicit solution of the algebraic problem described in section
\ref{intro:1.1}. 
Given a finite group $G$
with a weak action of a finite group $J$, we get an
extension $1\to G \to H \to J\to 1$ of finite groups,
together with a set-theoretic section $s:J\to H$.

\begin{Proposition}\mbox{} \\
We have the following natural realizations of the categories
in question in terms of categories of finite-dimensional representations
over finite-dimensional ribbon algebras:
\begin{enumerate}
\item The premodular category introduced in \cite{bantay} is 
$\calb(G\triangleleft H)\mod$. As an abelian category, it is equivalent
to the representation category $G//H\mod$ of the action groupoid $G//H$, i.e.\
to the category of $G$-graded $\KK$-vector spaces with compatible
action of $H$.

\item The modular category obtained by modularization 
is $\cald(G)\mod$. As an abelian category, it is equivalent to
$G//G\mod$.

\item
The $J$-modular category constructed in this paper is
$\cald^J(G)\mod$. As an abelian category, it is equivalent to
$H//G\mod$.

\item The modular category obtained by orbifoldization 
from the $J$-modular category $\cald(G)\mod$ is equivalent to
$\cald(H)\mod$. As an abelian category, it is equivalent to
$H//H\mod$.

\end{enumerate}
Equivalently, the diagram in equation (\ref{int:square}), has
the explicit realization:
\begin{equation}\label{square49}
\xymatrix{
\cald(G)\mod \ar@<0.7ex>[d]^{\text{orbifold}} \ar@(lu,ur)[]^J\ar@{^{(}->}[r] 
& \cald^J(G)\mod \ar@(lu,ur)[]^J \ar@<0.7ex>[d]^{\text{orbifold}} \\
\calb(G\triangleleft H)\mod \ar[u]^{\text{modularization}}\ar@{^{(}->}[r] 
& \cald(H)\mod \ar[u]
}
\end{equation}

\end{Proposition}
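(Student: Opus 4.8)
The plan is to assemble the four corners from results already established, viewing each category as the representation category of an action groupoid, and then to verify that the rows and columns of the square are realized by algebra-level maps already at our disposal. The statement is essentially a bookkeeping proposition: almost all of the work has been done in the preceding sections, and what remains is to match the four identifications and check that they are simultaneously compatible with the functors of the square.

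First I would dispose of the abelian-category identifications corner by corner. For the upper-left corner, the isomorphism $\cald(G)\mod\cong G//G\mod$ is Proposition \ref{DWcat} together with the identification of $\calc(G)$ with $\cald(G)\mod$. Applying the same Dijkgraaf--Witten computation to the group $H$ in place of $G$ yields the lower-right corner $\cald(H)\mod\cong H//H\mod$. The upper-right corner $\cald^J(G)\mod\cong H//G\mod$ is Proposition \ref{HG-Mod} combined with Proposition \ref{J-fusion}. The only corner relying on external input is the lower-left one: that Bantay's premodular category $\calb(G\triangleleft H)\mod$ is, as an abelian category, the category $G//H\mod$ of $G$-graded vector spaces with compatible $H$-action, which is read off from the explicit description in \cite{bantay} and \cite{maier}.

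Next I would realize the four functors by algebra-level maps. The two horizontal arrows are inclusions of neutral sectors: on the top this is the subalgebra inclusion $\cald(G)=\cald^J(G)_1\hookrightarrow\cald^J(G)$ of the component indexed by $1\in J$, corresponding to $G//G\mod\hookrightarrow H//G\mod$; on the bottom it is the inclusion of the $J$-neutral part, i.e.\ of those objects in $H//H\mod$ graded by the trivial coset $G\subset H$, which is exactly $G//H\mod\hookrightarrow H//H\mod$. The right-hand orbifold arrow is supplied by Proposition \ref{J-Drinfeld-orbifold}: the isomorphism $\Psi$ identifies the orbifold algebra $\widehat{\cald^J(G)}^J$ with $\cald(H)$ as ribbon algebras, so by Proposition \ref{orbifoldization} and Corollary \ref{ribbon-ribbon} the orbifold of $\cald^J(G)\mod$ is $\cald(H)\mod$. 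For the left-hand column I would restrict $\Psi$ to the neutral subalgebra $\cald^J(G)_1=\cald(G)$, which carries the restricted weak $J$-action; since $\Psi$ sends $\delta_h\otimes g\otimes j$ to $\delta_h\otimes g\,s(j)$, this restriction is an isomorphism $\widehat{\cald(G)}^J\iso\calb(G\triangleleft H)$ onto the subalgebra of $\cald(H)$ spanned by the $\delta_h\otimes k$ with $h\in G$ and $k\in H$, which realizes the left orbifold arrow; the upward modularization arrow is the induction functor of \cite{maier}.

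The commutativity of the square now reduces to the single observation that $\Psi$ is compatible with the gradings and carries the subalgebra $\widehat{\cald(G)}^J$ precisely onto $\calb(G\triangleleft H)\subset\cald(H)$, so that the lower horizontal inclusion is the restriction of the upper one under orbifoldization. I expect the main obstacle to be not this bookkeeping but the matching of the external and internal descriptions: one must verify that the modularization functor of \cite{maier} (induction along the regular representation of $J$) is the correct partner of the orbifold/neutral-sector picture produced here, and that the braided and ribbon structures---not merely the abelian ones---agree under all four identifications. This coherence between the Brugui\`eres-modularization language of \cite{maier} and the equivariant-orbifold language of Propositions \ref{J-fusion} and \ref{J-Drinfeld-orbifold} is where the genuine content of the square \eqref{int:square} lies.
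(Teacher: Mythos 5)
Your proposal is correct and takes essentially the same route as the paper, which in fact offers no separate proof: the proposition is stated as a summary assembled from the results already established (propositions \ref{DWcat}, \ref{HG-Mod}, \ref{J-fusion}, \ref{orbifoldization} and \ref{J-Drinfeld-orbifold}), with the lower-left corner and the modularization arrow imported from \cite{bantay} and \cite{maier}, and with the remaining coherence questions deferred to \cite[Lemma 2.2]{kirI15}. Your extra observation that restricting $\Psi$ to the neutral subalgebra identifies $\widehat{\cald(G)}^J$ with the subalgebra of $\cald(H)$ spanned by the $\delta_h\otimes k$ with $h\in G$, $k\in H$ (i.e.\ the groupoid algebra of $G//H$) makes the left-hand orbifold arrow explicit, a detail the paper leaves implicit but fully consistent with its summary.
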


We could have chosen the inclusion in the lower line as an
alternative starting point for the solution of the algebraic 
problem presented in introduction \ref{intro:1.1}. Recall
from the introduction that the category $\calb(G\triangleleft H)\mod$
contains a Tannakian subcategory that can be identified with the
category of representations of the quotient group $J=H/G$.
The Tannakian subcategory and thus the category $\calb(G\triangleleft H)\mod$
contain a commutative Frobenius algebra given by the algebra of functions
on $J$; recall that the modularization function was just induction along 
this algebra. The image of this
algebra under the inclusion in the lower line yields
a commutative Frobenius algebra in the category $\cald(H)\mod$. 
In a next step, one can consider induction along this algebra to
obtain another tensor category which, by general results 
\cite[Theorem 4.2]{kirI17} is a $J$-modular category.

In this approach, it remains to show that this $J$-modular
tensor category is equivalent, as a $J$-modular tensor category, 
to $\cald^J(G)\mod$ and, in a next step that the modularization
$\cald(G)\mod$ can be naturally identified with the neutral sector
of the $J$-modular category. This line of thought has been discussed in
\cite[Lemma 2.2]{kirI15} including the square \eqref{square49} of Hopf algebras. Our results directly lead to a natural
Hopf algebra $\cald^J(G)$ and additionally show how
the various categories arise from extended topological field theories
which are built on clear geometric principles and through which
all additional structure of the algebraic categories become explicitly
computable.

\section{Outlook}

Our results very explicitly provide an interesting class
$J$-modular tensor categories. All data of these theories,
including the representations of
the modular group $SL(2,\Z)$ on the vector spaces assigned to the
torus, are directly accessible in terms of representations of
finite groups. Also series of examples exist in which closed
formulae for all quantities can be derived, e.g.\ for 
the inclusion of the alternating group in the symmetric group.

Our results admit generalizations in various directions.
In fact, in this paper, we have only studied a
subclass of Dijkgraaf-Witten theories. The general
case requires, apart from the choice of a finite group $G$,
the choice of an element of
$$\ H^3_{Gp}(G,U(1))= H^4(\cala_G,\Z)\, . $$
This element can be interpreted \cite{Wi} geometrically 
as a 2-gerbe on $\cala_G$. It is known that in this
case a quasi-triangular Hopf algebra can be extracted that
is exactly the one discussed in \cite{dipr}. Indeed,
our results can also be generalized by including 
the additional choice of a non-trivial element 
$$\omega\in H^4_J(\cala_G,\Z)\equiv H^4(\cala_G//J,\Z)
\,\, . $$
Only all these data together
allow to investigate in a similar manner the
categories constructed by Bantay \cite{bantay} 
for crossed modules with a boundary map that is not necessarily
injective any longer. We plan to explain this general case
in a subsequent publication.

\renewcommand{\theDef}{\Alph{section}.\arabic{subsection}.\arabic{Def}}

\renewenvironment{Definition}{ \refstepcounter{Def}\mbox{}\\
\noindent\sl\textbf{Definition
\Alph{section}.\arabic{subsection}.\arabic{Def}} }
{\vspace{0.3cm}}

\appendix
\section{Appendix}

\subsection{Cohomological description of twisted bundles}\label{cech}

In this appendix, we give a description of $P$-twisted 
bundles as introduced in definition \ref{def:twisted} in terms of
local data. This local description
will also serve as a motivation for the term `twisted' in
twisted bundles. 
Recall the relevant situation: 
$1\to G \to H \stackrel\pi\to J \to 1$ is an exact sequence of groups.
Let $P\stackrel J\to M$ be a $J$-cover.
A $P$-twisted bundle on a smooth manifold $M$ is an $H$-bundle 
$Q\to M$, together with a smooth map $\varphi:Q\to P$ such
that $\varphi(qh)=\varphi(q)\pi(h)$ for all $q\in Q$ and
$h\in H$. \\

We start with the choice of a contractible open covering 
$\{U_\alpha\}$ of $M$, i.e. a covering for which all 
open sets $U_{\alpha}$ are contractible. Then the
$J$-cover $P$ admits local sections over $U_\alpha$. 
By choosing local sections $s_\alpha$, we obtain the cocycle 
$$j_{\alpha \beta} := s_\alpha^{-1} \cdot s_\beta: 
U_\alpha \cap U_\beta \to J$$
describing $P$.  

Let $(Q, \varphi)$ be a $P$-twisted $G$-bundle over $M$. 
We claim that we can find local sections 
$$t_\alpha: U_\alpha \to Q$$
of the $H$-bundle $Q$ which are compatible with the local
section of the $J$-cover $P$ in the sense that  
$\varphi \circ t_\alpha = s_\alpha$ holds for all 
$\alpha$. 

To see this, consider the map $\varphi: Q \to P$; 
restricting the $H$-action on $Q$ along the inclusion
$G \to H$, we get a $G$-action on $Q$ that covers the
identity on $P$. Hence $Q$ has the structure of a
$G$-bundle over $P$. Note that the image of
$s_\alpha$ is contractible, since $U_\alpha$ is
contractible. Thus the $G$-bundle $Q \to P$ admits a 
section $s'_\alpha$ over the image of $s_\alpha$.
Then $t_\alpha := s'_\alpha \circ s_\alpha$ is
a section of the $H$-bundle $Q\to M$ that does the job.

With these sections $t_\alpha: U_\alpha \to Q$, we obtain
the cocycle description
$$ h_{\alpha\beta} := t_\alpha^{-1} \cdot t_\beta :
U_\alpha \cap U_\beta \to H $$
of $Q$. 

The set underlying the group $H$ is isomorphic to the
set  $G \times J$. The relevant multiplication on this
set depends on the choice of a section $J\to H$;
it has been described in equation (\ref{mult}):
$$
(g,i) \cdot (g',j) := \big(g \cdot ~^i (g') \cdot 
c_{i,j}~, ~ij\big)\,\,. 
$$
This allows us to express the $H$-valued cocycles
$h_{\alpha\beta}$ in terms of $J$-valued and
$G$-valued functions 
$$ g_{\alpha\beta} : U_\alpha \cap U_\beta \to G \,\, .$$
By the condition $\varphi \circ t_\alpha = s_\alpha$,
the $J$-valued functions are determined to be 
the $J$-valued cocycles $j_{\alpha\beta}$. Using the multiplication
on the set $G\times J$, the cocycle condition
$h_{\alpha\beta} \cdot h_{\beta \gamma} = h_{\alpha \gamma}$ 
can be translated into the following condition for 
$g_{\alpha\beta}$
\begin{equation}\label{cocycle}
g_{\alpha\beta} \cdot ~^{j_{\beta\gamma}}\big(g_{\beta\gamma}\big) \cdot c_{j_{\alpha\beta},j_{\beta\gamma}} = g_{\alpha\gamma}
\end{equation}
over $U_\alpha \cap U_\beta \cap U_\gamma$. This local 
expression can serve as a justification of the term $P$-twisted
$G$-bundle.
\\

We next turn to morphisms.
A morphism $f$ between $P$-twisted bundles $(Q,\varphi)$ 
and $(Q',\psi)$ which are represented by twisted
cocycles $g_{\alpha\beta}$ and $g'_{\alpha\beta}$ 
is represented by a coboundary 
$$ l_\alpha := (t'_\alpha)^{-1} \cdot f(t_\alpha): U_\alpha \to H $$
between the $H$-valued cocycles $h_{\alpha\beta}$ and 
$h'_{\alpha\beta}$. Since $f$ satisfies $\psi \circ f = \varphi$,
the $J$-component $\pi\circ l_\alpha: U_\alpha \to H \to J$ is given by the 
constant function to $e \in J$. Hence the local
data describing the morphism $f$ reduce to a family of 
functions
$$ k_\alpha : U_\alpha \to G. $$
Under the multiplication (\ref{mult}), the coboundary 
relation $l_\alpha \cdot h_{\alpha\beta} 
= h'_{\alpha\beta} \cdot l_\beta$ translates into
$$
k_\alpha \cdot ~^e\big(g_{\alpha\beta}\big) \cdot 
c_{e,j_{\alpha\beta}} = 
g'_{\alpha\beta} \cdot ~^{j_{\alpha\beta}}\big(k_\beta\big) \cdot c_{j_{\alpha\beta},e}
$$
One can easily conclude from the definition \ref{Def:action} 
of a weak action that $~^e\!g = g$ and $c_{e,g} = c_{g,e} = e$ 
for all $g \in G$. Hence this condition reduces to the condition
\begin{equation}\label{coboundary}
k_\alpha \cdot g_{\alpha\beta} = 
g'_{\alpha\beta} \cdot ~^{j_{\alpha\beta}}\big(k_\beta\big) \,\,\, .
\end{equation}
We are now ready to present a classification of
$P$-twisted bundles in terms of \v{C}ech-cohomology. 

Therefore we define the relevant cohomology set:

\begin{Definition}
Let $\big\{U_{\alpha}\big\}$ be a contractible cover of $M$ and $(j_{\alpha\beta})$ be a \v{C}ech-cocycle with values in $J$. \begin{itemize}
\item
A $(j_{\alpha\beta})$-twisted \v{C}ech-cocycle is given by a family 
$$ g_{\alpha\beta} : U_\alpha \cap U_\beta \to G $$
satisfying relation (\ref{cocycle}).
\item
Two such cocycles $g_{\alpha\beta}$ and $g'_{\alpha\beta}$ are cobordant if there exists a coboundary, that is a family of functions $k_a: U_\alpha \to G$ satisfying relation (\ref{coboundary}).
\item
The twisted \v{C}ech-cohomology set $\h^1_{j_{\alpha\beta}}(M,G)$ is defined as the quotient of twisted cocycles modulo coboundaries.
\end{itemize}
\end{Definition}

\begin{Warning}
It might be natural to guess that twisted \v{C}ech-cohomology
$\h^1_{j_{\alpha\beta}}(M,G)$ agrees with the preimage of 
the class $[j_{\alpha\beta}]$ under the map
$ \pi_*: \h^1 (M,H) \to \h^1(M,J)$. This turns out
to be wrong: The natural map
$$\begin{array}{rll}
\h^1_{j_{\alpha\beta}}(M,G) &\to& \h^1(M,H) \\
{}[g_{\alpha\beta}] &\mapsto&
[(g_{\alpha,\beta},j_{\alpha\beta})] \,\,\, ,
\end{array}$$  
is, in general, not injective.
The image of this map is always the fiber 
${\pi_*}^{-1}[j_{\alpha\beta}]$.
\end{Warning}

We summarize our findings:

\begin{Proposition}
Let $P$ be a $J$-cover of $M$, described by the cocycle 
$j_{\alpha\beta}$ over the contractible open cover
$\big\{U_{\alpha}\big\}$. Then there is a canonical bijection
$$\h^1_{j_{\alpha\beta}}(M,G) \cong \bigset{6.0cm}{Isomorphism classes of $P$-twisted $G$-bundles over $M$}.$$
\end{Proposition}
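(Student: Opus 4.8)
The plan is to verify that the assignment constructed in the preceding discussion, sending a $P$-twisted $G$-bundle $(Q,\varphi)$ to its twisted \v{C}ech-cocycle $g_{\alpha\beta}$, descends to a well-defined bijection on isomorphism classes. Almost all of the computational content has already been assembled above; what remains is to organize it into the four standard steps --- well-definedness, isomorphism-invariance, surjectivity and injectivity --- and, throughout, to keep careful track of the role played by the map $\varphi$.

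First I would record the forward map. Given $(Q,\varphi)$, the construction above produces compatible local sections $t_\alpha:U_\alpha\to Q$ with $\varphi\circ t_\alpha=s_\alpha$, and hence an $H$-valued cocycle $h_{\alpha\beta}=t_\alpha^{-1}t_\beta$ whose $J$-component is forced to equal $j_{\alpha\beta}$ and whose $G$-component $g_{\alpha\beta}$ satisfies relation (\ref{cocycle}). To see that the resulting class $[g_{\alpha\beta}]\in\h^1_{j_{\alpha\beta}}(M,G)$ is independent of the chosen sections, I would note that any two choices $t_\alpha,t'_\alpha$ both covering $s_\alpha$ differ by a function $k_\alpha:U_\alpha\to G$ in the structure group: the $J$-component of the difference is the constant $e$ precisely because both sections cover the \emph{same} section of $P$. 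The induced change of cocycle is then exactly the twisted coboundary relation (\ref{coboundary}). The very same computation, applied to an isomorphism $f:Q\to Q'$ of twisted bundles through $l_\alpha=(t'_\alpha)^{-1}f(t_\alpha)$, shows that isomorphic twisted bundles yield cobordant twisted cocycles. This renders the map well-defined on isomorphism classes.

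For surjectivity I would run the gluing construction in reverse: given a twisted cocycle $g_{\alpha\beta}$, form the $H$-valued functions $h_{\alpha\beta}:=(g_{\alpha\beta},j_{\alpha\beta})$ using the multiplication (\ref{mult}). A direct check shows that relation (\ref{cocycle}) together with the $J$-cocycle identity for $j_{\alpha\beta}$ is equivalent to the ordinary $H$-cocycle condition $h_{\alpha\beta}h_{\beta\gamma}=h_{\alpha\gamma}$, so $h_{\alpha\beta}$ glues to an $H$-bundle $Q$. Since $\pi\circ h_{\alpha\beta}=j_{\alpha\beta}$ describes $P$, the associated $J$-cover $Q\times_H J$ is canonically identified with $P$, and this identification is precisely the datum of a map $\varphi:Q\to P$ satisfying $\varphi(qh)=\varphi(q)\pi(h)$ and $\varphi\circ t_\alpha=s_\alpha$; by construction the forward map returns $[g_{\alpha\beta}]$. (Alternatively one may invoke the descent result proven above to obtain $(Q,\varphi)$ directly as an object of $\cala_G(P\to M)$ glued from trivial twisted bundles on the contractible $U_\alpha$.) Injectivity is then the converse of the isomorphism-invariance step: given a coboundary $k_\alpha$ relating $g_{\alpha\beta}$ and $g'_{\alpha\beta}$, I would set $l_\alpha:=(k_\alpha,e)$ and check that it assembles into a morphism of $H$-bundles commuting with the maps to $P$, hence an isomorphism of twisted bundles.

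The main obstacle, and the only point requiring genuine care, is the rigidification coming from $\varphi$: one must consistently work with the fixed sections $s_\alpha$ of $P$ so that the $J$-part of every cocycle is \emph{equal} to $j_{\alpha\beta}$ rather than merely cohomologous to it. This is exactly what distinguishes $\h^1_{j_{\alpha\beta}}(M,G)$ from a fiber of $\pi_*:\h^1(M,H)\to\h^1(M,J)$, and is responsible for the failure of injectivity of the natural map to $\h^1(M,H)$ flagged in the preceding warning. Keeping this bookkeeping straight, rather than any single hard computation, is the crux of the argument.
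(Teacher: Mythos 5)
Your proposal is correct and follows essentially the same route as the paper: the paper's appendix derives exactly your forward map (compatible sections $t_\alpha$ with $\varphi\circ t_\alpha=s_\alpha$, the twisted cocycle relation (\ref{cocycle}), and the coboundary relation (\ref{coboundary}) from morphisms), and then states the proposition as a summary of that discussion. Your additional steps --- independence of the choice of sections, surjectivity by gluing the $H$-valued cocycle $(g_{\alpha\beta},j_{\alpha\beta})$ and recovering $\varphi$ from the induced identification $Q\times_H J\cong P$, and injectivity as the converse of the coboundary computation --- are precisely the routine verifications the paper leaves implicit, carried out with the correct bookkeeping of the fixed sections $s_\alpha$.
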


\subsection{Character theory for action groupoids}\label{groupoidrep}

In this subsection, we explicitly work out
a character theory for finite action groupoids $M//G$;
in the case of $M = pt $, this theory specializes
to the character theory of a finite group (cf. 
\cite{isaacs1994character} and \cite{serre1977linear}). 
In the special case of a
finite action groupoid coming from a finite crossed module, 
a character theory including orthogonality relation 
has been presented in \cite{bantay}. 
In the sequel, let $\KK$ be a field and denote by $\vect_{\KK}(M//G)$ the category of $\KK$-linear representations of $M//G$.

\medskip
\begin{Definition}
Let $((V_m)_{m\in M}, (\rho(g))_{g\in G})$ be a $\KK$-
linear representation of the action groupoid $M//G$ and 
denote by $P(m)$ the projection of 
$V = \bigoplus_{n \in M} V_n$ to the homogeneous component
$V_m$. We call the function
\begin{align*}
\chi: M \times G & \rightarrow \KK\\[.2em]
\chi(m,g) & := \mathrm{Tr}_V(\rho(g)P(m))
\end{align*}
the \emph{character of the representation}.
\end{Definition}

\begin{Example}
On the $\KK$-vector space
$H := \KK(M)\otimes \KK[G]$ with canonical basis 
$(\delta_m \otimes g)_{m \in M, g \in G}$,
we define a grading by $H_m = \bigoplus_{g} 
\KK(\delta_{g.m} \otimes g)$ and a group action by 
$\rho(g)(\delta_m \otimes h) = \delta_m \otimes gh$. 
This defines an object in $\vect_{\KK}(M//G)$,
called the regular representation. The character is
easily calculated in the canonical basis and found to be
\begin{align*}
\chi_H(m,g) = \sum_{(n,h) \in M\times G} \delta(g,1)\delta(h.m,n) = \delta(g,1)|G|
\end{align*}
\end{Example}

\begin{Definition}
We call a function
\[f:M \times G \rightarrow \KK \]
an \emph{action groupoid class function} on $M//G$, if it 
satisfies
\begin{center}
$
f(m,g)  = 0 \text{ if } g.m \ne m \quad\text{ and }\quad
f(h.m,hgh^{-1})  = f(m,g) \,\,. 
$
\end{center}
\end{Definition}

The character of any finite dimensional representation
is a class function.

From now on, we assume that the characteristic of
$\KK$ does not divide the order $|G|$ of the group $G$.
This assumption allows us to consider the following
normalized non-degenerate symmetric bilinear form 
\begin{align}
\langle f,f'\rangle := \frac{1}{|G|} \sum_{g \in G, m \in M} {f(m,g^{-1})} f'(m,g).\label{form}
\end{align}

In the case of {\em complex} representations, one can show,
precisely as in the case of groups,
the equality $\chi(m,g^{-1}) = \overline{\chi(m,g)}$
which allows introduce the hermitian scalar product
\begin{align}
(\chi,\chi') :=\frac{1}{|G|} \sum_{g \in G, m \in M} 
\overline{\chi(m,g)} \chi'(m,g) \,\, .\label{scalarpr}
\end{align}

\begin{Lemma} \label{orth}
Let $\KK$ be algebraically closed.
The characters of irreducible $M//G$-representations are  orthogonal and of unit length with respect to the bilinear form (\ref{form}).
\end{Lemma}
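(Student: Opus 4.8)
The plan is to mimic the classical proof of orthogonality of characters for finite groups, adapting it to the action groupoid $M//G$. The key structural fact is that a representation of $M//G$ is the same datum as an $H$-module, where $H = \KK(M) \otimes \KK[G]$ is the groupoid algebra with the multiplication and grading described in the regular-representation example above. Under this identification, the bilinear form \eqref{form} on class functions should be recognized as a form computing dimensions of Hom-spaces between representations, and Schur's lemma (valid since $\KK$ is algebraically closed) then forces orthonormality.

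First I would make precise the dictionary: given an irreducible $M//G$-representation $(V, \rho)$, its character is $\chi(m,g) = \tr_V(\rho(g) P(m))$. The central computation is to show that for two representations $V, V'$ with characters $\chi, \chi'$, one has
\[
\langle \chi, \chi' \rangle = \dim_\KK \Hom_{M//G}(V', V).
\]
To prove this I would introduce the averaging operator. For any $\KK$-linear map $\phi: V' \to V$, define
\[
\bar\phi := \frac{1}{|G|} \sum_{g \in G} \rho(g)\, \phi\, \rho'(g)^{-1},
\]
which is an averaging over the group $G$ that lands in $\Hom_{M//G}(V', V)$; the condition that $G$ acts compatibly with the grading (i.e.\ $\rho(g)$ shifts $V_m$ to $V_{g.m}$) is what guarantees $\bar\phi$ is a genuine morphism of groupoid representations. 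Taking traces of the corresponding projector onto the invariant subspace, together with the identity $\langle \chi,\chi'\rangle = \frac{1}{|G|}\sum_{g,m} \chi(m,g^{-1})\chi'(m,g)$ and the insertion of the projections $P(m)$, yields $\dim \Hom_{M//G}(V',V)$. This is the genuine analogue of the formula $\frac{1}{|G|}\sum_g \chi(g^{-1})\chi'(g) = \dim\Hom_G(V',V)$, the only new feature being the grading label $m$ and the projections $P(m)$ that track it.

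Granting this formula, the conclusion is immediate from Schur's lemma: for irreducible $V, V'$ over an algebraically closed field, $\Hom_{M//G}(V',V)$ is $\KK$ if $V \cong V'$ and $0$ otherwise, so $\langle \chi, \chi'\rangle = \delta_{V,V'}$, which is precisely orthogonality and unit length. I expect the main obstacle to be the bookkeeping in the averaging step: one must verify carefully that $\bar\phi$ respects both the $M$-grading and the $G$-action, and that the trace of the averaging projector correctly reproduces the form \eqref{form} with the projections $P(m)$ inserted at the right place. The grading-compatibility of the $G$-action is essential here and is exactly the extra ingredient distinguishing the groupoid case from the group case; once it is handled, the remainder is the standard Schur-lemma argument.
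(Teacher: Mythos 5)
Your overall strategy---average a linear map into an intertwiner, compute the trace of the averaging projector, and finish with Schur's lemma---is exactly the paper's approach. However, your averaging operator is wrong as defined, and the failure sits precisely at the point you deferred as ``bookkeeping''. The map $\bar\phi = \frac{1}{|G|}\sum_{g\in G}\rho(g)\,\phi\,\rho'(g)^{-1}$ is $G$-equivariant but does \emph{not} preserve the $M$-grading: for $v\in V'_m$ the vector $\rho'(g)^{-1}v$ lies in $V'_{g^{-1}.m}$, but $\phi$ is an arbitrary linear map and scatters it over all homogeneous components, so $\rho(g)\phi\rho'(g)^{-1}v$ has components in $V_{g.n}$ for every $n$, not only in $V_m$. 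The grading-compatibility of the action does not rescue this claim; what it actually gives is that the subspace of grading-\emph{preserving} maps is stable under conjugation by $\rho(g)$. One must therefore first cut $\phi$ down to its grading-diagonal part and only then average, which is exactly what the paper's intertwiner (\ref{f0}) does:
\begin{equation*}
f^0 \;=\; \frac{1}{|G|}\sum_{g\in G,\,m\in M}\rho_W(g^{-1})\,P_W(m)\,f\,P_V(m)\,\rho_V(g)\,,
\end{equation*}
with the projections inserted \emph{inside} the double sum, not added afterwards in the trace computation.

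The omission is not cosmetic, because it changes what the trace computes. The trace of your operator $\phi\mapsto\frac{1}{|G|}\sum_{g}\rho(g)\phi\rho'(g)^{-1}$ equals $\frac{1}{|G|}\sum_{g}\bigl(\sum_{m}\chi(m,g)\bigr)\bigl(\sum_{n}\chi'(n,g^{-1})\bigr)$, i.e.\ the pairing of the underlying \emph{group} characters; this computes $\dim_\KK\Hom_{G}(V',V)$ and is not the form (\ref{form}), whose sum is resolved in $m$ (a single sum $\sum_{g,m}\chi(m,g^{-1})\chi'(m,g)$, not a product of two $m$-sums). Moreover, Schur's lemma cannot be applied to $\Hom_{G}(V',V)$, since an irreducible $M//G$-representation need not be irreducible as a $G$-representation, so even the final step would break. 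Once the projections are placed inside the average as in (\ref{f0}), the operator becomes a projector onto $\Hom_{M//G}(V',V)$, its trace is $\frac{1}{|G|}\sum_{g,m}\tr_V\bigl(\rho(g^{-1})P(m)\bigr)\tr_{V'}\bigl(P'(m)\rho'(g)\bigr)=\langle\chi,\chi'\rangle$, and your Schur-lemma conclusion then goes through verbatim.
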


\begin{proof}
The proof proceeds as in the case of finite groups:
for a linear map $f:\ V\to W$ on the vector spaces
underlying two irreducible representations, one considers
the intertwiner
\begin{align}
f^0 = \frac{1}{|G|}\sum_{g\in G, m\in M}\rho_W(g^{-1})P_W(m)fP_V(m)\rho_V(g)\label{f0}.
\end{align}
and applies Schur's lemma.
\end{proof}\medskip

A second orthogonality relation
\[\sum_{i \in I} \chi_i(m,g)\chi_i (n,h^{-1}) = \sum_{z \in G} \delta(n,z.m) \delta(h,zgz^{-1})\]
can be derived as in the case of finite groups, as well.

Combining the orthogonality relations with the
explicit form for the character of the regular
representation, we derive in the case of
an algebraically closed field whose characteristic
does not divide the order $|G|$ use a standard reasoning:

\begin{Lemma}\label{multiplicity}
Every irreducible representation $V_i$ is contained in the 
regular representation with multiplicity $d_i:=\dim_\KK V_i$.
\end{Lemma}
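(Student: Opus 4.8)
The plan is to combine the two orthogonality relations just established with the explicit character of the regular representation computed in the example. First I would recall that by Lemma~\ref{orth} the characters $\chi_i$ of a complete set of representatives $\{V_i\}_{i\in I}$ of the irreducible $M//G$-representations are orthonormal with respect to the bilinear form \eqref{form}. Since the regular representation $H$ is finitely semisimple (we are over an algebraically closed field whose characteristic does not divide $|G|$), it decomposes as $H\cong\bigoplus_{i\in I} m_i V_i$ for uniquely determined multiplicities $m_i\in\N$, and the character is additive, so $\chi_H=\sum_{i\in I} m_i\chi_i$.

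The key computation is then to extract $m_i$ by pairing $\chi_H$ against $\chi_i$ in the form \eqref{form}. By orthonormality we get $m_i=\langle\chi_i,\chi_H\rangle$. Now I would plug in the explicit value $\chi_H(m,g)=\delta(g,1)\,|G|$ from the regular-representation example, so that only the terms with $g=1$ survive in the sum defining \eqref{form}:
\begin{align*}
m_i=\langle\chi_i,\chi_H\rangle
=\frac{1}{|G|}\sum_{g\in G,\,m\in M}\chi_i(m,g^{-1})\,\chi_H(m,g)
=\frac{1}{|G|}\sum_{m\in M}\chi_i(m,1)\cdot|G|
=\sum_{m\in M}\chi_i(m,1).
\end{align*}
Finally I would identify the right-hand side with $\dim_\KK V_i$: by definition $\chi_i(m,1)=\mathrm{Tr}_{V_i}(\rho(1)P(m))=\mathrm{Tr}_{V_i}(P(m))=\dim_\KK (V_i)_m$, the dimension of the homogeneous component in degree $m$, and summing over $m\in M$ gives $\dim_\KK V_i=:d_i$. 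Hence $m_i=d_i$, which is exactly the claim.

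The only subtle point, and the step I would be most careful about, is the bookkeeping with the bilinear form versus genuine orthonormality: one must use that $\langle\chi_i,\chi_j\rangle=\delta_{ij}$ holds for the symmetric form \eqref{form} (as in Lemma~\ref{orth}), rather than silently switching to the hermitian product \eqref{scalarpr}, which is only available in the complex case. Since we are working over a general algebraically closed field of suitable characteristic, the argument must run entirely through \eqref{form}, and the evaluation $\chi_i(m,g^{-1})$ at $g=1$ conveniently makes the inverse irrelevant. Everything else is a routine substitution, so I expect no genuine obstacle beyond this verification.
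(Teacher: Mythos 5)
Your proposal is correct and follows exactly the route the paper indicates: it combines the orthonormality of irreducible characters from Lemma~\ref{orth} with the explicit character $\chi_H(m,g)=\delta(g,1)|G|$ of the regular representation, extracting the multiplicity as $m_i=\langle\chi_i,\chi_H\rangle=\sum_{m\in M}\chi_i(m,1)=\dim_\KK V_i$. The paper leaves this as ``standard reasoning,'' and your write-up, including the care to work with the bilinear form \eqref{form} rather than the hermitian product \eqref{scalarpr}, supplies precisely the intended details.
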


As a consequence, the following generalization of
Burnside's Theorem holds:
\begin{Proposition}
Denote by $(V_i)_{i\in I}$ a set of representatives for the
isomorphism classes of simple representations of the
action groupoid and by $d_i:=\dim_\KK V_i$ the dimension
of the simple object. Then
\[\sum_{i \in I} |d_i|^2 = |M||G|\]
\end{Proposition}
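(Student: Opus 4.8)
The plan is to mimic exactly the classical proof that $\sum_i d_i^2 = |G|$ for a finite group, now applied to the regular representation of the action groupoid $M//G$. The two ingredients are already in place: Lemma \ref{multiplicity}, which says that each simple object $V_i$ occurs in the regular representation $H$ with multiplicity equal to its dimension $d_i$, and the explicit character of $H$ computed in the preceding Example, namely $\chi_H(m,g) = \delta(g,1)\,|G|$. First I would decompose the regular representation into simples, writing $H \cong \bigoplus_{i \in I} d_i\, V_i$ as objects of $\vect_\KK(M//G)$, so that on characters we have the pointwise identity
\[
\chi_H(m,g) \;=\; \sum_{i \in I} d_i\, \chi_i(m,g)
\]
for all $(m,g) \in M \times G$.

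The key computational step is then to evaluate the self-pairing $\langle \chi_H, \chi_H\rangle$ in two different ways using the normalized bilinear form \eqref{form}. On the one hand, substituting the explicit character $\chi_H(m,g) = \delta(g,1)|G|$ directly into \eqref{form} gives
\[
\langle \chi_H, \chi_H\rangle
= \frac{1}{|G|} \sum_{g \in G,\, m \in M} \delta(g^{-1},1)|G|\cdot \delta(g,1)|G|
= \frac{1}{|G|}\,|M|\,|G|^2 = |M|\,|G|,
\]
since the only surviving term is $g = 1$, which contributes for each of the $|M|$ points $m$. On the other hand, expanding $\chi_H = \sum_i d_i \chi_i$ bilinearly and invoking the orthonormality of the irreducible characters from Lemma \ref{orth}, i.e.\ $\langle \chi_i, \chi_{i'}\rangle = \delta_{i,i'}$, collapses the double sum to the diagonal:
\[
\langle \chi_H, \chi_H\rangle
= \sum_{i,i' \in I} d_i\, d_{i'}\, \langle \chi_i, \chi_{i'}\rangle
= \sum_{i \in I} d_i^2.
\]
Comparing the two evaluations yields $\sum_{i \in I} d_i^2 = |M|\,|G|$, which is the assertion (the absolute value signs on $d_i$ being vacuous since the $d_i$ are positive integers, or interpreted as the appropriate norm if one prefers the hermitian form \eqref{scalarpr}).

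I do not expect any genuine obstacle here, since all the substantive work has been done in Lemma \ref{orth} (orthonormality), Lemma \ref{multiplicity} (multiplicities in the regular representation) and the character computation of the Example. The only point requiring a little care is the bookkeeping of the substitution $g \mapsto g^{-1}$ in the first slot of the form \eqref{form}: one must check that $\delta(g^{-1},1) = \delta(g,1)$, which is immediate, so that the sum indeed localizes at the identity and produces the factor $|M|\,|G|$ rather than $|G|$. This reflects the difference from the group case, where one has a single object instead of the $|M|$ points of the groupoid. No new ideas beyond the standard Burnside argument are needed; the statement is a formal consequence of the already-established orthogonality relations together with the explicit regular character.
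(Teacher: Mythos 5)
Your proof is correct, and the computations check out (in particular the localization of the sum at $g=1$, giving the factor $|M||G|$), but it takes a different route than the paper. The paper's proof is a bare dimension count: by semisimplicity and Lemma \ref{multiplicity} one has $H \cong \bigoplus_{i\in I} d_i V_i$, hence $\dim_\KK H = \sum_{i\in I} d_i \dim_\KK V_i = \sum_{i\in I} d_i^2$, while $\dim_\KK H = |M||G|$ is immediate from the definition $H = \KK(M)\otimes\KK[G]$; comparing the two finishes the argument without ever invoking the bilinear form \eqref{form}. You instead evaluate $\langle \chi_H,\chi_H\rangle$ in two ways, which requires two further ingredients: the orthonormality of irreducible characters (Lemma \ref{orth}) and the explicit regular character $\chi_H(m,g)=\delta(g,1)|G|$ from the example. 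The trade-off: your route is the classical ``norm of the regular character'' version of Burnside's theorem, and it has the mild advantage of separating the inputs --- it establishes $\sum_{i\in I} m_i^2 = |M||G|$ for the multiplicities $m_i$ of \emph{any} decomposition of $H$ into simples, with Lemma \ref{multiplicity} needed only at the end to substitute $m_i = d_i$ --- whereas the paper's dimension count is shorter and uses strictly fewer ingredients, at the price of leaning entirely on Lemma \ref{multiplicity}, which is itself where the orthogonality relations and the regular character were already consumed.
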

\begin{proof}
One combines the relation $\dim H = \sum_{i \in I}d_i\dim V_i$
from Lemma \ref{multiplicity} with
the relation $\dim H = |M||G|$.
\end{proof}

In complete analogy to the case of finite groups, one then
shows:

\begin{Proposition}
The irreducible characters of $M//G$ form an orthogonal 
basis of the space of class functions with respect to the 
scalar product 
(\ref{form}).
\end{Proposition}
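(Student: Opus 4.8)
The plan is to reduce the statement to the two facts that the irreducible characters are (i) linearly independent and (ii) span the space of class functions. Linear independence is already available: Lemma \ref{orth} shows that the irreducible characters are orthogonal and of unit length with respect to the bilinear form (\ref{form}), hence in particular linearly independent. So everything comes down to a spanning argument, which I would phrase as a dimension count.

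First I would describe both sides of the equality ``number of irreducibles $=$ dimension of the space of class functions'' explicitly. The space of class functions has as a basis the indicator functions of the orbits of $G$ acting on the set $X := \{(m,g)\in M\times G \mid g.m = m\}$ by $h\cdot(m,g) := (h.m,\,hgh^{-1})$; indeed, by definition a class function is supported on $X$ and constant on these orbits, so its dimension is the number of such orbits. On the other side, the groupoid $M//G$ is the disjoint union of its connected components, one for each $G$-orbit $[m]$ in $M$, and the component through $m$ is equivalent to the one-object groupoid $\mathrm{pt}//G_m$, where $G_m$ denotes the stabilizer of $m$. Consequently a simple representation of $M//G$ is supported on a single component and restricts there to a simple representation of $G_m$, so the isomorphism classes of simple objects are indexed by $\bigsqcup_{[m]} \mathrm{Irr}(G_m)$.

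The key step is to match these two counts. I would use the $G$-equivariant projection $X \to M$, $(m,g)\mapsto m$: over a chosen representative $m$ the fibre is $\{m\}\times G_m$, on which the stabilizer $G_m$ acts by conjugation. A standard fibre/orbit-counting argument identifies the $G$-orbits on $X$ lying over $[m]$ with the conjugacy classes of $G_m$, so that the number of $G$-orbits on $X$ equals $\sum_{[m]}(\text{number of conjugacy classes of } G_m)$. Since $\KK$ is algebraically closed of characteristic not dividing $|G|$, and hence not dividing any $|G_m|$, ordinary group character theory gives that the number of conjugacy classes of $G_m$ equals $|\mathrm{Irr}(G_m)|$. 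Combining, the dimension of the space of class functions equals the number of isomorphism classes of simple objects of $M//G$, i.e.\ the number of irreducible characters. Together with the orthonormality from Lemma \ref{orth} this forces the irreducible characters to form an orthogonal basis, completing the proof.

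An alternative, more hands-on route mirrors the finite-group argument directly: for a class function $f$ and a representation $(V,\rho)$ one forms the averaged operator $T^V_f := \frac{1}{|G|}\sum_{g,m} f(m,g^{-1})\,\rho(g)P(m)$, checks using the two defining properties of a class function that $T^V_f$ preserves the grading and is $G$-equivariant (hence an endomorphism of $V$), and then applies Schur's lemma; for irreducible $V$ one gets $T^V_f = \lambda_V\,\mathrm{id}$ with $\lambda_V\dim_\KK V = \langle f,\chi_V\rangle$, so if $f$ is orthogonal to every irreducible character then $T^V_f$ vanishes on the regular representation and $f=0$. The main obstacle in this second approach is the bookkeeping with the groupoid grading in the equivariance check, together with controlling the factor $\dim_\KK V$ in positive characteristic; this is precisely why I would favour the clean orbit-counting argument above.
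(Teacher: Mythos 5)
Your proposal is correct, but your main argument is genuinely different from the paper's. The paper proves this proposition only "in complete analogy to the case of finite groups", i.e.\ by the classical completeness argument: given a class function $f$ orthogonal to all irreducible characters, form the averaged operator (as in the intertwiner \eqref{f0} of Lemma \ref{orth}), apply Schur's lemma to conclude it is scalar (hence zero) on each irreducible, and then evaluate it on the regular representation to force $f=0$ --- this is precisely your \emph{alternative} route, including the subtlety you flag that one must know $\dim_\KK V_i\neq 0$ in $\KK$ (which holds since $\dim_\KK V_i$ divides $|G|$). Your main route instead establishes linear independence from Lemma \ref{orth} and then proves spanning by a dimension count: class functions have the orbit indicator functions on $A=\{(m,g)\mid g.m=m\}$ as a basis, while the simple objects of $\vect_\KK(M//G)$ are parametrized by pairs of a $G$-orbit $[m]$ and an irreducible $G_m$-representation, via the decomposition of $M//G$ into connected components equivalent to $\mathrm{pt}//G_m$; the fibrewise orbit count matching $G$-orbits on $A$ over $[m]$ with conjugacy classes of $G_m$ then closes the loop using ordinary character theory of the stabilizers. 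Note that this reverses the paper's logical order without circularity: the paper \emph{deduces} the equality ``number of irreducibles $=$ number of $G$-orbits of $A$'' (and the inertia groupoid statement $|I|=|Iso(\Lambda(M//G))|$) from the proposition, whereas you prove that equality independently and derive the proposition from it. What each approach buys: yours yields as a byproduct the explicit classification of irreducibles by stabilizer data, while the paper's averaging argument stays entirely within the groupoid character formalism, needs no structure theory of groupoid representations, and transfers to settings (such as the crossed-module categories of Bantay cited in the paper) where a clean reduction to stabilizer subgroups may not be available.
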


The above proposition allows us to count the number of 
irreducible representations. On the set
\[ A := \{(m,g)|g.m = m\} \subset M \times G\]
the group $G$ naturally acts by 
$h.(m,g) := (h.m,hgh^{-1})$. A class function of $M//G$ 
is constant on $G$-orbits of $A$; it vanishes
on the complement of $A$ in $M\times G$.
We conclude that the number of irreducible characters 
equals the number of $G$-orbits of $A$.

This can be rephrased as follows:
the set $A$ is equal to the set of objects of 
the inertia groupoid $\Lambda (M//G):=[\bullet//\Z,M//G]$.
Thus the number of $G$-orbits of $A$ equals the number of 
isomorphism classes of objects in $\Lambda (M//G)$, thus
$|I| = |Iso(\Lambda (M//G))|$.

\bibliographystyle{alpha}
\bibliography{Diwi}{}

\begin{thebibliography}{FHLT09}

\bibitem[Ati88]{At88}
M.~Atiyah.
\newblock {Topological quantum field theories}.
\newblock {\em Publications Math{\'e}matiques de l'IH{\'E}S}, 68(1):175--186,
  1988.

\bibitem[Ban05]{bantay}
P.~Bantay.
\newblock Characters of crossed modules and premodular categories.
\newblock In J.~McKay J.~Lepowsky and M.~Tuite, editors, {\em Moonshine: the
  first quarter century and beyond: proceedings of a workshop on the moonshine
  conjectures and vertex algebras}, London Math.\ Soc.\ Lecture Notes Series
  372, page~1, 2005.

\bibitem[Bar09]{bartlett}
B.~Bartlett.
\newblock {The geometry of unitary 2-representations of finite groups and their
  2-characters}.
\newblock {\em Applied Categorical Structures}, 19:175--232, 2009.

\bibitem[Bau09]{baum}
H.~Baum.
\newblock {\em {Eichfeldtheorie: Eine Einf\"uhrung in die Differentialgeometrie
  auf Faserb\"undeln}}.
\newblock Springer-Lehrbuch Masterclass. Springer-Verlag Berlin Heidelberg,
  2009.

\bibitem[BD01]{BD00}
J.C. Baez and J.~Dolan.
\newblock {From finite sets to Feynman diagrams}.
\newblock In Bj\"orn Engquist and Wilfried Schmid, editors, {\em Mathematics
  Unlimited - 2001 and Beyond}, pages 29--50. Springer, 2001.

\bibitem[BK01]{BKLec}
B.~Bakalov and A.A. Kirillov.
\newblock {\em {Lectures on tensor categories and modular functors}}.
\newblock American Mathematical Society, 2001.

\bibitem[BL04]{baez04}
J.C. Baez and A.D. Lauda.
\newblock {Higher-dimensional algebra. V. 2-groups}.
\newblock {\em Theory Appl. Categ.}, 12(14):423--491, 2004.

\bibitem[Bru00]{brug}
A.~Brugui\`eres.
\newblock Cat\'egories pr\'emodulaires, modularisations et invariants des
  vari\'et\'es de dimension 3.
\newblock {\em Math. Annal.}, 316:215--236, 2000.

\bibitem[CY99]{CY99}
L.~Crane and D.~Yetter.
\newblock {On algebraic structures implicit in topological quantum field
  theories}.
\newblock {\em Journal of Knot Theory and its Ramifications}, 8:125--164, 1999.

\bibitem[Ded60]{Ded60}
P.~Dedecker.
\newblock {Les foncteurs Ext$\Pi$, H2 $\Pi$ et H2 $\Pi$ non ab{\'e}liens}.
\newblock {\em CR Acad. Sc. Paris}, 258:4891--4895, 1960.

\bibitem[Del90]{de90}
P.~Deligne.
\newblock {Cat{\'e}gories tannakiennes, The Grothendieck Festschrift, Vol. II,
  111--195}.
\newblock {\em Progr. Math}, 87, 1990.

\bibitem[DPP04]{Daw04}
R.J.M.G. Dawson, R.~Par{\'e}, and D.A. Pronk.
\newblock {Universal properties of span}.
\newblock {\em Theory and Appl. of Categories}, 13(4):61--85, 2004.

\bibitem[DPR90]{dipr}
R.~Dijkgraaf, V.~Pasquier, and P.~Roche.
\newblock Quasi { H}opf algebras, group cohomology and orbifold models.
\newblock {\em Nucl. Phys. B 18 (1990) 60}, 1990.

\bibitem[DW90]{DW90}
R.~Dijkgraaf and E.~Witten.
\newblock {Topological gauge theories and group cohomology}.
\newblock {\em Comm.\ Math.\ Phys.}, 129(2):393--429, 1990.

\bibitem[ENO10]{ENO2009}
P.~Etingof, D.~Nikshych, and V.~Ostrik.
\newblock {Fusion categories and homotopy theory}.
\newblock {\em Quantum Topol.}, 1(3):209–273, 2010.

\bibitem[FHLT09]{FHLT}
D.S. Freed, M.J. Hopkins, J.~Lurie, and C.~Teleman.
\newblock {Topological quantum field theories from compact Lie groups}.
\newblock {\em arXiv}, 905:367, 2009.

\bibitem[FQ93]{FQ93}
D.S. Freed and F.~Quinn.
\newblock {Chern-Simons theory with finite gauge group}.
\newblock {\em Comm. Math. Phys.}, 156(3):435--472, 1993.

\bibitem[Fre94]{freed94}
D.S. Freed.
\newblock {Higher algebraic structures and quantization}.
\newblock {\em Commun. Math. Phys.}, 159(2):343--398, 1994.

\bibitem[Fre95]{Freed92}
D.S. Freed.
\newblock {Classical Chern-Simons theory, Part 1}.
\newblock {\em Adv. Math.}, 133:237--303, 1995.

\bibitem[Fre99]{freed99}
D.S. Freed.
\newblock {Quantum groups from path integrals}.
\newblock In G.~Semenoff and L.~Vinet, editors, {\em Particles and fields}, CRM
  Series on mathematical physics, pages 63--108. Springer Verlag, 1999.

\bibitem[Hat02]{hatcher}
A.~Hatcher.
\newblock {\em {Algebraic topology}}.
\newblock Cambridge Univ Press, 2002.

\bibitem[Hua05]{Hu}
Y.-Z. Huang.
\newblock Vertex operator algebras, the {V}erlinde conjecture and modular
  tensor categories.
\newblock {\em Proc. Natl. Acad. Sci. USA}, 102:5352--5356, 2005.

\bibitem[Isa94]{isaacs1994character}
I.M. Isaacs.
\newblock {\em {Character theory of finite groups}}.
\newblock Dover publications, 1994.

\bibitem[Kas95]{kassel1995quantum}
C.~Kassel.
\newblock {\em Quantum groups}.
\newblock Graduate Texts in Mathematics 155. Springer Verlag, 1995.

\bibitem[Kir01]{kirI15}
A.A. Kirillov.
\newblock Modular categories and orbifold models {II}.
\newblock Preprint math.QA/0110221, 2001.

\bibitem[Kir04]{kirI17}
A.A. Kirillov.
\newblock On ${G}$-equivariant modular categories.
\newblock arXiv:math/0401119v1 [math.QA], 2004.

\bibitem[KLM01]{KLM}
Y.~Kawahigashi, R.~Longo, and M.~M\"uger.
\newblock Multi-interval subfactors and modularity of representations in
  conformal field theory.
\newblock {\em Commun. Math. Phys.}, 219:631--669, 2001.

\bibitem[KV94]{KV94}
M.M. Kapranov and V.A. Voevodsky.
\newblock {2-Categories and Zamolodchikov Tetrahedra Equations}.
\newblock {\em Summer Research Institute on Algebraic Groups and Their
  Generalizations, July 6-26, 1991, Pennsylvania State University}, 56(Part
  1):177, 1994.

\bibitem[Law93]{Lawrence}
R.J. Lawrence.
\newblock {Triangulations, categories and extended topological field theories}.
\newblock In L.~Kauffman and R.~Baadhio, editors, {\em Quantum Topology, Ser.
  Knots \& Everything}, volume~3, pages 191--208. World Scientific, 1993.

\bibitem[Lei08]{Leinster08}
T.~Leinster.
\newblock {The Euler characteristic of a category}.
\newblock {\em Documenta Mathematica}, 13:21--49, 2008.

\bibitem[Lur09]{Lurie}
J.~Lurie.
\newblock On the classification of topological field theories.
\newblock Preprint arXiv:0905.0465v1 [math.CT], 2009.

\bibitem[May99]{may}
J.P. May.
\newblock {\em {A concise course in algebraic topology}}.
\newblock University of Chicago Press, 1999.

\bibitem[Mon93]{montgomery1993hopf}
S.~Montgomery.
\newblock {\em {Hopf algebras and their actions on rings}}.
\newblock American Mathematical Society, 1993.

\bibitem[Mor08]{Morton08}
J.C. Morton.
\newblock {2-Vector Spaces and Groupoids}.
\newblock Preprint arXiv:0810.2361, 2008.

\bibitem[Mor09]{Morton06}
J.C. Morton.
\newblock {Double bicategories and double cospans}.
\newblock {\em Journal of Homotopy and Related Structures}, 4:389--428, 2009.

\bibitem[Mor10]{Morton}
J.C. Morton.
\newblock Extended tqft, gauge theory, and 2-linearization.
\newblock Preprint arXiv:1003.5603v2 [math.QA], 2010.

\bibitem[MS10]{maier}
J.~Maier and C.~Schweigert.
\newblock Modular categories from finite crossed modules.
\newblock {\em Journal of Pure and Applied Algebra}, 2010.

\bibitem[NTV03]{nitv}
D.~Nikshych, V.~Turaev, and L.~Vainerman.
\newblock Quantum groupoids and invariants of knots and 3-manifolds.
\newblock {\em Topology and its Appl.}, 127:91--123, 2003.

\bibitem[RT91]{RT}
N.Yu. Reshetikhin and V.G. Turaev.
\newblock Invariants of $3$-manifolds via link polynomials and quantum groups.
\newblock {\em Inv. Math.}, 103(547), 1991.

\bibitem[Sch26]{Schreier}
O.~Schreier.
\newblock {\"Uber die Erweiterung von Gruppen I}.
\newblock {\em Monatsh. Math. Phys}, 34(1):165--180, 1926.

\bibitem[Ser77]{serre1977linear}
J.P. Serre.
\newblock {\em {Linear representations of finite groups}}.
\newblock Lecture Notes in Mathematics 1500. Springer Verlag, 1977.

\bibitem[SP09]{priesPhD}
C.~Schommer-Pries.
\newblock {\em {The Classification of Two-Dimensional Extended Topological
  Quantum Field Theories}}.
\newblock PhD thesis, Ph. D. thesis, UC Berkeley. Also available at
  http://sites. google. com/site/chrisschommerpriesmath, 2009.

\bibitem[Tur10]{turaev2010}
V.~Turaev.
\newblock {\em {Homotopy Quantum Field Theory}}.
\newblock With appendices by M.\ M\"uger and A.\ Vir\'elizier. European
  Mathematical Society, Z\"urich, 2010.

\bibitem[VW95]{VW95}
C.~Vafa and E.~Witten.
\newblock {On orbifolds with discrete torsion}.
\newblock {\em Journal of Geometry and Physics}, 15(3):189--214, 1995.

\bibitem[Wil08]{Wi}
Simon Willerton.
\newblock The twisted {D}rinfeld double of a finite group via gerbes.
\newblock {\em Algebr. Geom. Topol. 8 (2008), no. 3, 1419-1457}, 2008.

\bibitem[Wit89]{witten}
E.~Witten.
\newblock Quantum field theory and the {J}ones polynomial.
\newblock {\em Commun. Math. Phys.}, 121:351--399, 1989.

\end{thebibliography}

\end{document}